\definecolor{refkey}{gray}{.75}
\definecolor{labelkey}{gray}{.5}
\colorlet{DarkGreen}{green!50!black}
\colorlet{DarkGray}{gray!60!black}
\numberwithin{equation}{section}
\renewcommand{\restriction}{\mathord{\upharpoonright}}
\renewcommand{\epsilon}{\varepsilon}
\newcommand{\one}{\mathbf{1}}
 \definecolor{refkey}{gray}{.5}
 \definecolor{labelkey}{gray}{.5}
\definecolor{light}{gray}{.9}
\definecolor{pillDarkGreenFG}{RGB}{35, 156, 77}
\definecolor{pillDarkGreenBG}{RGB}{177, 225, 199}
\definecolor{pillAzureFG}{RGB}{33, 160, 183}
\definecolor{pillAzureBG}{RGB}{134, 207, 219}
\definecolor{pillGreenFG}{RGB}{95, 215, 42}
\definecolor{pillGreenBG}{RGB}{190, 239, 161}
\definecolor{pillBlueFG}{RGB}{15, 130, 251}
\definecolor{pillBlueBG}{RGB}{192, 222, 254}
\definecolor{pillOrangeFG}{RGB}{252, 150, 39}
\definecolor{pillOrangeBG}{RGB}{255, 236, 162}
\definecolor{pillBrownFG}{RGB}{151, 82, 17}
\definecolor{pillBrownBG}{RGB}{201, 166, 127}
\definecolor{pillRedFG}{RGB}{235, 75, 78}
\definecolor{pillRedBG}{RGB}{242, 162, 166}
\newcommand{\pillarbox}[5]{\begin{scope}[shift={(#1,#2)}]
\draw[fill=#4, line width=0.25pt] (0,0) rectangle (8pt,8pt) ;
\draw[fill=#5, opacity = .5, line width=0.25pt] (0,8pt)--(3pt,11pt)--(11pt,11pt)--(8pt,8pt);
\draw[fill=#4, opacity =.5, line width=0.25pt] (8pt,0)--(11pt,3pt)--(11pt,11pt)--(8pt,8pt);
\draw (27pt, 6.25pt) node[text=#4]{#3};
\end{scope} }
\newcommand{\wallface}[5]{\begin{scope}[shift={(#1,#2)}]
\draw[fill=#4, line width=0.25pt] (0,0) rectangle (8pt,8pt) ;
\draw[fill=#4, opacity =.5, line width=0.25pt] (8pt,0)--(11pt,3pt)--(11pt,11pt)--(8pt,8pt);
\draw (27pt, 6.25pt) node[text=#4]{#3};
\end{scope} }
\newcommand{\ceilface}[5]{\begin{scope}[shift={(#1,#2)}]
\draw[fill=#5, opacity = .6,  line width=0.25pt] (0,4pt)--(3pt,7pt)--(11pt,7pt)--(8pt,4pt)--(0,4pt);
\draw (27pt, 6.25pt) node[text=#4]{#3};
\end{scope} }
\newtheorem{maintheorem}{Theorem}
\newtheorem{mainprop}[maintheorem]{Proposition}
\newtheorem{theorem}{Theorem}[section]
\newtheorem*{theorem*}{Theorem}
\newtheorem{lemma}[theorem]{Lemma}
\newtheorem{claim}[theorem]{Claim}
\newtheorem{proposition}[theorem]{Proposition}
\newtheorem{observation}[theorem]{Observation}
\newtheorem{fact}[theorem]{Fact}
\newtheorem{corollary}[theorem]{Corollary}
\newtheorem{remark}[theorem]{Remark}
\theoremstyle{definition}{

\newtheorem{definition}[theorem]{Definition}
\newtheorem*{definition*}{Definition}

}
\newcommand{\E}{\mathbb E}
\renewcommand{\P}{\mathbb P}
\newcommand{\R}{\mathbb R}
\newcommand{\Z}{\mathbb Z}
\newcommand{\cA}{\ensuremath{\mathcal A}}
\newcommand{\cB}{\ensuremath{\mathcal B}}
\newcommand{\cC}{\ensuremath{\mathcal C}}
\newcommand{\cE}{\ensuremath{\mathcal E}}
\newcommand{\cF}{\ensuremath{\mathcal F}}
\newcommand{\cH}{\ensuremath{\mathcal H}}
\newcommand{\cI}{\ensuremath{\mathcal I}}
\newcommand{\cJ}{\ensuremath{\mathcal J}}
\newcommand{\cK}{\ensuremath{\mathcal K}}
\newcommand{\cL}{\ensuremath{\mathcal L}}
\newcommand{\cO}{\ensuremath{\mathcal O}}
\newcommand{\cP}{\ensuremath{\mathcal P}}
\newcommand{\cS}{\ensuremath{\mathcal S}}
\newcommand{\cV}{\ensuremath{\mathcal V}}
\newcommand{\cZ}{\ensuremath{\mathcal Z}}
\newcommand{\llb }{\llbracket}
\newcommand{\rrb }{\rrbracket}
\newcommand{\fD}{\mathfrak{D}}
\newcommand{\fF}{\mathfrak{F}}
\newcommand{\fm}{\mathfrak{m}}
\newcommand{\fs}{\mathfrak{s}}
\newcommand{\fS}{\mathfrak{S}}
\newcommand{\fW}{\mathfrak{W}}
\newcommand{\fX}{\mathfrak{X}}
\newcommand{\sA}{{\ensuremath{\mathscr A}}}
\newcommand{\sB}{{\ensuremath{\mathscr B}}}
\newcommand{\sF}{{\ensuremath{\mathscr F}}}
\newcommand{\sT}{{\ensuremath{\mathscr T}}}
\newcommand{\sX}{{\ensuremath{\mathscr X}}}
\newcommand{\g}{{\ensuremath{\mathbf g}}}
\newcommand{\br}{{\ensuremath{\mathbf r}}}
\newcommand{\bB}{{\ensuremath{\mathbf B}}}
\newcommand{\bC}{{\ensuremath{\mathbf C}}}
\newcommand{\bD}{{\ensuremath{\mathbf D}}}
\newcommand{\bF}{{\ensuremath{\mathbf F}}}
\newcommand{\bH}{{\ensuremath{\mathbf H}}}
\newcommand{\bI}{{\ensuremath{\mathbf I}}}
\newcommand{\bJ}{{\ensuremath{\mathbf J}}}
\newcommand{\bW}{{\ensuremath{\mathbf W}}}
\newcommand{\bX}{{\ensuremath{\mathbf X}}}
\newcommand{\bY}{{\ensuremath{\mathbf Y}}}
 \renewcommand{\epsilon}{\varepsilon}
\DeclareMathOperator{\diam}{diam}
\DeclareMathOperator{\hgt}{ht}
\newcommand{\tv}{{\textsc{tv}}}
\newcommand{\spine}{{\textsc{sp}}}
\newcommand{\Tsp}{{\tau_\spine}}
\newcommand{\trivincr}{\varnothing}
\newcommand{\udarrow}{{\ensuremath{\scriptscriptstyle \updownarrow}}}
\newcommand{\lrvec}[1]{\overset{\,{}_\leftrightarrow}{#1}\!}
\newcommand{\red}{\textsc{red}}
\newcommand{\blue}{\textsc{blue}}
\newcommand{\superimpose}[2]{%
  {\ooalign{$#1\@firstoftwo#2$\cr\hfil$#1\@secondoftwo#2$\hfil\cr}}}
\begin{document}

\title{Tightness and tails of the maximum in 3D Ising interfaces}

\author{Reza Gheissari}
\address{R.\ Gheissari\hfill\break
Courant Institute\\ New York University\\
251 Mercer Street\\ New York, NY 10012, USA.}
\email{reza@cims.nyu.edu}

\author{Eyal Lubetzky}
\address{E.\ Lubetzky\hfill\break
Courant Institute\\ New York University\\
251 Mercer Street\\ New York, NY 10012, USA.}
\email{eyal@courant.nyu.edu}

\maketitle
\begin{abstract}
Consider the 3D Ising model on a box of side length $n$ with minus boundary conditions above the $xy$-plane and  plus boundary conditions below it. At low temperatures, Dobrushin (1972) showed that the interface separating the predominantly plus and predominantly minus regions is localized: its height above a fixed point has exponential tails. Recently, the authors proved a law of large numbers for the maximum height $M_n$ of this interface: for every $\beta$ large, $M_n/ \log n\to c_\beta$ in probability as $n\to\infty$.

Here we show that the laws of the centered maxima $(M_n - \E[M_n])_{n\geq 1}$ are uniformly tight. Moreover, even though this sequence does not converge, we prove that it has uniform upper and lower Gumbel tails (exponential right tails and doubly exponential left tails). Key to the proof is a sharp (up to $O(1)$ precision) understanding of the surface large deviations. This includes, in particular, the shape of a pillar that reaches near-maximum height, even at its base, where the interactions with neighboring pillars are dominant.
\end{abstract}

\section{Introduction}
We study the low-temperature interface of the Ising model in dimensions three and higher. An Ising configuration on a subgraph $\Lambda$ of $\Z^d$ is an assignment of $\{+1,-1\}$ \emph{spins} to the ($d$-dimensional) cells of $\Lambda$, denoted~$\cC(\Lambda)$. The cells are identified with their midpoints which are the vertices of the dual graph $(\Z+\frac 12)^d$ (when $d=2$, the cells are the faces of $\Lambda$ and when $d=3$, they are cubes of side length $1$), and two cells~$u,v$ are considered adjacent ($v\sim w$) if their mid-points are at a Euclidean distance $1$. 
The Ising model on a finite $\Lambda\subset\Z^d$ at inverse-temperature $\beta$ is the probability measure over configurations $\sigma \in \{\pm 1\}^{\cC(\Lambda)}$ with weights 
\begin{align*}
    \mu_{\Lambda,\beta} \propto \exp [- \beta \cH(\sigma)]\,, \qquad \mbox{where}\qquad \cH(\sigma) = \sum_{v\sim w} \one\{\sigma_v \neq \sigma_w\}\,.
\end{align*}
For a set $R\supset \Lambda$ and a configuration $\eta$ on $R$, the Ising model on $\Lambda$ with boundary conditions $\eta$, denoted~$\mu_{\Lambda,\beta}^{\eta}$, is the measure $\mu_{R,\beta}$ conditioned on $\sigma$ coinciding with $\eta$ on $R\setminus \Lambda$. 
These definitions extend to infinite subsets $\Lambda\subset\Z^d$ by taking weak limits of measures on (say) finite boxes under suitable boundary conditions. 

The Ising model exhibits a rich and extensively studied phase transition on $\Z^d$ ($d\geq 2$): there exists a $\beta_c(d)$ such that when $\beta<\beta_c(d)$, there is a unique infinite-volume Gibbs measure on $\Z^d$, whereas when $\beta>\beta_c(d)$, there are multiple distinct infinite-volume measures, e.g., those obtained by taking a limit of finite volume measures with plus boundary conditions $\mu_{\Z^d, \beta}^+$ vs.\ minus boundary conditions $\mu_{\Z^d, \beta}^-$.  This can be seen by a classical argument of Peierls, which demonstrates that in a box of side length $n$ with $+$ boundary conditions, there exists a $\beta_0$ such that for $\beta>\beta_0$ the minus clusters become ``sub-critical,'' i.e., the probability that the origin is part of a connected component (cluster) of at least $r$ minus sites is at most $\exp[-c\beta r]$. 

The analysis of this low-temperature phase has then focused on the structure of the interface between predominantly plus and predominantly minus regions. Namely, consider the model on the infinite cylinder 
\[\Lambda_n : = \{-n,\ldots,n\}^{d-1}\times \{-\infty,\ldots,\infty\}\,,
\]
under \emph{Dobrushin boundary conditions}, which are plus on all cells of $\Z^d$ with negative $d$-th coordinates, and minus on all cells of $\Z^d $ with positive $d$-th coordinates: denote the resulting (infinite volume) measure $\mu_n^\mp$ (the uniqueness of which, for every $\beta>0$, follows by a classical coupling argument via the monotonicity of the Ising model in boundary conditions). At low temperatures $\beta>\beta_0$, these boundary conditions impose an \emph{interface} $\cI$, below which the configuration has the features of the plus phase $\mu_{\Z^d}^{+}$, and above which the measure has the features of the minus phase $\mu_{\Z^d}^{-}$. For a configuration $\sigma\in \{\pm 1\}^{\cC(\Lambda_n)}$, this interface $\cI$ is defined by taking the set of all $(d-1)$-cells (e.g., edges for $d=2$ and faces for $d=3$) separating disagreeing spins, and letting $\cI$ be the (maximal) connected component of such $(d-1)$-cells that separates the minus cluster of the boundary from the plus cluster of the boundary  (see Section~\ref{sec:preliminaries} for a precise definition).

In two dimensions, this interface forms a random decorated curve whose properties as $n\to\infty$ are by now very well understood. For every $\beta>\beta_c(2)$, this interface is \emph{rough}, with typical fluctuations that are $O(\sqrt n)$. In fact, after a diffusive rescaling, the interface is known to converge to a Brownian bridge, and as such its \emph{maximum} height in $\Lambda_n$ can be seen to also have fluctuations of order $\sqrt n$~\cite{DH97,DKS,Hryniv98,Ioffe94,Ioffe95,PV97,PV99}. 

In dimensions $d\geq 3$, the interface forms a random $(d-1)$-dimensional surface whose features are quite different from the two-dimensional case described above. For the ease of exposition, we focus on the important case $d=3$, where the interface forms a random 2D surface; unless otherwise noted, the new results stated for $d=3$ extend to $d>3$ with simple modifications (see Remark~\ref{rem:higher-dimensions}). A landmark result in the study of the 3D Ising interface in $\mu^\mp_n$ was the proof by Dobrushin~\cite{Dobrushin72a} in 1972 that for  large $\beta$ the interface is \emph{rigid}: its typical height fluctuations are $O(1)$ and in fact have an exponential tail, e.g., 
\begin{align*}
\mu_n^\mp (\max \{h: (0,0, h)\in \cI\} \geq r) \leq \exp [-\beta r/3]\,.
\end{align*} 
A consequence of this is the existence of an infinite-volume measure $\mu_{\Z^3}^\mp$ which is not a mixture of $\mu_{\Z^3}^+$ and $\mu_{\Z^3}^{-}$ (cf., dimension two where all infinite-volume measures are mixtures of $\mu_{\Z^2}^{+}$ and $\mu_{\Z^2}^{-}$~\cite{Ai80,Hi81}). 

The present paper studies the maximum height of the 3D Ising interface. 
Unlike several related models of 2D~surfaces whose maximum has been extensively analyzed in recent years---e.g., the (2+1)D Solid-on-Solid (SOS) model~\cite{BEF86,CLMST1,CLMST2}, the Discrete Gaussian and $|\nabla \phi|^p$ models~\cite{LMS16}, and the discrete Gaussian free field (DGFF)~\cite{BDG01,BDZ16,Zeitouni16}, to name a few (we refer the reader to~\cite[Section 1.4]{GL19a} for a more detailed review of this literature)---the 3D Ising interface $\cI$ is not a height function; it can have \emph{overhangs} intersecting a given column $(x_1,x_2)\times \R$ at multiple heights. Further, sub-critical \emph{bubbles} in the plus and minus phases under $\mu_n^\mp$, albeit unseen in $\cI$, do affect its distribution (bubbles and overhangs are precluded from SOS for instance).

Since the Ising interface is not a height function, we define its maximum height as
\begin{align*}
M_n : = \max \left\{x_3\,:\; (x_1,x_2,x_3)\in \cI\mbox{ for some }(x_1,x_2)\in [-n,n]^2\right\}\,.
\end{align*}
The above bound by Dobrushin on the bulk fluctuations of $\cI$ implies via a union bound that $M_n \leq (C/\beta) \log n$ with probability $1-o(1)$. Recently, the authors showed a law of large numbers for this maximum height~\cite{GL19a}: 
 there exists $\beta_0$ such that every $\beta>\beta_0$, 
\begin{align*}
\lim_{n\to\infty}\frac{M_n}{\log n} = \frac{2}{\alpha}\,, \qquad \mbox{in $\mu_n^\mp$-probability}\,,
\end{align*}
where, if  $v\xleftrightarrow[A~]{+}w$ denotes that there is a path of adjacent or diagonally adjacent plus spins between $v$ and $w$ in $A$ (henceforth, we refer to this notion of adjacency as $*$-connectivity; see~\S\ref{subsec:graph-notation} for more details), then
\begin{equation}\label{eq:alpha-alpha-h-def}
\alpha := \lim_{h\to\infty} \frac{\alpha_h}h\qquad\mbox{for}\qquad\alpha_h = -\log \mu_{\Z^3}^\mp\left((\tfrac12,\tfrac12,\tfrac12) \xleftrightarrow[\R^2\times [0,\infty)]{+} (\Z+\tfrac 12)^2\times\{h-\tfrac12\}\right)\,.
\end{equation}
The present work continues the analysis of the extrema of the random surface $\cI$ given by the interface of low-temperature Ising models in three dimensions, and looks at the law of $M_n$ beyond first order asymptotics.
We begin by characterizing the mean of $M_n$ in terms of the infinite-volume large deviation quantity $\alpha_h$, and proving the tightness of the centered maximum.

\begin{maintheorem}\label{mainthm:tightness}
There exist $\beta_0>0$ and a sequence $\epsilon_\beta>0$ going to $0$ as $\beta\to\infty$,  so that, for every~$\beta>\beta_0$, the maximum $M_n$ of the interface of the Ising model on $\Lambda_n$ with Dobrushin boundary conditions satisfies
\begin{align}\label{eq:mn-def} 
 m_n^\star-1-\epsilon_\beta \leq \E[M_n] \leq m_n^\star+\epsilon_\beta\quad\mbox{ where }\quad m_n^\star = \inf\{ h\geq 1 \,:\; \alpha_h > 2\log(2n)-2\beta\}\,,\end{align}
and
\begin{align*}
\mu_{n}^{\mp}\big(M_n - \E[M_n] \in \cdot \big) \mbox{ is a tight sequence}\,.
\end{align*}
\end{maintheorem}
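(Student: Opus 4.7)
The plan is to prove matching upper and lower bounds on $\E[M_n]$ that pin it to within $1+\epsilon_\beta$ of $m_n^\star$, together with quantitative one-sided tail estimates that will immediately imply tightness. For each column $v=(x_1,x_2)\in\{-n,\ldots,n\}^2$, let $H_v$ denote the maximal height of $\cI$ above $v$, so that $M_n=\max_v H_v$.

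For the \emph{upper bound}, the first step is to upgrade the infinite-volume quantity $\alpha_h$ from \eqref{eq:alpha-alpha-h-def} into a finite-volume tail estimate
\[
\mu_n^\mp(H_v\geq h)\ \leq\ e^{-\alpha_h+\epsilon_\beta}\,,
\]
uniform in $v$ and in $h\leq C\log n$. This should follow from a surgery that ``replants'' the infinite-volume plus-connection event producing $\alpha_h$ onto the finite-volume interface, combined with FKG-monotonicity in boundary conditions and the decay of correlations of $\mu_{\Z^3}^\mp$. A union bound over the $(2n+1)^2$ columns then gives $\mu_n^\mp(M_n\geq h)\leq (2n+1)^2 e^{-\alpha_h+\epsilon_\beta}$. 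Since, throughout the relevant range, $\alpha_{h+1}-\alpha_h=\alpha+o_\beta(1)$ with $\alpha\asymp 4\beta$, summing the tail from $h\geq m_n^\star$ will yield $\E[M_n]\leq m_n^\star+\epsilon_\beta$; truncating $r$ above the mean yields the exponential upper tail $\mu_n^\mp(M_n\geq \E[M_n]+r)\leq e^{-\alpha r+O(1)}$.

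For the \emph{lower bound}, one must show $\mu_n^\mp(M_n\geq m_n^\star-1-\epsilon_\beta)\geq c>0$. Setting $h=m_n^\star-1$, so that $\alpha_h\leq 2\log(2n)-2\beta$ by definition, the plan is to count tall columns: take $X=\sum_v \one\{H_v\geq h\}$, the sum being over a well-separated sub-grid of columns with pairwise spacing $\ell_\beta\log n$ chosen large enough that spatial mixing decorrelates pillar events at distinct sites. A matching \emph{lower} bound
\[
\mu_n^\mp(H_v\geq h)\ \geq\ e^{-\alpha_h-\epsilon_\beta}\,,
\]
obtained by a second surgery that plants an infinite-volume pillar realizing $\alpha_h$ into a box around $v$, gives $\E[X]\gtrsim e^{2\beta-\epsilon_\beta}$, which is large. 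A Chen-Stein or second-moment inequality, leveraging the decorrelation between far-apart columns, then forces $\mu_n^\mp(X\geq 1)\geq c$, supplying the lower bound on $\E[M_n]$. Running the same argument with $h$ replaced by $m_n^\star-1-r-O(\epsilon_\beta)$ yields the doubly-exponential Gumbel lower tail $\mu_n^\mp(M_n\leq \E[M_n]-r)\leq \exp(-c\,e^{\alpha r})$, and combined with the upper tail, this gives tightness.

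The hard part will be establishing the matching sharp upper and lower bounds on $\mu_n^\mp(H_v\geq h)$ \emph{up to a multiplicative $e^{\pm\epsilon_\beta}$ factor}, together with the decorrelation needed for Chen-Stein. The infinite-volume rate $\alpha_h$ prices a single pillar rising from a flat interface in isolation; in finite volume under $\mu_n^\mp$ the pillar interacts with the fluctuating base interface and, more delicately, with nearby pillar bases, overhangs, and sub-critical bubbles whose combinatorial entropy is polynomial in $\beta$. One must show that these interactions only cost $O(\epsilon_\beta)$ in the exponent, and, crucially, that this correction is \emph{calibrated precisely against the $-2\beta$ shift} in the definition of $m_n^\star$. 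This is exactly where the sharp surface large-deviation analysis and the shape theorem for near-maximum pillars advertised in the abstract, describing a tall pillar even at its base where neighbor interactions dominate, become indispensable.
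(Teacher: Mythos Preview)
Your overall architecture---union bound for the upper tail, second-moment/Paley--Zygmund for the lower tail---matches the paper's, but the lower-tail implementation has a genuine gap. You propose summing $X=\sum_v\one\{H_v\geq h\}$ over a sub-grid with spacing $\ell_\beta\log n$ so that pillar events at distinct sites decorrelate. With that spacing the number of retained columns is $\Theta(n^2/\log^2 n)$, while at $h=m_n^\star-1$ the single-column probability is at most $e^{-\alpha_h}\leq e^{2\beta}/(2n)^2$; hence $\E[X]=O(e^{2\beta}/\log^2 n)\to 0$, not $\gtrsim e^{2\beta}$ as you claim. Any thinning by a diverging factor destroys the first moment, because $\E[Z_h]$ over \emph{all} columns is only of constant order $e^{2\beta}$. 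With a sparse grid you can at best recover tightness up to $O(\log\log n)$, not $O(1)$.

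The paper therefore sums over all $x$ in the bulk and must control the near-diagonal covariance directly. The key device is to replace $\{H_v\geq h\}$ by the event $G_h^x=A_h^x\cap E_h^x\cap\{\sB_x=\emptyset\}$: Theorem~\ref{mainthm:pillar-structure} (the new $O(1)$ base estimate) gives $\mu_n^\mp(G_h^x\mid E_h^x)\geq 1-\epsilon_\beta$, so the first moment is unaffected; and on $G_h^x$ the first cut-point is at height $\tfrac12$, so revealing the pillar exposes a \emph{single} plus site at $x+(0,0,\tfrac12)$ surrounded by minuses. One FKG step then yields $\mu_n^\mp(G_h^y\mid G_h^x)\leq e^{6\beta}\mu_n^\mp(G_h^y)$ for $y\neq x$, which makes the near-diagonal contribution to $\E[Z_h^2]$ of order $n^{-2+o(1)}$. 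This is precisely where the sharp base control is indispensable: with only the $O(\log h)$ base bound of the prequel, conditioning on $G_h^x$ would expose $\Theta(\log\log n)$ plus sites and the constant would diverge. Far-apart pairs are handled by exponential decorrelation of walls. A secondary point: the second-moment argument at $h=m_n^\star-\ell$ gives only a \emph{singly} exponential left tail $\mu_n^\mp(M_n<m_n^\star-\ell)\lesssim e^{-\alpha_\ell}$; the doubly-exponential bound you state requires an additional multi-scale coupling (tiling $\Lambda_n$ by $\sim e^{\alpha_\ell}$ i.i.d.\ sub-boxes), which is a separate argument. For tightness itself the single exponential suffices.
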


\begin{remark}\label{rem:median}
We also show (cf.~Corollary~\ref{cor:mean-median}) that every \emph{median} $m_n$ of $\mu_n^\mp(M_n\in\cdot)$ has
$m_n^\star-1 \leq m_n \leq m_n^\star$.
\end{remark}
\begin{remark}\label{rem:higher-dimensions}
The results extend to the $d$-dimensional Ising model for any $d\geq 3$, where $m_n^\star$ in~\eqref{eq:mn-def} becomes $\inf\{h \geq 1:\alpha_h > (d-1) \log (2n)-(d-1)\beta\}$ and $\alpha_h$ 
addresses $(\tfrac12,\ldots,\tfrac12) \xleftrightarrow[\R^{d-1}\times[0,\infty)]{+} (\Z+\tfrac 12)^{d-1}\times\{h-\tfrac12\}$.
\end{remark}

\begin{figure}
    \centering
    \begin{tikzpicture}
        \node (fig1) at (-5,5) {
	\includegraphics[width=0.6\textwidth]{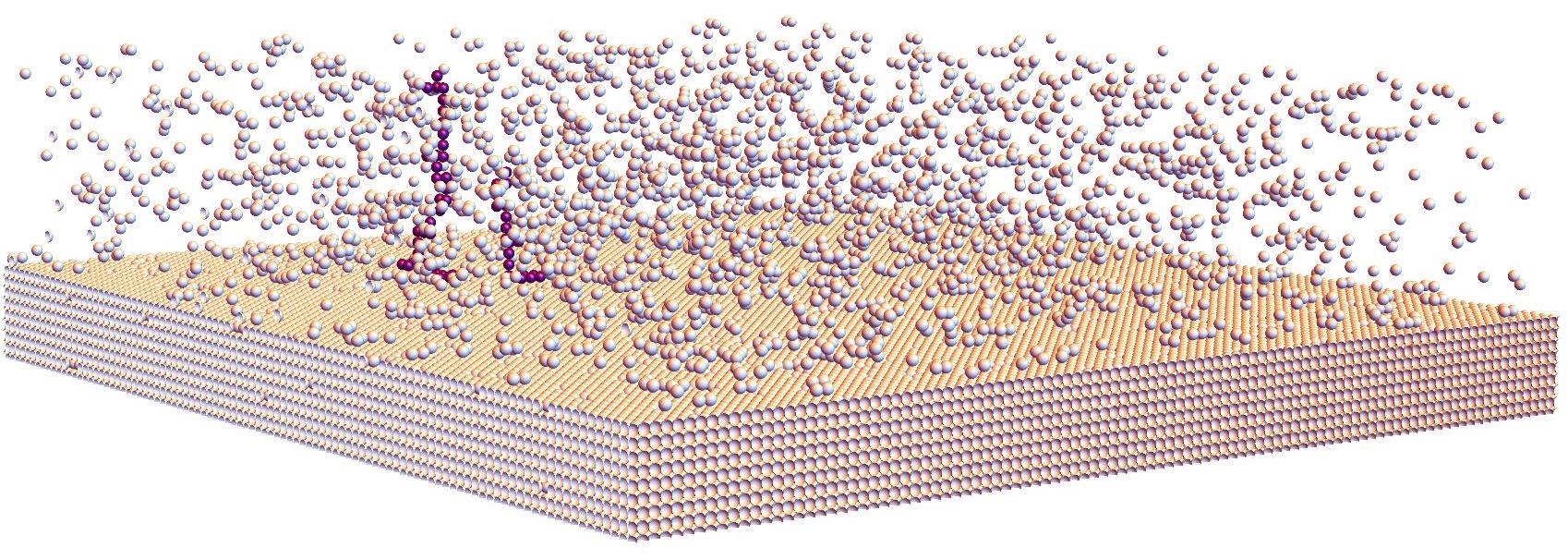}
	};
        \node (fig1) at (-5,2) {
	\includegraphics[width=0.6\textwidth]{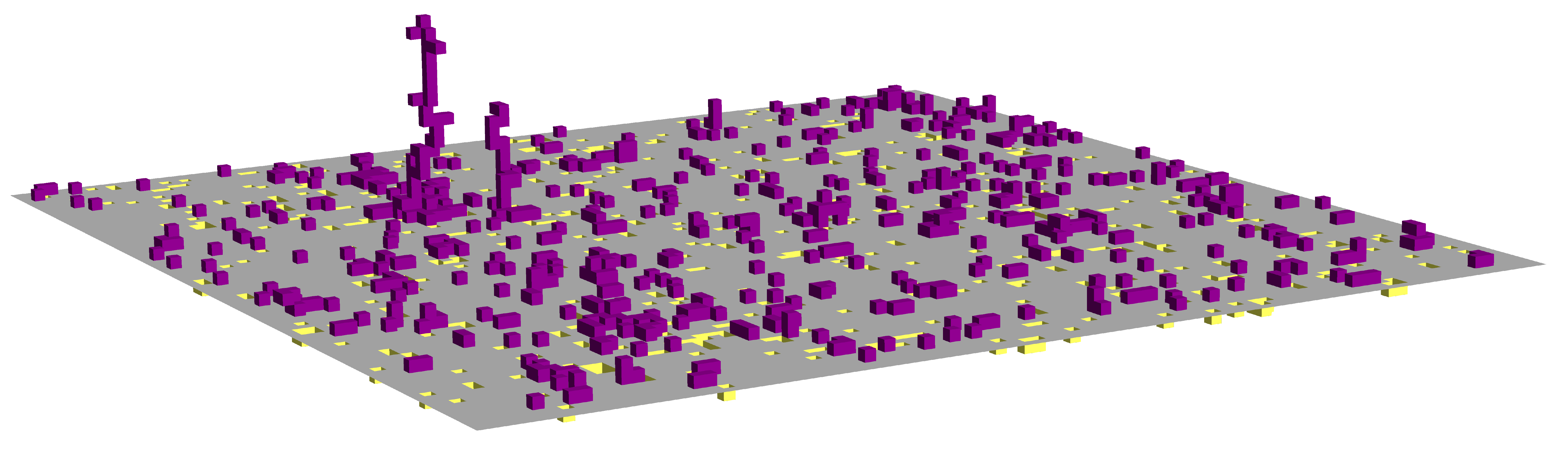}
    };
    \node (fig1) at (2.7,3.2) {
	\includegraphics[width=0.25\textwidth]{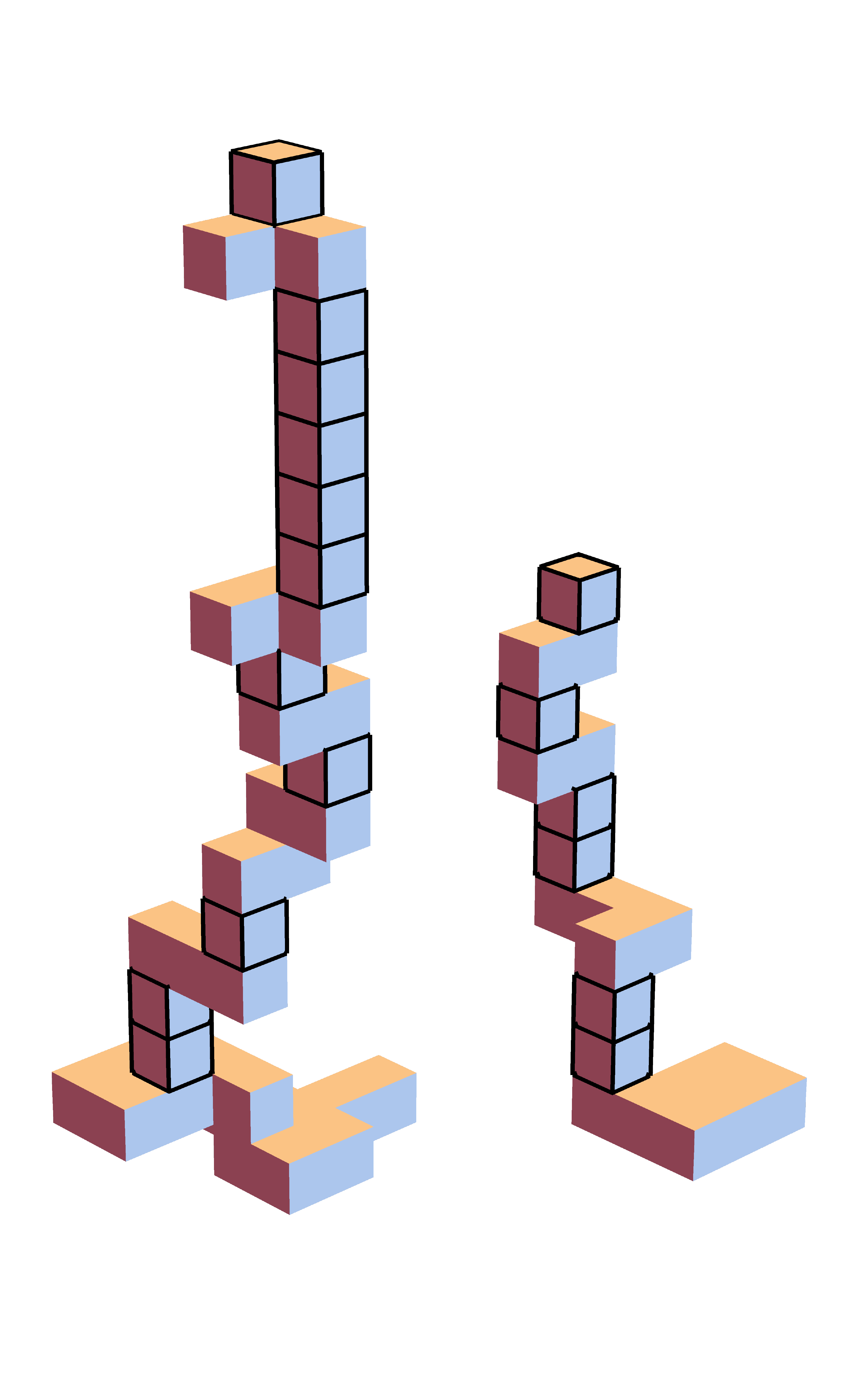}
    };
    \end{tikzpicture}
    \vspace{-0.4in}
    \caption{Top left: the plus sites of an Ising configuration under $\mu_n^\mp$. Bottom left: from that configuration, one obtains the interface $\cI$ by repeatedly flipping the spins in all finite connected components of plus or minus sites, and letting $\cI$ be the set of faces separating the remaining disagreeing spins. Right: two neighboring tall pillars of the resulting interface decompose into a base and a sequence of increments delimited by their cut-points (in bold). }
    \label{fig:interface}
    \vspace{-0.1in}
\end{figure}

Having established the tightness of the maximum height of the 3D Ising interface around its mean, one can then ask about the behavior of the limit/subsequential limits of the centered maximum.  Using a multi-scale argument, we prove the following regarding the limit points of $M_n -\E[M_n]$. 

\begin{maintheorem}\label{mainthm:gumbel-tails}
There exists $\beta_0$ such that for every $\beta>\beta_0$ there are $C,\bar\alpha>0$ so that the maximum $M_n$ of the interface of the Ising model on $\Lambda_n$ with Dobrushin boundary conditions has, for all $r>0$ and large enough~$n$,
\begin{align*}
 e^{-(\bar\alpha r+C)} & \leq \mu_n^{\mp}\big(M_n \geq \E[M_n] + r\big) \leq e^{ - (\alpha_r-C)}\,, \\
 e^{- e^{\bar\alpha r+C}} &  \leq \mu_n^{\mp}\big(M_n \leq \E[M_n] - r \big) \leq e^{ - e^{\alpha_r-C}}\,,
\end{align*}
in which, for every $r>0$, the ratio $\bar\alpha r /\alpha_r $ goes to $1$ as $\beta\to\infty$.
\end{maintheorem}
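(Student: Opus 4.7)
The plan is to reduce all four bounds to estimates on the column-count $N_h := |\{x \in \llb-n,n\rrb^2 : \text{the pillar above column } x \text{ reaches height } h\}|$, since $\{M_n \geq h\}=\{N_h \geq 1\}$. The inputs I would rely on are: (i) uniform single-column large deviations, $\mu_n^\mp(\text{pillar at }x \geq h) = e^{-\alpha_h + O(1)}$, valid across columns in $\llb-n,n\rrb^2$; (ii) an approximate super-additivity $\alpha_{h_1+h_2} \geq \alpha_{h_1} + \alpha_{h_2} - O(1)$ together with a matching upper bound $\alpha_{h+r}\leq \alpha_h + \bar\alpha r + O(1)$, extracted from the cut-point/renewal decomposition of pillars alluded to in the abstract (with $\bar\alpha r/\alpha_r \to 1$ as $\beta\to\infty$); and (iii) Theorem~\ref{mainthm:tightness}, which lets me replace $\E[M_n]$ by $m_n^\star$ up to $O(1)$. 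Combined with $\alpha_{m_n^\star}\approx 2\log(2n) - 2\beta$, this yields $\E[N_{m_n^\star \pm r}] \asymp e^{\mp \bar\alpha r}$, the correct Poisson intensity.

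The two right-tail bounds are then standard. A union bound gives $\mu_n^\mp(M_n\geq m_n^\star + r) \leq \E[N_{m_n^\star + r}] \leq e^{-(\alpha_r - C)}$ via (ii). For the matching lower bound I would use Paley--Zygmund, $\mu_n^\mp(N_h \geq 1) \geq \E[N_h]^2/\E[N_h^2]$: after showing that tall-pillar events at well-separated columns are approximately independent so that $\E[N_h^2]\leq \E[N_h]+(1+o(1))\E[N_h]^2$, one obtains $\mu_n^\mp(N_h\geq 1) \gtrsim e^{-\bar\alpha r - C}$. For the left-tail lower bound, the key observation is that $\{M_n \leq h\}=\bigcap_x\{\text{pillar at }x < h\}$ is an intersection of decreasing events in the Ising spins (adding $+$ spins can only grow pillars); applying FKG and (i),
\begin{align*}
\mu_n^\mp(M_n \leq m_n^\star - r) \;\geq\; \prod_x \mu_n^\mp(\text{pillar at }x < m_n^\star - r) \;\geq\; (1 - Ce^{-\alpha_{m_n^\star - r}})^{(2n)^2} \;\geq\; \exp(-e^{\bar\alpha r + C}).
\end{align*}

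The hard part is the left-tail upper bound. My plan is a sub-box decomposition: partition $\llb-n,n\rrb^2$ into $M \asymp e^{\alpha_r}$ disjoint sub-boxes, each sized so that its expected count of columns with pillar height $\geq m_n^\star - r$ is of constant order. By the same second-moment argument used for the right-tail lower bound, now applied inside a single sub-box, each sub-box contains such a tall pillar with probability at least some $c>0$. I would then argue that the indicators of these sub-box events decorrelate well enough---using exponential decay of correlations in the surrounding $\pm$ phases away from $\cI$, together with a finite-energy/resampling surgery that detaches a tall pillar from its neighbors along the cut-point structure---so that $\mu_n^\mp(M_n \leq m_n^\star - r) \leq (1-c+o(1))^M \leq \exp(-e^{\alpha_r - C})$. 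This multiscale decorrelation is precisely the main obstacle: the right tail needs only pairwise (second-moment) control, but the left tail requires that tall-pillar indicators over $e^{\alpha_r}$ sub-boxes behave as essentially independent Bernoullis despite the long-range influence mediated by overhangs and by the sub-critical bubble structure near the interface, and I expect the argument to lean heavily on the sharp pillar large deviations developed earlier in the paper.
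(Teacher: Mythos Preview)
Your overall plan is sound and the four bounds you outline would go through, but your route differs from the paper's in two of the four cases, and your description of the decorrelation mechanism in the hard case is not quite the one the paper uses.

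For the \emph{right-tail lower bound} you run Paley--Zygmund directly on $Z_{m_n^\star+r}$; this works (the variance calculation in Proposition~\ref{prop:exp-tails} goes through verbatim for $h=m_n^\star+r$, since $\Xi_2=n^{-2+o(1)}\sup_x\mu_n^\mp(E_h^x)^2=o(\E[Z_h]^2)$ regardless of the sign of the shift), and yields $\mu_n^\mp(M_n\ge m_n^\star+r)\gtrsim \E[Z_{m_n^\star+r}]\gtrsim e^{-\bar\alpha r}$. The paper instead embeds $\Lambda_n$ inside a \emph{larger} box $\Lambda_{n2^k}$ with $m^\star_{n2^k}=m_n^\star+r+1$, and reads the bound off from the exponential \emph{left} tail at the larger scale via the multiscale coupling (Lemma~\ref{lem:right-tail-power-of-2}). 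Your argument is more direct.

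For the \emph{left-tail lower bound} you use FKG on $\bigcap_x (E_h^x)^c$. This is legitimate---the paper itself asserts and uses that $E_h^x$ is increasing in the spins (see the proof of Claim~\ref{claim:effect-of-x-pillar})---and your product bound $\prod_x(1-\mu_n^\mp(E_h^x))\ge \exp(-e^{\bar\alpha r+C})$ is correct once you control the boundary columns via the crude Proposition~\ref{prop:Ex-bounds}. The paper instead goes through the multiscale coupling to $4^k$ i.i.d.\ smaller boxes with $m^\star_{n2^{-k}}=m_n^\star-r$ (Lemma~\ref{lem:left-tail-power-of-2}). Again your argument is simpler; the paper's has the virtue of being the same tool it uses for the matching upper bound.

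For the \emph{left-tail upper bound}, your sub-box plan is exactly the paper's (Proposition~\ref{prop:multiscale-linear} plus Lemma~\ref{lem:left-tail-power-of-2}): tile into $4^k\asymp e^{\alpha_r}$ boxes of side $n2^{-k}$ chosen so that $m^\star_{n2^{-k}}=m_n^\star-r+2$, and bound $\mu_n^\mp(M_n\le m_n^\star-r)$ by (approximately) $\mu_{n2^{-k}}^\mp(M_{n2^{-k}}\le m^\star_{n2^{-k}}-2)^{4^k}$. However, the decorrelation that makes this work is \emph{not} a ``finite-energy/resampling surgery along the cut-point structure''---that machinery is what drives the sub-multiplicativity of $\alpha_h$ in Section~\ref{sec:submult}. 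The coupling to i.i.d.\ sub-boxes comes instead from Dobrushin's cluster-expansion correlation bounds on groups of walls (Propositions~\ref{prop:dependence-on-volume}--\ref{prop:group-of-wall-correlations} and Corollaries~\ref{cor:pillar-dependence-on-volume}--\ref{cor:pillar-correlations}), combined with Claim~\ref{clm:corridors} to discard the corridors between sub-boxes. This distinction matters: the pillar surgery gives pointwise control on a single column, whereas what you need here is joint-law control on pillar collections over well-separated regions.
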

\begin{mainprop}\label{mainprop:divergence}
For no nonrandom sequence $(m_n)$ does $(M_n - m_n)$ converge weakly to a nondegenerate law.
\end{mainprop}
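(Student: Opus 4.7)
The plan is to argue by contradiction, exploiting the mismatch between the integer-valued nature of $M_n$ and the continuously varying residual $x_n:=2\log(2n)-\alpha_{m_n^\star}$, whose range is nondegenerate even though $m_n^\star$ only takes integer values. Suppose toward contradiction that $M_n-m_n\Rightarrow L$ for a deterministic sequence $(m_n)$ and nondegenerate limit $L$. Since $M_n\in\Z$, the deterministic quantity $(M_n-m_n)\bmod 1\equiv(-m_n)\bmod 1$ must converge, so after translating $L$ and replacing $m_n$ by $\lfloor m_n\rfloor$ we may assume $m_n\in\Z$ for every $n$ and that $L$ is supported on $\Z$. Combining the weak convergence $M_n-m_n\Rightarrow L$ with the tightness of $M_n-\E[M_n]$ from Theorem~\ref{mainthm:tightness} forces the deterministic sequence $m_n-\E[M_n]$ to be bounded; together with $|\E[M_n]-m_n^\star|\leq 1+\epsilon_\beta$, the integer-valued $K_n:=m_n-m_n^\star$ thus takes only finitely many values.

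The key tool is a sharp Poisson-type asymptotic implicit in the analysis of $\E[M_n]$ in Theorem~\ref{mainthm:tightness}. Decomposing $\Lambda_n$ into columns and using that under $\mu_n^\mp$ each column's pillar reaches height $\geq h$ with probability $e^{-\alpha_h(1+o(1))}$ (by~\eqref{eq:alpha-alpha-h-def}, up to boundary corrections), together with the near-independence of pillars at distinct columns arising from the $O(1)$ correlation length of the bulk plus phase at large $\beta$, one obtains
\begin{equation}\label{eq:poisson-heur}
\mu_n^\mp\bigl(M_n<h\bigr)\;=\;\exp\!\bigl(-(2n)^2\,e^{-\alpha_h}(1+o(1))\bigr)
\end{equation}
as $n\to\infty$ along any sequence $h=h(n)$ with $(2n)^2 e^{-\alpha_h}$ bounded. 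Specializing to $h=m_n^\star$ (where $(2n)^2 e^{-\alpha_{m_n^\star}}=e^{x_n}$ is bounded), this reads
\[
\mu_n^\mp\bigl(M_n\geq m_n^\star\bigr)\;=\;1-\exp\!\bigl(-e^{x_n}\bigr)+o(1)\,.
\]

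To conclude, by~\eqref{eq:mn-def} we have $x_n\in[2\beta-(\alpha_{m_n^\star}-\alpha_{m_n^\star-1}),\,2\beta)$, and since $\alpha_h-\alpha_{h-1}\to\alpha>0$ as $h\to\infty$, the set of subsequential limits of $(x_n)$ fills the nondegenerate interval $(2\beta-\alpha,2\beta)$: for any $t$ therein and any $\eta>0$, for all large enough $m$ one can find $n$ with $m_n^\star=m$ and $|x_n-t|<\eta$ (the $n\to n+1$ discretization of $2\log(2n)$ being $O(1/n)$). On the other hand, under the standing assumption, partitioning indices by the finitely many values of $K_n$ shows that every subsequential limit of $\mu_n^\mp(M_n\geq m_n^\star)=\mu_n^\mp(M_n-m_n\geq-K_n)$ must lie in the finite set $\{L([-k,\infty)):k\in\operatorname{range}(K_n)\}$. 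This is incompatible with the uncountably many distinct values $1-\exp(-e^t)$ of subsequential limits produced above (by strict monotonicity of $t\mapsto1-\exp(-e^t)$), a contradiction. The main obstacle is justifying the Poisson asymptotic~\eqref{eq:poisson-heur} at the required precision; however, this precision matches exactly what is needed to place $\E[M_n]$ inside $[m_n^\star-1,m_n^\star]$ (up to $\epsilon_\beta$) in Theorem~\ref{mainthm:tightness}, so the necessary ingredients---column decomposition, base-interaction control, and pillar quasi-independence---are already supplied by the paper's main analysis.
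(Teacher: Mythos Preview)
Your argument hinges on the Poisson-type asymptotic~\eqref{eq:poisson-heur}, and this is where it breaks down: the paper does \emph{not} establish this, nor do its tools suffice for it. All the key estimates in the paper carry multiplicative errors that are $O(\epsilon_\beta)$ or worse (constant in $n$, not $o(1)$ as $n\to\infty$): the sub-multiplicativity in Proposition~\ref{prop:submult} has a $(1+\epsilon_\beta)$ factor; the relation $|\widetilde\alpha_h-\alpha_h|\leq\epsilon_\beta$ never sharpens; the second-moment bound on nearby pillars in Claim~\ref{claim:effect-of-x-pillar} carries an $e^{6\beta}$ factor. The second-moment method in Section~\ref{sec:tightness} only yields the one-sided Paley--Zygmund bound $\mu_n^\mp(Z_h>0)\geq \E[Z_h]^2/\E[Z_h^2]$, not a two-sided Poisson approximation. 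Your claim that ``this precision matches exactly what is needed to place $\E[M_n]$ inside $[m_n^\star-1,m_n^\star]$'' is incorrect: pinning the mean to an interval of length $1+2\epsilon_\beta$ requires only the exponential tails of Proposition~\ref{prop:exp-tails}, not a sharp asymptotic for $\mu_n^\mp(M_n\geq m_n^\star)$. (As a secondary point, you also invoke $\alpha_h-\alpha_{h-1}\to\alpha$, which the paper does not prove; Corollary~\ref{cor:alpha-super-additive} only gives $\alpha_1-\epsilon_\beta\leq\alpha_h-\alpha_{h-1}\leq 4\beta+e^{-4\beta}$. Your argument survives this, since you only need the subsequential limits of $x_n$ to be uncountable, but the statement as written is unjustified.)

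The paper's actual proof bypasses any pointwise computation of $\mu_n^\mp(M_n\geq m_n^\star)$ entirely. It uses the multi-scale coupling of Proposition~\ref{prop:multiscale-linear} (via Corollary~\ref{cor:G-G^k}) to show that if $M_n-m_n\Rightarrow G$, then for every integer $k\geq 2$ there is an integer $a_k$ with $G^k(x+a_k)=G(x)$, i.e., $G$ is max-stable. Since $G$ is supported on $\Z$ while nondegenerate max-stable laws are continuous, this is a contradiction. This route is robust precisely because it only needs the $o(1)$ coupling error between scales, not a sharp formula for the distribution at any single scale.
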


The above described behavior of uniform Gumbel tails and non-convergence of its centered maximum matches the behavior exhibited by, e.g., the maximum of i.i.d.\ independent geometric random variables. 
However, if the Ising interface were instead \emph{tilted} at an angle (say via boundary conditions that are minus above some plane with outward normal $\hat n\notin \{ \pm e_1,\pm e_2,\pm e_3\}$ and plus below it), a famous open problem is to establish that $\cI$ would then be rough even at very low temperatures, and resemble the DGFF (see~\cite{CerfKenyon01} for such a result at zero temperature). It would be interesting to compare the maximum displacement of said tilted interface to that of the DGFF, where the asymptotic behavior of the centered maximum was shown~\cite{BrZe12} to be tight and subsequently found~\cite{Ding2013,BDZ16} to converge to a randomly-shifted Gumbel distribution.

Unlike the DGFF (and other log-correlated random fields, e.g., BBM)---where the marginal at a site is Gaussian and the difficulty in the analysis of the maximum is due to the logarithmic correlations between sites---in the case of 3D Ising interfaces, obtaining a good understanding of the probability that the interface reaches a height $h$ above a fixed site in $\Z^2 \times \{0\}$ is already a major obstacle.

\subsection{Proof strategy and outline}
As in the prequel~\cite{GL19a}, our analysis of the maximum height of a 3D Ising interface centers on understanding the shape of the interface locally near where it attains atypically large heights. This was formalized via what we refer to as \emph{pillars}: for a face with midpoint $x= (x_1, x_2, 0)$, the pillar of $x$, denoted $\cP_x$, is obtained from a configuration $\sigma$ as follows   (see Figure~\ref{fig:interface}):
\begin{enumerate}
    \item Take all finite ($+$) or ($-$) clusters (i.e., sites not $*$-connected to the boundary), simultaneously flip their spins, then repeat this step until no finite clusters remain; the faces separating differing spins in the result comprise the interface~$\cI$.
    \item Remove from that configuration all sites in the lower half-space $\R^2 \times (-\infty,0]$.
    \item The pillar $\cP_x$ is then the (possibly empty) $*$-connected plus component containing the site with midpoint $x+(0,0, \frac 12)$, also identified with the set of faces of $\cI$ that bound it.  
\end{enumerate}
We study the shape of $\cP_x$ conditionally on the height of the pillar, $\hgt(\cP_x)$, exceeding some $h$. To do so, we define cut-points of $\cP_x$ as sites $v$ in $\cP_x$ such that no other site in $\cP_x$ is at the same height as $v$. Ordering the cut-points of $\cP_x$ in increasing height as $v_1, v_2 ,\ldots,v_{\sT+1}$, we decompose $\cP_x$ into a sequence of increments $(\sX_i)_{i\leq \sT}$, where $\sX_i$ is the set of sites in $\cP_x$ delimited by $v_i$ from below and $v_{i+1}$ from above.
We further decompose $\cP_x$ into its \emph{base} consisting of the sites in $\cP_x$ below $v_1$, and its \emph{spine} consisting of the sites in $\cP_x$ at or above $v_1$ (due to the difficulty in controlling interactions with nearby pillars, the base was defined differently in~\cite{GL19a}, namely, it also included a prefix of the spine to mitigate the effect of these interactions).

With these new definitions, the results in~\cite{GL19a} show that, conditionally on $\hgt(\cP_x) \geq h$, the pillar $\cP_x$ has a base of diameter $O(\log h)$ with an exponential tail beyond that, and all increments above $v_{C\log h}$ have exponential tails on their surface area. It was also shown in~\cite{GL19a} that the increment sequence of $\cP_x$ conditioned on having $\sT\geq T$ increments, behaves asymptotically as a stationary, weakly mixing sequence; in particular, observables of the increment sequence (e.g., its volume, surface area, and displacement) were shown to obey central limit theorems as $T\to\infty$. At the level of these central limit theorems, and the law of large numbers of $M_n$, errors of $O(\log h)$ or $O(\log T)$ on the bounds on the size of the base could be sustained. 

The following result removes these $O(\log h)$ errors, which we cannot afford in proving Theorems~\ref{mainthm:tightness}--\ref{mainthm:gumbel-tails}.

\begin{figure}
\vspace{-0.5cm}
\centering
  \begin{tikzpicture}
    \node (fig1) at (-4.4,0) {
	\includegraphics[width=.40\textwidth]{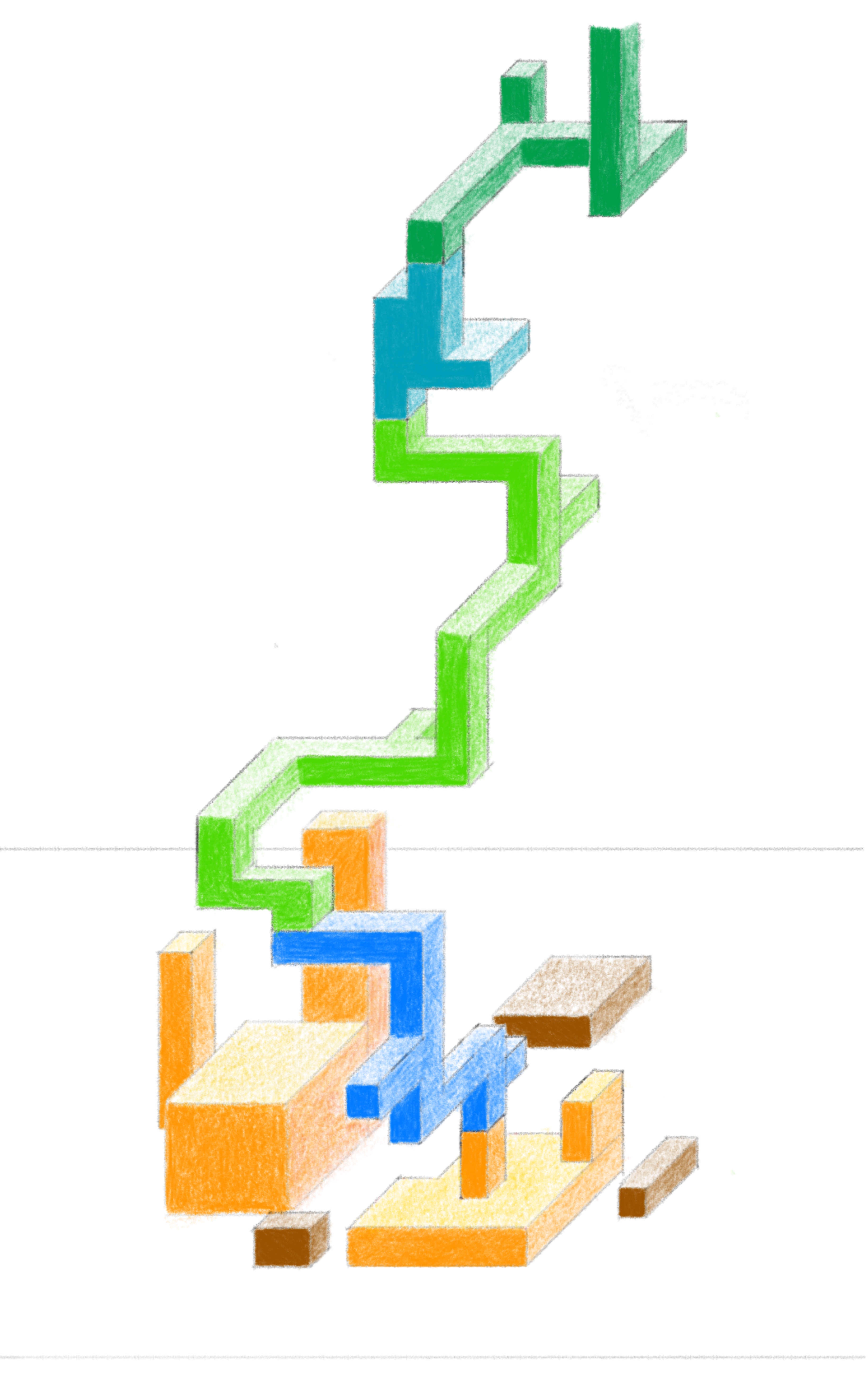}
	};
	
	\draw[densely dotted] (-5.05,-4.4)--(-4.75,-4.4)--(-4.6,-4.25)--(-4.9,-4.25)--(-5.05,-4.4);
  \node at (-4.825,-4.325) {\fontsize{3.5}{4}\selectfont $x$};

    \node (fig2) at (4.4,.18) {
    \includegraphics[width=.407\textwidth]{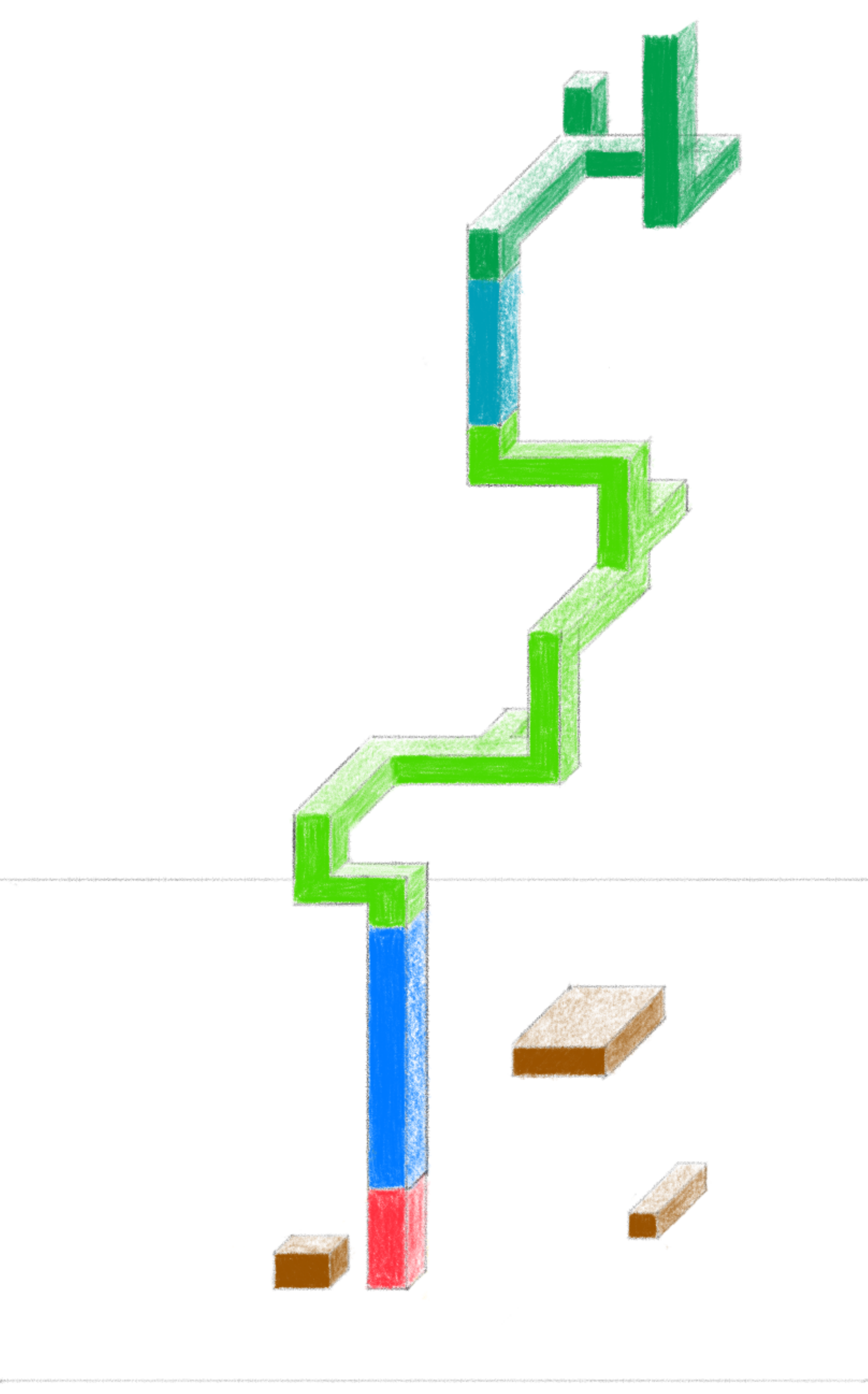}
    };
    
    	\draw[densely dotted] (3.9,-4.4)--(4.2,-4.4)--(4.35,-4.25)--(4.05,-4.25)--(3.9,-4.4);
  \node at (4.125,-4.325) {\fontsize{3.5}{4}\selectfont $x$};

        \draw[->, thick] (-2,2) -- (2,2);
    \node at (0,2.4) {$\Psi_{x,t}$};

  \end{tikzpicture}
\vspace{-.5cm}
 \caption{A map $\Psi_{x,t}$ locally modifies the structure of the interface near the base of a pillar $\cP_x$ that reaches a large height $h\gg 1$. The local modifications introduce delicate interactions with nearby fluctuations of the interface (nearby pillars).}
\vspace{-0.3cm}
\label{fig:big-map}
\end{figure}

\begin{maintheorem}\label{mainthm:pillar-structure}
There exist $\beta_0, C>0$ such that for every $\beta>\beta_0$ and sequences $(h_n),(\Delta_n),(x_n)$ with  $h_n \ll \Delta_n$ and $x_n \in \llb -n+\Delta_n, n-\Delta_n\rrb^2 \times \{0\}$, the pillar $\cP_x$ of the interface of the Ising model on $\Lambda_n$ with Dobrushin boundary conditions,  conditional on
reaching height~$h_n$, has the following structure:
\begin{enumerate}
   \item The diameter of the base $\sB_x: = \{w\in \cP_x: \hgt(w)< \hgt(v_1)\}$ is at most $ r$ except with probability $C \exp[-(\beta - C) r]$ for every $1\leq r\leq \Delta_n$.
    \item For every $t$, the surface area of the increment $\sX_t:= \{w\in \cP_x: \hgt(v_t)\leq \hgt(w)\leq \hgt(v_{t+1})\}$ is at most $8+r$ except with probability $C \exp[-(\beta - C) r]$ for every $1\leq r \leq \Delta_n$. 
\end{enumerate}
\end{maintheorem}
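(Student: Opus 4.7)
The plan is to prove both tail bounds via a Peierls-type modification map $\Psi_{x,t}$, as depicted in Figure~\ref{fig:big-map}, which surgically shrinks either the base (for (1)) or the $t$-th increment (for (2)) of the pillar $\cP_x$. Concretely, if $\Psi_{x,t}$ can be constructed so that it decreases the number of faces in $\cI$ by at least $r$ while being injective up to a pre-image multiplicity $e^{Cr}$, then the ratio of Boltzmann weights between the original and modified configurations is at least $e^{(\beta - C)r}$, and a standard cluster expansion absorbing subcritical Ising bubble corrections into the constant yields the tail $C e^{-(\beta - C)r}$.

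Part (2) is the cleaner of the two, thanks to the cut-point structure. For $t\geq 2$, the faces of $\cI$ bounding $\sX_t$ form a $\ast$-connected component pinched by the single faces at the cut-points $v_t$ and $v_{t+1}$, so it is automatically separated from the remainder of the pillar and, when $\hgt(v_t)\geq 2$, from neighboring pillars. I would define $\Psi_{x,t}$ to delete a horizontal slab of $\sX_t$ and re-glue along the cut-point faces: the minimal increment has surface area $8$ (four faces around each cut-point), so an increment of surface area $8+r$ carries $r$ excess faces that the slab operation removes, while the entropy of reinserting the slab contributes at most $e^{Cr}$. The boundary case $t=1$, where $\sX_1$ abuts the base, is handled jointly with the base argument.

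Part (1) is where the main difficulty lies. The base $\sB_x$ is \emph{not} isolated: neighboring pillars can have their own bases sharing walls with $\sB_x$, and non-local interactions arise through overhangs of $\cI$ and through subcritical bubbles in the plus and minus phases. This is precisely the obstruction that in \cite{GL19a} forced an $O(\log h)$ buffer, in which a $\log h$ prefix of the spine was absorbed into the base so as to deterministically separate $\sB_x$ from other pillars. To obtain $O(1)$ precision, I would decompose along the local environment of $x$: on the event that no pillar in $\llb x-r, x+r\rrb^2\times\{0\}$ other than $\cP_x$ reaches height $\geq 2$, the base is effectively isolated and a direct analogue of the increment map applies (delete a horizontal slice of $\sB_x$ and reattach); on the complementary event, $\Psi_{x,t}$ is defined jointly on $\cP_x$ and the $O(1)$ interacting neighbors, bundling their bases into a single connected object on which one surgical modification reduces the combined surface area by at least $r$. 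The failure probability of the isolation event at scale $r$ is bounded by $e^{-(\beta-C)r}$ via a union bound using Dobrushin rigidity, so only the good event must produce the final tail.

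I expect the joint-modification step on the non-isolated event to be the principal obstacle. Ensuring global injectivity of $\Psi_{x,t}$ modulo $e^{Cr}$ when multiple pillars are surgically spliced simultaneously, and checking that bubble contributions still amount to only a constant per modified face, will require careful bookkeeping of the local wall structure near the base (together with the monotonicity and mixing properties of the plus/minus phases invoked in \cite{GL19a}). Once this is in place, Peierls' inequality applied to $\Psi_{x,t}$ together with a summation in $r$ yields both statements of the theorem.
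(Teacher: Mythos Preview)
Your high-level Peierls strategy is right, but there are two genuine gaps that prevent the argument from going through as outlined.

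\textbf{The increment is not isolated under modification.} In part~(2) you say that, for $t\geq 2$, the cut-points pinch $\sX_t$ so the modification is local. This is false for the map one is forced to use here. To remain in the conditioning event $\{\hgt(\cP_x)\geq h\}$, one cannot simply delete faces from $\sX_t$; one must \emph{trivialize} it, replacing it by a straight column of the same height. But then the top cut-point of the trivialized piece no longer sits at $\rho(v_{t+1})$, so every increment $\sX_{t+1},\sX_{t+2},\ldots$ must be horizontally translated to reconnect. These shifted increments can now draw arbitrarily close to, or even collide with, walls of $\cI\setminus\cP_x$ that were previously far away. This is precisely the interaction that the paper's map handles via the algorithmic criteria ({\tt A2}),({\tt A3}),({\tt B2}),({\tt B3}): for each increment $j$ above the modification, one checks whether its new position (after the accumulated horizontal shift) violates a soft distance threshold against any wall $\tilde W_y$, and if so marks $\tilde W_y$ for deletion as well. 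Controlling the interaction terms $\sum_f \g(f,\cI)-\sum_{f'}\g(f',\Psi(\cI))$ then requires simultaneously tracking horizontal shifts of increments and vertical shifts of deleted walls (Lemmas~\ref{lem:C1-vs-X2-and-X4}--\ref{lem:C2-vs-X2-and-X4}), which is the technical heart of the proof; your ``slab deletion'' sidesteps this entirely.

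\textbf{The isolation decomposition has the wrong scaling.} In part~(1), your isolation event is that no pillar in $\llb x-r,x+r\rrb^2$ reaches height~$\geq 2$. A union bound over these $O(r^2)$ sites gives failure probability $O(r^2 e^{-c\beta})$, which is polynomial in $r$, not $e^{-(\beta-C)r}$ as you claim. If instead you require neighbors not to reach height $\sim r$, the failure probability is indeed exponential in $r$, but then on the ``good'' event you still have neighbors at $O(1)$ heights interacting with the base and with the horizontally shifted spine, so nothing is isolated. This is exactly why the prequel had to absorb an $O(\log h)$ buffer, and why the present paper abandons any isolation argument in favor of the iterative deletion scheme described above, which marks offending walls for deletion on the fly and pays for them out of $\fm(\cI;\Psi(\cI))$ via Claim~\ref{clm:m(W)}.
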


\subsubsection{Proof of  Theorem~\ref{mainthm:pillar-structure}}\label{subsec:proof-outline-map} As in classical Peierls arguments, as well as~\cite{Dobrushin72a,GL19a}, we design a map $\Psi$ on a subset $\bJ$ of interfaces whose pillars have $\hgt(\cP_x)\geq h$; the map will show that the subset $\bJ$ is rare if
\begin{enumerate}
    \item $\mu_n^\mp(\cI)/\mu_n^\mp(\Psi(\cI))$ is exponentially small in $\beta(|\cI| - |\Psi(\cI)|)$, while 
    \item the map has bounded multiplicity, i.e., it maps at most $C^M$ elements of $\bJ$ with $|\cI|-|\Psi(\cI)| = M$ to each element of $\Psi(\bJ)$ (noting  $|\cI|-|\Psi(\cI)|$ is the
    excess energy of the interface $\cI$ compared to~$\Psi(\cI)$).
\end{enumerate}
In the context of interfaces of the Ising model, we emphasize that bounding $\mu_n^\mp(\cI)/\mu_n^\mp(\Psi(\cI))$ is complicated by the fact that the Ising measure is not a measure on interfaces, but rather on configurations: using cluster expansion, Dobrushin~\cite{Dobrushin72a} viewed the law $\mu_n^\mp(\cI \in \cdot)$ as a perturbation of a measure on interfaces whose weights are proportional to $e^{ - \beta |\cI|}$, where the perturbation $e^{ - \sum_{f\in \cI} \g(f,\cI)}$ captures the sub-critical bubbles in the plus and minus phases. Thus the difficulty in item (1) above reduces to showing that the cumulative effect of $\sum \g(f,\cI) - \sum \g(f',\Psi(\cI))$ is comparable to the energy gain $|\cI|-|\Psi(\cI)|$. 

In Dobrushin's proof of rigidity, this interaction term was controlled via the decomposition of the interface into \emph{groups of walls} describing the vertical fluctuations of the interface: this effectively reduced considerations of the interaction terms to horizontal interactions between distinct walls. 

In~\cite{GL19a}, as in this paper, we required estimates conditionally on $\hgt(\cP_x)\geq h$ and therefore had to examine the structure of pillars in a more refined manner than by reducing to their two-dimensional projection. By decomposing pillars into a base and a sequence of increments, and accepting $O(\log h)$ errors in the base, we were able to handle the interaction terms by separating out the vertical interactions between shifts of increments from the horizontal interactions induced by deletions of groups of walls. 

In both approaches, there is an inherent competition between (1) a desire to delete more walls/straighten additional increments (which simplifies the comparison between the interaction terms and $|\cI|-|\Psi(\cI)|$) and (2)~the inability to delete ``too much," at risk of losing\ control of the multiplicity of the map. 

Towards Theorem~\ref{mainthm:pillar-structure} we need both an exponential tail on the size of the base beyond $O(1)$, and control on increment sizes even at heights that are $O(1)$ (cf., the $O(\log h)$ errors in~\cite{GL19a}). 
As we will see in  Section~\ref{subsec:proof-outline-tightness}, this is fundamental to establishing Theorems~\ref{mainthm:tightness}--\ref{mainthm:gumbel-tails}. 
To prove Theorem~\ref{mainthm:pillar-structure}, we devise a delicate algorithmic map (Algorithm~\ref{alg:le-big-map}) that iteratively handles the interactions between the horizontal shifts of the increments in the pillar $\cP_x$ and the vertical shifts of nearby walls of distinct pillars. See Figure~\ref{fig:big-map} for a visualization of this map, and Section~\ref{subsec:reader-guide-map} for a more detailed discussion of the various steps in its construction.

\subsubsection{Proof of tightness and Gumbel tails for $M_n - \E[M_n]$}\label{subsec:proof-outline-tightness} 
In~\cite{GL19a} we used structural results on the shape of  pillars $\cP_x$ attaining a large height to prove approximate sub-multiplicativity for the sequence $\mu^\mp (\hgt(\cP_x)\geq h)$, with a multiplicative error that is $O(e^{\beta \diam(\sB_x)^2})$ where $\sB_x$ is the base of the pillar $\cP_x$. As the bounds in that paper showed that typically $\diam(\sB_x)=O(\log h)$, it follows from Fekete's Lemma that $\frac 1h \log \mu^\mp(\hgt(\cP_x)\geq h)$ has a limit $\alpha$ as $h\to\infty$. Further, deducing in that paper that $M_n/\log n \to 2/\alpha$, via a second moment argument, relied on the bound on $\diam(\sB_x)$, as if $y+(0,0,\frac 12)$ is interior to $\sB_x$, then $\cP_x=\cP_y$. 

Our proof of tightness of $M_n  - \E[M_n]$ therefore 
necessitates establishing that $\diam(\sB_x)=O(1)$, as well as an $O(1)$ error in the relevant sub-multiplicativity estimates. Such refined estimates, as well as others needed in the second moment argument that is used to establish tightness (such as controlling the increments and tail bounds on the event $\{\hgt(\cP_x)\geq h\}$ at heights that are $O(1)$), are derived from the new Theorem~\ref{mainthm:pillar-structure}. 
The Gumbel tails in Theorem~\ref{mainthm:gumbel-tails} are then obtained via a coupling of the maximum $M_n$ to the maximum of $(1+o(1))(n/L_n)^2$ i.i.d.\ copies of the maximum at a suitably chosen smaller scale $M_{L_n}$ (see Proposition~\ref{prop:multiscale-linear}), thereby boosting the exponential left tail into a doubly exponential one.

\subsection{Organization}
Section~\ref{sec:preliminaries} contains the prerequisite definitions of walls/ceilings in Ising interfaces as per Dobroshin's framework. Section~\ref{sec:pillars} defines pillars and their decomposition into a base and sequence of increments (refining those of the prequel). Section~\ref{sec:exp-tail-base}, which is the heart of the proof, defines the map $\Psi$ and proves Theorem~\ref{thm:exp-tail-base} and Proposition~\ref{prop:exp-tail-all-increments}, which are more detailed versions of Theorem~\ref{mainthm:pillar-structure} from above. Section~\ref{sec:submult} proves the refined sub-multiplicativity estimates (Proposition~\ref{prop:submult} and Corollary~\ref{cor:alpha-super-additive}). These are used in Section~\ref{sec:tightness}, via a second moment argument, to prove exponential tails---and thus tightness---for the maximum (Proposition~\ref{prop:exp-tails}). Section~\ref{sec:gumbel} builds the multi-scale coupling of the maximum (Proposition~\ref{prop:multiscale-linear}), used to boost the exponential tails into Gumbel tails and prove Theorem~\ref{mainthm:gumbel-tails} as well as Proposition~\ref{mainprop:divergence}.

\section{Preliminaries}\label{sec:preliminaries}
In this section, we formalize the setup of the low-temperature Ising model, define its interface under Dobrushin boundary conditions more precisely, and recall the decomposition of this interface into walls and ceilings introduced in~\cite{Dobrushin72a} to prove rigidity of the interface. 

\subsection{Graph notation}\label{subsec:graph-notation}
We begin by describing the graph notation we use throughout the paper; though the results in this paper generalize directly to dimensions greater than three, for ease of exposition we present everything in the setup of the three-dimensional integer lattice. 

Let $\Z^3$ be the three-dimensional integer lattice graph with vertex set $ \{(v_1, v_2, v_3)\in \Z^3\}$ and edge-set identified with the set of nearest-neighbor pairs of vertices $\{\{v,w\}:d(v,w)=1\}$, where $d(x,y)=|x-y|$ will always denote the Euclidean distance between two points $x,y$. 

A \emph{face} of $\Z^3$ is the open set of points in $\R^3$ bounded by four edges (and four vertices) forming a square of side length one (normal to one of the coordinate axes). A face is \emph{horizontal} if its outward normal vector is $\pm e_3$ and it is \emph{vertical} if its outward normal vector is $\pm e_1$ or $\pm e_2$. A \emph{cell} of $\Z^3$ is the open set of points bounded by six faces (and eight vertices) forming a cube of side length one. We will frequently identify edges, faces, and cells with their midpoints, so that points
with two integer and one half-integer coordinate are midpoints of edges, points with one integer and two
half-integer coordinates are midpoints of faces, and points with three half-integer coordinates are midpoints
of cells.  

For a set of vertices $\Lambda \subset \Z^3$, we denote by $\cE(\Lambda), \cF(\Lambda),\cC(\Lambda)$ the edges, faces, and cells, respectively, whose bounding vertices are all contained in $\Lambda$. 

Two distinct edges are adjacent if they share a vertex; two distinct faces are adjacent if they share a bounding edge; two distinct cells are adjacent if they share a bounding face. We use the notation `$\sim$' to denote adjacency. A set of faces (resp., edges, cells) $F$  is called connected if for every $f, f'\in F$, there is a sequence $f_1,\ldots, f_m \in F$ such that $f=f_0 \sim f_1 \sim \ldots \sim f_m = f'$. We say that two faces (resp., edges, cells) are \emph{connected in $\Lambda$} if $F\cap \cF(\Lambda)$ is connected (resp., $F\cap \cE(\Lambda)$ and $F\cap \cC(\Lambda)$ are connected). 

Two distinct edges/faces/cells are $*$-adjacent if they share a bounding vertex. We define $*$-connectivity, analogously to the above definitions for connectivity, w.r.t.\ the weaker notion of $*$-adjacency.

\medskip \noindent \emph{Subsets of $\Z^3$.} The subsets of $\Z^3$ we will primarily consider are boxes or cylinders centered at the origin. Let us denote the centered $(2n+1)\times (2m+1)\times (2h+1)$ box by 
\begin{align*}
    \Lambda_{n,m,h}= \llb -n,n\rrb \times \llb -m,m\rrb \times \llb -h,h\rrb\,,
\end{align*}
where if $a<b$ are integers, $\llb a,b\rrb: = \{a, a+1, \ldots,b-1, b\}$. We use $\Lambda_n$ to denote the infinite cylinder $\Lambda_{n,n,\infty}$. 

For any cell-set $C\subset \cC(\Z^3)$, its (outer) boundary $\partial C$ is the set of all cells in $\cC(\Z^3)\setminus C$ which are adjacent to some cell in $C$. We use the shorthand $\partial \Lambda_{n,m,h} = \partial (\cC(\Lambda_{n,m,h}))$. 

Other important subsets we consider are slabs of $\Z^3$. For an integer $h\in \Z$, let $\cL_h$ be the subgraph of $\Z^3$ with vertex set $\Z^2 \times \{h\}$ and the resulting face-set.  For half-integer $h\in \Z+\frac 12$, let $\cL_h$ consist of the faces and cells of $\Z^3$ whose midpoints have height $h$. Let $\cL_{>0} = \bigcup_{h>0} \cL_h$ be the cell and face-set of the upper half-space, and let $\cL_{<0}$ be the cell and face-set of the lower half-space. 

Abusing notation slightly, it will be helpful to use the notation 
\[\cL_{0,n} = \cF(\cL_0 \cap \Lambda_n)\,.
\]

\subsection{The Ising model}
Since our primary object of study is the interface of the 3D Ising model, it will be convenient to consider the Ising model as an assignment of $\{\pm 1\}$ \emph{spins} to the vertices of the dual graph $(\Z^3)^*$, identified with the cells of $\Z^3$. With this choice, the interface will be a connected subset of $\cF(\Z^3)$. 

An Ising configuration on a subset $\cC(\Lambda)\subset \cC(\Z^3)$ is an element $\sigma \in \{\pm 1\}^{\cC(\Lambda)}$. A boundary condition on~$\cC(\Lambda)$ is a configuration $\eta \in \{\pm 1\}^{\cC(\Z^3)}$. The Ising model at inverse-temperature $\beta>0$ on $\cC(\Lambda)$ with boundary conditions $\eta$ is the probability measure on $\sigma \in \{\pm 1\}^{\cC(\Lambda)}$ given by 
\begin{align*}
    \mu_{\Lambda,\beta}^\eta(\sigma) = \frac{1}{\cZ_{\Lambda,\beta}} \exp[- \beta \cH(\sigma)] \qquad \mbox{where}\qquad \cH(\sigma)  = - \sum_{\substack {v\sim w \\ v,w\in \cC(\Lambda)}} \one\{\sigma_v \neq \sigma_w\} - \sum_{\substack{v\sim w\\ v\in \cC(\Lambda), w\in \partial \Lambda}} \one\{\sigma_v\neq \eta_w\}\,,
\end{align*}
where the normalizing constant $\cZ_{\Lambda,\beta}$, called the \emph{partition function}, is such that $\mu_{\Lambda, \beta}^{\eta}$ is a probability measure.  

We suppress the dependence on $\beta$ as the choice of $\beta$ is typically fixed in the context. When $\Lambda = \Lambda_{n,m,h}$ we use the shorthand $\mu_{n,m,h}^\eta = \mu_{\Lambda_{n,m,h}}^\eta$ and when $\Lambda = \Lambda_{n} = \Lambda_{n,n,\infty}$, we use the shorthand $\mu_n^\eta = \mu_{\Lambda_{n,n,\infty}}^\eta$. 

In this paper, we are interested in \emph{Dobrushin boundary conditions}, which are the assignment 
\begin{align*}
    \eta_v = \begin{cases} -1 \qquad v \in \cC(\Z^2 \times \llb 0,\infty\rrb) \\ +1 \qquad v\in \cC(\Z^2 \times \llb -\infty, 0\rrb) \end{cases}\,, 
\end{align*}
and we use the shorthand $\mp$ for this choice of $\eta$. 

\subsubsection*{Domain Markov Property}
Observe that the only dependence of the measure $\mu_{\Lambda, \beta}^\eta$ on the boundary conditions $\eta$ is through the restriction of $\eta$ to $\partial \Lambda$. This leads to what is known as the domain Markov property: for any two finite subsets $A \subset B \subset \cC(\Z^2)$, and every configuration $\eta$ on $B \setminus A$, 
\begin{align*}
    \mu_B(\sigma_A \in \cdot \mid \sigma_{B\setminus A} = \eta) = \mu_A^{\eta_{B\setminus A}} (\sigma_A \in \cdot)\,,
\end{align*}
where $\sigma_A$ denotes the restriction of $\sigma$ to the set $A$. 

\subsubsection*{FKG Inequality} The Ising model satisfies an important positive correlation inequality known as the FKG inequality. Consider the natural partial order on configurations $\sigma\in \{\pm 1\}^A$ and suppose $f$ and $g$ are non-decreasing functions in that partial order. Then
\begin{align*}
    \E_{\mu_{\Lambda}}[f(\sigma)g(\sigma)]\geq \E_{\mu_\Lambda} [f(\sigma)] \E_{\mu_\Lambda} [g(\sigma)]\,,
\end{align*}
where $\E_{\nu}$ is the expectation with respect to the law $\nu$. A special case of this is when $f$ and $g$ are indicator functions of non-decreasing events.

A recurring example of such an increasing event is $*$-connectivity via plus cells. For a set $A \subset \cC(\Lambda)$, we say $v,w\in A$ are in the same $*$-connected plus component of $A$ if $v$ and $w$ are $*$-connected in $\{u\in A: \sigma_u = +1\}$. We use the shorthand 
$v\xleftrightarrow[A]{\;+\;}w$ to denote this event. When $A=\cC(\Lambda)$ we omit it from the notation.

\subsubsection*{Infinite-volume Gibbs measures and DLR condition}
If the underlying geometry $\Lambda$ is an infinite (rather than finite) subset of $\Z^3$, the normalizing constant $\cZ_{\Lambda,\beta}$ is not finite and the measure $\mu_{\Lambda, \beta}^{\eta}$ is a priori undefined. Such \emph{infinite-volume Gibbs measures} are instead defined via a consistency relation known as the \emph{DLR conditions}. For an infinite set $\cC(\Lambda)$, a measure $\nu_{\Lambda}$, defined by its finite-dimensional distributions, satisfies the DLR conditions if for every finite $A \subset \cC(\Lambda)$
\begin{align*}
    \E_{\nu_\Lambda (\sigma_{\cC(\Lambda)\setminus A}\in \cdot)} \big[\nu_{\Lambda}(\sigma_{A}\in \cdot \mid \sigma_{\cC(\Lambda) \setminus A})\big] = \nu_{\Lambda}(\sigma_A \in \cdot)\,.
\end{align*}
Infinite-volume Gibbs measures need not be unique. For the Ising model on $\Z^d$, the phase transition of the model is described in terms of the uniqueness/non-uniqueness of the infinite-volume Gibbs measure. In the low-temperature regimes we are interested in, distinct infinite-volume measures are attained by taking weak limits of Ising models on finite boxes with different boundary conditions (e.g., all-plus, all-minus). 

We denote the infinite-volume Gibbs measures obtained by taking limits of $\mu_{n,n,n}^+$ and $\mu_{n,n,n}^-$ as $n\to\infty$ by $\mu_{\Z^3}^{+}$ and $\mu_{\Z^3}^{-}$, respectively. When $\beta>\beta_c$, $\mu_{\Z^3}^{+} \neq\mu_{\Z^3}^{-}$. Dobrushin~\cite{Dobrushin72a,Dobrushin73} proved that there exists a $\beta_0$ such that when $\beta>\beta_0$ there exist DLR measures on $\Z^3$ that are not mixtures of $\mu_{\Z^3}^+$ and $\mu_{\Z^3}^-$, namely those obtained by taking the limit of $\mu_{n,n,n}^{\mp}$ as $n\to\infty$.

\subsection{The Ising interface with Dobrushin boundary conditions}
The infinite volume measure $\mu_{\Z^3}^{\mp}$ is characterized by an \emph{interface} separating the minus and plus phases, which look like $\mu_{\Z^3}^{-}$ and $\mu_{\Z^3}^{+}$ respectively. Dobrushin showed that this interface is localized---on finite boxes (its height fluctuations above the origin are $O(1)$)---and we are interested in characterizing the law of its maximum height. To that end, we formally define the interface $\cI$ separating the plus and minus phases.  

\begin{definition}[Interface]
Consider the Ising model with Dobrushin boundary conditions on $\Lambda_{n,m,h}$, i.e., $\mu_{n,m,h}^\mp$. For a configuration $\sigma$ on $\cC(\Lambda_{n,m,h})$, define its \emph{interface} $\cI = \cI(\sigma)$ as follows. 
\begin{enumerate}
    \item Extend the configuration $\sigma$ to a configuration on all of $\cC(\Z^3)$ by taking $\sigma_v = -1$ if $v\in \cL_{>0}\setminus \cC(\Lambda_{n,m,h})$ and $\sigma_v = +1$ if   $v\in \cL_{<0}\setminus \cC(\Lambda_{n,m,h})$. 
    \item Let $F(\sigma)$ be the set of faces separating cells of different spins under $\sigma$. 
    \item Call the (maximal) $*$-connected component of $\cL_0 \setminus \cF(\Lambda)$ in $F(\sigma)$, the \emph{extended interface}. (This is also the unique infinite $*$-connected component in $F(\sigma)$.)
\item Let $\cI = \cI(\sigma)$ be the restriction of the extended interface to $\cF(\Lambda)$. 
\end{enumerate}
\end{definition}

\begin{remark}
One could use alternative definitions for singling out the interface $\cI$ out of the connected sets of faces that separate the minus and plus phases of the boundary, e.g., the minimal one, or one obtained by a splitting rule. Locally, the difference set between two such definitions would have an exponential tail via a Peierls argument. 
However, these other choices are not as well-tuned to the arguments that follow.   
\end{remark}

Just as a configuration $\sigma$ identifies an interface $\cI$, every interface $\cI$ identifies a configuration $\sigma(\cI)$ for which $F(\sigma(\cI)) = \cI$; i.e., this configuration is minus everywhere ``above" $\cI$ and plus ``below". One can obtain this configuration by starting from the boundary sites $\partial \mathcal C(\Lambda_{n,m,h})$, and iteratively, from the boundary inwards, assigning spins to the cells of $\Lambda_{n,m,h}$ such that adjacent cells have differing spins if and only if there is a face in $\cI$ separating them. 

If we call $\mathcal C^-(\cI)$ the set of all minus spins of $\sigma(\cI)$, and $\mathcal C^+(\cI)$ the set of all plus spins of $\sigma(\cI)$, we find that each of $\mathcal C^-$ and $\mathcal C^+$ is a \emph{single} infinite $*$-connected set of cells (though we emphasize that each may break up into distinct \emph{nearest-neighbor} connected components $\mathcal C^-_1,...,\mathcal C^-_r$ and $\mathcal C^+_1,...,\mathcal C^+_s$). Recall the following consequence of the definition of the interface above and the domain Markov property of the Ising model. 

\begin{observation}\label{obs:interface}
Conditionally on having an interface $\cI$, the Ising model on $\cC(\Lambda_{n,m,h})$ with $\pm$-boundary conditions is the Ising measure on $\cC(\Lambda_{n,m,h})\setminus \partial \cI$ (where $\partial \cI$ is the set of all cells that share a bounding vertex with a face of $\cI$) with its induced boundary conditions being $\pm$ on $\partial \Lambda_{n,m,h}$ and being the restriction $\sigma(\cI)\restriction_{\partial \cI}$ on $\partial \cI$. This Ising measure is evidently a product of Ising measures on the nearest-neighbor connected components $(\mathcal C^-_i)_i$ with minus boundary conditions and $(\mathcal C^+_i)_i$ with plus boundary conditions. 
\end{observation}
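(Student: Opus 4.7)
The plan is to deduce the statement from the domain Markov property, after observing that conditioning on $\{\cI = \mathcal I\}$ is equivalent to conditioning on the spin values on the collar $\partial \mathcal I$. Concretely, I would first verify the set-theoretic identity
\[
\{\sigma : \cI(\sigma) = \mathcal I\} \;=\; \{\sigma : \sigma\restriction_{\partial \mathcal I} = \sigma(\mathcal I)\restriction_{\partial \mathcal I}\}.
\]
The backward inclusion is the more delicate, topological half: if the collar spins equal $\sigma(\mathcal I)\restriction_{\partial \mathcal I}$, then $\mathcal I \subseteq F(\sigma)$ by construction, and any face $f \notin \mathcal I$ that is $*$-adjacent to some $g \in \mathcal I$ has both of its bounding cells sharing a vertex with $g$, so both lie in $\partial \mathcal I$; their spins therefore agree under $\sigma(\mathcal I)$ (since $F(\sigma(\mathcal I))\cap \cF(\Lambda_{n,m,h}) = \mathcal I$ and $f\notin \mathcal I$), hence $f\notin F(\sigma)$. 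Consequently the $*$-connected component of $F(\sigma)$ containing the boundary separator $\cL_0\setminus\cF(\Lambda)$ is exactly $\mathcal I$. For the forward inclusion, if $\cI(\sigma)=\mathcal I$ then both $\sigma$ and $\sigma(\mathcal I)$ furnish $2$-colorings of $\partial \mathcal I$ that alternate across faces of $\mathcal I$ and agree across the remaining nearest-neighbor pairs in $\partial \mathcal I$ (whose connecting faces are $*$-adjacent to $\mathcal I$ and therefore excluded from $F(\sigma)\setminus\mathcal I$). Connectedness of $\mathcal I$ renders $\partial \mathcal I$ nearest-neighbor-connected, so the two colorings coincide up to a possible global flip; the global flip is ruled out by orienting with respect to the Dobrushin boundary, using that $\mathcal C^{\pm}(\mathcal I)$ are infinite $*$-connected sets reaching the $\pm$ portions of the boundary.

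Granted this identification, the domain Markov property stated earlier immediately yields that the conditional law of $\sigma\restriction_{\cC(\Lambda_{n,m,h}) \setminus \partial \mathcal I}$ under $\mu_{n,m,h}^{\mp}(\cdot \mid \cI = \mathcal I)$ is the Ising measure on $\cC(\Lambda_{n,m,h}) \setminus \partial \mathcal I$ with boundary condition $\pm$ on $\partial \Lambda_{n,m,h}$ (the surviving portion of Dobrushin) and $\sigma(\mathcal I)\restriction_{\partial \mathcal I}$ on $\partial \mathcal I$. Because the Hamiltonian couples only nearest neighbors, this measure factorizes as an independent product over the nearest-neighbor connected components of $\cC(\Lambda_{n,m,h}) \setminus \partial \mathcal I$; each such component sits inside a unique $\mathcal C^{-}_{i}$ or $\mathcal C^{+}_{j}$, and its induced boundary values are uniformly $-$ or $+$ respectively by the construction of $\sigma(\mathcal I)$ together with the Dobrushin boundary on $\partial \Lambda_{n,m,h}$, giving the claimed product of $\pm$-boundary Ising measures.

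The main obstacle is making the set-theoretic identification fully rigorous, and specifically nailing down the forward inclusion where one must propagate the orientation of the $2$-coloring back to the Dobrushin boundary through the possibly intricate geometry of $\mathcal C^{\pm}(\mathcal I)$; once this is settled, the remaining steps are routine applications of the domain Markov property and the locality of the Ising interaction.
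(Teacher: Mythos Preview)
Your proposal is correct and follows precisely the approach the paper implies: the paper states this observation without proof, prefacing it as ``the following consequence of the definition of the interface above and the domain Markov property of the Ising model,'' and your argument (reduce to conditioning on the collar spins, then apply domain Markov and factorize over nearest-neighbor components) is exactly how one unpacks that remark. You have also correctly identified the only nontrivial point---propagating the $\pm$ orientation on $\partial\cI$ back to the Dobrushin boundary via the infinite $*$-connected sets $\cC^\pm(\cI)$---and your outline for handling it is sound.
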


It follows straightforwardly by Borel--Cantelli that for every $\beta>0$, we can take a limit of the measure $\mu_{n,m,h}^\mp$ as $h\to\infty$ and obtain an infinite-volume measure on the cylinder $\Lambda_{n,m,\infty}$ whose interface is almost surely finite. 
With this in hand, we can move to the Ising interface on $\Lambda_n = \Lambda_{n,n,\infty}$ under $\mu_n^{\mp}$. As in the preceding works~\cite{Dobrushin72a,GL19a}, we move from the Ising measure to a measure over interfaces, where the energetic cost of an interface is seen to be given by its cardinality, and the lowest energy interface is that coinciding with $\cL_0$. We therefore define the notion of \emph{excess energy} of one interface with respect to another by 
\begin{align*}
    \fm(\cI;\cJ) = |\cI| - |\cJ| = \cH(\sigma(\cJ)) - \cH(\sigma(\cI))\,.
\end{align*}

Informally, by Observation~\ref{obs:interface}, given an interface $\cI$ the measure $\mu_n^\mp$ looks like a combination of the measure $\mu^-_{n}$ above $\cI$ and $\mu^+_n$ below it.  However, the choice of a particular interface modifies these measures above and below the interface as it precludes, say, plus sites that appear under $\mu^-_n$ but would be $*$-adjacent to $\cI$. At low temperatures, these plus droplets have exponential tails on their size and we can sum over their cumulative effect in order to characterize the Ising measure as a Gibbs measure over interfaces with an additional perturbative term.

\begin{theorem}[{\cite[Lemma 1]{Dobrushin72a}}]
\label{thm:cluster-expansion} Consider the Ising measure $\mu_{n}= \mu^{\mp}_{n}$ on the cylinder $\Lambda_n= \Lambda_{n,n,\infty}$. There exist $\beta_0>0$ and a function $\g$ such that for every $\beta>\beta_{0}$ and any
two interfaces $\cI$ and $\cI'$, 
\begin{align*}
\frac{\mu_n^\mp(\cI)}{\mu_n^\mp(\cI')}= & \exp\bigg[-\beta \fm(\cI, \cI') +\Big(\sum_{f\in\cI}\g(f,\cI)-\sum_{f'\in\cI'}\g(f',\cI')\Big)\bigg]
\end{align*}
and the function $\g$ satisfies the following for some $\bar{c},\bar{K}$
independent of $\beta$: for all $\cI, \cI'$ and $f\in \cI$ and $f'\in\cI'$,
\begin{align}
|\g(f,\cI)| & \leq\bar{K} \label{eq:g-uniform-bound} \\
|\g(f,\cI)-\g(f',\cI')| & \leq \bar K e^{-\bar{c}\br(f,\cI;f',\cI')} \label{eq:g-exponential-decay}
\end{align}
where $\br(f,\cI;f',\cI')$ is the largest radius around
the origin on which $\cI-f$ ($\cI$ shifted by the midpoint of the face $f$) is \emph{congruent} to $\cI'-f'$: that is to say 
\begin{align*}
\br(f,\cI; f', \cI') = \sup \big\{r: (\cI - f) \cap B_{r}(0) \equiv (\cI' - f') \cap B_r(0)\big\}\,,
\end{align*}
where $B_r(0)$ is the ball of radius $r$ around $(0,0,0)$ and the congruence relation $\equiv$ is equality as subsets of $\R^3$, up to, possibly, reflections and $\pm \frac \pi2$ rotations in the horizontal plane.  
\end{theorem}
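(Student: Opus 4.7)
The plan is to prove Theorem~\ref{thm:cluster-expansion} by a standard low-temperature cluster expansion applied to the plus/minus phases living above and below the interface $\cI$. Fix $\cI$. By Observation~\ref{obs:interface}, conditional on the interface being $\cI$, the Ising measure factorizes into independent Ising measures on the nearest-neighbor components of $\cC^+(\cI)$ and $\cC^-(\cI)$, each with pure ($+$ or $-$) boundary conditions induced by $\sigma(\cI)$. Consequently, if $Z^{\pm}(\cI)$ denotes the product of these partition functions, one has
\begin{equation*}
\mu_n^{\mp}(\cI) \;=\; \frac{1}{Z_n}\, e^{-\beta|\cI|}\, Z^{+}(\cI)\, Z^{-}(\cI)\,,
\end{equation*}
so the ratio $\mu_n^{\mp}(\cI)/\mu_n^{\mp}(\cI')$ immediately gives $e^{-\beta \fm(\cI,\cI')}$ times a ratio of partition functions; the goal is to identify the latter with $\exp[\sum \g(f,\cI) - \sum \g(f',\cI')]$.

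Next I would expand each of $Z^\pm(\cI)$ into a polymer (contour) representation: write each configuration in $\cC^\pm(\cI)$ as the trivial ground state decorated by a family of pairwise $*$-disjoint ``bubbles'' (finite connected clusters of spins disagreeing with the phase). Each bubble $\gamma$ carries a weight $e^{-\beta|\gamma|}$, and two bubbles are compatible iff they are $*$-disjoint and do not touch $\cI$. For $\beta$ large this is a low-activity polymer gas, and the standard Kotecky--Preiss / Dobrushin criterion gives an absolutely convergent logarithmic expansion
\begin{equation*}
\log Z^{\pm}(\cI) \;=\; \sum_{C} \Phi(C;\cI)\,,
\end{equation*}
where $C$ ranges over finite clusters (multisets of compatible bubbles) and $|\Phi(C;\cI)|$ decays exponentially in the total size of $C$. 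Each cluster $C$ has a well-defined support $\mathrm{supp}(C)\subset\R^3$, and at low temperature one obtains the tail estimate $\sum_{C:\,\mathrm{supp}(C)\not\subset B_r(f)} |\Phi(C;\cI)| \lesssim e^{-\bar c r}$.

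I would then define $\g(f,\cI)$ by distributing the cluster weights to the faces of $\cI$: for each cluster $C$ whose support touches $\cI$, split its weight $\Phi(C;\cI)$ equally among the (finitely many) faces of $\cI$ nearest to $C$, and absorb the clusters not touching $\cI$ into a bulk term that cancels in the ratio (using $\log Z^\pm(\Z^3)$ as a reference). With this definition, $\sum_{f\in\cI}\g(f,\cI) = \log Z^+(\cI)+\log Z^-(\cI)$ up to an $\cI$-independent constant, so the ratio formula follows. The uniform bound $|\g(f,\cI)|\leq \bar K$ of \eqref{eq:g-uniform-bound} is exactly the statement that the total weight of clusters incident to a given face is summably bounded in $\beta$ large. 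The decay \eqref{eq:g-exponential-decay} is the main subtle point: if $\br(f,\cI;f',\cI')\geq r$, then after centering at $f$ and $f'$ the two interfaces look identical inside $B_r(0)$, so clusters supported inside $B_r$ contribute the same to $\g(f,\cI)$ and $\g(f',\cI')$ (invoking the reflection/rotation invariance of the bubble weights); only clusters that reach distance $\geq r$ can contribute a difference, and these are bounded by the tail estimate above.

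The main obstacle is the convergence of the polymer expansion uniformly in the shape of $\cI$: interfaces with overhangs create pinched regions where bubbles in the plus and minus phases can lie arbitrarily close to each other and to $\cI$, so the polymer gas must be set up so that the $*$-disjointness of bubbles from $\cI$ is enforced as an incompatibility and the Kotecky--Preiss condition still holds independently of $\cI$. Once this geometric setup is handled, the rest of the argument is mechanical: the invariance of the bubble weights under horizontal reflections/rotations is what allows $\br$ in \eqref{eq:g-exponential-decay} to be taken up to congruence rather than strict equality.
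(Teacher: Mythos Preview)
The paper does not prove this theorem; it is quoted verbatim as \cite[Lemma~1]{Dobrushin72a} and used as a black box throughout (see the attribution in the theorem header). So there is no ``paper's own proof'' to compare against.

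Your sketch is the standard route and is essentially Dobrushin's argument: condition on $\cI$, factorize into pure-phase partition functions above and below, run a low-temperature contour/cluster expansion in each phase, and then localize the resulting cluster weights to faces of $\cI$. The two bounds~\eqref{eq:g-uniform-bound}--\eqref{eq:g-exponential-decay} are exactly the summability of clusters incident to a face and the exponential tail on clusters reaching beyond the congruence radius, as you say. One point to tighten: your handling of the ``bulk'' clusters (those not touching $\cI$) via a reference $\log Z^\pm(\Z^3)$ is a little loose, since the volume is the infinite cylinder $\Lambda_n$ and the bulk contribution depends on $n$. The clean way is to work directly with the ratio $\mu_n^\mp(\cI)/\mu_n^\mp(\cI')$: clusters whose support is disjoint from both $\cI$ and $\cI'$ contribute identically to numerator and denominator and cancel, so one never needs an infinite-volume reference. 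Apart from this bookkeeping, your plan is correct and matches the cited proof.
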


We will use the phrase \emph{$\br(f,\cI; f',\cI')$ is attained by $g\in \cI$ (resp., $g'\in \cI'$)} if $g$ (resp., $g'$) is a face of minimal distance to $f$ (resp., to $f'$) whose presence prevents $\br(f,\cI; f',\cI')$ from being any larger. 

\subsection{Walls, ceilings, and groups of walls}
Dobrushin's proof of rigidity of the 3D Ising interface used a combinatorial decomposition of the interface to effectively reduce it to a two-dimensional polymer model on $\cL_0$ given by projections of \emph{walls} of $\cI$. We recap the definitions introduced therein in this section and describe the bijection between admissible collections of standard walls and Dobrushin interfaces. 

\begin{definition}
For a set of faces or cells $A$, define its projection $\rho(A) : = \{(x_1, x_2, 0): x=(x_1,x_2,x_3)\in A\}$ so that $\rho(A)\subset \cL_0$. 
\end{definition}

Notice that the projection of a horizontal face is in $\cF(\cL_0)$ while the projection of a vertical face is in $\cE(\cL_0)$. For an interface $\cI$ and an edge or face $u\in \cE(\cL_0)\cup \cF(\cL_0)$, denote by 
\begin{align*}
    N_\rho(u) : = \#\{f\in \cI: \rho(f) = u\}\,.
\end{align*}

\begin{definition}[Walls and ceilings]\label{def:walls-and-ceilings}
A face $f\in \cI$ is a \emph{ceiling face} if $f$ is a horizontal face and $N_\rho(\rho(f))= 1$. A face $f\in \cI$ is a \emph{wall face} if it is not a ceiling face. 
A \emph{ceiling} is a $*$-connected set of ceiling faces. By construction, all faces in a ceiling $C$ have the same $e_3$ coordinate, and we can call that the height of the ceiling $\hgt(C)$.  
A \emph{wall} is a $*$-connected set of wall faces. Clearly, the projections of distinct walls are disjoint. 
\end{definition}

See Figure~\ref{fig:walls-ceil-ex} for a depiction of these definitions in subtle scenarios.

\begin{figure}
    \centering
    \raisebox{-13pt}{\includegraphics[width=0.3\textwidth]{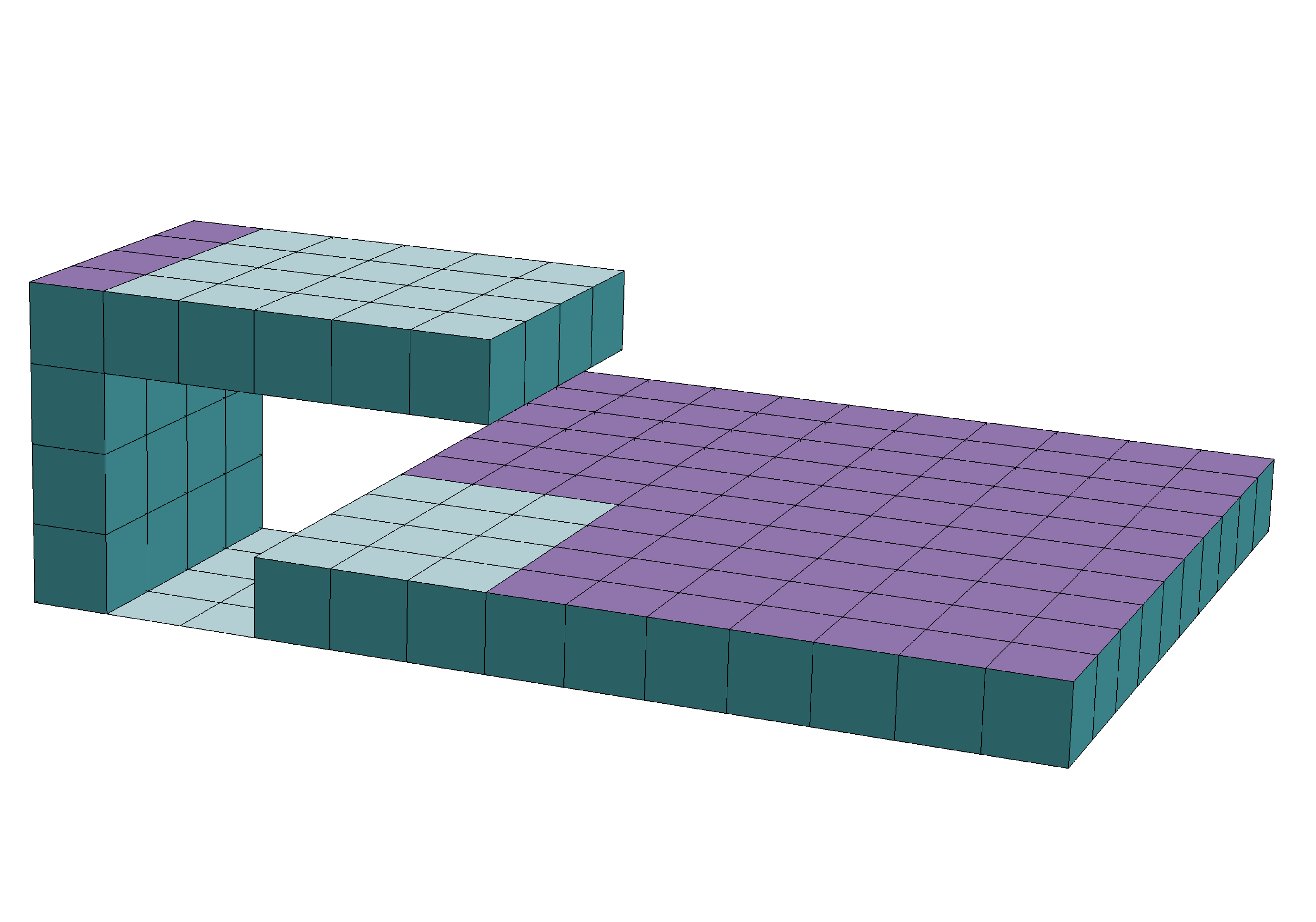}}
    \hspace{10pt}
    \includegraphics[width=0.2\textwidth]{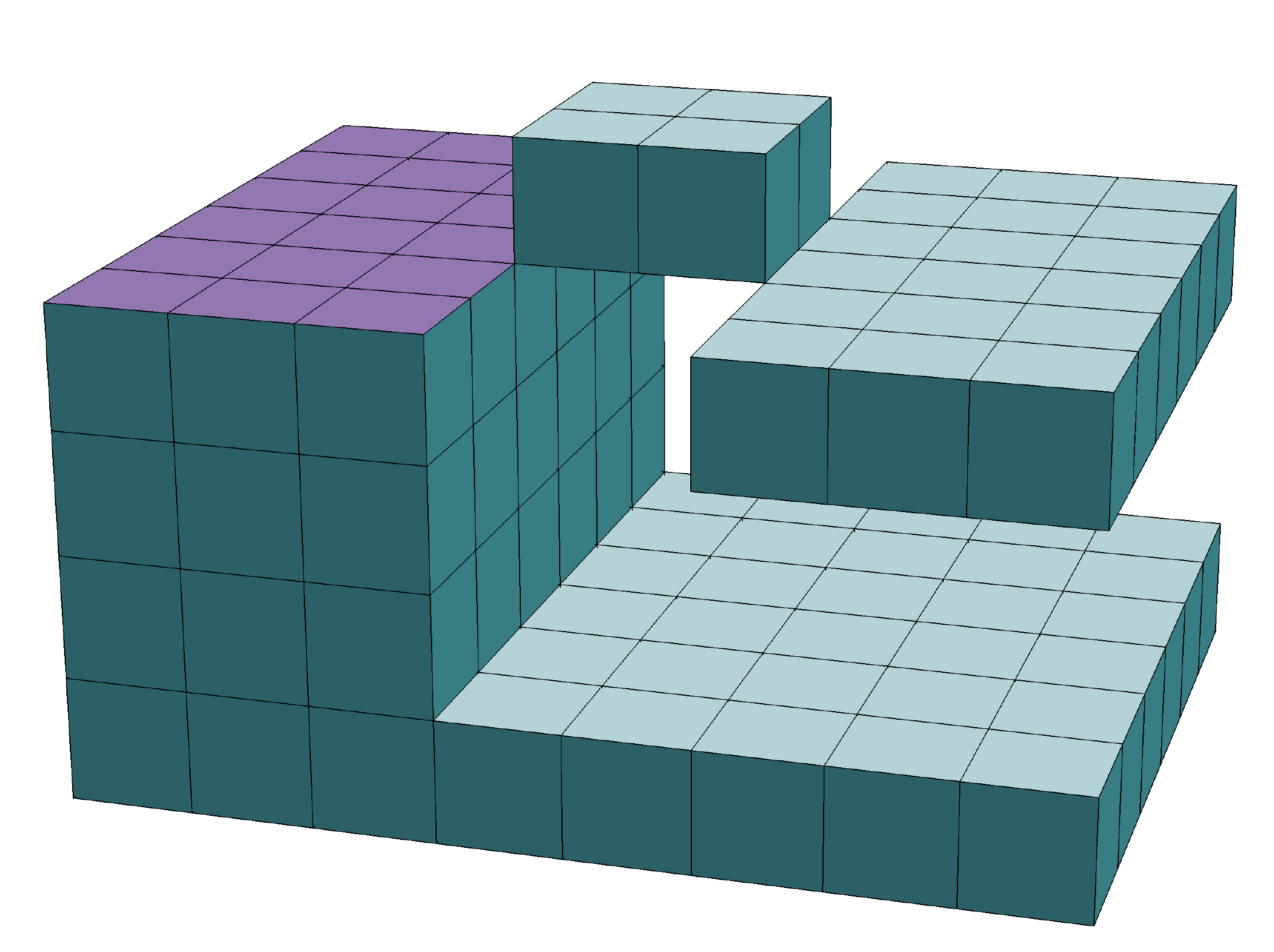}
    \hspace{20pt}
    \includegraphics[width=0.23\textwidth]{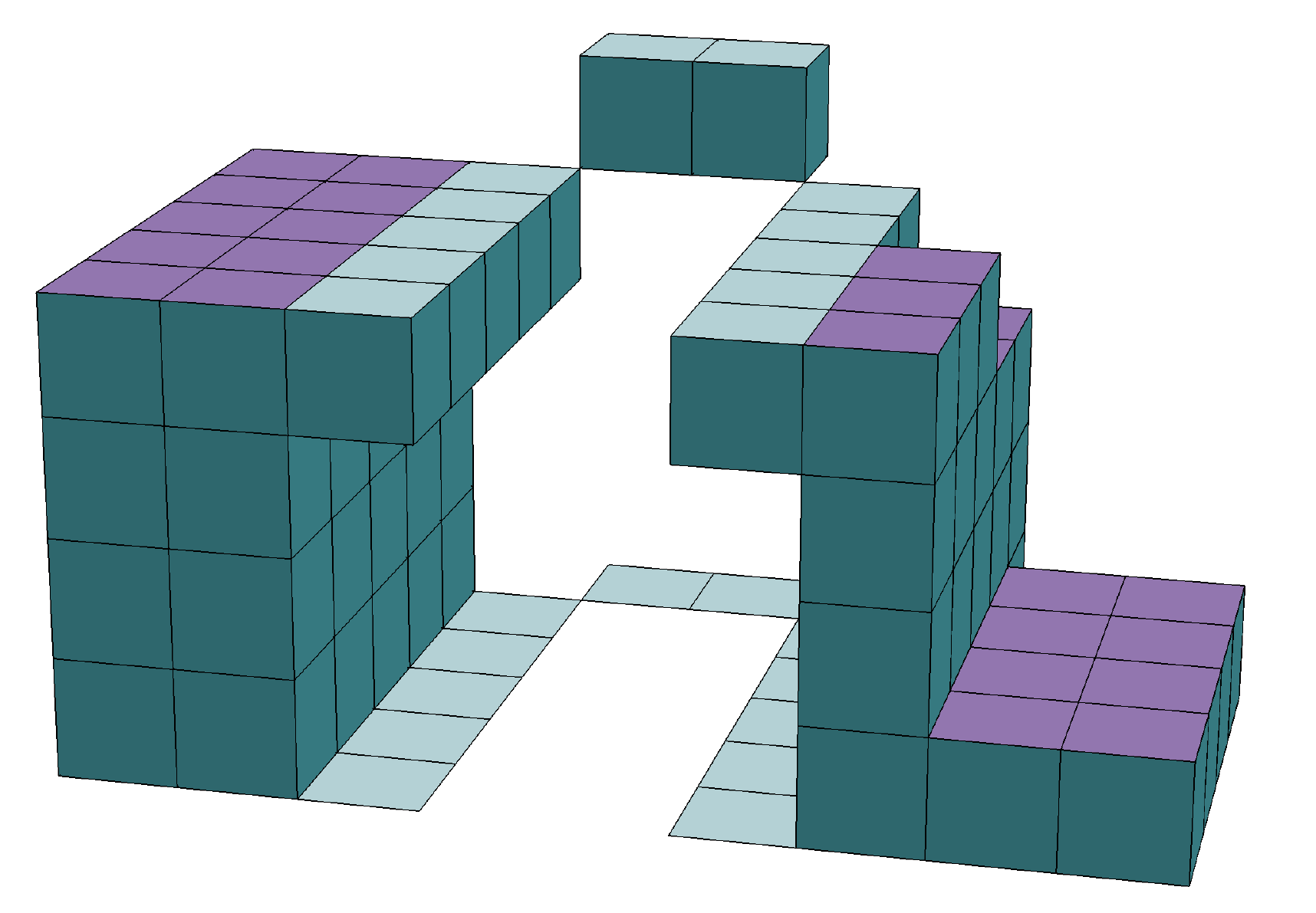}
    \vspace{-0.2in}
    \caption{Three distinct standard walls, with their interior ceiling faces (purple) and wall faces (vertical in teal, horizontal in light blue) as per Definition~\ref{def:walls-and-ceilings}. Left example features two distinct ($+$) components in $\R^2\times \R_+$ which correspond to two distinct pillars but form a single wall. 
Middle and right examples feature wall faces that would be disconnected via nearest-neighbor connectivity, but form a wall using $*$-connectivity.
    }
    \label{fig:walls-ceil-ex}
\end{figure}

\begin{definition}[Floors and ceilings of a wall]
For a wall $W$, define the complement of its projection 
\begin{align*}
    \rho(W)^c: = (\cF(\cL_0) \cup \cE(\cL_0))\setminus \rho(W)\,,
\end{align*}
and notice that it splits into an infinite connected component, and some finite ones (here connectivity is seen in $\R^2$). Any ceiling  adjacent to a wall $W$ projects into one of the connected components of $\rho(W)^c$. Call that ceiling that projects into the infinite component of $\rho(W)^c$ the \emph{floor} of $W$, denoted by $\lfloor W\rfloor$ and collect all other ceilings adjacent to $W$ into $\lceil W\rceil$. For distinct walls $W, W'$, the sets $\lceil W\rceil$ and $\lceil W'\rceil$ are disjoint.
\end{definition}

Importantly, given all the walls of an interface $\cI$, one can reconstruct the full interface by iteratively reading off the heights of the ceilings from the wall collection. 

\begin{definition}[Standard walls]
A wall $W$ is a \emph{standard wall} if there exists an interface $\cI_W$ such that its only wall is $W$. As such, a standard wall must have that $\lfloor W\rfloor \subset \cL_0$.

For a wall $W$, we define its \emph{standardization} (called \emph{drift} in~\cite{Dobrushin72a}) as its vertical shift by $-\hgt(\lfloor W\rfloor)$ and denote it by $\theta_{\textsc{st}} W$. For any wall $W$, its standardization is a standard wall. 
\end{definition}

\begin{remark}[Indexing of walls]
We can index the walls of $\cI$ as follows: assign an arbitrary ordering to the faces of $\cL_0$. Index a wall $W$ by the minimal face of $\cL_0$ that is interior to $W$ and incident to $\rho(W)$. Clearly, for any admissible collection of standard walls, the indices of distinct walls are distinct.   
\end{remark}

Since the projections of walls are distinct, the projection of a wall $W'$ is a subset of $\rho(W)^c$: if it is a subset of one of the finite components of $\rho(W)^c$, we say that $W'$ is \emph{nested in} $W$ and write $W' \Subset W$. In this way, to every $W' \Subset W$, we can identify a ceiling in $\lceil W\rceil$ which is the one projecting into that same finite component of $\rho(W)^c$. 

\begin{definition}[Nesting of walls]
We say $u\in \cE(\cL_0)\cup \cF(\cL_0)$ is \emph{interior to} a wall $W$ if it is not in the infinite component of $\rho(W)^c$. We say that $f\in \cI$ is interior to $W$ if $\rho(f)$ is interior to $W$.

Then for any $u\in \cE(\cL_0)\cup \cF(\cL_0)$, we define its \emph{nested sequence of walls} $\fW_u$ as the collection of all walls to which $u$ is interior. By the definition above, this forms a nested collection $W^u_1 \Subset W^u_2\Subset\cdots$.
\end{definition}

We say a collection of standard walls is \emph{admissible} if their projections are distinct.
The following lemma shows a bijection between admissible collections of standard walls and interfaces (also see~\cite[Lemma~2.12]{GL19a} for more details).

\begin{lemma}[The standard wall representation of $\cI$]\label{lem:standard-wall-representation}
There is a 1-1 correspondence between admissible collections of standard walls and interfaces. Namely, to obtain the standard wall representation of an interface $\cI$, take the union of the standardizations of all its walls. From an admissible collection of standard walls, recover an interface as follows: 

\begin{enumerate}
\item Iteratively, for every standardization of a wall $\theta_{\textsc{st}}W$,
\begin{itemize}
    \item If $W'\Subset \theta_{\textsc{st}}W$ and is identified with ceiling $C\in \lceil \theta_{\textsc{st}} W\rceil$, then shift $W'$ by $\hgt(C)$.
    \end{itemize}
    \item From this wall collection, fill in the ceiling faces to obtain the interface $\cI$. 
\end{enumerate}
\end{lemma}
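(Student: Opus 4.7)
The plan is to establish both directions of the bijection separately and then verify they are mutually inverse. For the forward direction, given an interface $\cI$, I would first argue that Definition~\ref{def:walls-and-ceilings} partitions the faces of $\cI$: each face is either horizontal with $N_\rho=1$ (a ceiling face) or a wall face, and walls/ceilings are then the $*$-connected components of the respective face types. For distinct walls $W\neq W'$, I would check that $\rho(W)\cap\rho(W')=\emptyset$: if some $u\in\cE(\cL_0)\cup\cF(\cL_0)$ lay in both projections, then the lowest wall face of $W$ projecting to $u$ and the lowest wall face of $W'$ projecting to $u$ would be vertically aligned in the same column, forcing them to be $*$-adjacent through a shared vertex and contradicting that they belong to different $*$-connected components. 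Since standardization is a vertical shift, $\rho(\theta_{\textsc{st}}W)=\rho(W)$, so the collection $\{\theta_{\textsc{st}}W: W\text{ a wall of }\cI\}$ inherits distinct projections and each $\theta_{\textsc{st}}W$ is standard, hence admissible.

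For the backward direction, given an admissible collection of standard walls, I would order them by the nesting partial order induced by $\Subset$ and process them outermost-first. Each outermost wall $\theta_{\textsc{st}}W$ is placed with $\lfloor W\rfloor\subset\cL_0$, and the heights of its ceilings $\lceil\theta_{\textsc{st}}W\rceil$ are then read off from the geometry of $\theta_{\textsc{st}}W$ itself. For any wall $W'$ nested one level deeper, admissibility ensures $\rho(W')$ lies in exactly one finite component of $\rho(\theta_{\textsc{st}}W)^c$, uniquely identifying a ceiling $C\in\lceil\theta_{\textsc{st}}W\rceil$; we then shift $W'$ vertically by $\hgt(C)$ so its floor coincides with $C$. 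Iterating placements inward and finally filling in every horizontal face $f$ whose projection $\rho(f)$ is not covered by a wall, with height equal to that of the ceiling in which it sits, produces a set of faces $\cI$.

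The final step is verification of mutual invertibility and that the reconstruction actually produces a valid interface. Mutual invertibility is shown by induction on the nesting depth: standardization of $\cI$ subtracts the offsets $\hgt(\lfloor W\rfloor)$, and the reconstruction recovers them layer by layer, since the height of each ceiling in $\lceil\theta_{\textsc{st}}W\rceil$ is a deterministic function of $\theta_{\textsc{st}}W$ read off from the already-placed outer wall. In the opposite composition, the partitioning from the first paragraph shows that the walls of the reconstructed $\cI$ are exactly the placed copies of the input walls, which standardize back to the input. The main obstacle will be verifying that the reconstructed face set is a bona fide interface: it must be a single $*$-connected surface in $\cF(\Lambda_n)$ separating the Dobrushin $+/-$ phases. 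Connectivity follows by induction on the nesting depth from the fact that each wall is $*$-connected to its floor and to each ceiling in $\lceil\cdot\rceil$ along its projected boundary, combined with the ceiling fillings stitching together adjacent walls at the same level; the outermost floor $\cL_0$ anchors the construction to the boundary of $\Lambda_n$ and matches the $\mp$ boundary condition, yielding a surface that separates $\mathcal C^+(\cI)$ from $\mathcal C^-(\cI)$ as required.
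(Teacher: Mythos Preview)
The paper does not supply a proof of this lemma; it is stated with a pointer to \cite[Lemma~2.12]{GL19a} and to Dobrushin's original framework, so there is no ``paper's own proof'' to compare against. Your outline is the standard one and is essentially what appears in those references: decompose $\cI$ into walls and ceilings, check that the standardizations form an admissible family, and show the reconstruction procedure inverts this.

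That said, your argument for $\rho(W)\cap\rho(W')=\emptyset$ has a real gap. You write that the lowest wall face of $W$ above $u$ and the lowest wall face of $W'$ above $u$ ``would be vertically aligned in the same column, forcing them to be $*$-adjacent through a shared vertex.'' This is false: two horizontal faces in the same column at heights $h$ and $h'$ share a vertex only if $h=h'$; for $h\neq h'$ they have no common vertex and are not $*$-adjacent. The correct statement you need is that \emph{all} faces of $\cI$ projecting to a given $u$ lie in a single wall, and this requires using neighboring columns. One way: if $u\in\cF(\cL_0)$ and the horizontal faces of $\cI$ above $u$ sit at heights $h_1<\cdots<h_k$ with $k\geq 2$, then for consecutive $h_i<h_{i+1}$ the cells of the $u$-column strictly between them carry a constant sign, and the bounding vertical faces of that block of cells which lie in $\cI$ (there must be some, since $\cI$ is a separating surface and the signs above/below differ) project to edges of $u$, are wall faces, and $*$-connect $f_i$ to $f_{i+1}$. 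The edge case $u\in\cE(\cL_0)$ is similar. Once this is in place, the rest of your plan (nesting-ordered reconstruction, mutual inversion by induction on nesting depth, and the connectivity check for the rebuilt surface) goes through as you describe.
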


Using the standard wall representation defined above, we note the following important observation.

\begin{observation}\label{obs:1-to-1-map-faces}
Consider interfaces $\cI$ and $\cJ$, such that the standard wall representation of $\cI$ contains that of $\cJ$ (and additionally has the standardizations $\mathbf W=W_1,...,W_r$). By the construction  in Lemma~\ref{lem:standard-wall-representation}, there is a 1-1 map between the faces of $\cI \setminus \mathbf W$ and the faces of $\cJ \setminus \mathbf H$ where $\mathbf H$ is the set of faces in $\cJ$ projecting into $\rho (\mathbf W)$. Moreover, this bijection can be encoded into a map $f\mapsto \theta_{\udarrow} f$ that only consists of vertical shifts, and such that all faces projecting into the same component of $\rho(\mathbf W)^c$ undergo the same vertical shift. 
\end{observation}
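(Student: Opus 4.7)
The plan is to read off the map $\theta_\udarrow$ directly from the iterative construction in Lemma~\ref{lem:standard-wall-representation}. Since the standard wall representation of $\cJ$ is obtained from that of $\cI$ by removing the standardizations $\mathbf W = W_1,\ldots,W_r$, one can build $\cI$ from $\cJ$'s representation by \emph{inserting} $\mathbf W$ and re-running the recipe of Lemma~\ref{lem:standard-wall-representation}. The bijection then records the vertical shifts that this re-running applies to the rest of the interface.

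More concretely, the set $\rho(\mathbf W)^c = (\cF(\cL_0)\cup\cE(\cL_0))\setminus\rho(\mathbf W)$ decomposes into an unbounded component and finitely many bounded components; each bounded component $K$ is contained in a unique innermost sub-region of some $W_i \in \mathbf W$ associated with a ceiling $C \in \lceil W_i\rceil$, and $K$ is nested in a chain of walls $W_{i_1}\Subset W_{i_2}\Subset\cdots$ in $\mathbf W$. Define $h_K \in \Z$ as the sum, along this chain, of the heights $\hgt(C)$ of the ceilings of each $W_{i_j}$ whose associated nested sub-region contains $K$; set $h_K=0$ on the unbounded component. Then put
\begin{equation*}
\theta_\udarrow f \;=\; f - h_{K(f)}\, e_3, \qquad f \in \cI\setminus \mathbf W,
\end{equation*}
where $K(f)$ is the component of $\rho(\mathbf W)^c$ containing $\rho(f)$ (well-defined since $f\notin\mathbf W$ forces $\rho(f)\notin\rho(\mathbf W)$).

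To verify the map is the stated bijection, I would run the construction of Lemma~\ref{lem:standard-wall-representation} on both standard wall collections in parallel. At each iteration, a wall $W'\notin\mathbf W$ is shifted upward by the cumulative heights of the ceilings in whose nested region it sits; the difference between the shift applied in the $\cI$-run and that in the $\cJ$-run is exactly the sum of the contributions of the walls of $\mathbf W$ that nest $W'$, i.e., $h_{K(W')}$. Because the ceiling-filling step places all horizontal faces of a given ceiling at that ceiling's height, the same shift $h_K$ sends every face of $\cI$ projecting into $K$ (wall face or ceiling face) to a face of $\cJ$. The inverse map $f\mapsto f+h_{K(f)} e_3$ is defined on $\cJ\setminus\mathbf H$: the exclusion of $\mathbf H$ removes precisely the faces of $\cJ$ whose projections lie in $\rho(\mathbf W)$ (those that, after inserting $\mathbf W$, get overwritten by wall faces of $\mathbf W$ or by ceiling faces at new heights), so the image of the inverse avoids $\mathbf W$ as required.

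The only real obstacle is the bookkeeping of nested ceilings: one must check that the cumulative shift produced by applying the iterative ceiling-height translations of Lemma~\ref{lem:standard-wall-representation} level by level agrees with the direct sum defining $h_K$, and that this common value is shared by \emph{all} faces projecting into a single component of $\rho(\mathbf W)^c$. Both points follow immediately from the compositional structure of the construction, since at each nesting level the rule is a single vertical translation applied uniformly to everything in the relevant nested sub-region, and vertical translations compose additively.
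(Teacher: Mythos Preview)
Your proposal is correct and follows the same line the paper has in mind: the paper states this as an observation with no explicit proof, simply citing the construction in Lemma~\ref{lem:standard-wall-representation}, and your argument is precisely the natural unpacking of that construction---defining the cumulative ceiling-height shift $h_K$ on each component of $\rho(\mathbf W)^c$ and checking it matches the difference between the two iterative runs.
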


\begin{definition}For a wall $W$, define its excess area as 
\begin{align*}
    \fm(W) : = \fm(\cI_{\theta_{\textsc{st}}W}; \cL_{0,n}) = |W| - |\cF(\rho(W))|\,, 
\end{align*}
and notice that this always satisfies 
\begin{align}\label{eq:wall-excess-area-inequalities}
    \fm(W) \geq \frac 12 |W| \qquad \mbox{and} \qquad \fm(W)\geq |\cE(\rho(W))| + |\cF(\rho(W))|\,.
\end{align}
Notice that for an interface $\cI$ with standard wall collection $(W_z)_{z\in \cL_{0,n}}$, we have $\fm(\cI;\cL_{0,n}) = \sum \fm(W_z)$.
\end{definition}

\begin{definition}[Closeness and groups of walls]
We say that two walls $W$ and $W'$ are close if there exist $u\in \rho(W)$, $u'\in \rho(W')$ such that 
\begin{align*}
    |u - u'| \leq \sqrt{N_\rho(u)} + \sqrt{N_\rho(u')}\,.
\end{align*}
A collection of walls $F = \bigcup_i W_i$ is a \emph{group of walls} if every wall in $F$ is close to another wall in $F$, and no wall not in $F$ is close to a wall in $F$. 
For a nested sequence of walls $\fW_u = W^u_1 \cup W^u_2 \cup \ldots$, this allows us to collect the union of all its groups of walls into 
\begin{align*}
    \fF_u : = \bigcup_i F^u_i \qquad \mbox{where} \qquad F^u_i \mbox{ is the group of walls containing $W^u_i$}\,.
\end{align*}
For collections of walls, e.g., groups of walls, nested sequences, define their excess area as the sum of the excess areas of the constituent walls.

Groups of walls are indexed by the minimal index of their constituent walls. However, notice that we do not employ a unique labeling procedure for nested sequences of walls or their groups of walls of nested sequences of walls; if $u, u'$ are both interior to $W$, then $\fW_u \cap \fW_{u'}\neq \emptyset$. 
\end{definition}

\section{Decomposition of tall pillars}\label{sec:pillars}
\subsection{Pillars and increments}
In~\cite{GL19a}, the authors introduced pillars and their decomposition into an increment sequence in order to understand the large deviations of the interface $\cI$ (e.g., its structure at points where it attains atypically large heights). In this section we recall these definitions, though we note crucially that the division of pillars into spines and bases has been modified from the prequel; in~\cite{GL19a} we absorbed imprecisions of $O(\log \log n)$, which we cannot afford when proving tightness of the maximum.

\begin{definition}[Pillar]
For an interface $\cI$ and a face $x \in \cF(\cL_0)$, we define the pillar $\cP_x$ as follows: consider the Ising configuration $\sigma(\cI)$ and let $\sigma(\cP_x)$ be the (possibly empty) $*$-connected plus component of the cell with mid-point $x+(0,0, \frac 12)$ in the upper half-space $\cL_{>0}$. The face set $\cP_x$ is then the set of bounding faces of $\sigma(\cP_x)$ in  $\cL_{>0}$.  
\end{definition}

The following relation between pillars and nested sequences of walls is important. 

\begin{observation}\label{obs:pillar-nested-sequence-of-walls}
The walls of the pillar $\cP_x$ are contained in the nested sequence of walls $\fW_x= \bigcup_i W^x_i$ together with all walls nested in some $W^x_i$. Namely, if $\cI$ and $\cJ$ agree on $\fW_x$ and on all walls nested in walls of $\fW_x$, then $\cP_x^\cI = \cP_x^\cJ$. Therefore, if $f\in \cP_x$, there exists $W$ such that both $f$ and $x$ are interior to~$W$. 
\end{observation}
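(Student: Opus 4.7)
The plan is to establish the third assertion---that for any $f \in \cP_x$ there is a wall $W$ with both $x$ and $\rho(f)$ in its interior---and then deduce the preceding two as corollaries. The starting point is the $*$-connectivity of $\cP_x$: since $\sigma(\cP_x)$ is a $*$-connected set of plus cells in $\cL_{>0}$ containing the base cell $c_x = x + (0,0,\tfrac12)$, one can fix a chain $c_0 = c_x, c_1, \ldots, c_m$ of cells in $\sigma(\cP_x)$ with each $c_{j-1}$ and $c_j$ being $*$-adjacent and $f$ bounding $c_m$. Projecting to $\cL_0$ yields a sequence of faces $x = \rho(c_0), \rho(c_1), \ldots, \rho(c_m) \supseteq \rho(f)$ in which consecutive terms either coincide or are $*$-adjacent.

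The main step is to exhibit a single wall $W \in \fW_x$ whose interior contains the entire projected chain. For any cell $c \in \sigma(\cP_x)$, consider the column of cells directly above $\rho(c)$: cells sufficiently high in this column are minus (by the extension in the definition of the interface) while $c$ itself is plus, so there is a topmost plus cell in the column whose upper horizontal face lies in $\cI$ and projects to $\rho(c)$; this face, together with the vertical interface faces bounding the pillar in the column, forms wall-faces of $\cI$ that enclose $\rho(c)$ in projection. The heart of the argument---and the main obstacle---is to show that consecutive cells $c_{j-1}, c_j$, being $*$-adjacent in 3D, have bounding faces in the interface that are $*$-connected, forcing the walls enclosing $\rho(c_{j-1})$ and $\rho(c_j)$ to coincide as walls of $\cI$ or to both be nested in a common larger wall. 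Iterating along the chain produces a single wall $W$ with $\rho(c_0), \ldots, \rho(c_m)$ all interior, and since $x = \rho(c_0)$ is interior to $W$, necessarily $W \in \fW_x$.

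The first two assertions then follow easily. For the first, any wall $W'$ of $\cI$ containing some face $f \in \cP_x$ has $\rho(f)$ interior to some $W \in \fW_x$ by the third assertion; since distinct walls have disjoint projections, the connected set $\rho(W')$ is either equal to $\rho(W)$---so $W' = W \in \fW_x$---or lies entirely in a single component of $\rho(W)^c$, which must be finite because it contains $\rho(f)$, yielding $W' \Subset W$. For the second, Lemma~\ref{lem:standard-wall-representation} lets one reconstruct the interface near $\cP_x$ from exactly the standardizations of the walls in $\fW_x$ and of the walls nested therein, so $\cP_x^\cI = \cP_x^\cJ$ whenever $\cI$ and $\cJ$ agree on those walls.
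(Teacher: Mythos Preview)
The paper records this statement as an Observation and supplies no proof, so there is nothing in the paper to compare your argument against directly; your proposal must be assessed on its own. Your overall plan---prove the third assertion via a $*$-connected chain of cells in $\sigma(\cP_x)$ and then read off the first two---is the natural one, and your derivations of the first two assertions from the third are correct: for the first you use that $\rho(W')$ is connected and disjoint from $\rho(W)$, hence lies in a single (finite) component of $\rho(W)^c$; for the second you combine the containment from the third assertion with the reconstruction of Lemma~\ref{lem:standard-wall-representation}.

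You have correctly located the crux in what you call the ``main obstacle,'' but your sketch does not settle it. The claim that bounding interface faces near $c_{j-1}$ and $c_j$ are $*$-connected is asserted without justification, and is in any case not quite the right formulation. What is needed is: if $W^*$ is the outermost wall in $\fW_x$ and the chain first exits its interior at step $j$, then $\rho(c_j)$ (being below a plus cell at positive height) is interior to some wall $W''$ incomparable with $W^*$, and one must show this is impossible. Since the interiors of incomparable walls are disjoint and $\rho(c_{j-1}),\rho(c_j)$ share a vertex, both $\rho(W^*)$ and $\rho(W'')$ must contain an edge or face incident to that vertex; one then checks---using that both walls are top-level with floor at height~$0$, so each has vertical wall faces descending to height~$0$ along its outer boundary---that wall faces of $W^*$ and $W''$ are forced to be $*$-adjacent in $\cF(\Z^3)$, contradicting $W^*\neq W''$. (The picture to keep in mind: two spikes above diagonally adjacent faces of $\cL_0$ always have adjacent vertical faces and hence form a single wall.) This local verification is short but genuine, and you have not carried it out.
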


\begin{definition}[Cut-points]
A half-integer $h\in \{\frac 12, \frac 32, \ldots \}$ is a \emph{cut-height} of $\cP_x$ if the intersection $\sigma(\cP_x) \cap \R^2 \times [ h- \frac 12, h+\frac 12]$ consists of a single cell. In that case, that cell (identified with its midpoint $v\in (\Z+\frac 12)^3$) is a cut-point of $\cP_x$. 
We enumerate the cut-points of $\cP_x$ in order of increasing height as $v_1 , v_2 , \ldots$.  
\end{definition}

\begin{definition}[Spine and base]
The spine of $\cP_x$, denoted $\cS_x$ is the set of cells in $\sigma(\cP_x)$ (resp., faces in $\cP_x$) intersecting $\R^2 \cap [\hgt(v_1), \infty)$. The base $\sB_x$ of $\cP_x$ is the set of cells in $\sigma(\cP_x) \setminus \cS_x$ (resp., faces in $\cP_x \setminus \cF(\cS_x)$).
\end{definition}

\begin{remark}
We draw attention to the fact that our decomposition of the pillar into a spine and base differs from that used in~\cite{GL19a}. There, the beginning of the spine was marked not by $v_1$ but by a random $v_{\Tsp}$: the first cut-point to, informally, have height greater than all other pillars in a radius of $R\propto \hgt(\cP_x)$. This was tailored to the fact that we could sustain errors that were logarithmic in the height of the pillar.  
\end{remark}

\begin{definition}[Increments]
We decompose a spine $\cS_x$ with cut-points $v_1,v_2,\ldots$ into its constituent increments. If there are at least $\sT+1 \geq 2$ cut-points, for every $i\leq \sT$, define the $i$-th increment as
\[\sX_i = \cS_x \cap (\R^2 \times [\hgt(v_i) - \tfrac 12,\hgt(v_{i+1})+\tfrac 12])\,,
\]
so that the $i$-th increment is the subset of $\cS_x$ delimited from below by $v_i$ and from above by $v_{i+1}$ and there are exactly $\sT$ increments. (If there are fewer than two cut-points, we say that $\sT=0$.)

Besides the increments, the spine additionally may have a \emph{remainder} $\sX_{>\sT}$, which we define as the set of faces intersecting $\R^2 \times [\hgt(v_{\sT+1}), \infty)$. For readability, for a spine $\cS_x$ with increment sequence $\sX_1, \ldots, \sX_{\sT}, \sX_{>\sT}$, we use the notation $\sX_{\sT+1}:= \sX_{>\sT}$ so that we can consistently index over increments and the remainder.
\end{definition}

Abusing notation, we may view increments not as subsets of an interface, but as  finite $*$-connected set of cells with at least two cells, and whose only cut-points are its bottom-most and top-most cells (modulo lattice translations, achieved by, say, rooting them at the origin). Call the set of all such increments $\fX$. The face-set of such an increment consists of all its bounding faces except its bottom-most and top-most horizontal ones.  A remainder increment is defined similarly, but its only cut-point is its bottom-most cell.  

\begin{lemma}
There is a 1-1 correspondence between triplets of $v_1$, a sequence of increments  $(X_1 ,\ldots, X_{T})\in \fX^{T}$ and a remainder $X_{>T}$, and possible spines of $T$ increments with first cut-point at $v_1$.    
\end{lemma}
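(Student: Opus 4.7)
The plan is to construct explicit forward and backward maps and then verify they are mutual inverses. In the forward direction, given a spine $\cS_x$ with cut-points $v_1,\ldots,v_{T+1}$ and (if nonempty) a remainder lying above $v_{T+1}$, we simply read off $v_1$, and for each $1\le i\le T$ define $X_i$ to be the $*$-connected set of cells of $\sigma(\cP_x)$ lying in $\R^2\times[\hgt(v_i)-\tfrac12,\hgt(v_{i+1})+\tfrac12]$, translated so that its bottom cell is at the origin. Similarly, $X_{>T}$ is obtained by translating the set of cells in $\R^2\times[\hgt(v_{T+1})-\tfrac12,\infty)$ so its bottom cell (namely $v_{T+1}$) sits at the origin. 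Each $X_i\in\fX$ because its only cut-points are its bottom and top cells (any interior cut-point would have been an additional cut-point of $\cS_x$, contradicting the enumeration), and $X_{>T}$ is a remainder increment in the analogous sense.

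In the backward direction, given the triplet $(v_1,(X_1,\ldots,X_T),X_{>T})$, we construct a spine iteratively. Define heights $h_1=\hgt(v_1)$ and $h_{i+1}=h_i+(\hgt(\mathrm{top}(X_i))-\hgt(\mathrm{bot}(X_i)))$, where $\mathrm{bot}(X_i)$ and $\mathrm{top}(X_i)$ are the (unique) bottom and top cut-points of $X_i$. Translate each $X_i$ horizontally and vertically so that its bottom cell coincides with the cell at the previous glue point; crucially, because $\mathrm{top}(X_i)$ is a cut-point of $X_i$, there is a \emph{unique} cell at that height, so the identification of $\mathrm{top}(X_i)$ with $\mathrm{bot}(X_{i+1})$ after translation is unambiguous. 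Finally, glue in $X_{>T}$ above $\mathrm{top}(X_T)$ by the same rule. Take $\cS_x$ to be the union of these translated cell sets, equipped with its induced set of bounding faces (excluding the horizontal face below $v_1$, which plays the role of the bottom of the spine).

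It remains to check that the two maps are mutual inverses and that the backward map lands in the set of legal spines with exactly the cut-points $v_1,\ldots,v_{T+1}$. That the forward-then-backward composition is the identity is immediate, since the forward map records precisely the data used by the backward map: the starting cut-point, the inter-cut-point shapes, and the remainder. For the reverse composition, the key point is that the cut-points of the glued object are exactly the images of the $\mathrm{bot}(X_i)$ (equivalently the $\mathrm{top}(X_{i-1})$), and no others: no interior cut-point arises because each $X_i$ has no interior cut-points by definition, and at each glue height only a single cell lies (namely the common cell $\mathrm{top}(X_i)=\mathrm{bot}(X_{i+1})$), confirming that height is a cut-height of the glued spine.

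The one step that requires a small amount of care — and which I view as the main (mild) obstacle — is checking that no new cut-heights are introduced at the glue interfaces, i.e., that the heights $h_2,\ldots,h_{T+1}$ are genuine cut-heights of the assembled object and that no height strictly inside some translated $X_i$ becomes a cut-height after gluing. The first assertion follows because the only cells of the glued object at height $h_{i+1}$ come from $\mathrm{top}(X_i)$ and $\mathrm{bot}(X_{i+1})$, which coincide. The second follows because any other height inside $X_i$ has at least two cells of $X_i$ at it (by the internal non-cut-point property of $X_i$), and these remain distinct after the rigid translation used in the gluing. This establishes the bijection.
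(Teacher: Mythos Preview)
Your proposal is correct and takes essentially the same approach as the paper: the paper gives only a one-line sketch (identify the bottom cut-point of $X_1$ with $v_1$ and sequentially translate increments so each bottom cut-point meets the previous top cut-point, referring to \cite[Section~3]{GL19a} for details), and your write-up is simply a fleshed-out version of exactly this gluing/decomposition argument, including the verification that the glue heights are the only cut-heights of the assembled spine.
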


Indeed this follows by identifying the bottom cut-point of $X_1$ with $v_1$ and sequentially translating the increments in the increment sequence to identify their bottom cut-point with the top cut-point of the previous increment. For more details, see \cite[Section~3]{GL19a}.

The simplest increment is what we call the \emph{trivial increment} $X_\trivincr$, consisting of two vertically consecutive cells, one on top of the other (resp., its eight bounding vertical faces). In proofs where we show that increments have exponential tails, the maps we apply \emph{trivialize} an increment $\sX_j$ by replacing it in the increment sequence of $\cS_x$ by $\hgt(v_{j+1}) - \hgt(v_j)$ consecutive trivial increments.  
Excess areas of increments will be defined w.r.t.\ this trivialization scheme. Namely, for an increment $\sX_i$ ($1\leq i \leq \sT$), define $\fm(X)$ as
 \begin{align}\label{eq:increment-excess-area}
     \fm(\sX_i) = |\cF(
     \sX_i)| - 4(\hgt(v_{i+1}) - \hgt(v_i)+1)
 \end{align}
 (recall that $\cF(X)$ does not include the top most and bottom most faces bounding $X$). For the remainder increment $\sX_{\sT+1} = \sX_{>T}$, where $v_{\sT+2}$ does not exist, this can be defined consistently by arbitrarily setting $\hgt(v_{\sT+2}):=\hgt(\cP_x)- \frac 12$. With these definitions, we notice that if $\sX_i\neq X_\trivincr$ then
 \begin{align*}
     \fm(\sX_i) \geq 2(\hgt(v_{i+1})- \hgt(v_i) -1)\,\vee\, 2\qquad \mbox{and} \qquad |\cF(\sX_i)|\leq 3\fm(\sX_i)+4\,,
 \end{align*}
 since the intersection of $\sX_i$ with any height which is not a cut-height has at least six faces vs.\ four faces in a trivial increment (a nontrivial increment $X$ that has height $1$ satisfies $\fm(X)=2$ and $|\cF(X)|=10$).
 
 For a spine $\cS_x$ and a fixed $T$, we define its excess area with respect to the reference increment sequence of $T$ trivial increments by 
\begin{align*}
\fm(\cS_x) = \Big(\sum_{i\leq \sT} \fm(\sX_i)\Big) + \fm(\sX_{>\sT})\,.
\end{align*}
We can define an excess area of the base of a pillar as being with respect to the pillar of the same height and no base: for a pillar $\cP_x$ with base $\sB_x$ and first cut-point $v_1$, define 
$$\fm(\sB_x): = |\cF(\sB_x)|- |\cF(\rho(\sB_x))| - 4(\hgt(v_1)- \tfrac 12)\,.$$

 For an $x \in \cL_{0,n}$, collect the interfaces with $\cP_x$ having at least $T$ increments and at least height $H$ in 
\begin{align*}
    \bI_{x,T,H}  = \{\cI: \sT\geq T \mbox{ and } \hgt(\cP_x)\geq H\}\,.
\end{align*}

\subsection{Preliminary estimates on tall pillars} In this section we recap some results which can be deduced from Dobrushin's proof of rigidity~\cite{Dobrushin72a} and simple modifications around that argument, together with the definitions of pillars and increments. See~\cite{GL19a} for short proofs of these. 

\begin{proposition}\label{prop:dobrushin-exp-tail}
There exists $C$ and $\beta_0$ such that for every $\beta>\beta_0$, for every $x\in \cL_{0,n}$ and every $r\geq 1$, 
\begin{align*}
    \mu_n^\mp(\fm(W_x) \geq r)\leq e^{- (\beta - C)r}\,, \qquad \mbox{and}\qquad \mu_n^\mp(\fm(\fF_x)\geq r)\leq e^{ - (\beta - C)r}\,.
\end{align*}
\end{proposition}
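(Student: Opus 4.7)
The plan is to prove both tail bounds by a Peierls-type wall-deletion map combined with the cluster expansion, following Dobrushin's classical rigidity argument. I treat the first bound on $\fm(W_x)$ in detail; the second follows by applying the same strategy to the full collection $\fF_x$ of groups of walls.

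First, define a map $\Psi$ as follows: on an interface $\cI$ for which $x$ is interior to the wall $W_x$ of the nested sequence $\fW_x$, pass to the standard-wall representation of $\cI$, delete the standardization of $W_x$ from it, and rebuild an interface via Lemma~\ref{lem:standard-wall-representation}. By construction $\fm(\cI;\Psi(\cI)) = \fm(W_x)$, and the map is injective once the deleted wall is tracked. By Theorem~\ref{thm:cluster-expansion},
\begin{align*}
\frac{\mu_n^\mp(\cI)}{\mu_n^\mp(\Psi(\cI))} = \exp\Big[-\beta\, \fm(W_x) + \Delta_\g(\cI,\Psi(\cI))\Big],
\end{align*}
where $\Delta_\g$ denotes the net $\g$-contribution. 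Using the bijection of Observation~\ref{obs:1-to-1-map-faces}, pair each face $f \in \cI\setminus W_x$ with its vertical shift $\theta_\udarrow f \in \Psi(\cI)$; because the only local change is at $W_x$, the congruence radius $\br(f,\cI;\theta_\udarrow f, \Psi(\cI))$ is at least the distance from $f$ to $W_x$, so by \eqref{eq:g-exponential-decay} the contribution of such a pair decays exponentially in that distance. For the faces of $W_x$ itself, which have no counterpart, I invoke the uniform bound \eqref{eq:g-uniform-bound}. A shell-by-shell summation around $W_x$ then gives $|\Delta_\g| \leq C_1 |W_x| \leq 2C_1 \fm(W_x)$ by \eqref{eq:wall-excess-area-inequalities}, with $C_1$ independent of $\beta$.

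The second ingredient is a standard Peierls count: the number of standard walls $W$ of excess area $m$ whose interior contains $x$ is at most $e^{C_2 m}$, obtained by encoding $W$ as a $*$-connected face-set via an exploration walk rooted at a face on the boundary of $\rho(W)$ reachable from $x$ (which in particular forces $|W|, |\rho(W)| = O(m)$ thanks to \eqref{eq:wall-excess-area-inequalities}). A union bound over such walls yields
\begin{align*}
\mu_n^\mp(\fm(W_x) \geq r) \leq \sum_{m\geq r} e^{C_2 m}\cdot e^{-(\beta - 2C_1)m} \leq e^{-(\beta - C)r}
\end{align*}
for $\beta$ sufficiently large. For the second bound, the same scheme applies with $\Psi$ deleting every wall belonging to a group in $\fF_x$; the closeness condition forces the walls of a group to be spatially clustered within a scale controlled by $\sqrt{N_\rho}$, so the counting estimate again takes the form $e^{C m}$ in the total excess area, and $\Delta_\g$ is again controlled by the same layered summation since all deletions occur in a union of groups of walls all of whose faces lie within the modified region.

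The main technical obstacle is controlling $\Delta_\g$: one must verify that, despite the cluster-expansion weights being global functionals of $\cI$, their net change under the local modification is $O(\fm(W_x))$ rather than $O(|\cI|)$. This is exactly where the exponential decay \eqref{eq:g-exponential-decay} in the congruence radius is essential --- it converts the shell decomposition around $W_x$ into a geometric series whose total is at most a constant multiple of the surface area of $W_x$, independently of the size of $\cI$ or of the cylinder $\Lambda_n$.
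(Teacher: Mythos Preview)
Your strategy for the second bound (on $\fm(\fF_x)$), deleting the full collection $\fF_x$ of groups of walls and using the closeness relation to control both $\Delta_\g$ and the multiplicity, is precisely the Dobrushin argument the paper invokes. That part is sound.

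The gap is in your treatment of the first bound. When you delete only the single wall $W_x$, the claim that ``the congruence radius $\br(f,\cI;\theta_\udarrow f,\Psi(\cI))$ is at least the distance from $f$ to $W_x$'' is false in three dimensions, and the ensuing shell summation does not give $|\Delta_\g|\le C|W_x|$. Concretely: take $W_x$ a wall of height $1$ at $x$, a tall column $W''$ of height $H$ nested inside $W_x$, and another tall column $W'$ of height $H$ whose projection lies just outside $\rho(W_x)$. Deleting $W_x$ shifts $W''$ down by one while leaving $W'$ fixed; a face $f\in W'$ at height $h$ then has $\br(f,\cdot)=O(1)$ (it sees the displaced $W''$ at horizontal distance $O(1)$), even though $d(f,W_x)\asymp h$. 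Summing over the $\Theta(H)$ faces of $W'$ gives a contribution of order $H$, not $|W_x|$. If instead you bound $\br$ via the \emph{projection} distance $d(\rho(f),\rho(W_x))$---which is the inequality that actually holds---the resulting sum is $\sum_{u'\notin\rho(W_x)} N_\rho(u')\,e^{-\bar c\,d(u',\rho(W_x))}$, and $N_\rho(u')$ is uncontrolled for $u'$ near $\rho(W_x)$ unless you have deleted the entire group of walls.

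The fix is simple: since $W_x\in\fW_x$ (as $x$ is interior to $W_x$ by the indexing convention) and hence $W_x$ is contained in some group in $\fF_x$, one has $\fm(W_x)\le\fm(\fF_x)$, and the first inequality follows from the second. Alternatively, delete the group of walls $F_x\ni W_x$ rather than $W_x$ alone; then the closeness definition gives $N_\rho(u')\le d(u',\rho(F_x))^2+1$ for $u'\notin\rho(F_x)$, and the projection-distance sum is bounded by $C\fm(F_x)$ as in the paper's proof of Proposition~\ref{prop:tameness}.
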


As a consequence of Observation~\ref{obs:pillar-nested-sequence-of-walls}, the proposition implies the following. 

\begin{corollary}
There exists $C$ and $\beta_0$ such that for every $\beta>\beta_0$, for every $x\in \cL_{0,n}$ and every $r\geq 1$, 
\begin{align*}
    \mu_n^\mp(\max \{h: (x_1,x_2,h)\in \cI\} \geq r)\leq e^{ - 4(\beta - C)r}\qquad \mbox{and} \qquad \mu_n^\mp (\hgt(\cP_x) \geq r)\leq e^{ -4( \beta - C)r}\,.
\end{align*}
\end{corollary}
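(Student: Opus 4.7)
I plan to deduce both inequalities from the second bound of Proposition~\ref{prop:dobrushin-exp-tail} by establishing the deterministic containment
\begin{align*}
\{\hgt(\cP_x)\geq r\}\cup\{\max\{h:(x_1,x_2,h)\in\cI\}\geq r\}\subseteq\{\fm(\fF_x)\geq 4r\}
\end{align*}
(up to an $O(1)$ slack absorbed into $C$). Proposition~\ref{prop:dobrushin-exp-tail} at level $4r$ then delivers $e^{-(\beta-C)\cdot 4r}=e^{-4(\beta-C)r}$, as claimed.

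The key geometric ingredient I would prove is $\fm(W)\geq 4\hgt(\theta_{\textsc{st}}W)$ for every wall $W$. Writing $h=\hgt(\theta_{\textsc{st}}W)$, the standardized wall must contain plus cells in every horizontal slab $\R^2\times(k,k+1)$ for $k=0,\ldots,h-1$: otherwise the horizontal wall faces at heights $k$ and $k+1$ would share no vertex and no vertical wall face could reside inside an all-minus slab, contradicting the $*$-connectivity of $W$. On each such slab the cell cross-section has perimeter $\geq 4$, so altogether at least $4h$ vertical wall faces appear in $W$; since vertical faces project to edges rather than faces of $\cL_0$, each contributes $+1$ to $|W|-|\cF(\rho(W))|=\fm(W)$, yielding the bound (tight on a $1\times 1$ column). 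Next, summing over the nested sequence $W_1^x\Subset W_2^x\Subset\cdots$ of $\fW_x$, whose standardized heights add up to at least $H_{\max}(x):=\max\{h:(x_1,x_2,h)\in\cI\}$, gives $\fm(\fW_x)\geq 4H_{\max}(x)$; and $\fm(\fF_x)\geq \fm(\fW_x)$ since the extra walls in the groups of walls contribute nonnegative excess area.

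This immediately handles the max-height bound. For the pillar bound, Observation~\ref{obs:pillar-nested-sequence-of-walls} ensures that any face of $\cP_x$ lies in a wall of $\fW_x$ or one nested in such a wall, so $\hgt(\cP_x)\geq r$ forces $H_{\max}(x)\geq r-\tfrac12$ via the top face of $\cP_x$, and hence $\fm(\fF_x)\geq 4r-O(1)$. Applying Proposition~\ref{prop:dobrushin-exp-tail} then concludes. The step requiring genuine care is the geometric lemma $\fm(W)\geq 4\hgt(\theta_{\textsc{st}}W)$ for arbitrary (non-column) walls under the $*$-connectivity convention; the remainder is essentially a bookkeeping combination of Observation~\ref{obs:pillar-nested-sequence-of-walls} with monotonicity of excess area under taking larger wall collections.
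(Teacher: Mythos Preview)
Your argument for the first inequality is essentially correct and matches the intended approach: the containment $\{\max\{h:(x_1,x_2,h)\in\cI\}\geq r\}\subseteq\{\fm(\fW_x)\geq 4r\}$ does hold, via the decomposition of the height above $x$ as a sum of ceiling heights of the standardized nested walls, together with your geometric lemma $\fm(W)\geq 4\hgt(\theta_{\textsc{st}}W)$.

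However, your reduction for the pillar bound has a genuine gap. The assertion that ``$\hgt(\cP_x)\geq r$ forces $H_{\max}(x)\geq r-\tfrac12$ via the top face of $\cP_x$'' is false: the top face of $\cP_x$ need not lie above $x$. Concretely, take the interface whose only walls are (i) the vertical perimeter $W_1$ of an $L\times L$ block of plus cells at height $\tfrac12$ containing $x$ in its interior, and (ii) a column $W_2$ of height $r-1$ sitting on the ceiling $\lceil W_1\rceil$ above some $y\neq x$ well inside the block. The pillar $\cP_x$ is the entire block together with the column, so $\hgt(\cP_x)=r$; yet the only interface face above $x$ is the ceiling at height $1$, so $H_{\max}(x)=1$. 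Worse, if $L$ is chosen with $2\sqrt{r}\ll L\ll r$ (so that $W_2$ is not close to $W_1$), then $\fW_x=\fF_x=\{W_1\}$ and $\fm(\fF_x)=4L\ll 4r$. Thus the containment $\{\hgt(\cP_x)\geq r\}\subseteq\{\fm(\fF_x)\geq 4r-O(1)\}$ fails outright, and Proposition~\ref{prop:dobrushin-exp-tail} applied at $x$ alone cannot yield the pillar bound.

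What Observation~\ref{obs:pillar-nested-sequence-of-walls} actually gives is that the top face projects to some $u$ with $u$ interior to a wall of $\fW_x$; your first-inequality argument then yields $\fm(\fW_u)\geq 4r$, but $u$ is random and $\fW_u\not\subset\fF_x$ in general (the extra walls are those nested in $\fW_x$ through which the pillar climbs). A correct derivation must account for these nested walls as well---this is precisely why the map in Proposition~\ref{prop:tameness} deletes $\fF_{[x]}\cup\fF_{\rho(v_1)}$ rather than only $\fF_x$. The paper's one-line pointer to Observation~\ref{obs:pillar-nested-sequence-of-walls} is thus somewhat elliptic; the bound is really the content of~\cite[Prop.~2.29]{GL19a} (restated here as Proposition~\ref{prop:Ex-bounds}), whose proof uses a map-type argument rather than a direct reduction to $\fm(\fF_x)$.
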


In fact, by a simple application of the FKG inequality and forcing argument, we obtain a corresponding lower bound, yielding the following.  

\begin{proposition}[see~{\cite[Prop.~2.29]{GL19a}}]\label{prop:Ex-bounds}
There exists $C>0$ and a sequence $\epsilon_\beta$ vanishing as $\beta \to\infty$ such that for every $\beta>\beta_0$, $x\in \cL_{0,n}$ and $h\geq 1$, 
\[ (1-\epsilon_\beta)e^{ - (4\beta + e^{ - 4\beta})h}\leq \mu_n^\mp(\hgt(\cP_x)\geq h)\leq e^{ - 4(\beta - C)h}\,.
\]
\end{proposition}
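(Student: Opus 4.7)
The upper bound $e^{-4(\beta-C)h}$ coincides with the conclusion of the Corollary preceding this Proposition: the event $\{\hgt(\cP_x)\geq h\}$ forces $x$ to be interior to a wall containing at least $4h$ vertical wall faces, hence excess area at least $4h$, and Proposition~\ref{prop:dobrushin-exp-tail} then supplies the bound.

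For the lower bound, the plan is a standard-wall insertion argument combined with the cluster expansion. Let $\bD_x$ denote the event that $x$ is not interior to any wall of $\cI$; Proposition~\ref{prop:dobrushin-exp-tail} applied with $r=1$ yields $\mu_n^\mp(\bD_x)\geq 1-e^{-(\beta-C)}$. For $\cI\in\bD_x$, define $\Phi(\cI)$ to be the interface whose standard-wall representation is that of $\cI$ together with one additional standard wall $W_\star^{(h)}$: the trivial pillar of height $h$ above $x$, satisfying $|W_\star^{(h)}|=4h+1$ and $\fm(W_\star^{(h)})=4h$. Since $\rho(W_\star^{(h)})=\{x\}$ is disjoint from the projections of the walls of $\cI$ (because $\cI\in\bD_x$), the insertion is admissible, $\Phi$ is injective (invert by deleting the unique wall at $x$), and $\Phi(\bD_x)\subset\{\hgt(\cP_x)\geq h\}$.

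By Theorem~\ref{thm:cluster-expansion}, for every $\cI\in\bD_x$,
\[
\frac{\mu_n^\mp(\Phi(\cI))}{\mu_n^\mp(\cI)}=\exp\bigl[-4\beta h+\Delta_\g(\cI)\bigr],\quad \Delta_\g(\cI):=\sum_{f\in\Phi(\cI)}\g(f,\Phi(\cI))-\sum_{f\in\cI}\g(f,\cI).
\]
I will split $\Delta_\g$ into (i) the sum over the $4h+1$ new faces of $W_\star^{(h)}$, and (ii) the change $\g(f,\Phi(\cI))-\g(f,\cI)$ over the faces $f\in\cI$. For (ii), the congruence radius between $\Phi(\cI)-f$ and $\cI-f$ is at least $\dist(f,W_\star^{(h)})$, so \eqref{eq:g-exponential-decay} bounds the total change by a uniformly convergent series in the horizontal distance from $f$ to $x$, giving $O(1)$. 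For (i), using translation invariance along the pillar together with \eqref{eq:g-exponential-decay}, all but $O(1)$ of the $4h$ vertical pillar faces have $\g$-values within exponentially small error of a common value $\g_v:=\g(f,\cI^\infty)$, where $\cI^\infty$ is the interface with an infinite straight vertical pillar above $x$. The refined low-temperature Ising cluster expansion then yields $|\g_v|\leq e^{-4\beta}$ (reflecting that the smallest local perturbation of the infinite pillar costs at least four faces), so $\Delta_\g(\cI)\geq -4h\cdot e^{-4\beta}-C$.

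Summing over $\cI\in\bD_x$ and using the injectivity of $\Phi$,
\[
\mu_n^\mp(\hgt(\cP_x)\geq h)\geq\sum_{\cI\in\bD_x}\mu_n^\mp(\Phi(\cI))\geq e^{-C}\,e^{-(4\beta+e^{-4\beta})h}\,\mu_n^\mp(\bD_x),
\]
and the claim follows by folding $e^{-C}(1-e^{-(\beta-C)})$ into a factor $(1-\epsilon_\beta)$ with $\epsilon_\beta\to 0$ as $\beta\to\infty$. The main obstacle is the sharp estimate $|\g_v|\leq e^{-4\beta}$: the uniform bound $|\g|\leq\bar K$ from \eqref{eq:g-uniform-bound} alone only yields $\alpha\leq 4\beta+O(1)$; one must exploit the specifically low-temperature structure of the Ising cluster expansion, in which each expansion term at an interior pillar face picks up an extra factor of at least $e^{-4\beta}$ from the smallest admissible local perturbation.
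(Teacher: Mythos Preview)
Your upper bound is fine and matches the paper's (it is exactly the Corollary preceding the Proposition). The lower bound, however, takes a different route from the paper and has a real gap.

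The paper (following~\cite{GL19a}) obtains the lower bound by a direct FKG/forcing argument on spin configurations, with no reference to cluster expansion. One forces the column of cells $\{x+(0,0,k-\tfrac12):k=1,\dots,h\}$ to be plus, iteratively from bottom to top: conditionally on the cell below being plus and on any configuration elsewhere, the single-site Gibbs probability of the next cell being plus is at least $(1+e^{4\beta})^{-1}$ (worst case: one plus neighbor below, five minus neighbors). Since $(1+e^{4\beta})^{-1}=e^{-4\beta}/(1+e^{-4\beta})\geq e^{-(4\beta+e^{-4\beta})}$, the column event has probability at least $e^{-(4\beta+e^{-4\beta})h}$ times the probability that the cell just below $x$ is plus, which is $1-\epsilon_\beta$. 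Combining with~\eqref{eq:equality-of-events} (or Claim~\ref{clm:equality-of-events-cond}) to pass from the connectivity event to $E_h^x$ yields the stated bound. The sharp constant $e^{-4\beta}$ falls out of an elementary one-line inequality.

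Your cluster-expansion approach, by contrast, hinges entirely on the claim $|\g_v|\leq e^{-4\beta}$ for the per-face decoration at an interior pillar face. This is not available from Theorem~\ref{thm:cluster-expansion}, which only gives $|\g|\leq\bar K$ with $\bar K$ \emph{independent of $\beta$}; as you yourself note, that alone yields only $e^{-(4\beta+O(1))h}$. Your heuristic justification---``the smallest local perturbation of the infinite pillar costs at least four faces''---conflates two different objects: $\g$ encodes the effect of \emph{bulk bubbles} (finite wrong-sign clusters away from $\cI$) on the partition function, not perturbations of the interface itself, and the minimal such bubble has six faces, not four. Whether a bound as sharp as $e^{-4\beta}$ holds for the specific geometry of an infinite straight pillar is a nontrivial question about the low-temperature expansion that you have not addressed; it is certainly not the routine consequence you suggest. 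The paper's forcing argument sidesteps this entirely.
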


\subsection{Tame pillars} In this section we consider the set of all pillars that have at least $T$ increments and reach a height $H$. We show that a subset of them, which we call \emph{tame} have large probability and from there on in Section~\ref{sec:exp-tail-base}, we restrict attention to tame pillars on which our future maps will be well-defined. 

Notice that if $H \leq T$, the event $\{\hgt(\cP_x)\geq H\}$ is vacuous, so we take $T< H$. 
\begin{definition}\label{def:tameness}
For a given $x\in \cL_{0,n}$ and $T< H$, we say that an interface $\cI \in \bI_{x,T,H}$ is \emph{tame} if $\cI$ is in
\begin{align*}
    \bar{\bI}_{x,T,H} : = \{ \cI \in \bI_{x,T,H}: \diam(\sB_x) + \tfrac 14 \fm (\cS_x)< d(x,\partial \Lambda_n)\}
\end{align*}
We observe geometrically that for a pillar $\cP_x$, 
\begin{align}\label{eq:geometric-observation-tameness}
    \max_{f\in \cP_x} d(x,\rho(f))\leq \diam(\sB_x)+\frac 14 \fm(\cS_x)\,.
\end{align}
\end{definition}
Notice that this is less restrictive than the corresponding definition of tameness from~\cite{GL19a} as it is the minimal requirement for our (more robust) map in Section~\ref{sec:exp-tail-base} to be well-defined. 

\begin{proposition}\label{prop:tameness}
There exists $C>0$ such that for every $\beta>\beta_0$ and every $x\in \cL_{0,n}$ and $T,H$ satisfying $0 \leq T < H \ll r$, we have 
\[\mu_n^\mp \big(\diam(\sB_x)+ \tfrac 14 \fm(\cS_x) \geq r\mid \bI_{x,T,H}\big) \leq Ce^{ - 2(\beta - C) r}\,,
\]
and, in particular, taking $r= d(x,\partial \Lambda_n)$, 
\[
\mu_n^\mp(\bar{\bI}_{x,T,H}^c \mid \bI_{x,T,H}) \leq Ce^{- 2(\beta-C) d(x,\partial \Lambda_n)}\,.
\]
\end{proposition}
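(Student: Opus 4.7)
The plan is to handle the conditional probability by Bayes: writing $E_r := \{\diam(\sB_x)+\tfrac14 \fm(\cS_x)\ge r\}$, we have $\mu_n^\mp(E_r\mid \bI_{x,T,H}) = \mu_n^\mp(E_r\cap \bI_{x,T,H})/\mu_n^\mp(\bI_{x,T,H})$. I would bound the numerator from above using Dobrushin's exponential tail on wall excess areas (Proposition~\ref{prop:dobrushin-exp-tail}), and the denominator from below via the FKG-based forcing argument underlying Proposition~\ref{prop:Ex-bounds}. The hypothesis $H\ll r$ then absorbs the $e^{-O(\beta H)}$ denominator into the constant $C$.

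The main deterministic step is to show that on $E_r$,
\[
  \fm(\fF_x)\;\ge\; 2r - O(1),
\]
which by Proposition~\ref{prop:dobrushin-exp-tail} gives $\mu_n^\mp(E_r)\le C e^{-2(\beta-C) r}$. By Observation~\ref{obs:pillar-nested-sequence-of-walls}, every face of the pillar $\cP_x$ is interior to some wall of the nested sequence $\fW_x\subseteq\fF_x$. If two cells of $\sigma(\sB_x)$ realise the diameter $d:=\diam(\sB_x)$, the nested structure of $\fW_x$ forces their horizontal projections to lie in a single finite component $R$ of $\rho(W)^c$ for an outermost-enough $W\in\fW_x$; the planar diameter of $R$ is at least the horizontal part of $d$ (any vertical part of the Euclidean distance is separately witnessed by additional vertical faces of $W$), and the lattice perimeter-diameter inequality yields $|\cE(\rho(W))|\ge 2d - O(1)$, hence $\fm(W)\ge 2d - O(1)$ by~\eqref{eq:wall-excess-area-inequalities}. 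For the spine contribution, the same observation places each excess face of $\cS_x$ (relative to a trivial column of height $\hgt(\cP_x)$) in a wall of $\fW_x$, adding at least $\tfrac12\fm(\cS_x)$ to $\fm(\fW_x)$ after accounting for any horizontal wall faces that might already be counted on the base side. Summing gives $\fm(\fF_x)\ge 2\diam(\sB_x) + \tfrac12\fm(\cS_x) - O(1) \ge 2r - O(1)$ on $E_r$.

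For the denominator, the FKG forcing in Proposition~\ref{prop:Ex-bounds} produces an event on which $\cP_x$ is a cylindrical plus column of height exactly $H$; on this event every cell of the column is a cut-point of $\cP_x$, so $\sT = H-1 \ge T$ and $\bI_{x,T,H}$ occurs, with probability at least $(1-\epsilon_\beta)e^{-(4\beta+e^{-4\beta})H}$. Combining with the numerator bound and using $H\ll r$ to absorb the $O(\beta H)$ term into the constant yields the stated conditional tail, and the second assertion follows by specialising $r=d(x,\partial\Lambda_n)$ in the first. The main technical obstacle is pinning down the factor $2$ in the geometric inequality: the naive bound $\fm(\fF_x)\ge \diam(\sB_x)+\tfrac14\fm(\cS_x) - O(1)$ yields only a conditional tail of $e^{-(\beta-C)r}$, insufficient once the $e^{-4\beta H}$ denominator is restored; retrieving the full factor of $2$ requires the perimeter-diameter inequality applied to the enclosing component of a well-chosen wall of $\fW_x$, together with a careful accounting ensuring the spine's excess contribution does not cancel against the base contribution.
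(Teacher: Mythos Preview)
Your approach has a genuine gap in the key geometric step. You claim that on $E_r$ one has $\fm(\fF_x)\ge 2r - O(1)$, and for the spine contribution you write that Observation~\ref{obs:pillar-nested-sequence-of-walls} ``places each excess face of $\cS_x$ \ldots\ in a wall of $\fW_x$.'' But that observation only says each face of $\cP_x$ is \emph{interior to} some wall of $\fW_x$; it does not say the face \emph{belongs to} a wall of $\fW_x$ (or of $\fF_x$). By Claim~\ref{clm:cut-point-property} the spine faces lie in a single wall $W_\cS$, and one always has $W_\cS\in\fW_{\rho(v_1)}$, but in general $W_\cS\notin\fF_x$. Concretely: let $W_1$ be a square platform of side $2R$ at height $1$ centered at $x$, and above some $y$ with $|y-x|=R/2$ place a column of height $h$ carrying a nontrivial increment of excess $S$. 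Then $\fW_x=\{W_1\}$, and for $R/2>\sqrt{h}+1$ the column's wall $W_2$ is not close to $W_1$, hence $W_2\notin\fF_x$. Here $\fm(\fF_x)\asymp R$ while $r=\diam(\sB_x)+\tfrac14\fm(\cS_x)\asymp R+S/4$, which can be made arbitrarily larger than $\fm(\fF_x)$ by taking $S$ large. So not even the ``naive'' bound $\fm(\fF_x)\ge r-O(1)$ holds.

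The paper's proof avoids this by constructing a Peierls map $\Phi_{x,T,H}$ that deletes $\fF_{[x]}\cup\fF_{\rho(v_1)}$ (thereby capturing both base and spine, even though $\rho(v_1)$ is random) and reinserts a trivial column of height $H$; the map sends $\bI_{x,T,H}$ into itself, so the weight comparison and multiplicity bound are carried out \emph{within} the conditioning event and no separate lower bound on $\mu_n^\mp(\bI_{x,T,H})$ is needed. To salvage your Bayes route you would need to union-bound over the location of $\rho(v_1)$ and apply Proposition~\ref{prop:dobrushin-exp-tail} to $\fF_{[x]}$ and $\fF_{\rho(v_1)}$ separately---which halves the exponential rate---and you would also need a genuine lower bound on $\mu_n^\mp(\bI_{x,T,H})$, which the FKG forcing behind Proposition~\ref{prop:Ex-bounds} does not directly provide since $\{\sT\ge T\}$ is not a monotone event.
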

\begin{proof}
This can be read off from a combination of various preliminary bounds in~\cite{Dobrushin72a,GL19a}. For the sake of completeness, and as an indication of the structure of the proofs in Section~\ref{sec:exp-tail-base}, we present a full proof using a map $\Phi_{x,T,H}$ which deletes the pillar $\cP_x$ and replaces it by a column of $H-1$ trivial increments. 

Namely, let $\Phi_{x,T,H}:\bI_{x,T,H} \to \bI_{x,T,H}$ be the following map. 
First, denote by \begin{align*}
\fF_{[x]}= \fF_x \cup \bigcup_{f\in \partial_0 x} \fF_f
\end{align*} (where $\partial_0 x$ are the four faces of $\cL_0$ adjacent to  $x$). 
Then, from an interface $\cI$ we obtain $\Phi_{x,T,H}(\cI)$ as the interface with the following standard wall representation:
\begin{enumerate}
    \item Remove the standardizations of $\fF_{[x]}\cup \fF_{\rho(v_1)}$ from the standard wall representation of $\cI$.
    \item Add the standard wall consisting of the bounding faces of a stack of $H-1$ trivial increments above $x$ (i.e., the cells with midpoints $\{x+ (0,0,n+\frac 12): n = 1,\ldots,H\}$). 
\end{enumerate}
(In the exceptional case $\sT=0$ and $v_1$ doesn't exist, interpret $v_1$ as any site of $\sigma(\cP_x)$ whose height is $\hgt(\cP_x)- \frac 12$.)
It is straightforward that $\Phi_{x,T,H}(\cI)$ is a valid interface in $\bI_{x,T,H}$ as we deleted all walls containing $x$ or its adjacent faces in their interior in step (1), so that adding the wall in step (2) preserves the admissibility of the standard wall collection. The pillar $\cP_x^{\Phi_{x,T,H}(\cI)}$ 
of the resulting interface
clearly consists only of the wall added in step (2) and therefore it has $H-1\geq T$ trivial increments and reaches height $H$; hence,~$\Phi_{x,T,H}(\cI) \in \bI_{x,T,H}$. 

Notice that for every $\cI\in \bI_{x,T,H}$, 
$$\fm(\cI; \Phi_{x,T,H}(\cI)) = \fm(\fF_{[x]} \cup\fF_{\rho(v_1)}) - 4H \geq 2\diam(\sB_x) + \frac 12 \fm(\cS_x)\,.$$
As such, 
it suffices for us to show the bound 
\begin{align}\label{eq:need-to-show-tameness}
    \mu_n^\mp(\fm(\cI; \Phi_{x,T,H} (\cI))\geq 2r \mid \bI_{x,T,H}) \leq e^{ - 2(\beta - C) r}\,.
\end{align}

We first consider how $\Phi_{x,T,H}$ transforms weights of interfaces. Namely, we claim that the map sends interfaces of low probability to ones of higher probability: there exists $C>0$ such that for every $\cI \in \bI_{x,T,H}$ having $\fm(\cI; \Phi_{x,T,H}(\cI)) \geq H$, 
\begin{align}\label{eq:interface-weights-tameness}
    \Big| \log \frac{\mu_n^\mp(\cI)}{\mu_n^\mp(\Phi_{x,T,H}(\cI))} + \beta \fm(\cI;\Phi_{x,T,H}(\cI))\Big| \leq C\fm(\cI; \Phi_{x,T,H}(\cI))\,.
\end{align}

For ease of notation, let $\cJ = \Phi_{x,T,H}(\cI)$. 
We split the set of faces in $\cI$ and $\cJ$ into the following: 
\begin{itemize}
    \item $\bW$: the faces in $\fF_{[x]} \cup \fF_{\rho(v_1)}$ in $\cI$.
    \item $\bB$: all other faces in $\cI$ (consisting of all ceiling faces of $\cI$ along with all wall faces besides $\bW$). 
    \item $\bH$: the set of faces in $\cJ$ whose projection is in $\cF(\rho(\bW))$. 
    \item $W_x^\cJ$: the set of faces in $\cJ$ from the wall added in step (2) of $\Phi_{x,T,H}$. 
    \item $\theta_{\udarrow}\bB$: all other faces in $\cJ$. 
\end{itemize}
By Lemma~\ref{lem:standard-wall-representation}, there is a 1-1 correspondence between $\theta_{\udarrow} \bB$ and faces in $\bB$ given by the vertical shifts induced by ceilings of deleted/added walls from the standard wall representation: encode this 1-1 correspondence into $f\mapsto \theta_{\udarrow} f$. 
With this splitting in hand, by Theorem~\ref{thm:cluster-expansion}, we need to bound
\begin{align*}
    \Big| \sum_{f\in \cI} \g(f,\cI) - \sum_{f'\in \cJ} \g(f',\cJ)\Big| & \leq \sum_{f\in \bW} |\g(f, \cI)| + \sum_{f'\in \bH} |\g(f', \cJ)| + \sum_{f'\in W_x^\cJ} |\g(f',\cJ)| \\ 
    & \qquad + \sum_{f\in \bB} |\g(f, \cI) - \g(\theta_{\udarrow} f, \cJ)|\,.
\end{align*}
The first term is at most $\bar K |\bW| \leq 2\bar K \fm(\bW)$ by~\eqref{eq:g-uniform-bound} and \eqref{eq:wall-excess-area-inequalities}. The second term is similarly at most $\bar K[|\cE(\rho(\bW))|+ |\cF(\rho(\bW))|]\leq \bar K \fm(\bW)$ and the third term is at most $4\bar K H$. 
The last term satisfies 
\begin{align*}
    \sum_{f\in \bB}|\g(f,\cI)- \g(\theta_{\udarrow}f,\cJ)|\leq \sum_{f\in \bB} \bar K e^{ - \bar c \br (f,\cI;\theta_{\udarrow}\cJ)}
\end{align*}
Since the distance between two faces is at least the distance between their projections, and the radius $\br(f,\cI; \theta_{\udarrow} f, \cJ)$ must be attained by a wall face, we see that the right-hand side is in turn at most 
\begin{align*}
 \sum_{f\in \bB} \sum_{u\in \rho(\bW \cup W_x^\cJ)} e^{ - \bar c d(\rho(f), u)} = \sum_{u'\in \rho(\bW \cup W_x^\cJ)^c} \sum_{u\in \rho(\bW\cup W_x^\cJ)} N_\rho(u') e^{ - \bar c d(u,u')}
\end{align*}
By definition of groups of walls, for every $u' \in \rho(\bW\cup W_x^\cJ)^c$, we have $N_\rho(u')\leq |u- u'|^2 +1$ (if $u'$ is the projection of a ceiling face, $N_\rho(u')= 1$) and therefore, the right-hand side above is at most $\bar K [2\fm(\bW) + 4H]$.    

Altogether, this implies that for some $C$, we have the bound 
\begin{align*}
\Big|\sum_{f\in \cI} \g(f,\cI) - \sum_{f'\in \cJ} \g(f',\cJ)\Big| \leq C[\fm(\bW) + 4H]
\end{align*}
which implies the bound of~\eqref{eq:interface-weights-tameness} for a different $C$ as long as $\fm(\cI;\cJ)\geq H$, say. 

On the other hand, let us bound the \emph{multiplicity} of the map $\Phi_{x,T,H}$. Namely, we bound the number of elements in the pre-image $\{\cI \in \Phi_{x,T,H}^{-1}(\cJ): \fm(\cI;\cJ)= M\}$ by some $C_\Phi^M$ uniformly over $\cJ\in \Phi_{x,T,H}(\bI_{x,T,H})$.
To do so, we associate to each possible such $\cI$, a $*$-connected face subset of $\Z^3$ rooted at $x$, together with a coloring of those faces by $\{\blue,\red\}$, and bound the number of possible such so-called \emph{witnesses}, from which together with $\cJ$ we can reconstruct $\cI$. Our witness will consist of the following:  
\begin{enumerate}
\item Take the standardizations of all walls in $\fF_{[x]} \cup \fF_{\rho(v_1)}$. Color all these faces $\blue$. 
\item For every $u\in \rho(\bW)$, add all faces in $\cL_0$ a distance at most $\sqrt{N_\rho(u)}$ from $u$ and color them $\red$.
\item Connect (via a shortest path of faces in $\cL_0$) $x$ to $W^x_1$ and $W^x_i$ to $W^x_{i+1}$ for all $i$. Do the same for $\rho(v_1)$. Also, add the face at $x$ and connect $x$ to $\rho(v_1)$. Color those faces added $\red$. 
\end{enumerate}
That this forms a $*$-connected face subset follows from the definition of closeness of walls. 
One can easily recover $\cI$ from $\cJ$ and the witness by taking the standard wall representation of $\cJ$, removing from it $W_x^\cJ$ and adding in all $\blue$ faces of our witness to obtain the standard wall representation of $\cI$. 

The number of $\blue$ faces in a witness corresponding to an interface $\cI$ with $\fm(\cI;\cJ) = M$ is at most 
$|\fF_{[x]}\cup \fF_{\rho(v_1)}|\leq 2\fm(\bW)$. The number of \red\ faces added in step (2) of the witness construction is, by definition of closeness and groups of walls, at most 
$$\sum_{u\in \rho(\bW)} N_\rho(u)\leq |\bW|\leq 2\fm(\bW)\,.$$
Finally, by Observation~\ref{obs:pillar-nested-sequence-of-walls}, there is some wall that is deleted to which both $x$ and $\rho(v_1)$ are interior. The number of \red\ faces added in step (3) of the witness construction is therefore at most 
$$ 3\fm(\bW) + \sum_i \fm(W_i^x)+ \sum_i \fm(W_i^{\rho(v_1)})\leq 5\fm(\bW)\,.$$
The number of possible witnesses corresponding to interfaces $\cI$ with $\fm(\cI;\cJ) = M$ is then at most the number of possible rooted face subsets of $\Z^3$ with at most $10 M$ faces,  multiplied by the number of possible $\{\blue, \red\}$ colorings of those faces. Recall the following combinatorial fact (see e.g.,~\cite{Dobrushin72a}).

\begin{fact}\label{fact:number-of-connected-face-sets}
There exists a universal constant $s>0$ such that the number of $*$-connected face-subsets rooted at (incident to) a fixed vertex, edge, or face of $\Z^3$, consisting of at most $M\geq 1$ faces, is at most $s^M$. 
\end{fact}

With the above fact in hand, we see that there are at most $s^{10M}$ choices for the face subset of the witness, and an additional multiplicative $2^{10M}$ for the number of possible colorings of those faces. 

Combining this multiplicity bound with~\eqref{eq:interface-weights-tameness}, we can deduce~\eqref{eq:need-to-show-tameness} as follows: for $r\geq H$, 
\begin{align*}
    \mu_n^\mp(\fm(\cI; \Phi_{x,T}(\cI))\geq r, \bI_{x,T,H}) & \leq 
    \sum_{M \geq r}\,\sum_{\cJ\in \Phi_{x,T,H}(\bI_{x,T,H})} \, \mu_n^\mp(\cJ) \sum_{\cI\in \Phi_{x,T,H}^{-1}(\cJ): \fm(\cI;\cJ) = M} e^{ - (\beta - C)M}  \\ 
    & \leq \sum_{M\geq r} \,\sum_{\cJ \in \Phi_{x,T,H}(\bI_{x,T,H})} e^{ - (\beta - C - \log C_\Phi) M }\mu_n^\mp(\cJ) 
\end{align*}
Since $\Phi_{x,T,H}(\bI_{x,T,H}) \subset \bI_{x,T,H}$, we have for some other $C>0$ 
\begin{align*}
    \mu_n^\mp(\fm(\cI; \Phi_{x,T}(\cI)) \geq r, \bI_{x,T,H}) \leq Ce^{-(\beta - C)r} \mu_n^\mp(\bI_{x,T,H})\,,
\end{align*}
from which dividing by $\mu_n^\mp(\bI_{x,T,H})$, we obtain~\eqref{eq:need-to-show-tameness}. 
\end{proof}

\section{Sharp estimates on the structure of tall pillars}\label{sec:exp-tail-base}
In this section, we obtain estimates on the structure of tall pillars (conditionally on $\bI_{x,T,H}$) up to $O(1)$ precision. This is a prerequisite to obtaining tightness of the maximum $M_n$ via a second moment method, as the size of the base $\sB_x$ contains the positive correlations between the events $\{\hgt(\cP_x) \geq h\}$ and $\{\hgt(\cP_y)\geq h\}$: e.g., if the base $\sB_x$ contains $y$ in its interior, the events are fully correlated. 

\begin{theorem}\label{thm:exp-tail-base}
There exist $\beta_0,C>0$ such that for every $\beta >\beta_0$, every $x\in \cL_{0,n}$ and $T,H$ satisfying $0\leq T< H \ll d(x,\partial \Lambda_n)$, the following holds. 
\begin{enumerate}[(a)]
    \item \emph{Base estimate:} for every $1\leq r \leq d(x,\partial \Lambda_n)$, 
    \begin{align*}
    \mu_{n}^{\mp} \big(|v_1 - (x+(0,0, \tfrac 12))|\geq r\mid \bI_{x,T,H}\big) \leq C e^{ - 2(\beta - C)r}\,,
\end{align*}
and in fact, 
\begin{align*}
    \mu_{n}^{\mp}\big(\fm (\sB_x)\geq r\mid \bI_{x,T,H}\big)\leq Ce^{ - (\beta - C) r}\,.
\end{align*}
\item \emph{Increment estimate:} for every $t\geq 1$, and every $1 \leq r \leq d(x, \partial \Lambda_n)$, 
\begin{align*}
    \mu_n^\mp \big(\fm(\sX_t) \geq r \mid \bI_{x,T,H}\big)\leq Ce^{ - (\beta - c)r}\,.
\end{align*}
\end{enumerate}
\end{theorem}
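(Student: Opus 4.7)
The plan is to follow the Peierls-type framework of Section~\ref{subsec:proof-outline-map}, implementing on $\bar{\bI}_{x,T,H}$ the same map-plus-multiplicity scheme that was carried out in miniature in the proof of Proposition~\ref{prop:tameness}. For each $t\geq 0$ I will design a map $\Psi_{x,t}:\bar{\bI}_{x,T,H}\to \bar{\bI}_{x,T,H}$ that rectifies either the base ($t=0$) or the $t$-th increment ($t\geq 1$) of $\cP_x$. The two requirements are (i)~\emph{energy profitability}: $\fm(\cI;\Psi_{x,t}(\cI))$ dominates the quantity to be controlled up to a constant, and (ii)~\emph{bounded multiplicity} of the form $C_\Psi^{\fm(\cI;\Psi_{x,t}(\cI))}$ on preimages. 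Combined with Theorem~\ref{thm:cluster-expansion} (for the weight ratio) and Fact~\ref{fact:number-of-connected-face-sets} (for the multiplicity counting), summing over $M\geq r$ and dividing by $\mu_n^\mp(\bI_{x,T,H})$---having first used Proposition~\ref{prop:tameness} to reduce to tame interfaces---then yields both stated bounds.

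For $t=0$, I would let $\bW_\sB$ be the union of all groups of walls of $\cI$ whose projections meet $\rho(\sB_x)$ or its immediate neighbourhood in $\cL_0$, and define $\Psi_{x,0}(\cI)$ by deleting the standardizations of $\bW_\sB\cup \fF_{[x]}\cup \fF_{\rho(v_1)}$ from the standard wall representation and reinserting, via Lemma~\ref{lem:standard-wall-representation}, the single standard wall built from the spine $\cS_x$ translated horizontally so that $v_1$ lies directly above $x+(0,0,\tfrac12)$. The spine survives intact, so $\Psi_{x,0}(\cI)\in \bar{\bI}_{x,T,H}$, and the geometry forces $\fm(\cI;\Psi_{x,0}(\cI))\gtrsim \fm(\sB_x)+\fm(\bW_\sB)$. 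For $t\geq 1$, the map $\Psi_{x,t}$ instead trivializes $\sX_t$ by replacing it with $\hgt(v_{t+1})-\hgt(v_t)$ stacked trivial increments, regluing $\sX_{t+1},\ldots,\sX_{\sT+1}$ cut-point-to-cut-point, and deleting any groups of walls whose projections are swept out by this repositioning. The multiplicity count for either map goes through as in Proposition~\ref{prop:tameness} via a $*$-connected two-coloured witness rooted at $x$ of size $O(\fm(\cI;\Psi_{x,t}(\cI)))$.

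The hardest step will be controlling the interaction term
\begin{align*}
\sum_{f\in\cI}\g(f,\cI)-\sum_{f'\in\Psi_{x,t}(\cI)}\g(f',\Psi_{x,t}(\cI))
\end{align*}
from Theorem~\ref{thm:cluster-expansion}. Contributions from deleted and added wall faces (and their projections) are bounded by $\bar{K}\bigl(\fm(\bW_\sB)+\fm(\sB_x)\bigr)$ via~\eqref{eq:g-uniform-bound}. The delicate piece is the interaction between faces of the horizontally-shifted spine $\cS_x$ and the vertical shifts $\theta_\udarrow$ induced on nearby ceilings by the removal of $\bW_\sB$: using~\eqref{eq:g-exponential-decay} together with the key geometric input $N_\rho(u)\leq d(u,\rho(\bW_\sB))^2+1$ coming from the definition of closeness of walls, one must bound this piece by $O(\fm(\bW_\sB))$ \emph{uniformly in $\hgt(\cP_x)$}, since we cannot afford a $\hgt$-dependent slack at the base. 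This is where the algorithmic, iterative nature of $\Psi$ (Algorithm~\ref{alg:le-big-map}) should be essential: one peels interfering walls off in shells of increasing radius around $\rho(\cP_x)$, so that the potentially uncontrollable $\hgt(\cP_x)$ contribution telescopes into the excess-area gain paid at each step, rather than being incurred at once.

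Once the weight-ratio inequality $\mu_n^\mp(\cI)/\mu_n^\mp(\Psi_{x,t}(\cI))\leq e^{-(\beta-C)\fm(\cI;\Psi_{x,t}(\cI))}$ is in place together with the multiplicity bound, the standard summation over $M\geq r$ delivers part~(a)'s second inequality and part~(b) (for the corresponding $\Psi_{x,t}$). The first inequality of part~(a), with its improved exponent $2(\beta-C)$, follows from the deterministic geometric observation $\fm(\sB_x)\geq 2\,|v_1-(x+(0,0,\tfrac12))|$ (each unit of horizontal displacement of $v_1$ forces at least two extra units of base perimeter in the intermediate cross-sections), so that $\{|v_1-(x+(0,0,\tfrac12))|\geq r\}\subset \{\fm(\sB_x)\geq 2r\}$.
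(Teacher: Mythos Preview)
Your high-level framework is right---map, weight comparison via Theorem~\ref{thm:cluster-expansion}, multiplicity via witnesses---and you correctly flag the interaction term as the crux. But the specific maps you propose are too naive and would not work, and your description of what Algorithm~\ref{alg:le-big-map} does (``peeling in shells of increasing radius'') misses the actual mechanism.

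Concretely: for $t\geq 1$ you propose to trivialize $\sX_t$, shift $\sX_{t+1},\ldots$ by the induced horizontal vector, and then delete ``groups of walls whose projections are swept out by this repositioning.'' The problem is that after the shift, the increments $\sX_{t+1},\sX_{t+2},\ldots$ can come arbitrarily close to (or intersect) walls of $\cI\setminus\cS_x$ that were \emph{not} in the swept region---this is exactly the scenario in Figure~\ref{fig:criteria-violations}. If you delete only the swept walls, $\Psi_{x,t}(\cI)$ may not be a valid interface; if you delete every wall the shifted spine approaches, the multiplicity bound fails, since you cannot encode in $O(\fm(\cI;\cJ))$ bits which of potentially many far-away walls were removed. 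The paper's $\Psi_{x,t}$ resolves this tension by iterating through \emph{all} increments $j=1,\ldots,\sT+1$, maintaining a running shift $\fs_j$, and at each step checking three criteria: ({\tt A1}) a \emph{linear} growth bound $\fm(\sX_j)\geq j-1$ (not the exponential threshold of the prequel), ({\tt A2}) proximity of the shifted $\sX_j$ to some wall $\tilde W_y$ relative to $\fm(\tilde W_y)$, and ({\tt A3}) proximity relative to $(j-1)/2$. When any criterion fires, $\sX_j$ is trivialized \emph{and} the shift is reset to $j$; walls violating ({\tt A2}) are marked, but only a single wall is marked via ({\tt A3}). This careful bookkeeping is what makes both the interaction bound (Lemmas~\ref{lem:C1-vs-X2-and-X4}--\ref{lem:C2-vs-X2-and-X4}) and the multiplicity count (Proposition~\ref{prop:multiplicity}) go through simultaneously. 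Your sketch does not contain this mechanism.

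Two further points. First, the paper does not use a separate $\Psi_{x,0}$: a single $\Psi_{x,t}$ handles the base (steps~2--3) and the spine (steps~4--5) together, and $\fm(\cI;\Psi_{x,t}(\cI))\geq\fm(\sB_x)$ already for $t=1$. Second, the factor $2$ in the first inequality of~(a) does not come from the containment $\{|v_1-(x+(0,0,\tfrac12))|\geq r\}\subset\{\fm(\sB_x)\geq 2r\}$ as you suggest; rather it comes from the $y^\dagger$ step of the algorithm (step~3 and Corollary~\ref{cor:no-cut-points-after-step-3}), which guarantees that the deleted walls $\tilde\fW_{v_1}\cup\tilde\fW_{y^\dagger}$ have at least six faces at every height below $\hgt(v_1)$, so that $\fm(\cI;\cJ)\geq 2(\hgt(v_1)-\tfrac12)+2d_\rho(v_1,x)$ via Claim~\ref{clm:m(W)}.
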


\begin{proposition}\label{prop:exp-tail-all-increments}
There exist $\beta_0,C>0$ such that for every $\beta>\beta_0$, every $x\in \cL_{0,n}$ and $T,H$ satisfying  $0\leq  T< H \ll d(x,\partial \Lambda_n)$, the following holds. For every half-integer $\ell\geq \frac 12$ and $1\leq r\leq d(x,\partial \Lambda_n)$, 
\begin{align*}
    \mu_{n}^{\mp}\big(|\cF(\cP_x\cap \cL_{\ell})| \geq 4+r \mid \bI_{x,T,H} \big)\leq Ce^{ - (\beta - C) r}
\end{align*}
\end{proposition}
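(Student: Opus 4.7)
\medskip

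The plan is to deduce Proposition~\ref{prop:exp-tail-all-increments} from Theorem~\ref{thm:exp-tail-base} by relating the cross-sectional surface area $|\cF(\cP_x\cap\cL_\ell)|$ at a given half-integer height $\ell$ to the excess area of the unique feature (base or increment) of the pillar intersecting that height. First I would partition according to where $\ell$ sits in $\cP_x$: either $\ell$ is a cut-height (so $\cP_x\cap\cL_\ell$ is the four vertical faces bounding a single cell, giving $|\cF(\cP_x\cap\cL_\ell)|=4$ and the event $\{|\cF(\cP_x\cap\cL_\ell)|\geq 4+r\}$ is vacuous for $r\geq 1$); or $\ell<\hgt(v_1)$, so $\ell$ is interior to the base $\sB_x$; or $\hgt(v_{t^*})<\ell<\hgt(v_{t^*+1})$ for a unique $t^*=t^*(\ell)$, so $\ell$ is interior to the increment $\sX_{t^*}$.

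In the second step I would show that on either non-trivial case, $|\cF(\cP_x\cap\cL_\ell)|\geq 4+r$ forces the ambient feature to have $\fm\geq r-C$ for a universal constant $C$. Indeed, each half-integer height interior to $\sB_x$ (resp., $\sX_t$) contributes at least $4$ vertical faces to $|\cF(\sB_x)|$ (resp., $|\cF(\sX_t)|$), since the cross-section at a non-cut height contains at least one cell. Thus any excess at height $\ell$ is absorbed directly into the relevant $\fm$, after accounting for the baseline $-4(\hgt(v_1)-\tfrac12)$ (resp.\ $-4(\hgt(v_{t+1})-\hgt(v_t)+1)$) and the projection term $|\cF(\rho(\sB_x))|$ appearing in the definitions of $\fm(\sB_x)$ and $\fm(\sX_t)$ from Section~\ref{sec:pillars}. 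Applying Theorem~\ref{thm:exp-tail-base}(a) handles the base case outright.

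The main obstacle is the increment case: Theorem~\ref{thm:exp-tail-base}(b) provides a bound on $\fm(\sX_t)$ uniform in a \emph{deterministic} index $t$, whereas we need to control $\fm(\sX_{t^*(\ell)})$ for the \emph{random} index $t^*(\ell)$. A naive union bound over $t$ costs a factor of $\hgt(\cP_x)$, which is unacceptable at small $r$. I would handle this in one of two equivalent ways. The first is to condition on the sequence of cut-heights $(\hgt(v_i))_i$: this renders $t^*(\ell)$ deterministic, and one argues that the per-$t$ estimate of Theorem~\ref{thm:exp-tail-base}(b) holds in this refined conditioning by repeating its proof without change (the map there acts only within $\sX_t$ and preserves all other cut-heights). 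The second, more robust, route is to mimic the map-based argument of Section~\ref{sec:exp-tail-base} directly: on $\bar{\bI}_{x,T,H}\cap\{|\cF(\cP_x\cap\cL_\ell)|\geq 4+r\}$, design a map $\Psi_{x,\ell}$ that identifies from $\cI$ itself (without reference to $t$) the feature containing height $\ell$, excises it, and re-inserts a column of trivial increments of matching height. The energy gain is at least $r-C$ by the face-counting of the previous paragraph, and the multiplicity is at most $C^{r}$ via a witness consisting of the excised feature's standardization, exactly as in the witness construction in the proof of Proposition~\ref{prop:tameness} and Theorem~\ref{thm:exp-tail-base}. Combining this with Theorem~\ref{thm:cluster-expansion} in the same bookkeeping used in those proofs yields the stated tail bound.
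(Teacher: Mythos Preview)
Your identification of the obstacle (the random index $t^*(\ell)$) is exactly right, and your Route~2 is essentially what the paper does: it defines $\tilde\Psi_{x,\ell}(\cI):=\Psi_{x,\tau_\ell(\cP_x)}(\cI)$ with $\tau_\ell=\min\{t\geq 1:\sX_t\cap\cL_\ell\neq\emptyset\}\vee 1$, inherits the weight comparison of Proposition~\ref{prop:partition-function-contribution} (which is uniform in $t$), and proves a separate multiplicity bound (Proposition~\ref{prop:multiplicity-at-height}). The extra ingredient beyond the fixed-$t$ multiplicity is that, given $\cJ$ and $\fm(\cI;\cJ)=M$, one must enumerate over the unknown $\tau_\ell$: the paper observes that the cut-point of $\cJ$ at height $\hgt(v_{\tau_\ell})$ lies within $\fm(\sX_{\tau_\ell})\leq M$ of $\ell$, giving at most $M$ choices, which is absorbed into $\tilde C_\Psi^M$. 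Your witness sketch does not make this step explicit, but it is the only substantive addition.

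However, be careful on two points. First, your Route~1 has a genuine gap: the map $\Psi_{x,t}$ of Algorithm~\ref{alg:le-big-map} does \emph{not} ``act only within $\sX_t$ and preserve all other cut-heights.'' It deletes $\tilde\fF_{[x]}\cup\tilde\fF_{\rho(v_1)}\cup\tilde\fF_{y^\dagger}$, trivializes all increments $\sX_1,\ldots,\sX_{j^*}$ in addition to $\sX_t,\ldots,\sX_{k^*}$, deletes nearby walls via ({\tt A2})--({\tt B3}), and replaces the base by $W_x^\cJ$. So the image is not in the event $\{\mbox{cut-heights}=(\hgt(v_i))_i\}$, and the Peierls ratio argument conditional on the full cut-height profile does not go through. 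Second, your description of Route~2 as ``excise the feature at height $\ell$ and re-insert trivial increments'' undersells what is needed: a map that only trivializes the single increment $\sX_{\tau_\ell}$ would not control the interaction terms $\g(f,\cI)-\g(f',\cJ)$ at heights $O(1)$, precisely the difficulty discussed in \S\ref{subsec:reader-guide-map}. You need the full $\Psi_{x,t}$ (with its base modification and iterative wall deletions), applied at $t=\tau_\ell$, to get Proposition~\ref{prop:partition-function-contribution}.
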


Recall from the introduction that in~\cite{GL19a} the authors proved a bound of $O(\log H)$ on $\diam(\sB_x)$ and the exponential tails on increment sizes were restricted to those with index above $O(\log H)$. In that work, affording an $O(\log H)$ error, the interactions between the horizontal shifts of the spine under the map were decoupled from the base and nearby pillars because, with high probability, no other pillars in the shadow of the pillar $\cP_x$ reach a height larger than $O(\log H)$. 

At heights that are $O(1)$, we need to deal directly and simultaneously with the interactions between vertical shifts (arising from deletions of groups of walls as in~\cite{Dobrushin72a}) and the horizontal shifts arising from trivializing increments and shifting the spine appropriately. This induces substantial complications. 
After defining a map $\Psi_{x,t}:\bar{\bI}_{x,T,H}\to \bI_{x,T,H}$, in Section~\ref{subsec:reader-guide-map}, we give a reader's guide to the various difficulties encountered in construction of this map and justify the necessity of its various steps. 

\subsection{A new base and increment map}
In this section, we define a new map that shrinks the base of $\cP_x$ and trivializes the $t$-th increment of the pillar. The map is significantly more involved than the maps in~\cite{GL19a} as it deals directly with the interactions between the horizontal shifts of $\cS_x$ with the walls near its base which the spine may get close to or hit.

For an increment $j$, denote the centered \emph{trivialization} of $\sX_j$ by 
\[ \Theta_\trivincr \sX_j =  [-\tfrac12,\tfrac12]^2 \times [\hgt(v_j)- \tfrac 12, \hgt(v_{j+1})+\tfrac 12]\,. \]

Denote by $W_y^\cK$ the wall indexed by $y$ in the interface $\cK$. 
If $(\tilde W_z)_{z\in \cL_{0,n}}$ is the standard wall representation of $\cK$, let
\[
\Theta_\udarrow^\cK \tilde W_y = \{W_y^{\cK'}\cup \lceil W_y^{\cK'} \rceil\,:\; \cK' \mbox{ has standard wall representation }(\tilde W_z) \mbox{ or } (\tilde W_z \setminus \tilde \fF_{y'}) \mbox{ for some $y'$}\}\,;
\]
namely, $\Theta_\udarrow^\cK \tilde W_y$ is the set of all possible vertical shifts induced on $\tilde W_y$ via Lemma~\ref{lem:standard-wall-representation} by deleting the group of walls of a nested sequence of walls.

Finally, for some $x\in\cL_{0,n}$ identified with its midpoint, an interface $\cI$, and two shift vectors $\omega_1,\omega_2\in \Z^2\times\{0\}$, 
denote by $(\tilde W_z)$ the standard wall representation of $\cI \setminus \cS_x$, and for every wall $\tilde W_y$ define 
\[ \fD_x(\tilde W_y,j,\omega_1,\omega_2) = d(\Theta_\udarrow^{\cI\setminus \cS_x} \tilde W_y\,,\, \sX_j \cup \theta_{\rho(x+\omega_1+\omega_2)}\sX_j \cup \theta_{\rho(x+\omega_2)}\Theta_\trivincr \sX_j)\,,\]
where for $\omega \in \R^3$ and $A\subset \R^3$, $\theta_\omega A$ is the shift of $A$ by the vector $\omega$, i.e., $\theta_\omega A = A+\omega$.  
\begin{figure}
\begin{algorithm2e}[H]
\DontPrintSemicolon
\nl
Let $\{\tilde W_y : y\in \cL_{0,n}\}$ be the standard wall representation of the interface $\cI\setminus \cS_x$. Also let $\cO_{v_1}$ be the nested sequence of walls of $v_1$, so that $\theta_{\textsc{st}} \cO_{v_1} =  \tilde {\mathfrak W}_{v_1}$.\;

\BlankLine
\tcp{Base modification}
\nl
Mark $[x]=\{x\}\cup \partial_0 x$ and $\rho(v_1)$ for deletion (where $\partial_0 x$ denotes the four faces in $\cL_0$ adjacent to $x$).\;

\nl 
\If{the interface with standard wall representation $\tilde \fW_{v_1}$ has a cut-height}
{Let $h^\dagger$ be the height of the highest such cut-height.\;
Let $y^\dagger$ be the index of a wall that intersects $(\cP_x \setminus \cO_{v_1}) \cap \cL_{h^\dagger}$ and mark $y^\dagger$ for deletion.\;
}

\BlankLine
\tcp{Spine modification (A): the 1st increment}

\nl
Set $\fs_1\gets 0$ and $y^*_A \gets \emptyset$.\;
 
\For{$j=1$ \KwTo $\sT+1$}{     
Let $s\gets\fs_j$ and $\fs_{j+1}\gets \fs_j$.\;
     \If(\hfill\tcp*[h]{(A1)}){
\quad$ \fm(\sX_j) \geq j-1 $ \quad\mbox{}}{Let $\fs_{j+1} \gets j$.} 
     \If(\hfill\tcp*[h]{(A2)}){ 
	\quad$\fD_x( \tilde W_y,j,-v_{s+1},0)\leq \fm(\tilde W_y)$\,\quad  for some $y$\quad\mbox{}}{
      Let $\fs_{j+1} \gets j$ and mark for deletion every $y$ for which~({\tt A2}) holds.}
     \If(\hfill\tcp*[h]{(A3)}){\quad$\fD_x(\tilde W_y,j,-v_{s+1},0) \leq (j-1)/2$\qquad\quad for some $y$\quad\mbox{}}{
      Let $\fs_{j+1}\gets j$ and let $y^*_A $ be the minimal index $y$ for which~({\tt A3}) holds.}
}
 Let $j^* \gets \fs_{\sT+2}$ and mark $y^*_A$ for deletion.\;

\BlankLine
\tcp{Spine modification (B): the $t$-th increment}
\nl 
\eIf{$t>j^*$}{
Set $\mathfrak{s}_{t}\gets t-1$ and $y^*_B \gets \emptyset$.\;
\For{$k=t$ \KwTo $\sT+1$}{     
Let $s\gets \fs_k$ and $\fs_{k+1}\gets \fs_k$.\;
     \If(\hfill\tcp*[h]{(B1)}){
\quad$  \fm(\sX_k) \geq k - t $\quad\mbox{}}{Let $\fs_{k+1} \gets k$.} 
     \If(\hfill\tcp*[h]{(B2)}){ 
	$\fD_x(\tilde W_y,j,-v_{s+1},v_t-v_{j^*+1})\leq \fm(\tilde W_y)$ \quad\quad for some $y$\,\,\mbox{}}{
      Let $\fs_{k+1} \gets k$ and mark for deletion every $y$ for which~({\tt B2}) holds.}
     \If(\hfill\tcp*[h]{(B3)}){\:$\fD_x(\tilde W_y,j,-v_{s+1},
     v_t-v_{j^*+1}) \leq (k-t)/2$ \,\,\quad for some $y$\:\mbox{}}{
      Let $\fs_{k+1} \gets k$ and let $y^*_B $ be the minimal index $y$ for which~({\tt B3}) holds.}
}
 Let $k^* \gets \fs_{\sT+2}$ and mark $y^*_B$ for deletion.\;
 }{Let $k^* \gets j^*$.}
\BlankLine

\nl 
\lForEach{index $y\in\cL_{0,n}$ marked for deletion}
{delete $\tilde\fF_y$ from the standard wall representation $(\tilde W_y)$.}

\nl Add a standard wall $W_x^\cJ$ consisting of $\hgt(v_1)- \frac 12$ trivial increments above $x$.

\nl Let $\cK$ be the (unique) interface with the resulting standard wall representation.

\nl Denoting by $(\sX_i)_{i\geq 1}$ the increment sequence of $\cS_x$, set
\[ \cS \gets \begin{cases}\big( \underbrace{X_\trivincr,X_\trivincr,\ldots,X_\trivincr}_{\hgt(v_{j^*+1})-\hgt(v_1)},\sX_{j^*+1},\ldots,\sX_{t-1}, \underbrace{X_\trivincr,X_\trivincr,\ldots,X_\trivincr}_{\hgt(v_{k^*+1})-\hgt(v_t)},\sX_{k^*+1},\ldots \big) &\mbox{ if $t > j^*$},\\
\noalign{\medskip}
 \big( \underbrace{X_\trivincr,X_\trivincr,\ldots,X_\trivincr}_{\hgt(v_{j^*+1})-\hgt(v_1)},\sX_{j^*+1},\ldots \big) &\mbox{ if $t \leq j^*$}\,.\end{cases}\]
\nl Obtain $\Psi_{x,t}(\cI)$ by appending the spine with increment sequence $\cS$ to $\cK$ at $x+(0,0,\hgt(v_1))$.\;

\caption{The map $\Psi_{x,t}$}
\label{alg:le-big-map}
\end{algorithm2e}
\end{figure}

\begin{definition}
For $x\in \cL_{0,n}$, every $t\geq 1$ and every $0 \leq T < H$, define the map $\Psi_{x,t}: \bar\bI_{x,T,H}\to\bar\bI_{x,T,H}$ as specified in Algorithm~\ref{alg:le-big-map} below.
\end{definition}

\begin{remark}
In the exceptional case $T=0$, when we are applying $\Psi_{x,t}$ to interfaces having pillars $\cP_x$ with $\sT=0$ increments (so that they have either zero or one cut-points), we interpret the steps in $\Psi_{x,t}$ in the following way for it to be well-defined. If $\sT=0$ but $v_1$ exists, then recalling that $\sX_1:= \sX_{>0}$, the remainder will be trivialized and the rest of the map is applied as is. If $\sT=0$ and $\cP_x$ has no cut-points, then take an arbitrary face of $\cP_x$ having height $\hgt(\cP_x)$ to stand-in as ``$v_1$," and steps 4--6 will be vacuous. 

Notice, more generally, that if $t>\sT$, step 5 would be vacuous but the map is still well-defined and our results hold by interpreting $\fm(\sX_t)=0$ if $\sX_t=\emptyset$.  
\end{remark}

\subsection{Strategy of the map \texorpdfstring{$\Psi_{x,t}$}{Psi(x,t)}}\label{subsec:reader-guide-map}
We now motivate the different steps in the map $\Psi_{x,t}$ and describe why each one is important to the trade-off described in Section~\ref{subsec:proof-outline-map} between control of the interaction terms and the multiplicity of the map. 
Let us recall in more detail the maps introduced in the prequel~\cite{GL19a} on pillars that reach height $H$, and used there to establish a bound of $O(\log H)$ on the height of the base and exponential tails on the increments above that height. Let $v_{\Tsp}$ be a special cut-point index of the spine, marking the first increment whose height is larger than all other pillars in a ball of radius $CT$ about $x$. For~$t>\Tsp$, the map that proved an exponential tail on the $t$-th increment would simply ``trivialize" $\sX_t$ and $\sX_j$ ($j>t$) in the increment sequence of $\cS_x$ if $\fm(\sX_j)\geq e^{\bar c(j-t)/2}$. In bounding the interactions by~\eqref{eq:g-exponential-decay}, this competed with $e^{\bar c (j-\Tsp)}\leq e^{-\bar c (j-t)}$ (because the horizontal shift of the portion of the spine above $v_{t+1}$ keeps the distance between $\sX_j$ and $\cI \setminus \cP_x$ at least $j-\Tsp$), and summing these terms over $j$ was $O(1)$.

However, for $t<\Tsp$, we have no control on the distance between  the new spine and walls in $\cI\setminus\cP_x$; in fact horizontal shifts of increments $j\in \llb t+1, \Tsp\rrb$ could even \emph{hit} a  neighboring wall of $\cI \setminus \cS_x$ (Figure~\ref{fig:criteria-violations}, left), and the map would not yield a valid interface.  
We thus have to consider the full geometry of these interactions as walls $\tilde W_y$ undergo vertical shifts, and nearby increments  simultaneously undergo horizontal shifts.  

With these difficulties laid out, we discuss the various steps in the definition of $\Psi_{x,t}$ and the different scenarios they are designed to address. The base modification (steps 2--3) here is very similar to that used in~\cite{GL19a}: it marks the nested sequence $\tilde \fW_{[x]}$ for deletion so that the modified spine $\cS$ can later be placed above $W_x^\cJ$ at $x$ to form the new pillar $\cP_x^\cJ$; the additional deletion of $\tilde \fW_{\rho(v_1)}$ and $\tilde \fW_{y^\dagger}$ is to exploit the fact that $\cP_x$ has no cut-points below $v_1$ and ensure that the gain in energy $\fm(\cI;\Psi(\cI))$ is larger than $\hgt(v_1)- \frac 12$.  

\begin{figure}
\begin{tikzpicture}

    \node (fig1) at (-4.4,0) {
	\includegraphics[width=.30\textwidth]{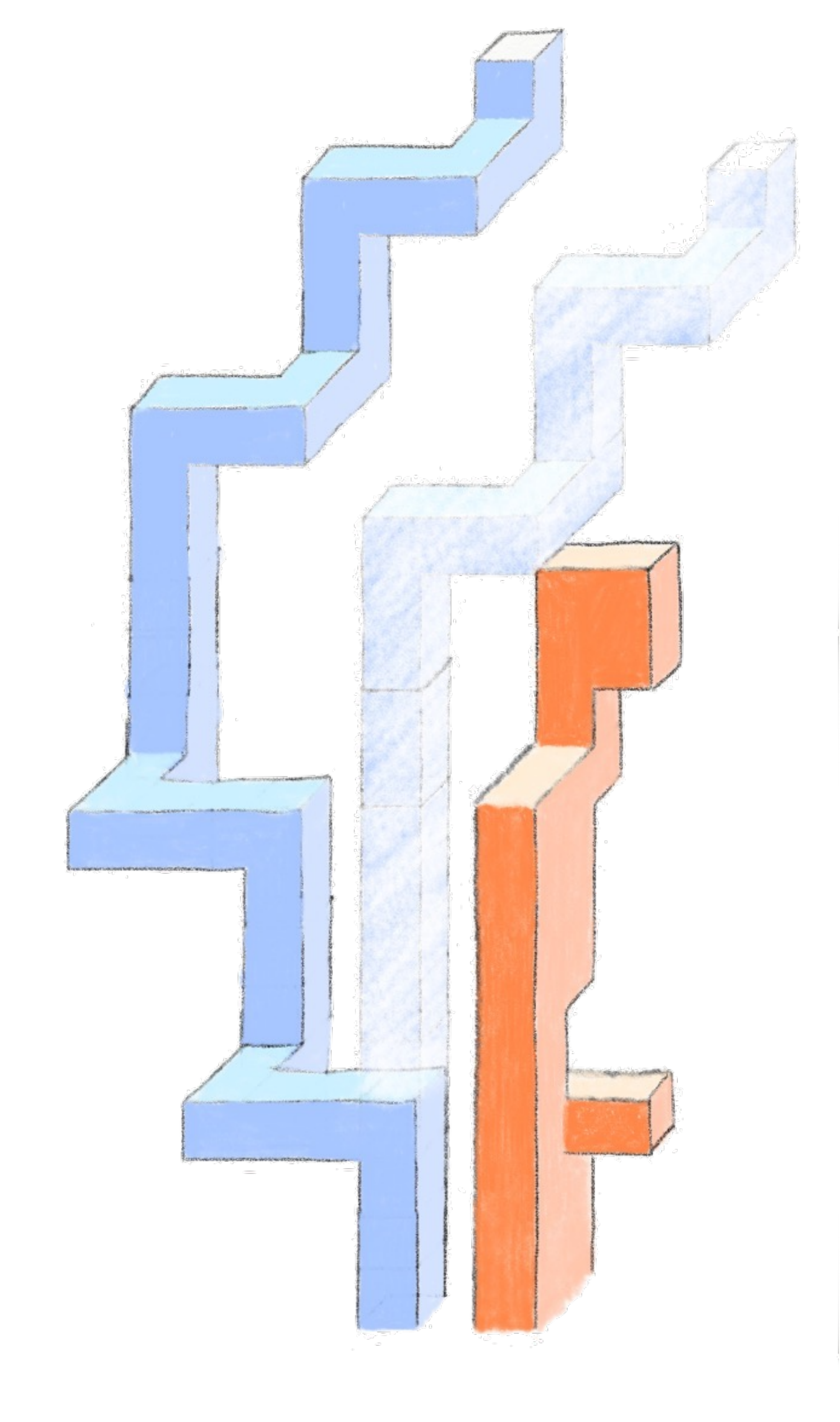}
	};
	
	\draw[->] (-4.7,1.7)--(-4, 1.5);
	
    \node (fig2) at (4.4,.18) {
    \includegraphics[width=.55\textwidth]{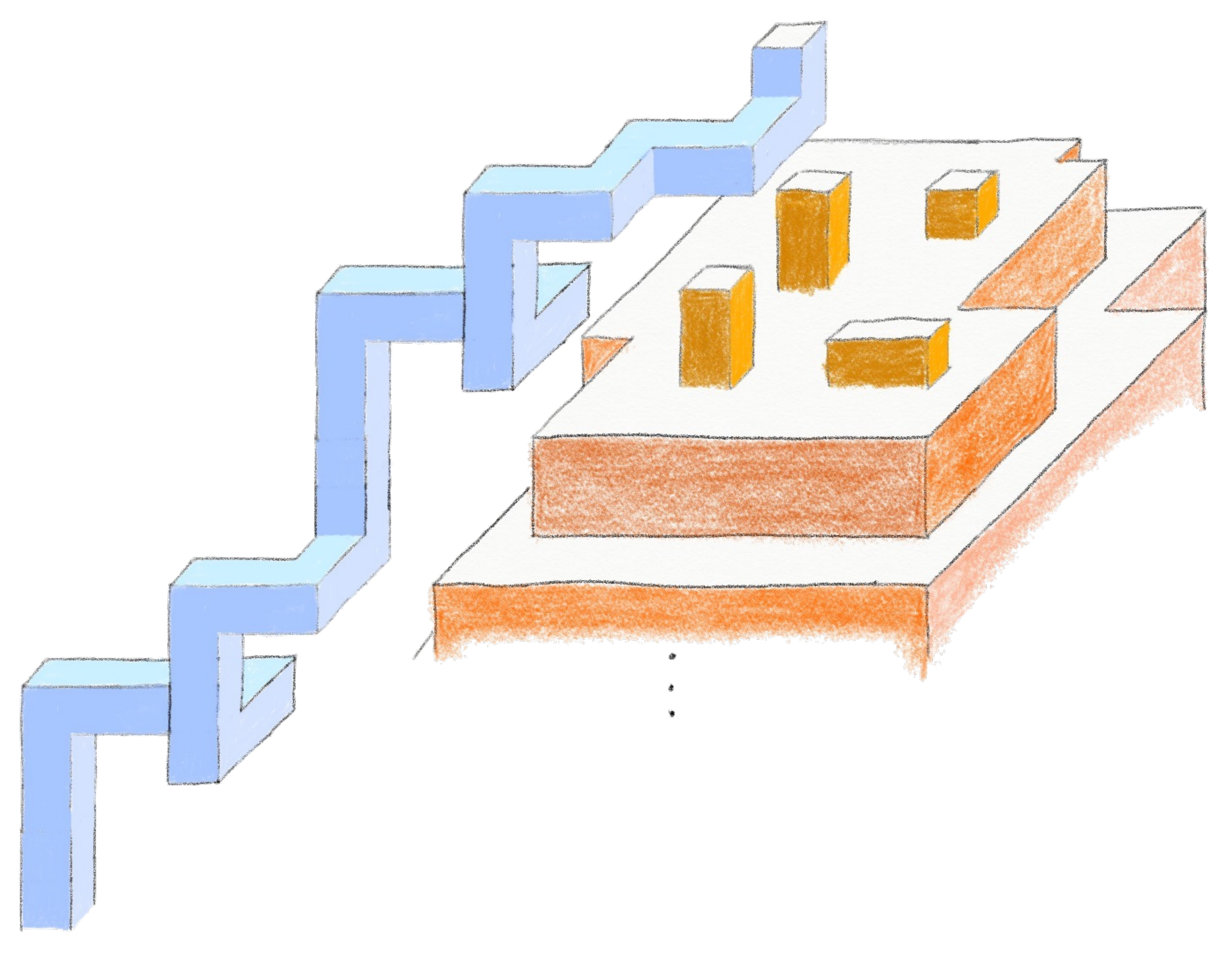}
    };
    \end{tikzpicture}
    \vspace{-0.2in}
    \caption{Left: After trivializing increments $\sX_1,\sX_4$, the shifted spine intersects an adjacent wall (orange). This constitutes an ({\tt{A2}}) violation. Right: the walls in yellow-brown are small (and therefore do not violate ({\tt{A2}}), but still interact strongly with the increment sequence; these will violate criterion ({\tt{A3}}).}
    \label{fig:criteria-violations}
    \vspace{-0.1in}
\end{figure}

The spine modification is substantially more involved. Note that the modifications in ({\tt{A}}) (Step 4) and those in ({\tt{B}}) (Step 5) are essentially the same, with the latter applied at the $t$-th increment so that we can prove the exponential tail on $\fm(\sX_t)$ simultaneously with the exponential tail on $\fm(\sB_x)$. Thus, let us only discuss the steps in the former (the spine modification ({\tt{A}}) at the first increment). 

\begin{enumerate}[({\tt{A}}1)]
    \item aims to control interactions between the horizontal shifts of the increments within the spine itself. 
    Unlike~\cite{GL19a}, where the corresponding modification used a threshold of $\fm(\sX_j) \geq e^{\bar c (j-t)/2}$ which was in a sense the ``most lenient" criterion for trivializing increments, it is important that here we use a ``strictest possible" criterion only allowing a linear growth of the excess areas of increments. 
    \item ensures that after $\Psi_{x,t}$ is applied, any walls that were hit by horizontal shifts of the spine are deleted. This is achieved via a soft threshold comparing the distance between various horizontal shifts of $\sX_i$ to a wall $W_y$ (the relevant quantity in bounding $|\g(f,\cI)- \g(f',\Psi_{x,t}(\cI))|$ via~\eqref{eq:g-exponential-decay}), with the excess area $\fm(W_y)$. Notice that the threshold cannot be done with respect to $\fm(\fW_y)$ instead of $\fm(W_y)$ because one large wall nesting many smaller walls only counts once towards $\fm(\cI;\Psi_{x,t}(\cI))$. (See Figure~\ref{fig:criteria-violations}, left.)   
    \item addresses the additional scenario in which many distinct walls of small excess area $W_1,W_2 , \ldots$ are nested in some $W'$, and the spine draws close to $W_1 , W_2,\ldots$ without violating ({\tt{A2}}). In this situation, only one \emph{highest} nested sequence of walls violating this criterion is deleted in addition to the trivialization of the increments; otherwise we would again be overcounting the nesting wall.  (See Figure~\ref{fig:criteria-violations}, right.)
\end{enumerate}

Finally, it is crucial that the horizontal shifts $\omega_1,\omega_2$ considered in $\fD(\tilde W_y, j, \omega_1, \omega_2)$ in ({\tt{A2}})--({\tt{A3}}) are to be determined in an  algorithmic manner. Namely, we want to ensure that if an increment is \emph{not} trivialized, its horizontal shift in $\Psi_{x,t}(\cI)$ did not violate any of the criteria above; the horizontal shift vector with which this needs to be checked is determined by the last increment to have violated one of the trivialization criteria.    

\subsection{Properties of \texorpdfstring{$\Psi_{x,t}$}{Psi(x,t)}}In this section, we begin by showing that the map $\Psi_{x,t}$ is well-defined on tame interfaces. We then give a decomposition of the interfaces $\cI$ and $\Psi_{x,t}(\cI)$ and prove some simple inequalities on the excess area $\fm(\cI;\cJ)$.

\begin{proposition}[Well-definedness of map $\Psi_{x,t}$]\label{prop:well-defined}
For every $T< H$, and every $\cI\in \bar{\bI}_{x,T,H}$, the interface $\Psi_{x,t}(\cI)$ is well-defined and is an element of $\bI_{x,T,H}$. 
\end{proposition}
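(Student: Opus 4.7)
My plan is to verify three things in sequence: that the standard wall collection produced by steps 1--7 is admissible; that attaching the modified spine in step 9 yields a valid interface; and that the output interface belongs to $\bI_{x,T,H}$.

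First I would handle admissibility. After step 6, only deletions have been applied to the admissible collection $(\tilde W_y)$ describing $\cI\setminus\cS_x$, so what remains is admissible. In step 7 the column $W_x^\cJ$, whose projection is $\{x\}\cup\partial_0 x$, is added; because step 2 marked $\tilde\fF_{[x]}$ for deletion---thereby removing every wall having $x$ or a face of $\partial_0 x$ in its interior---no surviving wall projects onto $\{x\}\cup\partial_0 x$, so $W_x^\cJ$ joins admissibly.

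The heart of the argument is that attaching $\cS$ in step 9 preserves admissibility, i.e., that each non-trivialized increment $\sX_j$, placed at its new horizontal position, has projection disjoint from every surviving $\tilde W_y$---accounting also for the vertical shifts $\Theta_\udarrow^{\cI\setminus\cS_x}\tilde W_y$ induced elsewhere by deletions of nesting walls. I would trace the loop of step~4: once $j>j^*$, the counter $s=\fs_j$ stabilizes at $j^*$, so the criterion $\fD_x(\tilde W_y,j,-v_{s+1},0)\leq \fm(\tilde W_y)$ of ({\tt A2}) is precisely the statement that the shift $x-v_{j^*+1}$ applied to $\sX_j$ in the reconstructed spine would bring it within distance $\fm(\tilde W_y)$ of some admissible vertical placement of $\tilde W_y$. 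Since $\sX_j$ was kept (so none of ({\tt A1}), ({\tt A2}), ({\tt A3}) triggered), this criterion fails for every surviving $\tilde W_y$, ruling out collisions. The analogous argument via ({\tt B2})--({\tt B3}), with the extra $\omega_2=v_t-v_{j^*+1}$ correction reflecting the horizontal reset after the second trivialization block, handles increments above $v_t$. Lemma~\ref{lem:standard-wall-representation} then uniquely identifies an interface from this admissible collection.

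To finish, I would use that trivialization substitutes an increment of vertical span $h$ by $h$ trivial ones, which preserves $\hgt(\cP_x)\geq H$ and (weakly) increases $\sT\geq T$, so $\Psi_{x,t}(\cI)\in\bI_{x,T,H}$. Tameness of $\cI$, together with~\eqref{eq:geometric-observation-tameness}, ensures every face of the modified pillar lies within $\Lambda_n$ (the horizontal shifts applied to any surviving $\sX_j$ have magnitude bounded by $\diam(\sB_x)+\tfrac14\fm(\cS_x)<d(x,\partial\Lambda_n)$). The hard part will be the collision-avoidance analysis above: one must confirm that the algorithmic bookkeeping of $\fs_j$ (and likewise of $k^*$ in step~5) precisely captures the horizontal shift eventually applied to each surviving $\sX_j$, and that the soft threshold $\fm(\tilde W_y)$ in ({\tt A2})/({\tt B2}) dominates every possible protrusion of $\tilde W_y$ once its vertical reconstruction is factored in.
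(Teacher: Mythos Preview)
Your proposal follows essentially the same approach as the paper. One conceptual imprecision is worth correcting: the attachment of $\cS$ in step~10 is not a matter of ``preserving admissibility'' or of projections being disjoint---$\cS$ is not being added to the standard-wall collection, it is appended physically to the already-constructed interface $\cK$, so what must be checked is that $\cS$ does not intersect $\cK\setminus W_x^\cJ$ in $\R^3$. Your subsequent argument via ({\tt A2})/({\tt B2}) is exactly how the paper handles this: every face of $\cK\setminus(W_x^\cJ\cup\cL_0)$ lies in $\Theta_\udarrow^{\cI\setminus\cS_x}\tilde W_y$ for some $y$, and a collision would force $\fD_x=0\leq\fm(\tilde W_y)$, so that $y$ would have been marked for deletion and the offending face could not survive into $\cK$. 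Your closing worry about whether ``the soft threshold $\fm(\tilde W_y)$ dominates every possible protrusion'' is unnecessary---the well-definedness argument only needs the trivial inequality $0\leq\fm(\tilde W_y)$, and $\fD_x$ already minimizes over all vertical placements in $\Theta_\udarrow^{\cI\setminus\cS_x}\tilde W_y$, so the actual placement in $\cK$ is automatically accounted for.
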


\begin{proof}
Firstly, we claim that the standard wall representation obtained after step 7 is admissible. This is because, after $\tilde \fF_{[x]}= \fF_x \cup \bigcup_{f\in \partial_0 x} \fF_f$ is deleted in step 6, the wall $W_x^{\cJ}$  has disjoint projection from all remaining standard walls. We next must ensure that when adding the modified $\cS$ in step 10 to $\cK$, it does not intersect any part of the pre-existing interface, or $\partial \Lambda_n$. 

For this, notice that if $t> j^*$, then $\cS$ is exactly 
\begin{align*}
    \bigcup_{1\leq j \leq j^*} \theta_{\rho(x)}\Theta_\trivincr \sX_j \cup \bigcup_{j^* < i < t} \theta_{\rho(x- v_{j^*+1})}\sX_i \cup \bigcup_{t\leq k\leq k^*} \theta_{\rho(x+v_t-v_{j^*+1})} \Theta_\trivincr \sX_k \cup \bigcup_{i> k^*+1} \theta_{\rho(x-v_{k^*+1}+v_t - v_{j^*+1})}\theta \sX_i
\end{align*}
and if $t\leq j^*$, then $\cS$ is exactly 
\begin{align*}
    \bigcup_{1\leq j \leq j^*} \theta_{\rho(x)}\Theta_\trivincr \sX_j \cup \bigcup_{i>j^*} \theta_{\rho(x- v_{j^*+1})}\sX_i\,.
\end{align*}
Now, make the following observation regarding the sequence of shifts observed while running~$\Psi_{x,t}$. 
\begin{observation}\label{obs:fs_i}
The sequence $(\fs_i)$ has $\fs_{i+1}\neq \fs_i$ if and only if one of criteria ({\tt{A1}}), ({\tt{A2}}), ({\tt{A3}}) or ({\tt{B1}}), ({\tt{B2}}), ({\tt{B3}}) were attained for $i$, in which case $\fs_{i+1} = i$. Consequently, $\fs_i = j^*$ for every $j^* <i<t$, and $\fs_i = k^*$ for every $i>k^*$. 
\end{observation}

\noindent Thus, whether $t>j^*$ or $t\leq j^*$, the shifts and trivializations comprising $\cS$ are considered in the criteria
\begin{align*}
    \fD_x(\tilde W_y,j,-v_{\fs_j+1},0) \leq (j-1)/2\qquad  \mbox{and}\qquad \fD_x(\tilde W_y,j,-v_{\fs_j+1},
     v_t-v_{j^*+1}) \leq (k-t)/2\,.
\end{align*}
Also, by definition, every face $f$ of $\cK\setminus (W_x^\cJ\cup \cL_0)$ is in $\Theta_\udarrow^{\cI\setminus \cS_x} \tilde W_y$ for some $y$. As such, if $\cS$ intersects some pre-existing part of $\cK\setminus W_x^\cJ$, there would have been some pair $y,j$ such that the above distance $\fD_x$ would be zero; in that case, that $y$ would have been marked for deletion, and the corresponding face in $\cK$ would be in $\cL_0$ yielding a contradiction. 

In order to see that the addition of $\cS$ does not hit $\partial \Lambda_n$, we use the definition of tameness. In particular, the  horizontal displacement of the spine $\cS$ from $x$ is always bounded above by
\begin{align*}
   \diam(\sB_x) +\tfrac 14 \fm(\cS_x) <d(x,\partial \Lambda_n)\,,
\end{align*}
where the inequality was by definition of $\cI \in \bar{\bI}_{x,T,H}$ and $\Psi_{x,t}(\cI)$ is a valid interface. 

Finally, we observe that the resulting interface is in $\bI_{x,T,H}$. Notice that the resulting pillar of $x$ in $\Psi_{x,t}(\cI)$ consists of $W_x^\cJ\cup \cS$; on the one hand, this has at least $T$ increments since trivializing increments only increases the total number of increments and on the other hand, it has the same height as $\cP_x$ by construction.  
\end{proof}

\subsubsection{Decomposition of the interfaces}\label{subsec:decomposition-interfaces}
Fix any interface $\cI \in \bar{\bI}_{x,T,H}$ and for ease of notation, let $\cJ = \Psi_{x,t}(\cI)$. We begin by partitioning the faces of $\cI$ and $\cJ$ into their constituent parts as dictated by the map $\Psi_{x,t}$. This partioning will govern the pairings of $\g(f,\cI)$ with $\g(f',\cJ)$ when applying~\eqref{eq:g-exponential-decay}. 

\begin{figure}
    \centering
\begin{tikzpicture}
    \node (glossary-I) at (90pt,-28pt) {\includegraphics[width=0.4\textwidth]{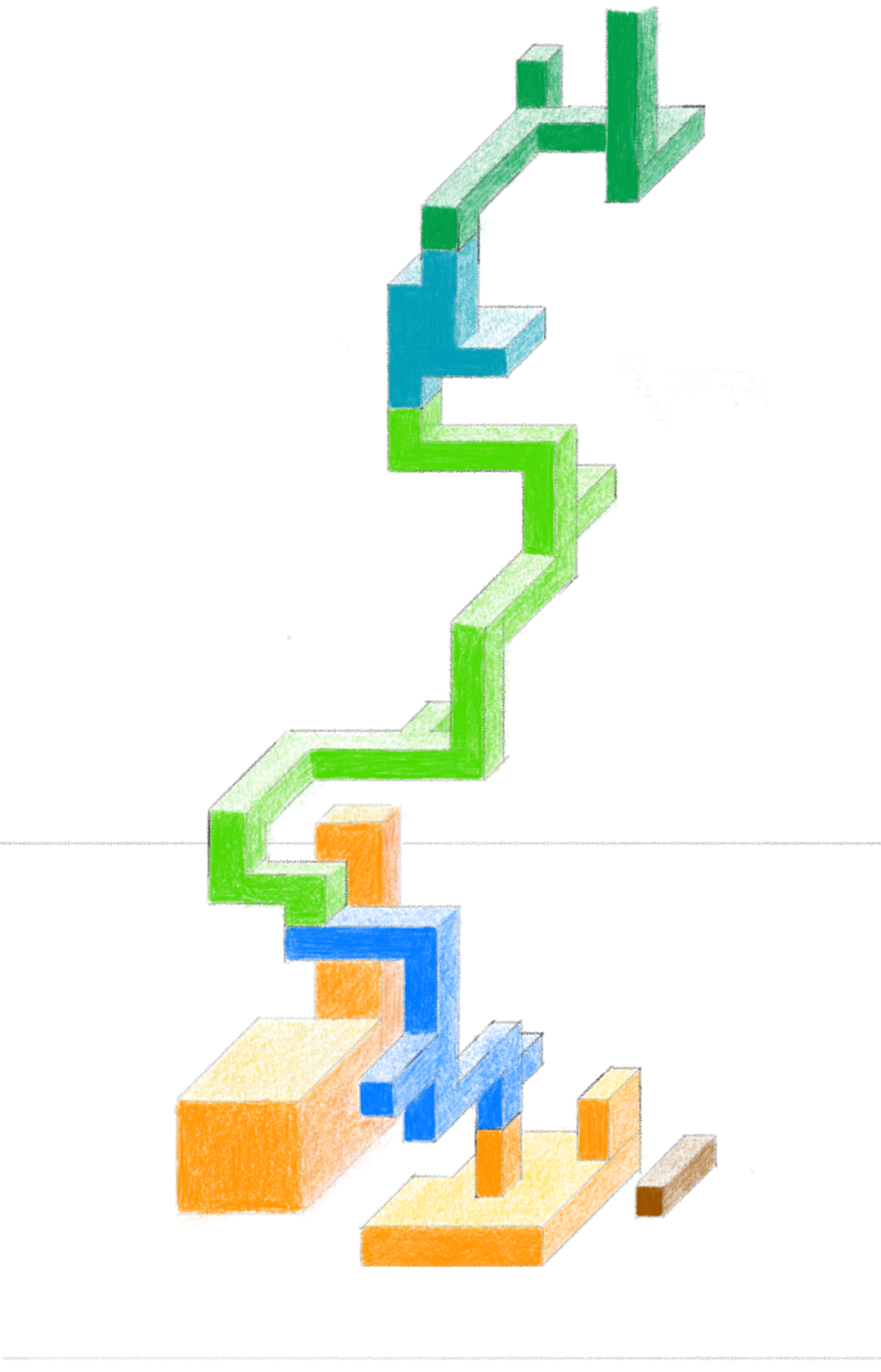}};
    \draw[rounded corners, color=gray] (-5pt,-25pt) rectangle (40pt,115pt); 
    \pillarbox{0}{100pt}{$\bX_2^\cI$}{pillDarkGreenFG}{pillDarkGreenBG};
    \pillarbox{0}{80pt}{$\bX_B^\cI$}{pillAzureFG}{pillAzureBG};
    \pillarbox{0}{60pt}{$\bX_1^\cI$}{pillGreenFG}{pillGreenBG};
    \pillarbox{0}{40pt}{$\bX_A^\cI$}{pillBlueFG}{pillBlueBG};
    \wallface{0}{20pt}{$\bW$}{pillOrangeFG}{pillOrangeBG};
    \ceilface{0}{0pt}{$\bC_1$}{pillOrangeFG}{pillOrangeBG};
    \pillarbox{0}{-20pt}{$\bC_2$}{pillBrownFG}{pillBrownBG};

    \begin{scope}[shift={(250pt,0)}]
    \node (glossary-J) at (90pt,-23.5pt) {\includegraphics[width=0.4\textwidth]{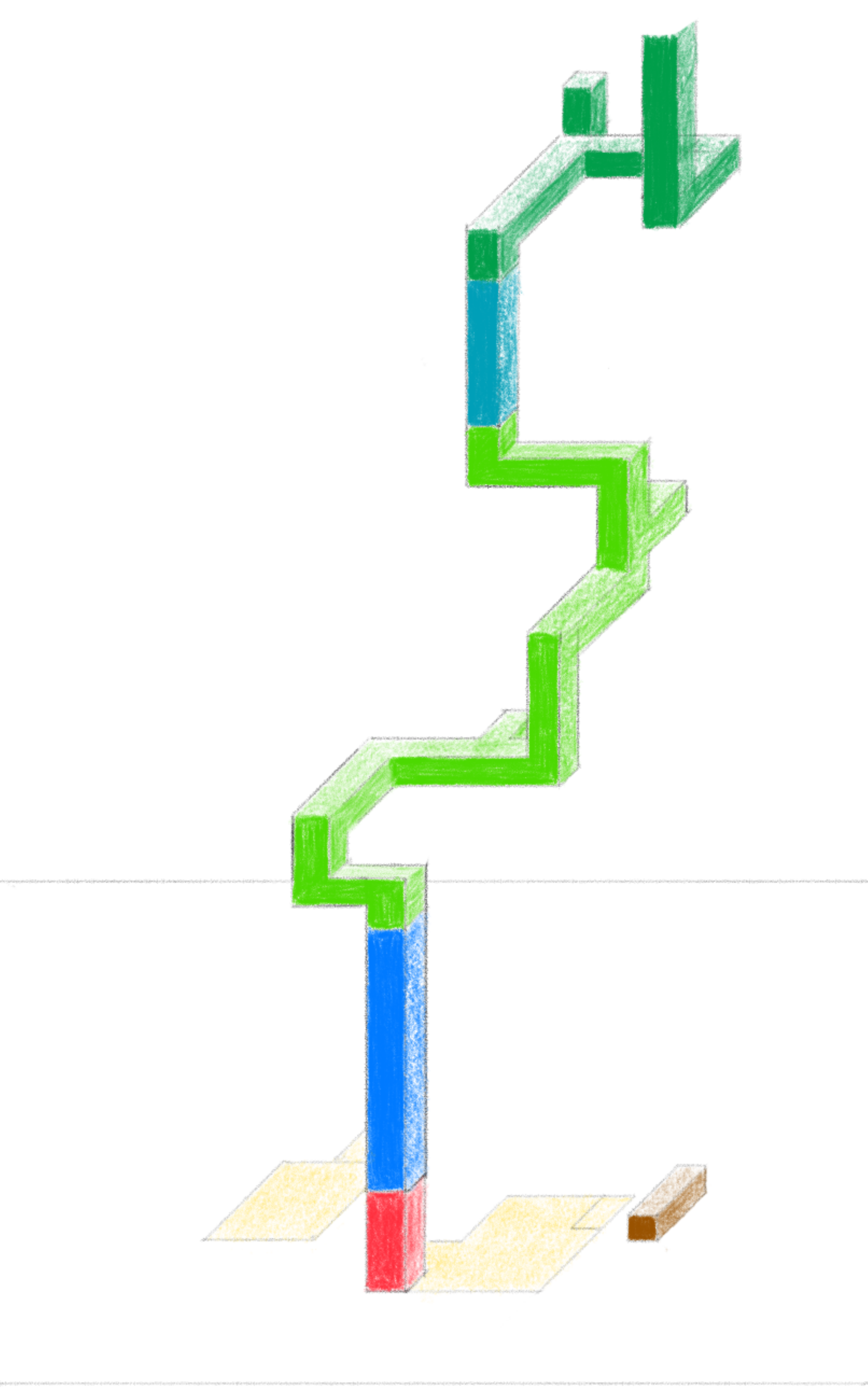}};
    \draw[rounded corners, color=gray] (-5pt,-25pt) rectangle (40pt,115pt); 
    \pillarbox{0}{100pt}{$\bX_2^\cJ$}{pillDarkGreenFG}{pillDarkGreenBG};
    \pillarbox{0}{80pt}{$\bX_B^\cJ$}{pillAzureFG}{pillAzureBG};
    \pillarbox{0}{60pt}{$\bX_1^\cJ$}{pillGreenFG}{pillGreenBG};
    \pillarbox{0}{40pt}{$\bX_A^\cJ$}{pillBlueFG}{pillBlueBG};
    \wallface{0}{20pt}{$W_x^\cJ$}{pillRedFG}{pillRedBG};
    \ceilface{0}{0}{$\theta_\udarrow \bC_1$}{pillOrangeFG}{pillOrangeBG};
    \pillarbox{0}{-20pt}{$\theta_{\udarrow}\bC_2$}{pillBrownFG}{pillBrownBG};
    


	\draw[densely dotted] (-5.05-1.14,-4.4-.93)--(-4.75-1.14,-4.4-.93)--(-4.6-1.14,-4.25-.93)--(-4.9-1.14,-4.25-.93)--(-5.05-1.14,-4.4-.93);
  \node at (-4.825-1.14,-4.325-.93) {\fontsize{3.5}{4}\selectfont $x$};

    	\draw[densely dotted] (3.9-1.23,-4.4-.93)--(4.2-1.23,-4.4-.93)--(4.35-1.23,-4.25-.93)--(4.05-1.23,-4.25-.93)--(3.9-1.23,-4.4-.93);
  \node at (4.125-1.23,-4.325-.93) {\fontsize{3.5}{4}\selectfont $x$};

    \end{scope}
\end{tikzpicture}
\vspace{-0.2in}
\caption{The decomposition of the interfaces $\cI$ (left) and $\cJ$ (right) into their constituent parts.}
    \label{fig:glossary}
\end{figure}

Let $\bY\subset \cL_0$ be the set of indices of walls in $\cI \setminus \cS_x$ that were marked for deletion. Let ${\bD}\subset \cL_0$ be the indices of walls that were deleted (i.e., walls in $\bigcup\{ \tilde \fF_y : y\in \bY\}$). 
Split up the faces of $\cI$ as follows:
{\renewcommand{\arraystretch}{1.3}
\[
\begin{tabular}{m{0.05\textwidth}m{0.3\textwidth}m{0.5\textwidth}}
\toprule
\midrule
 $\bX_A^\cI$ & $\bigcup_{1\leq j \leq j^*} \cF(\sX_j)$ & Increments between $v_1$ and $v_{j^*+1}$  \\ 
 $\bX_{1}^{\cI}$ & $\bigcup_{j^* + 1\leq j \leq t-1}\cF(\sX_j)$ & If $t> j^*$, increments between $v_{j^*+1}$ and $v_{t}$ \\
 $\bX_{B}^{\cI}$ & $\bigcup_{t \leq k \leq k^*} \cF(\sX_k)$ & If $t> j^*$, increments between $v_{t}$ and $v_{k^* + 1}$ \\ 
 $\bX_{2}^\cI$ & $\bigcup_{k \geq k^*} \cF(\sX_k)$ & Increments above $v_{k^* + 1}$ \\
 $\bW$ & $\bigcup_{z\in \bD} \tilde W_z$ & All walls that were deleted \\
 $\bB$ & $\cI\setminus(\bX_A^\cI\cup\bX_1^\cI\cup\bX_B^\cI\cup\bX_2^\cI\cup\bW)$
 & The remaining set of faces in $\cI$ \\
 \bottomrule
 \end{tabular}
\]
}
where $\bB$ splits further into
{\renewcommand{\arraystretch}{1.25}
\[
\begin{tabular}{m{0.05\textwidth}m{0.3\textwidth}m{0.5\textwidth}}
\toprule
\midrule
    $\bC_1$ & 
    $\bigcup_{y\in \bY} \lceil \tilde W_y \rceil$
& Ceilings of walls marked for deletion \\
 $\bC_2$& $\bigcup_{z\in \bD\setminus \bY} \lceil \tilde W_z\rceil \cup \bigcup_{z\notin \bD} \tilde W_z \cup \lceil \tilde W_z \rceil $& 
Ceilings of walls that were not marked, along with all non-deleted walls and their ceilings\\
$\bF$ & $(\bB \cap \cL_0) \setminus \bigcup_{z} \tilde W_z \cup \lceil \tilde W_z \rceil$
& Faces in $\bB$ that are only floors of walls \\
\bottomrule
 \end{tabular}
\]
}
\noindent See Figure~\ref{fig:glossary} (left) for a depiction of this splitting. 

We next partition the faces of $\cJ $. Let us first introduce a few pieces of notation. Denote by $\cP_x^\cJ$ the pillar of $x$ in $\cJ = \Psi_{x,t}(\cI)$; observe that by construction, in $\cJ$ the spine $\cS_x^\cJ$ is all of $\cP_x^\cJ$. For a given $\cI,x,t$ define the shift map $\lrvec \theta$ as the horizontal shift on the increments $\sX_{j^* + 1},\ldots, \sX_{t-1}$ and $\sX_{k^*+1},\ldots$ from $\Psi_{x,t}$. Namely, for $f\in \bX_1^\cI\cup \bX_2^\cI$, let
\begin{align}\label{eq-theta-lr-def}
        \lrvec \theta f =     
        \begin{cases}
        \theta_{\rho(x- v_{j^*+1})} f & \qquad \mbox{if $f\in \bX_{1}^{\cI}$ or if $f\in\bX_2^\cI$ and $t \leq j^*$}\,,\\
        \theta_{\rho(x- v_{k^* + 1} +v_{t} - v_{j^*+1})} f & \qquad \mbox{if $f\in\bX_2^\cI$ and $t>j^*$}\,.
  \end{cases}
\end{align}
We can also define the map $\theta_{\udarrow}$ on faces in $\bB$, that vertically shifts faces of $\bB$ to obtain corresponding faces of $\cJ$ as dictated by the bijection Lemma~\ref{lem:standard-wall-representation} and the removal of the walls in $\bW$. With these, let:  

{\renewcommand{\arraystretch}{1.3}
\[
\begin{tabular}{m{0.05\textwidth}m{0.33\textwidth}m{0.51\textwidth}}
\toprule
\midrule
 $\bX_A^\cJ$ & $\Theta_{\trivincr} \bX_A^\cI$ & Increments of $\cJ$ between $\hgt(v_1)$ and $\hgt(v_{j^*+1})$  \\ 
 $\bX_{1}^{\cJ}$ & $\lrvec \theta \bX_{1}^{\cI}$ & Increments of $\cJ$ between $\hgt(v_{j^*+1})$ and $\hgt(v_{t})$ \\
 $\bX_{B}^{\cJ}$ & $\theta_{\rho(v_t-v_{j^*+1})} \Theta_{\trivincr} \bX_B^\cI$ & Increments of $\cJ$ between $\hgt(v_{t})$ and $\hgt(v_{k^* + 1})$ \\ 
 $\bX_{2}^\cJ$ & $\lrvec \theta \bX_2^\cI$ & Increments of $\cJ$ above $\hgt(v_{k^* + 1})$ \\
 $W_x^\cJ$ & $\cF(x+[ -\frac 12, \frac 12]^2\times [0,\hgt(v_1)-\frac 12])$ & New faces added in step 9 of $\Psi_{x,t}$ \\
 $\theta_{\udarrow} \bB$ & $\theta_{\udarrow}\bC_1 \cup \theta_\udarrow \bC_2 \cup \bF$ & Vertical translations of $\bB$ due to the deletion of walls~$\bW$ \\
 $\bH$ & $\bigcup \{f\in \cJ\setminus \cP_x^\cJ: \rho(f) \in \cF(\rho(\bW))\}$ & Faces added to ``fill in" the rest of the interface  \\
 \bottomrule
 \end{tabular}
\]
}
Refer to Figure~\ref{fig:glossary} (right) for a depiction of this splitting.

\subsubsection{The excess area of the map}
With the above decomposition in hand, note that the change in energy between $\cI$ and $\cJ = \Psi_{x,t}(\cI)$ is given by
\begin{align}\label{eq:m(I;J)}
\fm(\cI;\cJ)= \sum_{z\in \bD} \fm(\tilde W_z) + \sum_{j=1}^{j^*} \fm(\sX_j) + \sum_{k=t}^{k^*} \fm(\sX_k)\one\{t>j^*\} - |W_x^\cJ|\,.
\end{align}

The following inequalities regarding $\fm(\cI; \cJ)$ will be used repeatedly. 

\begin{claim}\label{clm:m(W)}For every $\cI \in \bar{\bI}_{x,T,H}$, denoting by $\cJ = \Psi_{x,t}(\cI)$, we have
\begin{align}\label{eq:|W_x^J|}
|W_x^\cJ| \leq \frac23 \fm(\tilde \fW_{v_1} \cup \tilde \fW_{y^\dagger})
\end{align}
and in particular
\begin{equation}\label{eq:m(I;J)-WxJ} |W_x^\cJ|\leq 2 \fm(\cI;\cJ)\,,\quad\mbox{and}\quad \fm(\bW) \leq 3 \fm(\cI;\cJ)\,.
\end{equation}
We also have 
\begin{align}\label{eq:m(I;J)-j-k}
    j^* - 1 \leq 6 \fm(\cI;\cJ) \qquad \mbox{and}\qquad  k^* - t \leq 6 \fm(\cI; \cJ)\,.
\end{align}
\end{claim}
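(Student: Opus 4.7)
The plan is to establish the three stated inequalities of Claim~\ref{clm:m(W)} in order, leveraging the energy decomposition~\eqref{eq:m(I;J)} and the architecture of Algorithm~\ref{alg:le-big-map}.

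For the first inequality, I observe $|W_x^\cJ| = 4(\hgt(v_1) - \tfrac12)$ by direct counting of vertical faces in the tower added in step 7, and then establish $\fm(\tilde\fW_{v_1}\cup\tilde\fW_{y^\dagger}) \geq 6(\hgt(v_1)-\tfrac12)$ via a geometric case split exploiting that $v_1$ is the lowest cut-point of $\cP_x$, so the pillar has at least two cells at every horizontal slab below $\hgt(v_1)$. If the interface determined by $\tilde\fW_{v_1}$ alone has no cut-height, this two-cell property transfers to that interface and contributes at least six vertical wall faces per layer to $|\tilde\fW_{v_1}| - |\cF(\rho(\tilde\fW_{v_1}))|$, summing to $\geq 6(\hgt(v_1)-\tfrac12)$. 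If there is a cut-height $h^\dagger$, the extra wall $\tilde W_{y^\dagger}$ (which exists precisely because $\cP_x$ has no cut-height at $h^\dagger$) contributes vertical faces extending from $\cL_0$ up through $h^\dagger$, and combined with the six-per-layer contribution of $\tilde\fW_{v_1}$ above $h^\dagger$ the sum is still $\geq 6(\hgt(v_1)-\tfrac12)$.

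For the second inequality, steps 2--3 of $\Psi_{x,t}$ mark every wall in $\tilde\fW_{v_1}\cup\tilde\fW_{y^\dagger}$ for deletion, so $\bW\supseteq \tilde\fW_{v_1}\cup\tilde\fW_{y^\dagger}$ and $\fm(\bW)\geq \tfrac32 |W_x^\cJ|$ by the previous part. Dropping the nonnegative increment terms in~\eqref{eq:m(I;J)} yields $\fm(\cI;\cJ)\geq\fm(\bW)-|W_x^\cJ|\geq\tfrac12|W_x^\cJ|$, which gives $|W_x^\cJ|\leq 2\fm(\cI;\cJ)$, and then~\eqref{eq:m(I;J)} again gives $\fm(\bW)\leq\fm(\cI;\cJ)+|W_x^\cJ|\leq 3\fm(\cI;\cJ)$.

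For the third inequality, combining~\eqref{eq:m(I;J)} with the bounds above yields $\sum_{j=1}^{j^*}\fm(\sX_j)\leq 3\fm(\cI;\cJ)$ and analogously for the $k$-sum. By Observation~\ref{obs:fs_i}, one of~({\tt A1}), ({\tt A2}), ({\tt A3}) triggered at $j=j^*$. In case~({\tt A1}), $\fm(\sX_{j^*})\geq j^*-1$ directly yields $j^*-1\leq 3\fm(\cI;\cJ)$. In case~({\tt A2}) or~({\tt A3}), the triggering forces some vertical shift in $\Theta_\udarrow^{\cI\setminus\cS_x}\tilde W_y$ of the triggering wall $y$ to lie close to $\sX_{j^*}$ at its height $\hgt(v_{j^*})\geq j^*-1$; realizing such a shift requires a nested sequence of walls above $y$ in $\cI\setminus\cS_x$ of total height $\Omega(j^*-1)$, and each nested wall of height $h_i$ contributes at least $4h_i$ vertical wall faces, hence at least $4h_i$ to its excess area, giving $\fm(\tilde\fF_y)\geq 2(j^*-1)$. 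Since $\tilde\fF_y\subseteq\bW$, we conclude $2(j^*-1)\leq\fm(\bW)\leq 3\fm(\cI;\cJ)$, i.e., $j^*-1\leq \tfrac32\fm(\cI;\cJ)\leq 6\fm(\cI;\cJ)$. The analogous argument using~({\tt B1})--({\tt B3}) yields $k^*-t\leq 6\fm(\cI;\cJ)$. The main obstacle I anticipate is this third step: carefully tracking in the~({\tt A2})/({\tt A3}) case how the required vertical shift of the triggering wall forces a tall nested tower above $y$ in $\cI\setminus\cS_x$ whose excess area is fully captured inside $\fm(\bW)$.
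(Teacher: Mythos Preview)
Your treatment of~\eqref{eq:|W_x^J|} and~\eqref{eq:m(I;J)-WxJ} is correct and mirrors the paper's, which packages your geometric case-split as Corollary~\ref{cor:no-cut-points-after-step-3} (the walls in $\tilde\fW_{v_1}\cup\tilde\fW_{y^\dagger}$ meet every slab $\cL_{1/2},\ldots,\cL_{\hgt(v_1)-1}$ in at least two cells, hence at least six vertical faces).

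Your argument for~\eqref{eq:m(I;J)-j-k} has the right case structure, but the ({\tt A2}) case contains a gap. You assert that the shifted copy of $\tilde W_y$ must reach height $\Omega(j^*-1)$, and from this deduce $\fm(\tilde\fF_y)\ge 2(j^*-1)$. This fails when $\fm(\tilde W_y)$ is itself comparable to $j^*-1$: the ({\tt A2}) condition $\fD_x \le \fm(\tilde W_y)$ then allows $\fD_x$ to absorb almost all of $j^*-1$, and the shifted wall need only reach height $j^*-1-\fD_x$, which can be $O(1)$. In that regime the height argument alone gives nothing; what saves you is that $\fm(\tilde W_y)$ is already large, but you never invoke this.

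The paper's remedy is Fact~\ref{fact:useful}: since $\sX_{j^*}$ sits at height $\ge j^*-1$, one always has
\[
j^*-1 \;\le\; \fD_x(\tilde W_y,j^*,\cdot,\cdot) + \fm(\tilde\fF_y)
\]
(the distance to $\sX_{j^*}$ plus the maximal height attainable in $\Theta_\udarrow^{\cI\setminus\cS_x}\tilde W_y$ must together cover $j^*-1$). In ({\tt A2}) one then bounds $\fD_x \le \fm(\tilde W_y) \le \fm(\tilde\fF_y)$ to get $j^*-1\le 2\fm(\tilde\fF_y)$; in ({\tt A3}) one bounds $\fD_x \le (j^*-1)/2$ to reach the same conclusion. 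This yields only $\fm(\tilde\fF_y)\ge (j^*-1)/2$, not your $2(j^*-1)$, and explains why the final constant is $6$ rather than $3/2$. Your ({\tt A1}) and ({\tt A3}) cases are fine; only ({\tt A2}) needs this additive splitting in place of the pure height argument.
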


Before getting to the proof of Claim~\ref{clm:m(W)}, we need some simple geometric observations. 
Recall that for a face-set $W$, its height is given by $\hgt(W) = \max\{x_3 : (x_1,x_2,x_3)\in W\}$. Observe that $\hgt(W) = \hgt( W \cup \lceil W \rceil)$. 
\begin{fact}\label{fact:useful} For every $j\geq 1$ and every index $y \in \cL_{0,n}$,
\[ j-1 \leq \min_{W\in \Theta_\udarrow^{\cI\setminus \cS_x}  \tilde W_y} \big[d_\udarrow( W \cup \lceil  W \rceil, \sX_j) + \hgt(W)\big] \,,\]
where $d_\udarrow(A,B) = \min\{|x_3-y_3| : x\in A, y\in B\}$. In particular, for every $\omega_1$ and $\omega_2$,
\[ j-1 \leq \fD_x(\tilde W_y ,j,\omega_1,\omega_2)+ \fm(\tilde \fF_y)\,.\]
\end{fact}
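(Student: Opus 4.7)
The plan is to reduce both inequalities to elementary vertical-extent comparisons, exploiting two simple facts: cut-heights of $\cP_x$ are distinct half-integers bounded below by $\tfrac12$, and the horizontal shifts $\theta_{\rho(\cdot)}$ together with the trivialization $\Theta_\trivincr$ preserve the vertical interval occupied by an increment.

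For the first inequality, I would start by noting that since the cut-heights $\hgt(v_1)<\hgt(v_2)<\cdots$ take values in $\tfrac12+\Z_{\geq 0}$ with $\hgt(v_1)\geq\tfrac12$, we must have $\hgt(v_j)\geq j-\tfrac12$, so the bottom-most horizontal face of $\sX_j$ sits at height at least $j-1$. On the other hand, for any $W\in\Theta_\udarrow^{\cI\setminus\cS_x}\tilde W_y$ the set $W\cup\lceil W\rceil$ has topmost point at height $\hgt(W)$ by definition. Hence either $\hgt(W)\geq j-1$ (in which case the claimed inequality is immediate) or
\[ d_\udarrow(W\cup\lceil W\rceil,\sX_j)\;\geq\;(j-1)-\hgt(W)\,,\]
and taking the minimum over $W$ yields the first inequality.

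For the second inequality, the plan is to first observe that $d\geq d_\udarrow$ and that the three sets appearing in $\fD_x$ all share the vertical range $[\hgt(v_j)-\tfrac12,\hgt(v_{j+1})+\tfrac12]$, since $\theta_{\rho(\cdot)}$ is horizontal and $\Theta_\trivincr$ only modifies the horizontal footprint; this gives
\[ \fD_x(\tilde W_y,j,\omega_1,\omega_2)\;\geq\;\min_{W\in\Theta_\udarrow^{\cI\setminus\cS_x}\tilde W_y}d_\udarrow(W\cup\lceil W\rceil,\sX_j)\,.\]
Combined with the first inequality applied to the minimizer, this yields $j-1\leq \fD_x+\hgt(W)$ for some such $W$. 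The one step I expect to require care is then the bound $\hgt(W)\leq \fm(\tilde\fF_y)$. I would prove it via the standard-wall calculus of Section~\ref{sec:preliminaries}: in $\cI\setminus\cS_x$, the floor of the wall indexed by $y$ sits at a height equal to the sum of the ceiling heights of the walls strictly nesting it, each of which is at most the excess area of the enclosing wall; meanwhile $\hgt(\tilde W_y\cup\lceil\tilde W_y\rceil)$ above its floor is bounded by $\fm(\tilde W_y)$ via~\eqref{eq:wall-excess-area-inequalities}. Summing these contributions along the nested sequence $\tilde\fW_y$ bounds the height of $W_y^{\cI\setminus\cS_x}\cup\lceil W_y^{\cI\setminus\cS_x}\rceil$ by $\fm(\tilde\fW_y)$; deleting at most one group of walls (as allowed in the definition of $\Theta_\udarrow^{\cI\setminus\cS_x}\tilde W_y$) can only lower this height. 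Since $\tilde\fW_y\subseteq\tilde\fF_y$, we conclude $\hgt(W)\leq \fm(\tilde\fF_y)$, finishing the plan.
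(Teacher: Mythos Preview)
Your approach matches the paper's: derive the first inequality from $\min\{x_3:(x_1,x_2,x_3)\in\sX_j\}\geq j-1$, then deduce the second by observing that horizontal shifts do not affect $d_\udarrow$ (so $\fD_x$ dominates the vertical distance) and invoking $\max_{W}\hgt(W)\leq\fm(\tilde\fF_y)$. The paper states this last bound as a one-line observation; you try to justify it, which is where one step slips.

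The claim that ``deleting at most one group of walls can only lower this height'' is not correct in general. Ceilings of a standard wall may lie \emph{below} its floor (a pit), so the shift contributed by a nesting wall can be negative; removing such a wall from the standard wall representation raises the floor of $W_y$, and hence raises $\hgt(W_y^{\cK'}\cup\lceil W_y^{\cK'}\rceil)$. So monotonicity under deletion fails. The fix is immediate and uses the same ingredients you already assembled: for any $\cK'$ appearing in $\Theta_\udarrow^{\cI\setminus\cS_x}\tilde W_y$, the walls nesting $y$ in $\cK'$ form a subset of $\tilde\fW_y$, and each contributes a ceiling shift of absolute value at most its excess area. Hence
\[
\hgt(W_y^{\cK'}\cup\lceil W_y^{\cK'}\rceil)\;\leq\;\sum_{W'\in\tilde\fW_y}\fm(W')\;\leq\;\fm(\tilde\fF_y)
\]
directly for every such $\cK'$, with no appeal to monotonicity. (As a side remark, the per-wall bound ``ceiling height above the floor is at most the excess area'' is correct but does not follow from~\eqref{eq:wall-excess-area-inequalities} alone; it uses that reaching height $h$ forces at least $h$ units of excess via vertical wall faces at each intermediate level.)
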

\begin{proof}
The first claim follows from the triangle inequality and  $\min \{x_3: (x_1,x_2,x_3) \in \sX_j\}\geq j-1$. 
Then, for every $\omega_1,\omega_2$, 
\begin{align*} \min_{W \in \Theta_\udarrow^{\cI\setminus \cS_x}  W_y} d_\udarrow(W\cup \lceil W \rceil,\sX_j) 
&= \min_{W \in \Theta_\udarrow^{\cI\setminus \cS_x}  \tilde W_y} d_\udarrow(W\cup \lceil W \rceil, \sX_j \cup \theta_{\rho(x+\omega_1+\omega_2)}\sX_j \cup \theta_{\rho(x+\omega_2)}\Theta_\trivincr \sX_j) \\
&\leq \min_{W \in \Theta_\udarrow^{\cI\setminus \cS_x}  \tilde W_y} d(W\cup \lceil W \rceil, \sX_j \cup \theta_{\rho(x+\omega_1+\omega_2)}\sX_j \cup \theta_{\rho(x+\omega_2)}\Theta_\trivincr \sX_j) \\&= \fD_x( \tilde W_y, j, \omega_1,\omega_2)\,. 
\end{align*}
 The proof concludes from the observation that $\max_{W\in \Theta_\udarrow^{\cI\setminus \cS_x} \tilde W_y} \hgt(W) \leq \fm(\tilde \fF_y)$.
\end{proof}

\begin{claim}\label{clm:cut-point-property}
Let $\{W_z\}_{z\in \cL_{0,n}}$ be the collection of walls corresponding to some interface $\cI$. There exists some $z_0\in\cL_{0,n}$ such that every cut-point $v$ of $\cI$ must belong to $W_{z_0}$ (its four vertical bounding faces are in $W_{z_0}$). 

Consequently, for an interface $\cI$ with pillar $\cP_x$, the face set of $\cS_x$ consists of exactly one wall, together with at most one ceiling face projecting into $\rho(v_1)$. 
\end{claim}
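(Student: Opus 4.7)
The plan is to establish the main statement by propagating $*$-connectivity through the cells of $\cP_x$, then derive the consequence by analyzing the column structure at each projection.

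First, for any cut-point $v$ of $\cP_x$ at height $h$, the cut-point property forces $v$ to be the unique cell of $\cP_x$ in the slab $\R^2\times[h-\tfrac12,h+\tfrac12]$, so its four lateral neighbors at height $h$ lie outside $\cP_x$. Since these neighbors are $*$-adjacent to $v\in\cP_x$ and $\cP_x$ is the maximal $*$-connected plus component of $\cL_{>0}$ containing the cell above $x$, they cannot be plus, hence are minus. The four vertical faces around $v$ therefore lie in $\cI$; being vertical they are wall faces by Definition~\ref{def:walls-and-ceilings}, and since they are pairwise $*$-adjacent (each pair sharing a vertical edge of the cube $v$), they all belong to a single wall $W_{z_0}$. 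To see that every cut-point of $\cP_x$ lies in $W_{z_0}$, I will propagate along the $*$-connected cell-chain joining consecutive cut-points $v_i,v_{i+1}$ through the increment $\sX_i$: each step of the chain lies between $*$-adjacent cells of $\cP_x$, and by the same reasoning every such cell on the boundary of $\cS_x$ contributes vertical wall faces whose $*$-connectivity chains together, so the vertical wall faces around $v_{i+1}$ lie in the same wall as those around $v_i$.

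For the consequence, the same propagation applied to all of $\cS_x$ shows that every vertical face of $\cF(\cS_x)$ lying in $\cI$ belongs to $W_{z_0}$. Any horizontal wall face $f\in\cF(\cS_x)$ bounds some cell $c\in\cS_x$ and is $*$-adjacent, via the cube $c$, to at least one vertical face separating $c$ from a non-$\cP_x$ lateral neighbor; that vertical face is a wall face in $W_{z_0}$, so $f\in W_{z_0}$ as well. To bound the ceiling faces in $\cF(\cS_x)$: a horizontal face $f\in\cS_x$ at projection $u$ is a ceiling iff $N_\rho(u)=1$. For $u\neq\rho(v_1)$, the lowest cell of $\cS_x$ at projection $u$ must have height strictly greater than $\hgt(v_1)$—otherwise there would be a second cell of $\cP_x$ at height $\hgt(v_1)$, violating the cut-point property of $v_1$. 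Consequently, the horizontal face directly below the lowest spine cell at $u$ lies in $\cI$ (separating $\cS_x$ from a minus cell), and the face directly above the topmost spine cell at $u$ likewise lies in $\cI$, forcing $N_\rho(u)\geq 2$. Hence ceiling faces of $\cS_x$ can only project to $\rho(v_1)$, and uniqueness at that projection is immediate from $N_\rho(\rho(v_1))=1$.

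The main technical obstacle is the final step: verifying that $N_\rho(u)\geq 2$ for every $u\neq\rho(v_1)$ at which $\cS_x$ has a horizontal face. This requires a careful trace of the column at $u$, invoking both the uniqueness of $v_1$ at height $\hgt(v_1)$ and the base/spine split of the pillar. The earlier propagation-of-$*$-connectivity steps along the pillar cell-chain are essentially a bookkeeping exercise.
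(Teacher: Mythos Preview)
Your approach differs substantially from the paper's, and the propagation step contains a real gap. You argue that a $*$-connected chain of cells from $v_i$ to $v_{i+1}$ forces their surrounding vertical wall faces into the same wall, because ``every such cell on the boundary of $\cS_x$ contributes vertical wall faces whose $*$-connectivity chains together.'' But two $*$-adjacent cells in the chain need not have $*$-adjacent vertical \emph{wall} faces. For instance, take $c_1=(0,0,h)$ and $c_2=(0,0,h+1)$ stacked vertically, with $c_1$'s only minus lateral neighbor at $(0,1,h)$ and $c_2$'s only minus lateral neighbor at $(0,-1,h+1)$; then the sole vertical wall face of $c_1$ (its north face) and that of $c_2$ (its south face) share no vertex. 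Worse, interior cells of a thick increment have \emph{no} vertical wall faces at all, and you do not explain how to route the chain along the lateral boundary while maintaining $*$-adjacency of wall faces at each step. The same issue recurs in your treatment of horizontal wall faces: a horizontal wall face $f$ may bound a cell $c\in\cS_x$ whose four lateral neighbors are all in $\cP_x$, so there is no vertical wall face of $c$ for $f$ to attach to.

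The paper sidesteps all of this with a contradiction argument on the global wall structure: if two cut-points $w_1,w_2$ lay in distinct walls $W_{\rho(w_1)},W_{\rho(w_2)}$, then either both are standard, in which case each wall intersects every slab $\cL_{1/2},\ldots,\cL_{\hgt(w_1)}$ and no height below $\hgt(w_1)$ can be a cut-height; or some $W_{\rho(w_i)}$ is non-standard, whence it is nested in a wall $W$ whose interior ceiling has at least eight faces, and the observation that every cell below a ceiling at its projection lies in $\sigma(\cI)$ again kills all cut-heights below. For the consequence, the paper applies this to the auxiliary interface $\cI_\cP$ (flat except for $\cP_x$), which cleanly isolates the wall structure of the spine. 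Your direct surface-connectivity route can likely be salvaged with a careful topological argument that the wall faces of $\cS_x$ form a single $*$-connected set, but as written the propagation step does not go through.
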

\begin{proof}
Suppose by way of contradiction that the interface $\cI$ has two cut-points $w_1, w_2$ with $0<\hgt(w_1) <\hgt(w_2)$ such that the walls containing the bounding faces of $w_1$ and $w_2$, namely $W_{\rho(w_1)}$ and $W_{\rho(w_2)}$ are distinct. If both $W_{\rho(w_1)}, W_{\rho(w_2)}$ are standard, then $W_{\rho(w_i)}$ intersect each of $\cL_{\frac 12},\ldots,\cL_{\hgt(w_i)}$ in at least one cell (i.e., if  $\cI_{W_{\rho(w_i)}}$ is the interface whose only wall is $W_{\rho(w_i)}$, then $\sigma(\cI_{W_{\rho(w_i)}})$ intersects every height between $\frac 12$ and $\hgt(w_i)$ in at least one cell). As a consequence,  no height between $\frac 12$ and $\hgt(w_1)$ can be a cut-height of $\cI$. If at least one of $W_{\rho(w_i)}$ is not standard, then it is identified with a ceiling $\cC_i \subset \lceil W \rceil$ for some other wall $W \Supset W_{\rho(w_i)}$. Call the tallest of those ceilings $\cC$ with height $\hgt(\cC)$. By definition of ceiling faces, we have the following observation. 

\begin{observation}Every cell $w$ sharing a projection with a face $f\in \cC$ and having $\hgt(w)\leq \hgt(\cC)$ is in~$\sigma(\cI)$.
\end{observation}

At the same time, since $W_{\rho(w_i)} \Subset W$ and $W_{\rho(w_i)}\neq W$, the ceiling $\cC$ has at least eight faces. Therefore, there are no cut-heights below $\hgt(\cC)$, yielding a contradiction if $\hgt(\cC)\geq \hgt(w_1)$. If $\hgt(\cC)<\hgt(w_1)$, $W_{\rho(w_1)} \cap \cL_h \neq \emptyset$ and $W_{\rho(w_2)} \cap \cL_h\neq\emptyset$ for every $\hgt(\cC)<h\leq \hgt(w_1)$, also yielding a contradiction. 

To see the conclusion for the spine of a pillar $\cP_x$, take $\cI_{\cP}$ to be the interface which is at $\cL_0$ except for the faces of $\cP_x$. Applying the first part of the claim to $\cI_{\cP}$, we see that all cut-points of $\cS_x$ are in the same wall $W$, and by definitions of cut-points and the observation above, all other faces of $\cS_x$ must in $W$, except possibly one ceiling face projecting into $\rho(v_1)$. 
\end{proof}

\begin{corollary}\label{cor:no-cut-points-after-step-3}
The walls whose standardizations are $\tilde \fW_{v_1}\cup \tilde \fW_{y^\dagger}$ intersect each of $\cL_{\frac 12},\ldots,\cL_{\hgt(v_1)-1}$ in at least six faces (i.e., the corresponding interface intersects every such height in at least two cells).
\end{corollary}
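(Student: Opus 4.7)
The plan is to reduce the corollary to showing that the interface $\cI^{\ast}$ with standard-wall representation $\tilde\fW_{v_1}\cup\tilde\fW_{y^\dagger}$ has at least two cells in each slab $\cL_h$ for $h\in\{\tfrac12,\ldots,\hgt(v_1)-1\}$: two such cells share at most one vertical face, so they contribute at least $4+4-2=6$ distinct vertical wall faces to $\cL_h$, giving the desired bound. As a preliminary, I would observe that the outermost wall of $\cO_{v_1}$ must already be standard in $\cI$, for any wall strictly nesting it would also nest $\rho(v_1)$ and hence belong to $\cO_{v_1}$, contradicting ``outermost.'' Consequently the standardization/reassembly procedure of Lemma~\ref{lem:standard-wall-representation} preserves absolute heights for $\tilde\fW_{v_1}$, and in particular the innermost member of $\cO_{v_1}$---the spine wall $W_{\cS_x}^{\cI}$, by the second part of Claim~\ref{clm:cut-point-property}---reaches height $\hgt(\cP_x)\geq \hgt(v_1)$ in $\cI^\ast$. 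Hence $\cI^\ast$ has at least one cell at every level in the target range, and it suffices to rule out cut-heights there.

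Next I would split on whether the trigger in Step~3 of Algorithm~\ref{alg:le-big-map} fires. If it does not, then $\tilde\fW_{y^\dagger}=\emptyset$ and $\cI^\ast=\cI_{\tilde\fW_{v_1}}$ has no cut-height by hypothesis, so by the previous paragraph every target level has $\geq 2$ cells. Otherwise let $h^\dagger$ be the highest cut-height of $\cI_{\tilde\fW_{v_1}}$ and $y^\dagger$ the selected index. Apply Claim~\ref{clm:cut-point-property} to $\cI_{\tilde\fW_{v_1}}$: all of its cut-points lie in a single wall $W^\ast\in\tilde\fW_{v_1}$, and every cut-height in the target range is thus $\leq h^\dagger$ and arises from a single-cell cross-section of $W^\ast$. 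The required $y^\dagger$ exists because $v_1$ is the first cut-point of $\cP_x$ in $\cI$, so $\sigma(\cP_x)\cap\cL_{h^\dagger}$ contains at least two cells, at least one of which is bounded by a wall outside $\cO_{v_1}$.

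It remains to verify that the walls in $\tilde\fW_{y^\dagger}$ genuinely deliver an extra cell in each $\cL_h$ for $h\in\{\tfrac12,\ldots,h^\dagger\}$. The cell set enclosed by the standardization of $W_{y^\dagger}$ is connected between its floor and top, so a horizontal gap would violate $*$-connectedness of the wall; combined with $\hgt(W_{y^\dagger})\geq h^\dagger$ (forced by the face of $W_{y^\dagger}$ at $\cL_{h^\dagger}$), this gives a non-empty cross-section of $\tilde W_{y^\dagger}$ at every such level. Since $W^\ast$ and $W_{y^\dagger}$ are distinct walls of $\cI$ their projections are disjoint, so in $\cI^\ast$ the cells from $\cO_{y^\dagger}$ at each cut-height are horizontally separated from the $W^\ast$-cell there, breaking that cut-height. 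For levels in $\{h^\dagger+1,\ldots,\hgt(v_1)-1\}$ the interface $\cI_{\tilde\fW_{v_1}}$ already had $\geq 2$ cells, and adding more walls preserves this. The main obstacle in turning this outline into a proof is bookkeeping the horizontal placements through Lemma~\ref{lem:standard-wall-representation}: one must check that the walls of $\tilde\fW_{y^\dagger}$ sit at the correct absolute heights to produce a second cell at every cut-height, rather than being shifted by nesting relations inside walls of $\tilde\fW_{v_1}$; the ``outermost implies standard'' argument applied to $\cO_{y^\dagger}$ handles the base case, and inner walls stack compatibly by the same lemma.
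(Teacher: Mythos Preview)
Your proposal contains a genuine error in the opening paragraph. You identify ``the innermost member of $\cO_{v_1}$'' with ``the spine wall $W_{\cS_x}^{\cI}$,'' but by definition $\theta_{\textsc{st}}\cO_{v_1}=\tilde\fW_{v_1}$ where $(\tilde W_y)$ is the standard wall representation of $\cI\setminus\cS_x$---the spine has already been removed, so its wall is not a member of $\cO_{v_1}$. The conclusion you draw from this (that $\cI^\ast$ has a cell at every level up to $\hgt(v_1)-1$) may still be recoverable from properties of the base walls, but the argument you give does not establish it.

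For heights $h\le h^\dagger$, your approach also diverges from the paper's and carries the very gap you flag at the end. You argue that the walls of $\cO_{y^\dagger}$ supply a second cell at every such $h$, horizontally separated from the $W^\ast$-cell because $\rho(W_{y^\dagger})$ and $\rho(W^\ast)$ are disjoint. But at heights below $\hgt(\lfloor W_{y^\dagger}\rfloor)$ the cell you would attribute to $\cO_{y^\dagger}$ comes from a wall strictly nesting $W_{y^\dagger}$, and that wall may well coincide with one in $\cO_{v_1}$ (indeed it may equal $W^\ast$ itself). In that case your disjointness-of-projections argument says nothing, and the second cell has not been produced. The paper sidesteps this by invoking Observation~\ref{obs:pillar-nested-sequence-of-walls} to exhibit a common nesting wall $W$ containing both $\rho(v_1)$ and $y^\dagger$ in its interior; since $\tilde W_{\rho(v_1)}\neq\tilde W_{y^\dagger}$, the ceilings $\cC_{v_1},\cC_{y^\dagger}\in\lceil W\rceil$ above them are distinct ceiling components, and the observation in Claim~\ref{clm:cut-point-property} forces many cells at every height below $\hgt(\cC_{v_1})\vee\hgt(\cC_{y^\dagger})$. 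Above that height and up to $h^\dagger$, each of $\tilde W_{\rho(v_1)}$ and $\tilde W_{y^\dagger}$ (which both reach $h^\dagger$) contributes its own cell. This ceiling-based split is exactly what resolves the ``bookkeeping'' issue you identified but left open.
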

\begin{proof}
By definition, the walls $\cO_{v_1}$ (defined s.t.\ $\theta_{\textsc{st}}\cO_{v_1} =\tilde {\mathfrak W}_{v_1}$) intersected every height between $h^\dagger+1$ and $\hgt(v_1)-1$ in at least two cells. Now consider heights between $0$ and $h^\dagger$. 

Since $y^\dagger$ and $v_1$ are both in $\cP_x$, by Observation~\ref{obs:pillar-nested-sequence-of-walls}, there must exist a wall $W$ such that $y^\dagger, v_1$ are both interior to $W$. Since $\tilde W_{\rho(v_1)} \neq \tilde W_{y^\dagger}$, there are inner-most ceilings of $\cC_{y^\dagger}$ and $\cC_{v_1}$ in $\lceil W\rceil$ nesting those respective walls. As such, by the observation above, every height between below $\hgt(\cC_{v_1}) \vee \hgt(\cC_{y^\dagger})$ is intersected by at least eight cells. Finally, since the walls whose standardizations are $\tilde W_{v_1}$ and $\tilde W_{y^\dagger}$ attain height $h^\dagger$, every height between $\hgt(\cC_{v_1}) \vee \hgt(\cC_{y^\dagger})$ and $\hgt(h^\dagger)$ is intersected by at least one cell by each of those walls. 
\end{proof}

\begin{proof}[\textbf{\emph{Proof of Claim~\ref{clm:m(W)}}}] By Corollary~\ref{cor:no-cut-points-after-step-3}, there are no cut-points in $\tilde\fW_{v_1}\cup\tilde\fW_{y^\dagger}$, and therefore
\[ |W_x^\cJ| = 4(\hgt(v_1)-\tfrac 12) \leq \tfrac23\fm(\tilde\fW_{v_1}\cup\tilde\fW_{y^\dagger})\,.\]
Hence, $|W_x^\cJ|\leq \frac23 \fm(\bW)$, and~\eqref{eq:m(I;J)-WxJ}
 follows from the fact $\fm(\cI;\cJ) \geq \fm(\bW) - |W_x^\cJ|$ by~\eqref{eq:m(I;J)}.
 
 Let us now turn to proving the comparisons with $j^*-1$ and $k^* - t$, namely~\eqref{eq:m(I;J)-j-k}. Suppose $j^* >1$ as otherwise the inequality is trivial. 
 Since $\sX_{j^*}$ was deleted, it was due to one of criteria ({\tt A1}) or ({\tt A2}) or ({\tt A3}):
\begin{enumerate}[({A}1)]
\item In this case, $\fm(\sX_{j^*}) \geq j^*-1$.
\item In this case, $\fD_x(\tilde W_y,j^*,-v_{\fs_j+1},0)\leq \fm(\tilde W_y) $ for some $y\in\bY$.
By Fact~\ref{fact:useful} applied to $j^*$,
\[  j^*-1 \leq \fD_x(\tilde W_y, j^*, -v_{\fs_j+1},0) +\fm(\tilde \fF_y) \leq 2\fm(\tilde \fF_y) \,,\]
\item In this case, $\fD_x(\tilde W_y,j^*,-v_{\fs_j+1},0) \leq (j-1)/2$ for $y  = y^*_A$.
By Fact~\ref{fact:useful} applied to $j^*$ and $y^*_A$,
\[ j^*-1 \leq \fD_x(\tilde W_{y^*_A}, j^*, -v_{\fs_j+1},0) + \fm(\tilde \fF_{y^*_A}) \leq (j^*-1)/2 + \fm(\tilde \fF_{y^*_A}) \qquad \mbox{so} \qquad j^*-1 \leq 2 \fm(\tilde \fF_{y^*_A})\,.\]
\end{enumerate}
In any of these above cases, we have $j^*-1 \leq 2\fm(\bW)\vee\fm(\sX_{j*})\leq 6 \fm(\cI; \cJ)$ by~\eqref{eq:m(I;J)-WxJ}.

If $j^* \geq t$, then we are done. Otherwise, since $\sX_{k^*}$ was deleted, it was due to either ({\tt B1}) or ({\tt B2}) or ({\tt B3}):
\begin{enumerate}[({B}1)]
\item In this case, $\fm(\sX_{k^*}) \geq k^* - t$.
\item In this case, $\fD_x(\tilde W_y,k^*,-v_{\fs_k+1},v_t-v_{j^*+1})\leq \fm(\tilde W_y) $ for some $y\in\bY$.
By Fact~\ref{fact:useful},
\[  k^*-t \leq k^*-1 \leq \fD_x(\tilde W_y, k^*, -v_{\fs_k+1},v_t-v_{j^*+1}) +\fm(\tilde \fF_y) \leq 2\fm(\tilde \fF_y) \,.\]
\item In this case, $\fD_x(\tilde W_y,k^*,-v_{\fs_k+1},v_t-v_{j^*+1}) \leq (k-t)/2$ for $y = y_B^*$.
By Fact~\ref{fact:useful},
\[ k^*-t \leq \fD_x(\tilde W_{y^*_B}, k^*, -v_{\fs_k+1},v_t-v_{j^*+1}) + \fm(\tilde \fF_{y^*_B}) \leq (k^*-t)/2 + \fm(\tilde \fF_{y^*_B})\,\qquad \mbox{so}\qquad k^*-t \leq 2 \fm(\tilde \fF_{y^*_B})\,.\]
\end{enumerate}
In any of these cases, we have $k^*-t \leq 6\fm(\cI;\cJ)$ by~\eqref{eq:m(I;J)-WxJ}.
\end{proof}

\subsection{Proof part 1: interface weights}\label{subsec:interface-weights-psi}
In this section, we show that the map $\Psi_{x,t}$ amplifies the weights of interfaces by something exponential in the excess area $\fm(\cI;\Psi_{x,t}(\cI))$. 

As in the preceding works~\cite{Dobrushin72a,GL19a}, the difficulty here is ensuring that the cumulative effect of the perturbative terms $\g$ in Theorem~\ref{thm:cluster-expansion} (capturing interactions between different parts of the interface through sub-critical droplets) is comparable to $\fm(\cI;\Psi_{x,t}(\cI))$. As described in Section~\ref{subsec:reader-guide-map}, this is particularly complicated here as we cannot reduce the interactions to only their horizontal, or only their vertical parts.

\begin{proposition}\label{prop:partition-function-contribution}
There exists $C>0$  and $\beta_0$ such that for every $\beta>\beta_0$ the following holds. For every $x\in \cL_{0,n}$ and $0\leq T < H$, for every $t$ and every $\cI \in \bar{\bI}_{x,T,H}$,
\begin{align*}
    \Big|\log \frac{\mu_n^{\mp}(\cI)}{\mu_n^{\mp}(\Psi_{x,t}(\cI))} + \beta \fm(\cI;\Psi_{x,t}(\cI))\Big| \leq C\fm(\cI;\Psi_{x,t}(\cI))\,.
\end{align*}
\end{proposition}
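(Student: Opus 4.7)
The proof will proceed by cluster expansion. By Theorem~\ref{thm:cluster-expansion}, setting $\cJ=\Psi_{x,t}(\cI)$ it suffices to show that the perturbative term
\[\Bigl|\sum_{f\in\cI}\g(f,\cI)-\sum_{f'\in\cJ}\g(f',\cJ)\Bigr|\leq C\fm(\cI;\cJ)\,.\]
My plan is to partition both sums according to the decomposition in Section~\ref{subsec:decomposition-interfaces}, applying the uniform bound~\eqref{eq:g-uniform-bound} to the blocks without a natural partner (the deleted walls $\bW$, the filler faces $\bH$, the new column $W_x^\cJ$, and the trivialized increments $\bX_A^\cI,\bX_B^\cI$ paired with their trivializations $\bX_A^\cJ,\bX_B^\cJ$), and the exponential decay bound~\eqref{eq:g-exponential-decay} to the three paired correspondences: the horizontal shifts $\bX_1^\cI\leftrightarrow\bX_1^\cJ$ and $\bX_2^\cI\leftrightarrow\bX_2^\cJ$ under $\lrvec\theta$, and the vertical shift $\bB\leftrightarrow\theta_\udarrow\bB$.

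The unpaired blocks and the $\bB$ block are comparatively routine. For the former, the bounds $|W|\leq 2\fm(W)$ from~\eqref{eq:wall-excess-area-inequalities}, $|\bH|\leq|\cF(\rho(\bW))|\leq\fm(\bW)$, $|\cF(\sX_j)|\leq 3\fm(\sX_j)+4$, and $|\cF(\Theta_\trivincr\sX_j)|\leq 2\fm(\sX_j)+4$, together with Claim~\ref{clm:m(W)}, give a total of order $\fm(\bW)+\sum_j\fm(\sX_j)+j^*+(k^*-t)$, which is $O(\fm(\cI;\cJ))$ by~\eqref{eq:m(I;J)} and~\eqref{eq:m(I;J)-j-k}. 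For the $\bB$ block, Observation~\ref{obs:1-to-1-map-faces} ensures that $\br(f,\cI;\theta_\udarrow f,\cJ)$ is attained at a face of $\bW$ or $W_x^\cJ$, so $\br\geq d(\rho(f),\rho(\bW\cup W_x^\cJ))$; the closeness bound $N_\rho(u')\leq(d(u',\rho(\bW))+1)^2+1$ (as in the proof of Proposition~\ref{prop:tameness}) then yields
\[\sum_{f\in\bB}\bar K e^{-\bar c\br}\leq\bar K\!\!\sum_{u'\notin\rho(\bW\cup W_x^\cJ)}\!\!N_\rho(u')e^{-\bar c\,d(u',\rho(\bW\cup W_x^\cJ))}\leq C\bigl(\fm(\bW)+|W_x^\cJ|\bigr)\leq C\fm(\cI;\cJ)\,.\]

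The hard part is the paired contribution from $\bX_1^\cI$ and $\bX_2^\cI$, where the horizontal shift $\lrvec\theta$ interacts with surviving walls of $\cI\setminus\cS_x$. For each $\sX_i$ with $j^*<i<t$, the fact that $i$ was not promoted to $j^*$ by Step~4 of Algorithm~\ref{alg:le-big-map} enforces three invariants: ({\tt A1}) $\fm(\sX_i)<i-1$, hence $|\cF(\sX_i)|\leq 3i+1$; ({\tt A2}) every wall $\tilde W_y$ in $\cI\setminus\cS_x$ satisfies $\fD_x(\tilde W_y,i,-v_{\fs_i+1},0)>\fm(\tilde W_y)$; and ({\tt A3}) any such $y\notin\bY$ also satisfies $\fD_x(\tilde W_y,i,-v_{\fs_i+1},0)>(i-1)/2$. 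I plan to combine these with the cut-point gap $\hgt(v_i)-\hgt(v_{j^*+1})\geq i-j^*-1$---which dominates the distance from $\sX_i$ to the trivialized core $\bX_A^\cI\cup\bX_A^\cJ\cup W_x^\cJ$ and to the shifted sequel of the spine---to deduce a lower bound on $\br(f,\cI;\lrvec\theta f,\cJ)$ for each $f\in\cF(\sX_i)$. Summing $\bar K e^{-\bar c\br}$ first over $f\in\cF(\sX_i)$ (giving an $O(i)$ factor from ({\tt A1})), then over $i\in\llb j^*+1,t-1\rrb$, the geometric decay in $i-j^*$ collapses the ``core'' contribution to $O(j^*)$, while the ({\tt A2})--({\tt A3}) thresholds collapse the ``wall-proximity'' contribution to $O(\fm(\bW))$ (large-$\fm$ walls are suppressed by ({\tt A2}), and small walls that would violate ({\tt A3}) have already been marked for deletion and absorbed into $\fm(\bW)$). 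The same argument applies verbatim to $\bX_2^\cI$ using ({\tt B1})--({\tt B3}) and the compound shift $v_t-v_{j^*+1}$. Claim~\ref{clm:m(W)} then aggregates everything into $C\fm(\cI;\cJ)$. The most delicate step will be the bookkeeping in the wall-proximity sum---specifically, avoiding double counting of walls nested within a common outer wall---and this is precisely what the three-tiered structure of ({\tt A1})--({\tt A3}) and ({\tt B1})--({\tt B3}) is designed to prevent.
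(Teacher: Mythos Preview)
Your overall strategy matches the paper's, and your treatment of the unpaired blocks is fine. However, your handling of the $\bB$ block contains a genuine error that leaves the hardest part of the proof unaddressed.

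You invoke Observation~\ref{obs:1-to-1-map-faces} to claim that $\br(f,\cI;\theta_\udarrow f,\cJ)$ is attained at a face of $\bW\cup W_x^\cJ$. That observation applies when $\cI$ and $\cJ$ differ only by the deletion of walls; here $\cJ$ also carries a \emph{horizontally shifted} spine. For $f\in\bB$, the local picture around $f$ in $\cI$ sees the spine at its original position, while around $\theta_\udarrow f$ in $\cJ$ it sits at $\lrvec\theta$-translated position, so the radius can perfectly well be attained by a face in $\bX_1^\cI\cup\bX_1^\cJ\cup\bX_2^\cI\cup\bX_2^\cJ$. Your projection bound $\br\geq d(\rho(f),\rho(\bW\cup W_x^\cJ))$ therefore fails for precisely those $f\in\bB$ that lie near the spine, and there can be many such $f$ (e.g., ceilings $\lceil\tilde W_y\rceil$ for $y\in\bY$ sitting at heights comparable to $\hgt(v_{j^*+1})$). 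This is exactly the cross-term the paper isolates as case~(iii) for the $\bB$ sum and handles via Lemmas~\ref{lem:C1-vs-X2-and-X4} and~\ref{lem:C2-vs-X2-and-X4}.

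Your sketch for the spine sum does not rescue this. Bounding $\br$ from below by $(i-1)/2$ via ({\tt A3}) gives $\sum_{f\in\bX_1^\cI}e^{-\bar c\br}=O(1)$, but this is a \emph{different} sum from $\sum_{f\in\bB}e^{-\bar c\br}$; the two are linked only once one passes to the symmetric double sum $\sum_{f\in\bB}\sum_{g\in\bX_1^\cI}(e^{-\bar c d(f,g)}+e^{-\bar c d(\theta_\udarrow f,\lrvec\theta g)})$, and that double sum is \emph{not} $O(1)$: for $f\in\bC_1=\bigcup_{y\in\bY}\lceil\tilde W_y\rceil$ the paper obtains only $O(\fm(\cI;\cJ))$, and does so by a nontrivial argument (splitting at the height $\bar\jmath$ of the tallest marked wall, using $\bar\jmath\leq\fm(\tilde\fF_{\bar y})$ below, and a ``record times'' telescoping of the projected distances $d_\rho(y,i)$ above). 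Your appeal to ({\tt A2}) does not substitute for this: the bound $\fD_x>\fm(\tilde W_y)$ gives no decay in $i$, so $\sum_{i>j^*}|\cF(\sX_i)|e^{-\bar c\fm(\tilde W_y)}$ does not collapse. Your remark that ``small walls violating ({\tt A3}) have already been marked for deletion'' is also inaccurate: only \emph{one} index $y_A^*$ is ultimately marked via ({\tt A3}), not all violators.
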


We first prove a series of preliminary estimates to which we will reduce Proposition~\ref{prop:partition-function-contribution} by pairing faces together according to the decomposition of $\cI$ and $\cJ$ from \S\ref{subsec:decomposition-interfaces}.  

\begin{claim}\label{clm:W-and-H-vs-everything}There exists $\bar C$ such that for every $\cI \in \bar{\bI}_{x,T,H}$, 
\[ 
\sum_{f\in \cF(\Z^3)} \sum_{g\in \bW\cup\bH}   e^{-\bar c d(f,g)} \leq \bar C \fm(\bW)\,.
 \]
\end{claim}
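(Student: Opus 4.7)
The plan is to swap the order of summation, so that the bound becomes
\[ \sum_{g\in\bW\cup\bH}\Bigl(\sum_{f\in\cF(\Z^3)} e^{-\bar c\,d(f,g)}\Bigr), \]
and observe that for any fixed face $g$ of $\Z^3$, the inner sum is bounded by a universal constant $C_0=C_0(\bar c)$: the number of faces of $\Z^3$ in an annulus of radius $[r,r+1]$ around $g$ is $O(r^2)$, so the sum $\sum_{r\geq 0} O(r^2) e^{-\bar c r}$ converges. This reduces the claim to the purely combinatorial estimate $|\bW|+|\bH| \leq 3\fm(\bW)$, from which the result follows with $\bar C = 3C_0$.

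For $|\bW|$, since $\bW = \bigcup_{z\in\bD}\tilde W_z$ is a disjoint union of standard walls, we have $|\bW| = \sum_{z\in\bD}|\tilde W_z|$, and applying the first half of~\eqref{eq:wall-excess-area-inequalities} ($|W|\leq 2\fm(W)$) wall-by-wall gives $|\bW| \leq 2\fm(\bW)$.

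For $|\bH|$, recall that $\bH$ consists of faces $f\in\cJ\setminus \cP_x^\cJ$ with $\rho(f) \in \cF(\rho(\bW))$; such faces must be horizontal. The walls of $\cJ\setminus \cP_x^\cJ$ are exactly the \emph{undeleted} walls $\{\tilde W_z : z \notin \bD\}$ (read off from the standard wall representation produced by Step 6 of Algorithm~\ref{alg:le-big-map} via Lemma~\ref{lem:standard-wall-representation}), and by admissibility of the standard wall representation of $\cI$ their projections are pairwise disjoint from $\rho(\bW)$. Hence for any column $z \in \cF(\rho(\bW))$, $z$ lies in no wall projection of $\cJ\setminus \cP_x^\cJ$, so every horizontal face of $\cJ\setminus \cP_x^\cJ$ projecting to $z$ must be a ceiling face—of which there is at most one per column by Definition~\ref{def:walls-and-ceilings}. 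Summing yields
\[ |\bH| \leq |\cF(\rho(\bW))| = \sum_{z\in\bD} |\cF(\rho(\tilde W_z))| \leq \sum_{z\in\bD} \fm(\tilde W_z) = \fm(\bW), \]
where the second inequality used the second half of~\eqref{eq:wall-excess-area-inequalities}.

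Combining these gives $|\bW\cup\bH|\leq 3\fm(\bW)$. The only mildly nontrivial step is the ``one horizontal face per column'' count for $\bH$, which relies solely on admissibility of $\cJ$'s standard-wall representation; once that is in place everything reduces to a geometric tail sum and two applications of~\eqref{eq:wall-excess-area-inequalities}.
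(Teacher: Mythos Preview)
Your proof is correct and follows essentially the same route as the paper's: swap the order of summation, bound the inner geometric sum by a universal constant, and then control $|\bW|+|\bH|\leq 3\fm(\bW)$ via the two halves of~\eqref{eq:wall-excess-area-inequalities}. The paper simply asserts the per-column bound $|\bH|\leq|\cF(\rho(\bW))|$ without justification, whereas you spell it out; one minor caveat is that $\cJ\setminus\cP_x^\cJ$ is not itself an interface, so the cleaner phrasing is that in the interface $\cK$ of Step~8 no remaining wall projects into $\cF(\rho(\bW))$, hence $N_\rho^{\cK}(u)=1$ there.
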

\begin{proof}By summing the exponential tail, there exists $C>0$ such that 
\begin{align*}
\sum_{g\in \bW\cup\bH} \sum_{f\in \cF(\Z^3)} e^{-\bar c d(f,g)} &\leq \sum_{z\in {\bD}}\bigg(\sum_{g\in \tilde W_z} C + \sum_{\substack{g'\in\cJ\setminus \cS_x^\cJ \\ \rho(g') \in \cF(\rho(\tilde W_z))}} C\bigg) \leq \sum_{z\in {\bD}} \left( C|\tilde W_z| + C|\cF(\rho(\tilde W_z))|\right)\,,
\end{align*}
which by~\eqref{eq:wall-excess-area-inequalities} is at most $3C\sum_{z\in \bD} \fm(\tilde W_z) \leq 3C \fm(\bW)$.
\end{proof}
\begin{claim}\label{clm:X1-and-X3-vs-everything}There exists $\bar C$ such that for every $\cI \in \bar{\bI}_{x,T,H}$,
\[ 
\sum_{f\in \cF(\Z^3)} \sum_{\iota\in\{A,B\}}   \sum_{g\in \bX_\iota^\cI \cup \bX_\iota^\cJ}  e^{-\bar c d(f,g)} \leq \bar C \fm(\cI;\cJ)\,,
\]
\end{claim}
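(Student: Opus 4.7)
My plan is a two-step reduction. For each fixed $g\in\cF(\Z^3)$, summing the exponential tail $\sum_{f\in\cF(\Z^3)} e^{-\bar c d(f,g)}$ over spherical shells of radius $\ell$ about $g$ (each containing $O(\ell^2)$ faces) yields a universal constant $C_0$. Exchanging the order of summation therefore reduces the claim to the cardinality bound
\[ |\bX_A^\cI| + |\bX_A^\cJ| + |\bX_B^\cI| + |\bX_B^\cJ| \;\leq\; \bar C' \fm(\cI;\cJ)\,.\]

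To control each of these four cardinalities, I would use the per-increment face-count bound $|\cF(\sX_j)|\leq 3\fm(\sX_j)+8$ (which subsumes both $|\cF(\sX_j)|\leq 3\fm(\sX_j)+4$ in the nontrivial case and $|\cF(X_\trivincr)|=8$), together with the per-increment height bound $\hgt(v_{j+1})-\hgt(v_j)\leq \fm(\sX_j)/2+1$ (which also holds in both the trivial and nontrivial cases). Summing over $1\leq j\leq j^*$ gives
\[ |\bX_A^\cI|\;\leq\; 3\sum_{j=1}^{j^*}\fm(\sX_j) + 8 j^*\,,\]
and since $\bX_A^\cJ$ is a stack of trivialized increments with a total of $4(\hgt(v_{j^*+1})-\hgt(v_1))+4j^*$ vertical faces,
\[ |\bX_A^\cJ|\;\leq\; 2\sum_{j=1}^{j^*}\fm(\sX_j) + 8 j^*\,.\]
Symmetric bounds for $\bX_B^\cI$ and $\bX_B^\cJ$ follow identically, summing $\fm(\sX_k)$ over $t\leq k\leq k^*$ in the case $t>j^*$.

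Finally, I would combine the estimates with Claim~\ref{clm:m(W)}. From the decomposition~\eqref{eq:m(I;J)} together with $|W_x^\cJ|\leq 2\fm(\cI;\cJ)$ from~\eqref{eq:m(I;J)-WxJ}, one obtains
\[ \sum_{j=1}^{j^*}\fm(\sX_j) + \sum_{k=t}^{k^*}\fm(\sX_k)\one\{t>j^*\}\;\leq\;\fm(\cI;\cJ)+|W_x^\cJ|\;\leq\; 3\fm(\cI;\cJ)\,,\]
while~\eqref{eq:m(I;J)-j-k} gives $j^*-1\leq 6\fm(\cI;\cJ)$ and $(k^*-t)\,\one\{t>j^*\}\leq 6\fm(\cI;\cJ)$. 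Substituting yields a bound of the form $C\fm(\cI;\cJ)+O(1)$. The main obstacle is absorbing the residual additive $O(1)$ term: in the degenerate situation $\fm(\cI;\cJ)=0$ all trivialized increments were already trivial, no walls were deleted and $\hgt(v_1)=\tfrac12$, so $\cJ$ coincides with $\cI$ on the spine and the faces in $\bX_A^\cI=\bX_A^\cJ$ (and likewise for $\bX_B$) contribute matching $\g(f,\cI)=\g(f,\cJ)$ terms in the pairing carried out in Proposition~\ref{prop:partition-function-contribution}. Incorporating this cancellation (equivalently, working with the symmetric difference of the $\bX_\iota^\cI$ and $\bX_\iota^\cJ$ sets, which is empty precisely when $\fm(\cI;\cJ)=0$) removes the constant overhead and recovers the claimed linear bound $\bar C\fm(\cI;\cJ)$.
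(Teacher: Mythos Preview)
Your proof is correct and follows essentially the same approach as the paper: sum out $f$ to reduce to a cardinality bound, then control $|\bX_\iota^\cI|+|\bX_\iota^\cJ|$ via per-increment face-count and height estimates together with Claim~\ref{clm:m(W)}. The paper's own proof also lands on a bound with an additive constant (written there as $+32C$) and simply absorbs it into $\bar C\fm(\cI;\cJ)$ without comment, so your discussion of the degenerate case $\fm(\cI;\cJ)=0$ is in fact more careful than the paper's.
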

\begin{proof}Summing over all $f\in \cF(\Z^3)$, there exists $C>0$ such that
\begin{align*} \sum_{\iota\in\{A,B\}} \sum_{g\in \bX_\iota^\cI \cup \bX_\iota^\cJ} \sum_{f\in \cF(\Z^3)} e^{-\bar c d(f,g)} 
 &\leq C \bigg(\;\sum_{1\leq j \leq j^*} [\fm(\sX_j) + 8 (\hgt(v_{j+1})-\hgt(v_j)+1)] \\ & \quad \qquad +\!\!\sum_{t\leq k \leq k^*} [\fm(\sX_k) + 8 (\hgt(v_{k+1})-\hgt(v_k)+1)]\bigg)\,,
\end{align*}
where in the first inequality, the factor of 8 accounts for 4 faces from the $\sX_i$ (Eq.~\eqref{eq:increment-excess-area}) and 4 from $\Theta_\trivincr \sX_i$ for each height. 
The telescopic sums give $8(\hgt(v_{j^*+1})-\hgt(v_1))$, and $\hgt(v_{j^*+1})-\hgt(v_1) \leq j^* + \frac 12 \sum_{j=1}^{j^*} \fm(\sX_j)
$ since for each height in in $\hgt(v_1),\ldots,\hgt(v_{j^*+1})$ that wasn't a cutpoint, we added an excess area of at least~2. Accounting for an extra additive $j^*$, as well as the similar telescoping for $t\leq k \leq k^*$, we see this is at most 
\begin{align*}
 16C (j^*-1)  + 16 C (k^*-t) + 5 C \fm(\cI;\cJ) + 32C\,,
\end{align*}
which, by~\eqref{eq:m(I;J)-j-k} of Claim~\ref{clm:m(W)}, is in turn at most $\bar C\fm(\cI;\cJ)$ for some other constant $\bar C>0$.
\end{proof}
\begin{claim}\label{clm:Wx^J}
There exists $\bar C$ such that for every $\cI \in \bar{\bI}_{x,T,H}$
\[ 
\sum_{f\in \cF(\Z^3)} \sum_{g\in W_x^\cJ }  e^{-\bar c d(f,g)} \leq \bar C \fm(\bW) \,.
\]
\end{claim}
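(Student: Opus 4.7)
The plan is to reduce this to a straightforward cardinality count on $W_x^\cJ$ and then invoke the key inequality~\eqref{eq:|W_x^J|} from Claim~\ref{clm:m(W)}. Swap the order of summation: for each fixed $g \in W_x^\cJ$, the inner sum $\sum_{f\in \cF(\Z^3)} e^{-\bar c d(f,g)}$ is bounded by an absolute constant $C_0$, since it is just a sum of exponential decay in Euclidean distance over the (locally finite) face set of $\Z^3$, and $C_0$ does not depend on $g$ by translation invariance. Therefore
\[\sum_{f\in \cF(\Z^3)} \sum_{g\in W_x^\cJ} e^{-\bar c d(f,g)} \;\leq\; C_0\, |W_x^\cJ|\,.\]

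Now apply~\eqref{eq:|W_x^J|} from Claim~\ref{clm:m(W)}, which gives $|W_x^\cJ| \leq \tfrac23 \fm(\tilde\fW_{v_1}\cup\tilde\fW_{y^\dagger})$. Since the standardizations of the walls indexed in $\tilde\fW_{v_1}\cup\tilde\fW_{y^\dagger}$ are all contained in $\bW$ (by construction of $\Psi_{x,t}$ in step~2 and step~3, both $[x]$ and $y^\dagger$ are marked for deletion, hence their nested sequences of walls land in $\bD$), we obtain $\fm(\tilde\fW_{v_1}\cup\tilde\fW_{y^\dagger}) \leq \fm(\bW)$, and thus
\[\sum_{f\in \cF(\Z^3)} \sum_{g\in W_x^\cJ} e^{-\bar c d(f,g)} \;\leq\; \tfrac{2C_0}{3}\, \fm(\bW)\,,\]
which is the desired bound with $\bar C = \tfrac{2C_0}{3}$.

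There is no real obstacle here: unlike Claims~\ref{clm:W-and-H-vs-everything} and~\ref{clm:X1-and-X3-vs-everything}, which needed the bijective structure between the two interfaces or a telescoping argument, here the set $W_x^\cJ$ is simply a column of trivial increments whose size is directly controlled by the heights of the deleted walls. The only thing to check carefully is the factor $\tfrac23$, which comes from the fact (established in Corollary~\ref{cor:no-cut-points-after-step-3}) that the walls standardized to $\tilde\fW_{v_1}\cup\tilde\fW_{y^\dagger}$ have no cut-points below $\hgt(v_1)$ and hence intersect each intermediate layer in at least six faces, giving an excess area $\geq 6(\hgt(v_1)-\tfrac12) = \tfrac32 |W_x^\cJ|$.
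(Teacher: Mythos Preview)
Your proof is correct and takes essentially the same approach as the paper: swap the order of summation, bound the inner exponential sum by a constant to get $C|W_x^\cJ|$, then apply~\eqref{eq:|W_x^J|}. One small slip in your justification: the relevant indices marked for deletion to ensure $\tilde\fW_{v_1}\cup\tilde\fW_{y^\dagger}\subset\bW$ are $\rho(v_1)$ (in step~2) and $y^\dagger$ (in step~3), not $[x]$ and $y^\dagger$; but this does not affect the argument, and the conclusion $\fm(\tilde\fW_{v_1}\cup\tilde\fW_{y^\dagger})\leq\fm(\bW)$ holds exactly as you state.
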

\begin{proof}
Summing over all $f\in \cF(\Z^3)$, there exists $C>0$ such that
\[ 
\sum_{g\in W_x^\cJ } \sum_{f\in \cF(\Z^3)} e^{-\bar c d(f,g)} \leq C |W_x^\cJ|\,,
\]
which is at most $\frac{2}{3}C\fm(\bW)$ by~\eqref{eq:|W_x^J|}.
\end{proof}

The following series of lemmas bounds the interactions (through the subcritical droplets, via $|\g(f,\cI) - \g(f', \cJ)|$) between different subsets of $\cI$ and $\cJ$. The first of these concerns interactions between horizontal shifts of $\bX_1^\cI$ and $\bX_2^\cJ$ with $\bF$. 

\begin{lemma}\label{lem:F-vs-X2-and-X4}There exists $\bar C$ such that for every $\cI \in \bar{\bI}_{x,T,H}$, 
\[ 
\sum_{f\in \bX_1^\cI\cup \bX_2^\cI} \sum_{g\in \cL_0} e^{-\bar c d(f,g)} \leq \bar C\,,
\]
\end{lemma}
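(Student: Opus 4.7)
The plan is to reduce the double sum to a convergent one-dimensional geometric series in the increment index $j$, exploiting two features of the faces in $\bX_1^\cI\cup\bX_2^\cI$: first, every face $f\in\sX_j$ has height at least $\hgt(v_j)\geq j-\tfrac12$; second, the increments $\sX_j$ appearing in $\bX_1^\cI\cup\bX_2^\cI$ are precisely those that \emph{avoided} trivialization in Algorithm~\ref{alg:le-big-map}, which forces a linear-in-$j$ bound on $\fm(\sX_j)$ via criterion~({\tt{A1}}) or~({\tt{B1}}).

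The first step is to dispense with the inner sum. For any face $f$ of $\cF(\Z^3)$ at height $h\geq\tfrac12$ and any $g\in\cL_0$, the bound $d(f,g)\geq\max(h,|\rho(f)-g|)\geq\tfrac12(h+|\rho(f)-g|)$ together with a two-dimensional geometric series yields
\[ \sum_{g\in\cL_0}e^{-\bar c d(f,g)}\leq C_1\,e^{-\bar c\hgt(f)/2}\]
for an absolute constant $C_1$.

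Next I would apply Observation~\ref{obs:fs_i} together with the definitions of $j^*$ and $k^*$ as the final values of $\fs$ in the two phases of the algorithm: at every index strictly greater than $j^*$ within the range of the~({\tt{A}}) loop, none of~({\tt{A1}})--({\tt{A3}}) was triggered, and at every index strictly greater than $k^*$ within the range of the~({\tt{B}}) loop, none of~({\tt{B1}})--({\tt{B3}}) was triggered. In particular~({\tt{A1}}), respectively~({\tt{B1}}), fails, so $\fm(\sX_j)<j-1$ for $\sX_j\in\bX_1^\cI$ and for $\sX_j\in\bX_2^\cI$ in the case $t\leq j^*$, while $\fm(\sX_k)<k-t$ for $\sX_k\in\bX_2^\cI$ in the case $t>j^*$. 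The inequality $|\cF(\sX_j)|\leq 3\fm(\sX_j)+4$ from Section~\ref{sec:pillars} (which extends to trivial increments, where $|\cF(X_\trivincr)|=8$) then bounds the face-count of each such increment linearly in its index.

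Combining these ingredients with $\hgt(f)\geq j-\tfrac12$ for $f\in\sX_j$, the total sum is at most
\[ C_1\sum_{j\geq 1}(3j+8)\,e^{-\bar c(j-1/2)/2}\,,\]
a convergent geometric series bounded by an absolute constant $\bar C$, independent of $\cI$, $x$, $t$, $T$, and $H$. The only mildly delicate point is matching the correct linear bound on $\fm(\sX_j)$ to each part of $\bX_2^\cI$ according to whether $t>j^*$ or $t\leq j^*$; but in either case the bound is linear in the relevant index and the geometric decay in $\hgt(f)$ dominates, yielding the uniform constant.
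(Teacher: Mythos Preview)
Your proposal is correct and follows essentially the same approach as the paper: bound the inner sum over $\cL_0$ by a constant times an exponential in the height, use $\hgt(v_j)\geq j-\tfrac12$, and invoke the failure of criteria ({\tt A1})/({\tt B1}) for $j>j^*$ (resp.\ $k>k^*$) to bound $|\cF(\sX_j)|$ linearly in the index, yielding a convergent geometric series. One small caveat: the inequality $|\cF(\sX_j)|\leq 3\fm(\sX_j)+4$ is only asserted in Section~\ref{sec:pillars} for \emph{non}-trivial increments (for $X_\trivincr$ one has $|\cF|=8>4$), so your parenthetical is slightly misleading; however, your final bound $(3j+8)$ correctly covers both cases, so the argument is unaffected.
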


\begin{proof}Noticing that for every $i$, $d(\sX_i, \cL_0)\geq i$, there exists $C>0$ such that 
\begin{align*}
 \sum_{g\in \bX_1^\cI\cup \bX_2^\cI} \sum_{f\in \cL_0} e^{-\bar c d(f,g)} &  \leq C \sum_{j^* < i < t} |\cF(\sX_i)|  e^{-\bar c i} + C\sum_{i>k^*} |\cF(\sX_i)|  e^{-\bar c i}  \\ 
& \leq C \sum_{i \in (j^*,t) \cup (k^*, \infty)} [4+3\fm(\sX_i)]e^{ - \bar c i} \,.
\end{align*}
In turn, using the fact that $\fm(\sX_i)\leq i-1$  for $i\in (j^*, t)$ and $i >k^*$ (criterion ({\tt A1})), this is at most
\[\sum_{i > j^*} C [1+3i] e^{-\bar c i} \leq \bar C e^{-\bar c j^*}\,.\qedhere
\]
\end{proof}

The next lemma helps control horizontal interactions induced by vertical shifts of walls and ceilings in $\bB$. 
\begin{lemma}\label{lem:B-vs-B}There exists $\bar C$ such that for every $\cI \in \bar{\bI}_{x,T,H}$, 
\[ \sum_{f\in \bB} \sum_{u\in\rho(\bigcup_{z\in \bD}\tilde W_z)} e^{-\bar c d(\rho(f),u)} \leq \bar C\fm(\bW)\,.\]
\end{lemma}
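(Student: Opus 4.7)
The plan is to exchange the order of summation, fix $u\in \rho(\bW)$, and bound $\sum_{f\in\bB} e^{-\bar c d(\rho(f),u)}$ by a universal constant uniformly in $u$; summing over $u$ then contributes a factor $|\rho(\bW)|\leq \fm(\bW)$ by~\eqref{eq:wall-excess-area-inequalities}. Grouping faces of $\bB$ by their projection, I would bound the inner sum by
\[
\sum_{w\in\cF(\cL_0)\cup\cE(\cL_0)} N_\rho(w)\, e^{-\bar c |u-w|}\,,
\]
using $\bB\subset\cI$.

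I would then split the sum into three cases. If $w\in\rho(\bW)$, admissibility forces every face of $\cI$ projecting to $w$ into a single wall of $\bW$, which is disjoint from $\bB$, so the contribution vanishes. If $w$ is in the projection of no wall at all, then $N_\rho(w)\leq 1$: every vertical face of $\cI$ is by definition a wall face, and two horizontal faces sharing a projection would violate the ceiling criterion $N_\rho(\rho(f))=1$, so both would also be wall faces. The remaining case is $w\in\rho(W')$ for some wall $W'\notin\bW$, and here the crucial observation is that $\bW$ is a union of \emph{complete} groups of walls: step~6 of Algorithm~\ref{alg:le-big-map} deletes $\tilde\fF_y$ (the union of groups of walls of the nested sequence of $y$) for each marked index, so $W'$ lies in a group of walls disjoint from $\bW$. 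Picking any $W\in\bW$ with $u\in\rho(W)$, the non-closeness of $W$ and $W'$ yields $|u-w|>\sqrt{N_\rho(u)}+\sqrt{N_\rho(w)}\geq\sqrt{N_\rho(w)}$, so $N_\rho(w)<|u-w|^2$.

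Combining the three cases, for every $u\in\rho(\bW)$,
\[
\sum_{f\in\bB} e^{-\bar c d(\rho(f),u)} \leq \sum_{w\in\cF(\cL_0)\cup\cE(\cL_0)} (1+|u-w|^2)\, e^{-\bar c |u-w|} \leq C_1\,,
\]
for some universal constant $C_1$, the sum converging as the summand decays polynomial-times-exponentially on a two-dimensional lattice. Summing over $u\in\rho(\bW)$ and applying~\eqref{eq:wall-excess-area-inequalities} to each wall gives $|\rho(\bW)|\leq \sum_{z\in\bD}\fm(\tilde W_z)=\fm(\bW)$, whence the claim with $\bar C=C_1$. The step most deserving of care is the verification that $\bW$ is a union of complete groups of walls, which underwrites the third case; this is immediate from the deletion rule in step~6 of the algorithm, but the whole bound hinges on it, so it is worth making explicit.
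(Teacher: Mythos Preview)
Your proof is correct and follows essentially the same approach as the paper: both swap the order of summation, group faces of $\bB$ by their projection $u'$, use that $\bD$ is closed under groups of walls to obtain $N_\rho(u')\leq |u-u'|^2+1$ for $u'\notin\rho(\bW)$ and $u\in\rho(\bW)$, and then sum over $u\in\rho(\bW)$ using $|\rho(\bW)|\leq\fm(\bW)$. Your three-case split is just a more explicit rendering of the paper's single bound $N_\rho(u')\leq|u-u'|^2+1$, and your emphasis on step~6 deleting full groups of walls is exactly the point the paper invokes when it writes ``since $\bD$ is closed under closeness of walls.''
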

\begin{proof}By definition of $N_\rho(u)$, we have 
\[ \sum_{f\in \bB} \sum_{u\in\rho(\bigcup_{z\in \bD}\tilde W_z)} e^{-\bar c d(\rho(f),u)} = \sum_{u'\in\rho(\bigcup_{z\in \bD}\tilde W_z)^c}\sum_{u\in\rho(\bigcup_{z\in \bD}\tilde W_z)} N_\rho(u') e^{-\bar c d(u,u')}\,.
\]
Since $\bD$ is closed under closeness of walls, for every such $u,u'$, we have $ N_\rho(u') \leq |u-u'|^2+1$. Thus there exists a $C>0$ such that the right-hand side above is in turn at most
\[ C \sum_{z\in\bD} |\cE(\rho(\tilde W_z))| + |\cF(\rho(\tilde W_z))| \leq  C \sum_{z\in\bD} \fm(\tilde W_z)\,.\qedhere\]
\end{proof}

The following lemma controls the vertical interactions between the shift in $\bX_2^\cI$ relative to faces in $\bX_1^\cI$.  
In this way, $\lrvec \theta \bX_1^\cI = \bX_1^\cJ$ and $\lrvec \theta \bX_2^\cI= \bX_2^\cJ$.

\begin{lemma}\label{lem:X2-vs-X4}There exists $\bar C$ such that for every $\cI \in \bar{\bI}_{x,T,H}$, 
\[ \sum_{f\in \bX_1^\cI} \sum_{g\in \bX_2^\cI} e^{-\bar c d(f,g)} + e^{-\bar c d(\lrvec\theta f,\lrvec\theta g)} \leq \bar C\,.\]
\end{lemma}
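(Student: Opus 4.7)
The first observation is that $\lrvec\theta$ acts by purely horizontal translations on each of $\bX_1^\cI$ and $\bX_2^\cI$, so vertical distances are preserved: $d(\lrvec\theta f, \lrvec\theta g) \geq d_\udarrow(\lrvec\theta f, \lrvec\theta g) = d_\udarrow(f,g)$ for every $f\in\bX_1^\cI$, $g\in\bX_2^\cI$. Both summands therefore admit the same vertical lower bound and it suffices to bound the single sum $\sum_{f\in\bX_1^\cI}\sum_{g\in\bX_2^\cI} e^{-\bar c\, d_\udarrow(f,g)}$; the estimate for the shifted sum follows identically.

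Next, I would stratify by the increment indices $j\in[j^*+1,t-1]$ and $k\in[k^*+1,\infty)$. For $f\in\cF(\sX_j)$ and $g\in\cF(\sX_k)$, the strict integer monotonicity of cut-heights yields
\[ d(f,g) \geq d_\udarrow(\sX_j,\sX_k) \geq \hgt(v_k)-\hgt(v_{j+1}) \geq k-j-1. \]
I would then apply the standard spherical-sum estimate in $\Z^3$: for any fixed face $f$,
\[\sum_{g\in\sX_k} e^{-\bar c d(f,g)} \leq \sum_{r\geq d(f,\sX_k)} O(r^2)\,e^{-\bar c r} \leq C\, e^{-\bar c' d(f,\sX_k)}\]
for any $0<\bar c'<\bar c$. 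Summing over $f\in\sX_j$ trivially bounds the result by $C\,|\cF(\sX_j)|\, e^{-\bar c' d(\sX_j,\sX_k)}$, and by the symmetric estimate one also obtains $C\,|\cF(\sX_k)|\, e^{-\bar c' d(\sX_j,\sX_k)}$, so together
\[ \sum_{f\in\cF(\sX_j)}\sum_{g\in\cF(\sX_k)} e^{-\bar c d(f,g)} \leq C\,\min\bigl(|\cF(\sX_j)|,|\cF(\sX_k)|\bigr)\, e^{-\bar c'(k-j-1)}. \]

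The key structural input comes from the trivialization criteria. Since $j>j^*$ and $k>k^*$ evaded trivialization, criterion ({\tt A1}) gives $\fm(\sX_j)<j-1$ and criterion ({\tt B1}) gives $\fm(\sX_k)<k-t$; coupled with the relation $|\cF(\sX)|\leq 3\fm(\sX)+8$, this produces $|\cF(\sX_j)|\leq 3j+C$ and $|\cF(\sX_k)|\leq 3(k-t)+C$. The resulting double sum
\[ C'\sum_{j=j^*+1}^{t-1}\sum_{k\geq k^*+1} \min(j,\,k-t+1)\, e^{-\bar c'(k-j-1)} \]
would be handled by splitting on whether $j\leq k-t+1$ or $j>k-t+1$; in each regime a geometric tail in $k-j-1$ collapses the outer sum into a convergent geometric series. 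The main obstacle is that the naive bound $|\cF(\sX_j)|\cdot|\cF(\sX_k)|$ grows like $j(k-t)$ and leaves a sum growing with $t$; it is essential to use $\min(|\cF(\sX_j)|,|\cF(\sX_k)|)$---available because the spherical estimate may be applied from either side---so that the geometric decay in $k-j-1$ dominates, yielding a constant $\bar C$ independent of $t$, $j^*$, $k^*$, $\sT$.
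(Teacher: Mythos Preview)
Your proof is correct and rests on the same ingredients as the paper's: the vertical separation between $\bX_1^\cI$ and $\bX_2^\cI$, together with criterion ({\tt B1}) to control $|\cF(\sX_k)|$ for $k>k^*$. The paper's execution is more streamlined, however. It does not stratify by $j$ at all: since every face of $\bX_1^\cI$ lies at height at most $\hgt(v_t)$ while $\sX_k$ lies at height at least $\hgt(v_k)\geq\hgt(v_t)+(k-t)$, one has $d(f,g)\geq k-t$ uniformly over $f\in\bX_1^\cI$ and $g\in\sX_k$; the spherical estimate centered at $g$ then gives directly
\[ \sum_{f\in\bX_1^\cI}\sum_{g\in\sX_k} e^{-\bar c d(f,g)} \leq C\,|\cF(\sX_k)|\,e^{-\bar c(k-t)}, \]
and summing over $k>k^*$ with $|\cF(\sX_k)|\leq 4+3(k-t)$ from ({\tt B1}) is a convergent series. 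In particular, neither criterion ({\tt A1}) nor the $\min$ device is needed---the $|\cF(\sX_k)|$ side alone suffices. You are right that the $|\cF(\sX_j)|$ side alone would fail (it leaves a factor of order $t$), but that asymmetry is exactly why one applies the spherical estimate from the $g$ side rather than symmetrizing.
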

\begin{proof}Assume $t>j^*$ as otherwise $\bX_1^\cI$ is empty. We can bound the left-hand side above by 
\begin{align*}
\sum_{f\in \bX_1^\cI} \sum_{k> k^*} \sum_{g\in \sX_k} e^{-\bar c d(f,g)}+ e^{ - \bar c d(\lrvec \theta f, \lrvec\theta g)} & \leq
    C\sum_{k> k^*} |\cF(\sX_k)| e^{-\bar c (k-t)}
\end{align*} 
for some $C>0$. By criterion ({\tt B1}), for every $k >k^*$, $\fm(\sX_k)\leq k-t$ and this is at most 
\[     C \sum_{k>k^*} [4+3(k-t)] e^{-\bar c (k-t)}  \leq \bar C\,.\qedhere
\]
\end{proof}

The remaining two lemmas are more involved as they control the interactions between faces in $\bC_1$ and $\bC_2$ (which may shift vertically) with the horizontal shifts of $\bX_1^\cI \cup \bX_2^\cI$. Such terms were not considered in previous works and they cannot be reduced to either two-dimensional bound via projections, nor to a one-dimensional bound via height differences. As explained in Section~\ref{subsec:reader-guide-map}, these bounds are very sensitive to the particular choices for the deletion criteria, particularly ({\tt A1}),({\tt B1}) and ({\tt A3}),({\tt B3}). 

Recall that for all $f\in \bB$, $\theta_{\udarrow}f$ was defined as the vertical shift of $f$ induced by removal of the walls in $\bW$ per Lemma~\ref{lem:standard-wall-representation}. With this in mind, note that $\theta_{\udarrow} f \in \Theta_{\udarrow}^{\cI \setminus \cS_x} W$ for every $f\in W\cup \lceil W\rceil$.

\begin{lemma}\label{lem:C1-vs-X2-and-X4} There exists $\bar C$ such that for every $\cI \in \bar{\bI}_{x,T,H}$, 
\[ \sum_{f\in \bC_1} \sum_{g\in \bX_1^\cI\cup\bX_2^\cI} e^{-\bar c d(f,g)} + e^{-\bar c d(\theta_\udarrow f, \lrvec\theta g)} \leq \bar C \fm(\cI;\cJ)\,.\]
\end{lemma}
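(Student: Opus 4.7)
The plan is to split $\bC_1 = \bigcup_{y\in\bY}\lceil \tilde W_y\rceil$ and $\bX_1^\cI\cup\bX_2^\cI = \bigcup_{j} \sX_j$ (over the set of increment indices $j$ that were \emph{not} trivialized by $\Psi_{x,t}$), and to bound each pair $(y,j)$ separately. The key observation is algorithmic: if $\sX_j$ was not trivialized, then the criteria that would have trivialized it---namely ({\tt A2}), ({\tt A3}) for $j\in(j^*,t)$ (or, when $t\leq j^*$, for $j>j^*$), or ({\tt B2}), ({\tt B3}) for $j>k^*$ when $t>j^*$---all failed at step $j$ of Algorithm~\ref{alg:le-big-map}. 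Since these criteria are of the form ``there exists some $y$ with $\ldots$'', their failure yields, for \emph{every} wall index $y$ (in particular for $y\in\bY$), the inequality
\[ \fD_x(\tilde W_y, j, \omega^j_1, \omega^j_2) > \fm(\tilde W_y) \vee \tfrac12(j-\star_j)\,, \]
where $(\omega^j_1, \omega^j_2, \star_j) = (-v_{j^*+1}, 0, 1)$ in the first range and $(-v_{k^*+1}, v_t - v_{j^*+1}, t)$ in the second.

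This lower bound transfers to both distances appearing in the lemma. For the pre-image distance, $\tilde W_y \cup \lceil \tilde W_y \rceil \in \Theta_\udarrow^{\cI\setminus\cS_x} \tilde W_y$ (as the trivial vertical shift) and $\sX_j$ appears as one of the three components of the set inside $\fD_x$, so $d(f, g) \geq \fD_x(\tilde W_y, j, \omega^j_1, \omega^j_2)$ for every $f \in \lceil \tilde W_y\rceil$ and $g \in \sX_j$. For the post-image, $\theta_\udarrow f \in W \cup \lceil W \rceil$ for some $W \in \Theta_\udarrow^{\cI\setminus\cS_x} \tilde W_y$ by construction (Lemma~\ref{lem:standard-wall-representation}), and case-checking~\eqref{eq-theta-lr-def} shows that $\lrvec\theta g$ matches the translate $\theta_{\rho(x+\omega^j_1+\omega^j_2)} \sX_j$ appearing inside $\fD_x$, giving the matching lower bound $d(\theta_\udarrow f, \lrvec\theta g)\geq \fD_x(\tilde W_y, j, \omega^j_1, \omega^j_2)$.

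I then apply crude cardinality estimates: $|\cF(\sX_j)| \leq 3\fm(\sX_j) + 4 \leq 3(j-\star_j) + 4$ (using that ({\tt A1})/({\tt B1}) did not fire, so $\fm(\sX_j) < j-\star_j$), and $|\lceil \tilde W_y\rceil| \leq \fm(\tilde W_y)^2$ by planar isoperimetry applied to the finite components of $\rho(\tilde W_y)^c$ (whose total perimeter is at most $|\cE(\rho(\tilde W_y))| \leq \fm(\tilde W_y)$). Splitting the $j$-sum according to whether $(j-\star_j)/2$ exceeds $\fm(\tilde W_y)$ yields a product of a polynomial in $\fm(\tilde W_y)$ and $e^{-\bar c \fm(\tilde W_y)}$, which is at most $C\fm(\tilde W_y)$ for a universal $C$. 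Summing over $y\in\bY$ and using $\sum_{y\in\bY} \fm(\tilde W_y) \leq \sum_{z\in\bD} \fm(\tilde W_z) = \fm(\bW) \leq 3\fm(\cI;\cJ)$ (by~\eqref{eq:m(I;J)-WxJ}) completes the proof.

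The main obstacle is the bookkeeping for the post-image distance: verifying that $\lrvec\theta g$ lands inside a set appearing in the definition of $\fD_x$ with the \emph{same} shift parameters $(\omega^j_1, \omega^j_2)$ used in the relevant non-firing criterion requires tracing through both cases of~\eqref{eq-theta-lr-def} and matching them against the case structure of the algorithm. A secondary subtlety is that, even though walls indexed by $\bY$ are precisely those the algorithm deemed ``close'' to the spine, the lower bound on $\fD_x$ still holds uniformly over all $y$: $y\in\bY$ was marked at some other index $j'\neq j$, whereas the failure of the criteria at $j$ itself is a universal statement quantified over all $y$.
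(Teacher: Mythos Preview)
Your proposal is correct and takes a genuinely different---and more elementary---route than the paper's proof.

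The key divergence is your use of criterion~({\tt A2})/({\tt B2}). Since no criterion fired at a non-trivialized index $j$ (so $\fs_{j+1}=\fs_j$), you correctly deduce $\fD_x(\tilde W_y,j,\omega_1^j,\omega_2^j)>\fm(\tilde W_y)$ for \emph{every} $y$, in addition to the $(j-\star_j)/2$ lower bound from ({\tt A3})/({\tt B3}). Combined with the isoperimetric estimate $|\lceil\tilde W_y\rceil|\leq C\fm(\tilde W_y)^2$, this lets the polynomial prefactor $\fm(\tilde W_y)^2\cdot(j-\star_j)$ be absorbed by $e^{-\bar c\max(\fm(\tilde W_y),(j-\star_j)/2)}$, yielding an $O(1)$ contribution per $y\in\bY$, and hence $O(\fm(\bW))$ after summing. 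The paper, by contrast, uses only ({\tt A1}) and ({\tt A3}): it first sums over $f$ to replace $|\lceil\tilde W_y\rceil|$ by a constant (so never needs the isoperimetric bound), but then the per-increment estimate $\sum_{y}\fD_x e^{-\bar c\fD_x}=O(1)$ summed over all $j^*<i<t$ would give only $O(t-j^*)$, which need not be comparable to $\fm(\cI;\cJ)$. To repair this, the paper introduces a height threshold $\bar\jmath$ (the first increment above all shifted ceilings of walls in $\bY$), uses the per-increment bound below $\bar\jmath$ to collect $O(\fm(\tilde\fF_{\bar y}))$, and above $\bar\jmath$ runs a ``record-times'' argument on the projected distance $d_\rho(y,i)$ to control the tail geometrically per $y$. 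Your argument bypasses this machinery entirely; the paper's approach trades the extra ({\tt A2}) input and the isoperimetric bound for a finer geometric analysis.
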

\begin{proof}Begin by considering $g\in \bX_1^\cI$ (assuming $t>j^*$ as otherwise $\bX_1^\cI$ is empty). There exists $C>0$ such that
\begin{align*}
 \sum_{f\in \bC_1} \sum_{g\in \bX_1^\cI} e^{-\bar c d(f,g)} + e^{-\bar c d(\theta_\udarrow f, \lrvec\theta g)} &\leq C\sum_{j^*<i<t} \sum_{y\in \bY} |\cF(\sX_i)|\left( e^{- \bar cd(\lceil \tilde W_y\rceil, \sX_i)} +
  e^{-\bar c d(\theta_\udarrow(\lceil \tilde W_y\rceil),\lrvec\theta\sX_i)}\right)\\
  &\leq 2C \sum_{j^*<i<t} \sum_{y\in \bY}  |\cF(\sX_i)| e^{-\bar c\fD_x(\tilde W_y,i,v_{j^*+1},0)}\,,
 \end{align*}
since $\Theta_\udarrow^{\cI\setminus \cS_x} \tilde W_y$ includes $\lceil \tilde W_y\rceil$ and $\theta_\udarrow(\lceil \tilde W_y\rceil)$, and $\lrvec\theta \sX_i$ is exactly $\theta_{\rho(x-v_{j^*+1})}\sX_i$, which was one of the horizontal translates considered in the definition of $\fD_x$. Further, since $\sX_i$ was not deleted, by~({\tt A1}) and ({\tt A3}),
\[ \fm(\sX_i) < (i-1)\qquad\mbox{and}\qquad\fD_x(\tilde W_y,i,-v_{j^*+1},0) > (i-1)/2\,, \]
so that for every $j^* < i <t$ and every $y\in \bY$, 
\[\fm(\sX_i) < 2\fD_x(\tilde W_y,i,-v_{j^*+1},0)\,.\]
Using $|\cF(\sX_i)| \leq 3\fm(\sX_i)+4$, we have that the above sum is at most
\[ 
  8C \sum_{j^*<i<t} \sum_{y\in \bY}  \fD_x(\tilde W_y,i,-v_{j^*+1},0) e^{-\bar c\fD_x(\tilde W_y,i,-v_{j^*+1},0)}\,.
  \]
  Observe first that for some $C>0$, 
  \begin{equation}\label{eq:per-incr-D-bound} \sum_{y\in\bY} \fD_x(\tilde W_y,i,-v_{j^*+1},0) e^{-\bar c \fD_x(\tilde W_y,i,-v_{j^*+1},0)} < C \quad\mbox{ for each $j^*<i<t$}\,.\end{equation}
  Indeed this follows by writing
  \begin{align*} \fD_x(\tilde W_y,i,-v_{j^*+1},0)e^{-\bar c \fD_x(\tilde W_y,i,-v_{j^*+1},0)} &\leq 
  d(\Theta_\udarrow^{\cI\setminus \cS_x}  \tilde W_y,\sX_i)e^{-\bar c d(\Theta_\udarrow^{\cI\setminus \cS_x}  \tilde W_y,\sX_i)} \\ & \quad + 
  d(\Theta_\udarrow^{\cI\setminus \cS_x}  \tilde W_y,\lrvec\theta\sX_i)e^{-\bar c d(\Theta_\udarrow^{\cI\setminus \cS_x}  \tilde W_y,\lrvec\theta\sX_i)}\\
  & \quad +
  d(\Theta_\udarrow^{\cI\setminus \cS_x}  \tilde W_y,\Theta_\trivincr\sX_i)e^{-\bar c d(\Theta_\udarrow^{\cI\setminus \cS_x}  \tilde W_y,\Theta_\trivincr\sX_i)}\,,
  \end{align*}
  and noticing that if $y\neq y'$ then $\Theta_\udarrow^{\cI\setminus \cS_x} \tilde W_y \cap \Theta_\udarrow^{\cI\setminus \cS_x} W_{y'}=\emptyset$, so that 
  after summing over $y\in\bY$, each term on the right-hand side contributes a constant.
(However, we cannot afford an overall bound of order $t-j^*$, which may not be comparable to $\fm(\cI;\cJ)$.)
  
Thus we only use the above bound to deal with increments whose height is at most the maximal height of a ceiling of $\tilde W_y$ or one of its possible vertical shifts. Namely, let 
\[ \bar\jmath=\min\left\{j : \hgt(v_{j+1}) > \max_{y\in\bY} \max_{f\in\Theta_\udarrow^{\cI\setminus \cS_x} \tilde W_y}\hgt(f) \right\}\,,\]
and denote by $\bar y$ the index of the wall attaining this height. Then, using~\eqref{eq:per-incr-D-bound},
\[ \sum_{j^*<i\leq \bar\jmath} \sum_{y\in \bY}  \fD_x(\tilde W_y,i,-v_{j^*+1},0) e^{-\bar c\fD_x(\tilde W_y,i,-v_{j^*+1},0)} \leq C \bar\jmath \leq C \fm(\tilde \fF_{\bar y})\,.
\]
For the remaining increments, for every $y\in \bY$, let 
\[ d_\rho(y,i) := d\big(\rho(\lceil \tilde W_y\rceil),\rho(\sX_i \cup \theta_{\rho(x-v_{j^*+1})}\sX_i \cup \Theta_\trivincr \sX_i) \big)\,,\]
and let $ \ell_1 = \bar\jmath + 1 < \ell_2 < \ldots < \ell_r$ be the record times of the function $d_\rho(y,\cdot)$, i.e.,  
\begin{align*} 
&d_\rho(y,\ell_j) < d_\rho(y,i)\quad\mbox{for all $\bar\jmath < i<\ell_j$}\,,\\
&d_\rho(y,\ell_r) = \min\{ d_{\rho}(y,i) : \bar\jmath < i < t\}\,.
\end{align*}

\begin{figure}
\begin{tikzpicture}

    \node (fig1) at (4.4,0) {
	\includegraphics[width=.40\textwidth]{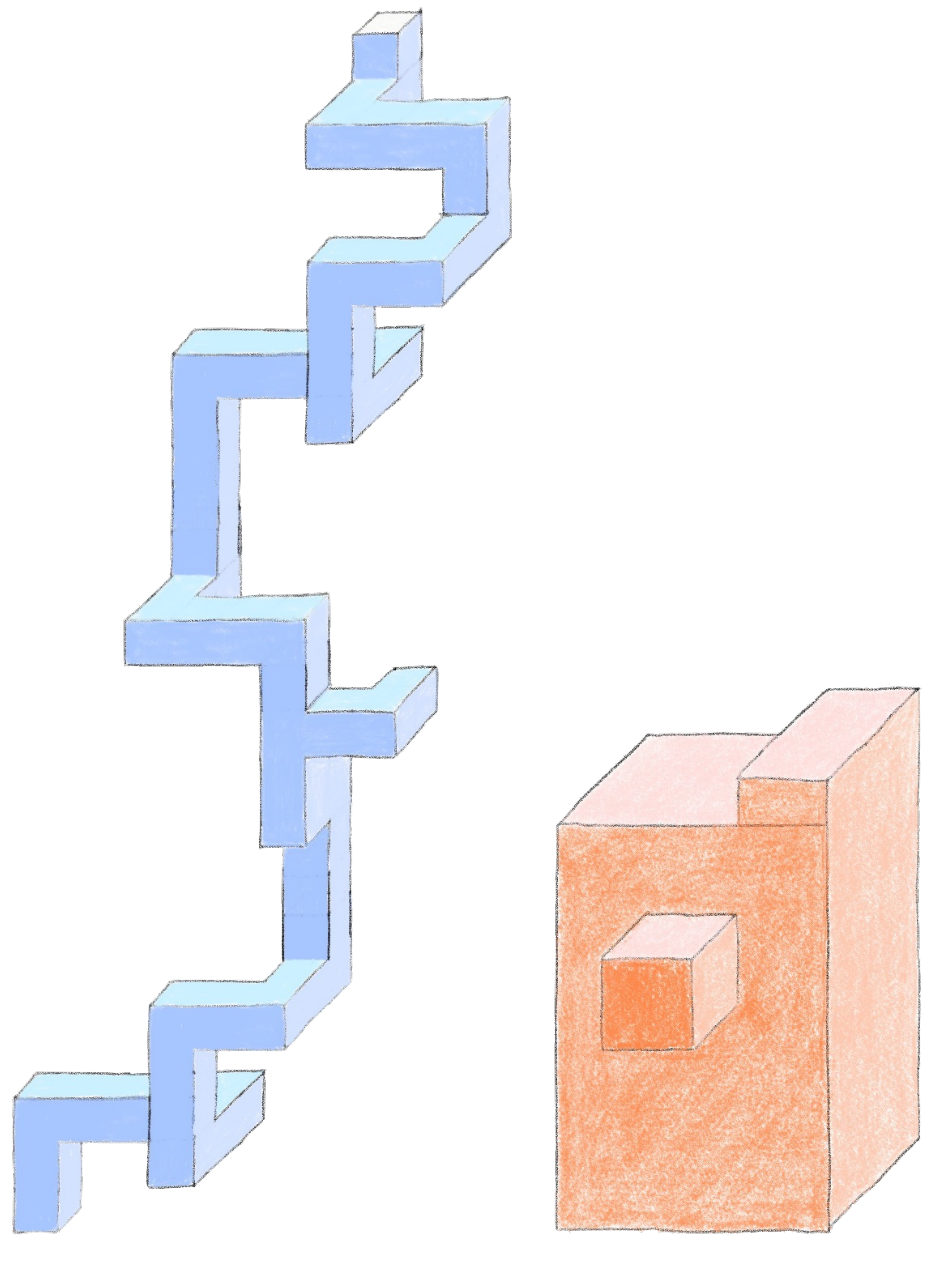}
	};

	\draw[<-] (-6+8.8, -4.15)--(-3.82+8.8, -4.15);
	
	\draw[densely dashed] (-5.58+8.8, 0)--(-5.58+8.8, -4.3);
	\draw[dashed] (-5.25+8.8, 1.9)--(-5.25+8.8, -4.3);
	
	\draw[dotted] (-4.64+8.8, 2.6)--(-4.64+8.8, -4.3);
	
	\draw [decorate,decoration={brace,amplitude=5pt}]
(-6,-1.6+.18) -- (-6,-.27+.18) node [black,midway,xshift=-.5cm] 
{\footnotesize $\sX_{\bar \jmath}$};

	\draw [decorate,decoration={brace,amplitude=5pt}]
(-6.6+8.8,-.5) -- (-6.6+8.8,.7) node [black,midway,xshift=-.5cm] 
{\footnotesize $\sX_{\ell_1}$};

	\draw [decorate,decoration={brace,amplitude=5pt}]
(-6.6+8.8,1.3) -- (-6.6+8.8,2.2) node [black,midway,xshift=-.5cm] 
{\footnotesize $\sX_{\ell_2}$};

	\draw [decorate,decoration={brace,amplitude=5pt}]
(-5.65+8.8,1.92) -- (-5.65+8.8,3.18) node [black,midway,xshift=-.5cm] 
{\footnotesize $\sX_{\ell_3}$};

    \node (fig2) at (-4.4,.18) {
    \includegraphics[width=.4\textwidth]{fig_record_times.pdf}
    };
    
    
    
    \draw[<->, dashed] (3.23-8.8,.13)--(4.95-8.8,.13);
    
    \draw[<->, dashed] (2.61-8.8,.9)--(5.28-8.8,.9);
    \draw[<->, dashed] (2.61-8.8,1.21)--(5.28-8.8,1.21);
    \draw[<->, dashed] (2.61-8.8,1.52)--(5.28-8.8,1.52);
    
    \draw[<->, dashed] (3.54-8.8,1.75)--(4.95-8.8,1.75);
    
    \draw[<->, dashed] (3.54-8.8,2.15)--(4.95-8.8,2.15);
    
    \draw[<->, dashed] (4.16-8.8,2.55)--(4.95-8.8,2.55);
    
    \draw[<->, dashed] (4.46-8.8,3.5)--(5.28-8.8,3.5);
    
    	
    	\node[font = \tiny] at (5.6-8.8,.1) {$i=1$};
    	\node[font = \tiny] at (5.6-8.8,.89) {$i=2$};
    	\node[font= \tiny] at (5.6-8.8, 1.2) {$i=3$};
    
    	\node[font = \tiny] at (5.6-8.8,1.7) {$\vdots$};

    \end{tikzpicture}
    \caption{Left: the index $\bar \jmath$ is determined by the height of the wall $W_{\bar y}$ (orange). The distances $d_\rho(\bar y,\bar \jmath +i)$ are recorded for $i =1,2 \ldots$. Right: the record times of this distance then form the increment indices $\ell_1=\bar \jmath+1, \ell_2,\ell_3$.
 }
 \label{fig:record-times}
\end{figure}
\noindent (See Figure~\ref{fig:record-times}.) Let $\ell_{r+1}: = \infty$ and observe that for every $j= 1,\ldots,r$ and every $\ell_j \leq i < \ell_{j+1}$,
\[ \fD_x(\tilde W_y,i,-v_{j^*+1}, 0) \geq \sqrt{d_\rho(y,\ell_j)^2 + (\hgt(v_i)-\hgt(v_{\bar\jmath+1}))^2} \geq \frac{d_\rho(y,\ell_j) + i-\ell_{j}}{\sqrt2}\,,\]
using the definition of $\bar\jmath$ and that it satisfies $\bar\jmath < \ell_j$.
In particular, there exists $C, C'>0$ such that
\[ \sum_{\ell_j \leq i < \ell_{j+1}} \fD_x(\tilde W_y,i,-v_{j^*+1}, 0) e^{-\bar c \fD_x(\tilde W_y,i,-v_{j^*+1}, 0)} \leq \sum_{\ell_j \leq i < \ell_{j+1}}
C e^{-\frac{\bar c}{\sqrt2} (d_{\rho}(y,\ell_j)+i-\ell_j)} \leq C' e^{-\frac{\bar c}{\sqrt2} d_{\rho}(y,\ell_j)}\,.
\]
Summing over $1\leq j \leq r$, and noticing that $r \leq d_\rho(y,\ell_1)$,
\[
 \sum_{j=1}^r e^{-\frac{\bar c}{\sqrt{2}} d_{\rho}(y,\ell_j)} = \sum_{j=1}^r e^{-\frac{\bar c}{\sqrt{2}} (d_\rho(y,\ell_1)-j)} \leq \bar C\,.
\]
Summing over $y\in\bY$, this is at most $\sum_{y\in \bY} \bar C \leq \bar C \sum_{y\in \bY} \fm(\tilde W_y)$.

The treatment of $g\in\bX_2^\cI$ is identical to the above argument, with the sole difference being the values of the horizontal shifts in the definition of $\fD_x$.
\end{proof}

\begin{lemma}\label{lem:C2-vs-X2-and-X4}There exists $\bar C$ such that for every $\cI \in\bar{\bI}_{x,T,H}$,
\[ \sum_{f\in \bC_2} \sum_{g\in \bX_1^\cI\cup\bX_2^\cI} e^{-\bar c d(f,g)} + e^{-\bar c d(\theta_\udarrow f, \lrvec\theta g)} \leq \bar C\,.\]
\end{lemma}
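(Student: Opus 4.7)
My approach mirrors the proof of Lemma~\ref{lem:C1-vs-X2-and-X4} in structure, but invokes a dual criterion: whereas there $f$ lay in the ceiling of a \emph{deleted} wall and the geometric decay came from the surviving-increment criteria ({\tt A1}) and ({\tt A3}), here $f$ lies in $\tilde W_z \cup \lceil \tilde W_z \rceil$ for some index $z \notin \bY$, and the fact that $z$ was never marked for deletion means that ({\tt A2}) (and ({\tt B2}) when $t > j^*$) failed for $\tilde W_z$ at every surviving increment. Concretely,
\[\fD_x(\tilde W_z, i, -v_{\fs_i+1}, 0) > \fm(\tilde W_z)\qquad\text{for } j^*<i<t,\]
and similarly with $\omega_2 = v_t - v_{j^*+1}$ for $i > k^*$.

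For each such non-marked $z$ and each surviving increment $\sX_i$, I would bound the local pair contribution using the geometric-series estimate $\sum_{f:\,d(f,g)\geq D} e^{-\bar c d(f,g)} \leq C e^{-\bar c D/2}$ applied after restricting to $f \in \tilde W_z \cup \lceil \tilde W_z\rceil$, yielding
\[\sum_{f}\sum_{g\in \sX_i} e^{-\bar c d(f,g)} \leq C (\fm(\sX_i) + 1)\, e^{-\bar c' \fD_x(\tilde W_z, i, -v_{\fs_i+1}, 0)}\,.\]
Since $\sX_i$ survived ({\tt A1}) we have $\fm(\sX_i) < i-1$, hence $|\cF(\sX_i)| = O(i)$; and since $\tilde W_z$ survived ({\tt A2}) we have $\fD_x > \fm(\tilde W_z) \geq \hgt(\tilde W_z)$. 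Combining this with the purely vertical estimate $\fD_x \geq (i-1) - \hgt(\tilde W_z)$ (from the fact that $\sX_i$ sits at heights $\geq i-1$), one deduces $\fD_x \geq (i-1)/2$ in every regime. Splitting $e^{-\bar c'\fD_x}$ as $e^{-\bar c' \fD_x/2}\cdot e^{-\bar c'\fD_x/2}$, the first factor absorbs the polynomial $i$ while the second preserves geometric decay.

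The remaining step is summation over $z \notin \bY$ for each fixed $i$, which I would handle via a record-times argument applied to the horizontal projection distance $d_\rho(\tilde W_z, \sX_i)$, analogous to the one in Lemma~\ref{lem:C1-vs-X2-and-X4}: since distinct walls have disjoint projections, record times enumerate walls with strictly decreasing $d_\rho$, and the induced telescoping sum yields $\sum_{z\notin\bY}e^{-\bar c''\fD_x}=O(1)$ uniformly in $i$. The shifted pairs $(\theta_\udarrow f, \lrvec\theta g)$ are handled by the same estimate because $\theta_\udarrow\tilde W_z \in \Theta_\udarrow^{\cI\setminus\cS_x}\tilde W_z$ and $\lrvec\theta \sX_i$ is exactly one of the three horizontal translates appearing in the definition of $\fD_x$, so the same lower bound governs both. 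The principal obstacle is precisely this summability over $z$: no $\fm(\bW)$-type factor is allowed on the right-hand side (since $\bC_2$ can have arbitrarily large total size), which forces the argument to rely entirely on the non-deletion lower bound $\fD_x > \fm(\tilde W_z)$ rather than on any counting of deleted walls.
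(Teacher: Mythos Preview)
Your approach has a genuine gap, and it also misses the much simpler route the paper takes.

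\textbf{The gap.} Your derivation of $\fD_x \geq (i-1)/2$ from ({\tt A2}) failure hinges on the chain
\[
\fD_x > \fm(\tilde W_z) \geq \hgt(\tilde W_z)\,,\qquad \fD_x \geq (i-1) - \hgt(\tilde W_z)\,.
\]
But the second inequality requires $\hgt(\tilde W_z)$ to mean the maximal height attained by any element of $\Theta_\udarrow^{\cI\setminus\cS_x}\tilde W_z$ (since $\fD_x$ is a distance from that entire set), and this maximal height is only bounded by $\fm(\tilde\fF_z)$, not by $\fm(\tilde W_z)$ --- see Fact~\ref{fact:useful}. A small wall $\tilde W_z$ nested inside a large wall can sit at a height far exceeding its own excess area, so your two inequalities do not combine to give $(i-1)/2$. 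The ({\tt A2}) failure controls only the individual wall's excess area, which is precisely why it is the wrong criterion here.

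\textbf{What the paper does instead.} The key observation you missed is that since $i>j^*$ (resp.\ $i>k^*$), criterion ({\tt A3}) (resp.\ ({\tt B3})) \emph{failed} at step $i$ --- for otherwise $\fs_{i+1}=i$ and eventually $j^*\geq i$. Failure of ({\tt A3}) says directly that
\[
\fD_x(\tilde W_y, i, -v_{j^*+1}, 0) > (i-1)/2 \qquad\mbox{for \emph{every} index }y\,,
\]
including every $y$ with $\tilde W_y\cup\lceil\tilde W_y\rceil$ contributing to $\bC_2$. This is the bound you were trying to derive, obtained in one line. Since both $f$ and $\theta_\udarrow f$ lie in $\Theta_\udarrow^{\cI\setminus\cS_x}\tilde W_y$ and both $g$ and $\lrvec\theta g$ are among the three translates of $\sX_i$ appearing in $\fD_x$, the distance lower bound $(i-1)/2$ applies to both pairs. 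One then simply bounds $\sum_{f\in\bC_2} e^{-\bar c d(f,g)} \leq \sum_{f:\,d(f,g)>(i-1)/2} e^{-\bar c d(f,g)} \leq C e^{-\bar c(i-1)/2}$ for each $g\in\sX_i$, sums over $|\cF(\sX_i)|\leq 3(i-1)+4$ faces (using ({\tt A1}) failure), and then over $i$. No per-wall decomposition and no record-times argument is needed: once the distance bound is uniform over all of $\bC_2$, the sum over $f$ is handled by the crude $\cF(\Z^3)$ bound.
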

\begin{proof}
Begin by considering $g\in \bX_1^\cI$. If $y\notin \bY $ is such that $f\in \tilde W_y\cup\lceil \tilde W_y\rceil$ and $g\in\sX_i$ for some $j^*<i<t$, then ({\tt A3}) implies that 
\[ (i-1)/2 < \fD_x(\tilde W_y,i,-v_{j^*+1},0) \leq d(f,g) \wedge d(\theta_\udarrow f, \lrvec\theta g)\,, \]
since $\theta_\udarrow f \in \Theta_\udarrow^{\cI\setminus \cS_x} \tilde W_y$, and $\lrvec\theta g = \theta_{\rho(x-v_{j^*+1})} g$. 
Further, since $i> j^*$, by criterion ({\tt A1}), $\fm(\sX_i) < i-1$, so
\[ \sum_{f\in\bC_2}\sum_{g\in\bX_2^\cI} e^{-\bar c d(f,g)}+e^{-\bar c d(\theta_\udarrow f,\lrvec\theta g)}  \leq 2C \sum_{j^* <i <t} [3\fm(\sX_i)+4] e^{ - \bar c (i-1)/2} \leq 8 C \sum_{j^*<i<t} i e^{-\bar c (i-1)/2}\leq \frac{\bar C}{2}\,,\]
for some $\bar C$. 
The treatment of $g\in \bX_2^\cI$ is identical to the above, with the only difference being in the horizontal shift in the definition of $\fD_x$.
\end{proof}

\begin{proof}[\textbf{\emph{Proof of Proposition~\ref{prop:partition-function-contribution}}}]By Theorem~\ref{thm:cluster-expansion}, it suffices to show that there exists $C>0$ such that for every $\cI \in \bar{\bI}_{x,T,H}$, if $\cJ = \Psi_{x,t}(\cI)$, 
\begin{align*}
    \Big| \sum_{f\in \cI} \g(f,\cI) - \sum_{f' \in \cJ} \g(f',\cJ)\Big| \leq C\fm(\cI; \cJ)\,.
\end{align*}
Using the partition of the faces of $\cI,\cJ$ in Section~\ref{subsec:decomposition-interfaces}, we can expand
\begin{align}
    |\sum_{f\in\cI} \g(f,\cI) - \sum_{f'\in\cJ} \g(f',\cJ)| &\leq \sum_{f\in \bX_A^\cI \cup \bX_B^\cI} |\g(f,\cI)| +\sum_{f\in \bW} |\g(f,\cI)| \label{eq:part-function-splitting-I} \\
    & \qquad + \sum_{f'\in \bX_A^\cJ \cup \bX_B^\cJ} |\g(f',\cJ)| + \sum_{f'\in \bH} |\g(f',\cJ)|+ \sum_{f'\in W_x^\cJ} |\g(f',\cJ)| \label{eq:part-function-splitting-J}\\
    &\qquad + \sum_{f\in \bX_1^\cI\cup \bX_2^\cI} |\g(f,\cI)- \g(\lrvec\theta f,\cJ)|
    + \sum_{f\in \bB} 
    |\g(f,\cI)-\g(\theta_{\udarrow}f,\cJ)| \label{eq:part-function-splitting-IJ} \,.
\end{align}
We show that each term on the right-hand side is comparable to $\fm(\cI; \cJ)$. By~\eqref{eq:g-uniform-bound}, the first sum satisfies
\[ \sum_{f\in\bX_A^\cI\cup\bX_B^\cI} |\g(f,\cI)| \leq 
\bar K |\bX_A^\cI\cup \bX_B^\cI| \]
which is at most $\bar C \bar K \fm(\cI;\cJ)$ by Claim~\ref{clm:X1-and-X3-vs-everything}. The same holds for the first sum in line~\eqref{eq:part-function-splitting-J} by Claim~\ref{clm:X1-and-X3-vs-everything}. By Claim~\ref{clm:W-and-H-vs-everything}, the second sums in~\eqref{eq:part-function-splitting-I} and~\eqref{eq:part-function-splitting-J} are bounded in the same way by $\bar C \bar K \fm(\bW)$, which is in turn at most $3 \bar C \bar K \fm(\cI; \cJ)$ by~\eqref{eq:m(I;J)-WxJ}. The third sum in line~\eqref{eq:part-function-splitting-J} is bounded in this way by $\bar C \bar K \fm(\bW)$ via Claim~\ref{clm:Wx^J}, and this is in turn at most $3 \bar C \bar K \fm(\cI;\cJ)$ by~\eqref{eq:m(I;J)-WxJ}. 

It remains to consider the two sums in line~\eqref{eq:part-function-splitting-IJ}, which by~\eqref{eq:g-exponential-decay} satisfy,
\begin{align*}
\sum_{f\in \bX_1^\cI \cup\bX_2^\cI} |\g(f,\cI)- \g(\lrvec\theta f,\cJ)| &\leq
\sum_{f\in \bX_1^\cI\cup\bX_2^\cI} \bar K e^{-\bar c\br(f,\cI; \lrvec\theta f,\cJ)}\,, \qquad \mbox{and}\,,\\
\sum_{f\in \bB} |\g(f,\cI)- \g(\theta_\udarrow f,\cJ)| &\leq
\sum_{f\in \bB} \bar K e^{-\bar c\br(f,\cI; \theta_\udarrow f,\cJ)}\,.
\end{align*}
To evaluate the radius $\br$, consider the right-hand sides according to the face $g$ attaining $\br(f,\cI; f',\cJ)$.
\begin{enumerate}[(i)]
\item If $g \in \bX_A^\cI\cup\bX_B^\cI \cup\bX_A^\cJ\cup\bX_B^\cJ$, both these sums are at most 
\begin{align*}
     \sum_{g\in \bX_A^\cJ\cup \bX_B^\cI \cup \bX_A^\cJ \cup \bX_B^\cJ} \bar K \Big[ \sum_{f\in \bX_1^\cI \cup \bX_2^\cI} (e^{ - \bar c d(f,g)}+ e^{- \bar c d(\lrvec \theta f,g)})+ \sum_{f\in \bB}(e^{ - \bar c d(f,g)} + e^{ -\bar c d(\theta_{\udarrow} f, g)})\Big]\,.
\end{align*}
Replacing the sums over $f$ by sums over all $f\in \cF(\Z^3)$, Claim~\ref{clm:X1-and-X3-vs-everything} implies this contributes at most $4\bar C \bar K \fm(\cI;\cJ)$.

\item If $g\in \bW \cup \bH\cup W_x^\cJ$, these sums are at most 
\begin{align*}
    \sum_{g\in \bW \cup \bH\cup W_x^\cJ} \bar K \Big[ \sum_{f\in \bX_1^\cI \cup \bX_2^\cI} (e^{ - \bar c d(f,g)}+ e^{- \bar c d(\lrvec \theta f,g)}) + \sum_{f\in \bB}(e^{ - \bar c d(f,g)} + e^{ -\bar c d(\theta_{\udarrow} f, g)})\Big]\,.
\end{align*}
Replacing the sums over $f$ by sums over all $f\in \cF(\Z^3)$,  by Claim~\ref{clm:W-and-H-vs-everything} and Claim~\ref{clm:Wx^J}, this contributes at most $12\bar C \bar K \fm(\cI;\cJ)$. 

\item For $g\in \bX_1^\cI\cup\bX_1^\cJ\cup \bX_2^\cI \cup \bX_2^\cJ$, let us begin with the first sum ($f\in \bX_1^\cI \cup \bX_2^\cI$). If $f\in \bX_1^\cI$, the radius $\br$ could not have been attained by $g\in \bX_1^\cI\cup \bX_1^\cJ$ since all increments in $\bX_1^\cI$ are shifted by the same vector. Then this reduces to 
\begin{align*}
   \bar K  \sum_{f\in \bX_1^\cI} \sum_{g\in \bX_2^\cI} (e^{ - \bar c d(f,g)}+ e^{ - \bar c d(\lrvec \theta f, \lrvec \theta g)})\,, 
\end{align*}
which is at most $\bar C \bar K $ by Lemma~\ref{lem:X2-vs-X4}. If $f\in \bX_2^\cI$ , this reduces to $g\in \bX_1^\cI$ and is handled symmetrically. 

Turning to the sum over $f\in\bB$, it splits into the following: 
\begin{align*}
    \bar K \sum_{g\in \bX_1^\cI \cup \bX_2^\cI} \sum_{f\in \bC_1\cup \bC_2 \cup \bF} (e^{ - \bar c d(f,g)}+ e^{ - \bar c d(\theta_\udarrow f,\lrvec \theta g)})
\end{align*}
The contribution from $f\in \bC_1$ is at most $\bar C \bar K \fm(\cI; \cJ)$ by Lemma~\ref{lem:C1-vs-X2-and-X4}; the contribution from $f\in \bC_2$ is at most $\bar C \bar K $ by Lemma~\ref{lem:C2-vs-X2-and-X4}; the contribution from $f\in \bF$ is at most $\bar C\bar K $  by Lemma~\ref{lem:F-vs-X2-and-X4} as $\bF \subset \cL_0$.

\item For $g \in \bC_1\cup\theta_\udarrow\bC_1 \cup  \bC_2\cup\theta_\udarrow\bC_2 \cup \bF$, the first sum can be expressed as 
\begin{align*}
    \bar K \sum_{f\in \bX_1^\cI \cup \bX_2^\cI} \sum_{g\in \bC_1\cup \bC_2\cup \bF} (e^{ - \bar c d(f,g)}+ e^{ - \bar c d(\lrvec \theta f, \theta_\udarrow g)})\,.
\end{align*}
Up to a change of roles of $f$ and $g$, this is identical to the term considered in the item above, and its contribution is therefore at most $2\bar C \bar K + \bar C \bar K  \fm(\cI;\cJ)$ by Lemmas~\ref{lem:F-vs-X2-and-X4} and~\ref{lem:C1-vs-X2-and-X4}--\ref{lem:C2-vs-X2-and-X4}. 

For the second sum, in which $f \in \bB$, note that if the radius $\br(f, \cI; \theta_{\udarrow} f, \cJ)$ is attained by $g\in \bC_1 \cup \bC_2 \cup \bF \cup \bW$ or by $\theta_{\udarrow}$ of such a $g$, it must be attained by a face in a wall nested in some wall of $\bW$. Since the distance between two faces is at least the distance between their projections, and projections of distinct walls are distinct, the contribution of this term (summed over all possible such $g$) is at most 
$$\bar K \sum_{f\in\bB} \sum_{u\in\rho(\bigcup_{z\in \bD}\tilde W_z)} e^{-\bar c d(\rho(f),u)}\,,$$
which is at most $\bar C \bar K  \fm(\bW)\leq 3\bar C \bar K  \fm(\cI; \cJ)$ by Lemma~\ref{lem:B-vs-B} and~\eqref{eq:m(I;J)-WxJ}. 
\end{enumerate}
Altogether, we deduce that all the summands on the right-hand side of~\eqref{eq:part-function-splitting-I}--\eqref{eq:part-function-splitting-IJ} are bounded by an absolute constant times $\fm(\cI;\cJ)$, implying the desired.
\end{proof}

\subsection{Proof part 2: multiplicity}
We next bound the multiplicity of the map $\Psi_{x,t}$ with a fixed excess area $M$ by an exponential in $M$ (independently of $\beta$). 

\begin{proposition}\label{prop:multiplicity}
There exists some universal $C_\Psi$ such that 
for every $M\geq 1$ and every $x,t$ and $T< H$,
\begin{align*}
\max_{\cJ\in\Psi_{x,t}(\bar {\bI}_{x,T,H})}\left|\{\cI\in\Psi_{x,t}^{-1}(\cJ) \,:\; \fm(\cI;\cJ)=M\}\right| \leq C_\Psi^M\,.
\end{align*}
\end{proposition}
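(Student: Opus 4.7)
The plan is a classical witness argument, adapted to the horizontal shifts that $\Psi_{x,t}$ performs (a feature absent from the maps in \cite{Dobrushin72a,GL19a}). For a fixed $\cJ \in \Psi_{x,t}(\bar{\bI}_{x,T,H})$ and each preimage $\cI$ with $\fm(\cI;\cJ)=M$, I will associate a \emph{witness} $w(\cI)$: a $*$-connected set of faces in $\Z^3$ rooted at the face $x$, carrying a coloring from a fixed finite palette. I will verify (i) $|w(\cI)| \leq C_0 M$ for a universal $C_0$, and (ii) $\cI$ is reconstructible uniquely from the pair $(\cJ, w(\cI))$. Fact~\ref{fact:number-of-connected-face-sets} then bounds the number of $*$-connected rooted face-sets of size at most $C_0 M$ by $s^{C_0 M}$, while the colorings contribute another factor $k^{C_0 M}$, giving the claim with $C_\Psi = (sk)^{C_0}$.

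Using the decomposition from \S\ref{subsec:decomposition-interfaces}, the witness aggregates all data erased by $\Psi_{x,t}$:
\begin{enumerate}[(W1)]
\item the standardizations $\theta_{\textsc{st}}\tilde W_z$ of every deleted wall $z\in\bD$, colored $\blue$;
\item for every $u\in\rho(\bW)$, all faces of $\cL_0$ within distance $\sqrt{N_\rho(u)}$ of $u$, colored $\red$, to absorb closeness of walls inside a common group of walls;
\item the non-trivial original increments $\sX_1,\ldots,\sX_{j^*}$ and, if $t>j^*$, $\sX_t,\ldots,\sX_{k^*}$ at their $\cI$-positions, colored $\blue$ with a distinct flag;
\item shortest paths of faces in $\cL_0$ from $x$ along $\rho(\fF_{[x]}\cup \fF_{\rho(v_1)}\cup \fF_{y^\dagger})$ connecting $x$ to each wall in $\bW$, together with a vertical column of faces above $x$ reaching height $\hgt(v_{k^*+1})$, colored $\red$, which tie the pieces of (W3) to the root.
\end{enumerate}
By construction the union is $*$-connected. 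The size estimate uses \eqref{eq:wall-excess-area-inequalities}, \eqref{eq:m(I;J)-WxJ}, and \eqref{eq:m(I;J)-j-k}: (W1) contributes at most $2\fm(\bW)\leq 6M$; (W2) contributes $O(|\bW|)=O(M)$ since $\sum_u N_\rho(u)=|\bW|$; (W3) contributes $\sum(3\fm(\sX_j)+4)\leq 3M+4(j^*+k^*-t+1)=O(M)$; (W4) contributes $O(\fm(\bW) + \hgt(v_{k^*+1})) = O(M)$, because $\hgt(v_1)\leq \tfrac14|W_x^\cJ|+1\leq \tfrac12 M+1$ and $\hgt(v_{k^*+1})-\hgt(v_1)\leq (j^*-1)+(k^*-t)+\sum_j\fm(\sX_j) = O(M)$.

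For the inversion: the $\blue$-colored faces in (W1) identify $\bW$, so the standard-wall representation of $\cI\setminus\cS_x$ is recovered from that of $\cJ\setminus\cP_x^\cJ$ by removing $W_x^\cJ$ and reinserting the $\theta_{\textsc{st}}\tilde W_z$, with Lemma~\ref{lem:standard-wall-representation} producing the correct vertical shifts of the ceilings and nested walls. The flagged faces in (W3) reveal $j^*$, $k^*$ and the three cut-points $v_{j^*+1}$, $v_t$, $v_{k^*+1}$ (as the top of $\sX_{j^*}$, the bottom of $\sX_t$, and the top of $\sX_{k^*}$), which together with $\cJ$ furnish the horizontal-shift vectors required to invert $\lrvec\theta$ on the non-trivialized increments above these cut-points in $\cS_x^\cJ$; the original increments in (W3) are then reinserted in place of the trivial increments of $\cS_x^\cJ$ at the corresponding heights, yielding $\cI$.

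The main obstacle is ensuring \emph{simultaneous} recoverability of the vertical shifts (driven by the deletion set $\bD$ and Lemma~\ref{lem:standard-wall-representation}) and of the horizontal shifts (driven by $v_{j^*+1}, v_t, v_{k^*+1}$) from a single $*$-connected face-set of size $O(M)$. The quantities $\fm(\bW)$, $j^*-1$, $k^*-t$, and $\hgt(v_1)$ are only individually $O(M)$, so each of (W1)--(W4) must be accounted for sharply; the color flags are what ultimately separate wall faces, increment faces, and connecting faces so that the inversion of $\Psi_{x,t}$ is unambiguous.
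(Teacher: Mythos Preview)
Your witness is too large when $t>j^*$. The claimed bound in (W4),
\[
\hgt(v_{k^*+1})-\hgt(v_1)\leq (j^*-1)+(k^*-t)+\sum_j\fm(\sX_j)=O(M),
\]
ignores the heights of the \emph{non-trivialized} increments $\sX_{j^*+1},\ldots,\sX_{t-1}$. These contribute $\hgt(v_t)-\hgt(v_{j^*+1})\geq t-j^*-1$ to the left-hand side, and since $t$ is an input parameter (possibly of order~$h$) that is in no way controlled by $M=\fm(\cI;\cJ)$, the vertical column above $x$ to height $\hgt(v_{k^*+1})$ has $\Theta(t)$ faces, not $O(M)$. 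Concretely, take a pillar whose spine is a straight column above $x$ except that $\sX_t$ alone has small excess area; then $j^*=1$, $k^*=t$, $M$ is bounded, yet your column has $\sim 4t$ faces. The same obstruction prevents a single $*$-connected set rooted at $x$ from reaching the B-block $\sX_t,\ldots,\sX_{k^*}$ at all: the gap between $v_{j^*+1}$ and $v_t$ in $\cI$ is not bridgeable with $O(M)$ faces.

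This is precisely why the paper does \emph{not} use a single rooted connected set. Instead it builds six separately rooted $*$-connected pieces $\cF_\iota^\gamma$ ($\iota\in\{A,B\}$, $\gamma\in\{\cI,\cJ,\Psi\}$), together with per-vertex decorations $\Upsilon_w$ encoding the deleted walls. The crucial device is that the root $v_t$ of the B-block is recovered \emph{without} encoding any path of length $\Theta(t)$: one reads $v_{j^*+1}$ from $\cF_A^\cI$, locates the corresponding cut-point in $\cP_x^\cJ$, and then \emph{counts $t-j^*-1$ increments along $\cJ$} (using that $t$ is a known parameter of the map) to find the image $v'$ of $v_t$; this pins down $v_t$ exactly. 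A secondary issue in your sketch is that walls marked via ({\tt A2})/({\tt A3})/({\tt B2})/({\tt B3}) are tied to the increments only through the 3D distances in $\fD_x$, not through $\cL_0$-paths from $x$; the paper connects each such wall to the relevant $\bX_\iota^\gamma$ by a short 3D path (length $\leq \fm(\tilde W_y)$ or $\leq (j-1)/2$) and then packages its faces into a decoration $\Upsilon_w$ rooted at the endpoint $w$, with disjointness of the $\fS_w$ ensuring the total size stays $O(M)$. Your steps (W2) and (W4) do not accomplish this.
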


Towards proving Proposition~\ref{prop:exp-tail-all-increments}, we are also interested in a map used to prove an exponential tail on the increment of a pillar that intersects a given height $\ell$ (as opposed to an increment of a given index). For that purpose, for any pillar $\cP_x$ and a half-integer height $\ell$, let 
$$
\tau_\ell= \tau_\ell (\cP_x):=  \min \{t\geq 1: \sX_t \cap \cL_\ell \neq \emptyset \} \vee 1
$$
and define the map $\tilde \Psi_{x,\ell}$ as 
\begin{align*}
    \tilde \Psi_{x,\ell}(\cI) : = \Psi_{x,\tau_\ell(\cP_x)}(\cI)\,.
\end{align*}
Clearly since the bound of Proposition~\ref{prop:partition-function-contribution} is independent of $\cI$ and $t$, the estimate also holds for $\tilde \Psi_{x,\ell}(\cI)$. However, handling the multiplicity is slightly different since interfaces with differing $\tau_\ell$ may be mapped to the same $\cJ\in \tilde \Psi_{x,\ell}(\bar{\bI}_{x,T,H})$. 

\begin{proposition}\label{prop:multiplicity-at-height}
There exists a universal $\tilde C_\Psi>1$ such that for every $M\geq 1$ and every $x, \ell$ and $T< H$ 
\begin{align*}
    \max_{\cJ \in \tilde \Psi_{x,\ell}(\bar{\bI}_{x,T,H})} \big| \{\cI \in \tilde \Psi_{x,\ell}^{-1}(\cJ)\,: \, \fm(\cI;\cJ) = M\} \big| \leq \tilde C_\Psi^M\,.
\end{align*}
\end{proposition}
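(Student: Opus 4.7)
The plan is to reduce Proposition~\ref{prop:multiplicity-at-height} to Proposition~\ref{prop:multiplicity} by enumerating over the possible values of $t = \tau_\ell(\cP_x^\cI)$. Fix $\cJ \in \tilde\Psi_{x,\ell}(\bar\bI_{x,T,H})$. Since $\tilde\Psi_{x,\ell}(\cI) = \Psi_{x,\tau_\ell(\cP_x^\cI)}(\cI)$, one has the inclusion
\[
\{\cI \in \tilde\Psi_{x,\ell}^{-1}(\cJ) \,:\, \fm(\cI;\cJ) = M\} \subseteq \bigcup_{t \in T_\cJ(M)} \{\cI \in \Psi_{x,t}^{-1}(\cJ) \,:\, \fm(\cI;\cJ) = M\},
\]
where $T_\cJ(M)$ is the set of $t\geq 1$ for which the right-hand side is non-empty. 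Proposition~\ref{prop:multiplicity} bounds each term in the union by $C_\Psi^M$, so it suffices to show that $|T_\cJ(M)| \leq Q^M$ for some universal $Q$; in fact, a polynomial bound $|T_\cJ(M)| = O(M)$ suffices since the polynomial factor can be absorbed by slightly enlarging the base.

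The main step is the comparison
\[
\bigl|\tau_\ell(\cP_x^\cI) - \tau_\ell(\cP_x^\cJ)\bigr| \leq c\, M
\]
for some universal $c>0$, which yields $|T_\cJ(M)| \leq 2cM+1$. To prove this, note that $\tau_\ell(\cP)$ is essentially the number of cut-heights of $\cP$ lying at most $\ell$ (plus $1$). By construction of $\Psi_{x,t}$, the cut-heights of $\cP_x^\cI$ and $\cP_x^\cJ$ agree outside the two intervals $(0,\hgt(v_{j^*+1}^\cI))$ and $(\hgt(v_t^\cI),\hgt(v_{k^*+1}^\cI))$, so the change in the cut-count is bounded by the total length of these two intervals (each interior half-integer height contributes at most one cut-point in each of $\cI$ and $\cJ$). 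Now~\eqref{eq:m(I;J)-WxJ} gives $\hgt(v_1^\cI) - \tfrac12 = \tfrac14 |W_x^\cJ| \leq \tfrac12 M$, and~\eqref{eq:m(I;J)-j-k} together with $\hgt(v_{i+1})-\hgt(v_i)\leq 1 + \tfrac12 \fm(\sX_i)$ for nontrivial increments (from~\eqref{eq:increment-excess-area}) yields
\[
\hgt(v_{j^*+1}^\cI) \leq \hgt(v_1^\cI) + j^* + \tfrac12 \sum_{i\leq j^*} \fm(\sX_i) = O(M),
\]
and analogously $\hgt(v_{k^*+1}^\cI) - \hgt(v_t^\cI) = O(M)$. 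Hence the total number of cut-heights modified is $O(M)$, proving the claimed comparison.

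Combining the two steps, we obtain the multiplicity bound $(2cM+1)\,C_\Psi^M$, which is at most $\tilde C_\Psi^M$ for any $\tilde C_\Psi > C_\Psi$ and all sufficiently large $M$; the finitely many small values of $M$ can be handled by enlarging $\tilde C_\Psi$ further. The main obstacle is the bookkeeping in the second step: verifying that cut-heights outside the two intervals are genuinely unchanged by $\Psi_{x,t}$ (which requires revisiting the attachment of the modified spine $\cS$ to $\cK$ in Algorithm~\ref{alg:le-big-map}) and handling boundary effects at $\hgt(v_{j^*+1}^\cI)$ and $\hgt(v_{k^*+1}^\cI)$, where the original increments $\sX_{j^*+1}^\cI$ and $\sX_{k^*+1}^\cI$ of $\cI$ meet the trivialized segments of $\cJ$.
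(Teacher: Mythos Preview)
Your reduction to Proposition~\ref{prop:multiplicity} via an $O(M)$ enumeration over $t=\tau_\ell(\cP_x^\cI)$ is correct, and the cut-height comparison you sketch does yield $|\tau_\ell(\cP_x^\cI)-\tau_\ell(\cP_x^\cJ)|\leq cM$: outside the two intervals the increments of $\cP_x^\cJ$ are horizontal translates of those of $\cP_x^\cI$ (so cut-heights coincide), and inside them the total height, hence the change in cut-count, is $O(M)$ by the estimates of Claim~\ref{clm:m(W)} exactly as you indicate. The bookkeeping you flag (attachment of $\cS$ to $\cK$, boundary heights) is routine since $\cP_x^\cJ=W_x^\cJ\cup\cS$ and horizontal shifts preserve cut-heights.

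The paper's route is different in packaging though not in spirit. Rather than proving Proposition~\ref{prop:multiplicity} first and then enumerating over $t$, the paper treats both propositions simultaneously via a single witness construction $\Xi_{\cJ,x,t}$ (colored rooted face-sets encoding the deleted walls and trivialized increments). The only place where the unknown $t$ enters is in locating the root $v_t$ of the witness piece $\cF_B^\cI$: for $\Psi_{x,t}$ one reads this off from $t$ and $\cJ$ directly, whereas for $\tilde\Psi_{x,\ell}$ the paper instead enumerates over cut-points of $\cJ$ whose height lies within $\fm(\sX_{\tau_\ell})\leq M$ of~$\ell$, again an $M$-fold enumeration. So both arguments pay the same $O(M)$ price; the paper's approach avoids the cut-count comparison at the expense of building the witness machinery, while your approach is more modular (it black-boxes Proposition~\ref{prop:multiplicity}) but requires the extra geometric estimate on $\tau_\ell$.
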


We prove Propositions~\ref{prop:multiplicity}--\ref{prop:multiplicity-at-height} by constructing a witness that (given $\Psi_{x,t}(\cI)$) is in 1-1 correspondence with the pre-image $\cI$. We then bound the number of all possible such witnesses. 

Let us fix any $\cJ\in \Psi_{x,t}(\bar{\bI}_{x,T,H})$ (or respectively $\cJ\in \tilde \Psi_{x,\ell}(\bar{\bI}_{x,T,H})$). We wish to define an injective map $\Xi=\Xi_{\cJ,x,t}$ (respectively, $\tilde \Xi = \tilde \Xi_{\cJ, x, \ell}$ on $\{\cI \in \Psi_{x,\ell}^{-1}(\cJ)\}$) and bound the cardinality of the set $\Xi(\{\cI\in\Psi_{x,t}^{-1}(\cJ) \,:\; \fm(\cI;\cJ)=M\})$ (resp., $\tilde \Xi(\{\cI\in\Psi_{x,\ell}^{-1}(\cJ) \,:\; \fm(\cI;\cJ)=M\})$.

\medskip
\noindent \textbf{Construction of the witness.}
Fix $x$ and $t$ (respectively $\ell$).
We describe how for a given $\cJ$ and an $\cI \in \Psi_{x,t}^{-1}(\cJ)$ (respectively $\cI \in \tilde \Psi_{x,\ell}^{-1}(\cJ)$) we construct the witness $\Xi_{\cJ,x,t}(\cI)$ (respectively $\tilde \Xi_{\cJ, x,\ell}(\cI)$). In order to do this in a unified manner, we let 
\[\tilde \Xi_{\cJ,x,\ell}(\cI) = \Xi_{\cJ, x, \tau_{\ell}(\cP_x)}(\cI)
\]
and then it suffices to describe how to construct $\Xi (\cI)= \Xi_{\cJ, x,t}(\cI)$ for each $\cI$. 

Our witness $\Xi(\cI)$ will consist of six $*$-connected face-subsets $(\cF_\iota^\gamma)_{\iota \in \{A,B\}, \gamma \in \{\cI, \cJ, \Psi\}}\subset \cF(\Z^3)$, each of which are decorated by coloring its faces \blue\ or \red, and associating to each $v\in \cF_\iota^\gamma$, its own individual face-subset $\Upsilon_v \subset \cF(\cF(\Z^3))$  whose faces are also colored \blue\ or \red.

Let us begin by constructing the six $*$-connected face-subsets $\cF_\iota^\gamma$ and their colorings. 
Partition $\bY$ into $\bY_\sB$, and $\bY_A^\cI \cup \bY_A^\cJ \cup \bY_A^\Psi$ along $\bY_B^\cI \cup \bY_B^\cJ \cup \bY_B^\Psi$  as follows:
\begin{itemize}
    \item Let $\bY_\sB = \{ v_1, y^\dagger \}\cup [x]$ (recalling that $[x]$ indicates $x$ and its four adjacent faces in $\cL_{0,n}$).
    \item Let $\bY_A^\cI$ (resp., $\bY_B^\cI$) be the indices of walls that were first marked for deletion due to one of the criteria ({\tt A2}),({\tt A3}) (resp., ({\tt B2}),({\tt B3})) wherein $\fD_x(\tilde W_y,i,\omega_1,\omega_2)$ was attained by $d(\Theta_\udarrow^{\cI\setminus \cS_x} \tilde W_y,\sX_i)$.
    \item Let $\bY_A^\cJ$ (resp., $\bY_B^\cJ$) be the indices of walls that were first marked for deletion due to ({\tt A2}) or ({\tt A3}) (resp., ({\tt B2}) or ({\tt B3})) wherein $\fD_x(\tilde W_y,i,\omega_1,\omega_2)$ was attained by $d(\Theta_\udarrow^{\cI\setminus \cS_x} \tilde W_y,\theta_{\rho(x+\omega_2)}\Theta_\trivincr\sX_i)$.
    \item Let $\bY_A^\Psi$ (resp., $\bY_B^\Psi$) are the indices of walls that were first marked for deletion due to ({\tt A2}) or ({\tt A3}) (resp., ({\tt B2}) or ({\tt B3})) wherein  $\fD_x(\tilde W_y,i,\omega_1,\omega_2)$ was attained by $d(\Theta_\udarrow^{\cI\setminus \cS_x} \tilde W_y,\theta_{\rho(x+\omega_1+\omega_2)}\sX_i)$.
\end{itemize}

\begin{figure}
  \centering
  \begin{tikzpicture}
  
      \node (pillar) at (-3.5,0)
   {\includegraphics[width=.2\textwidth]{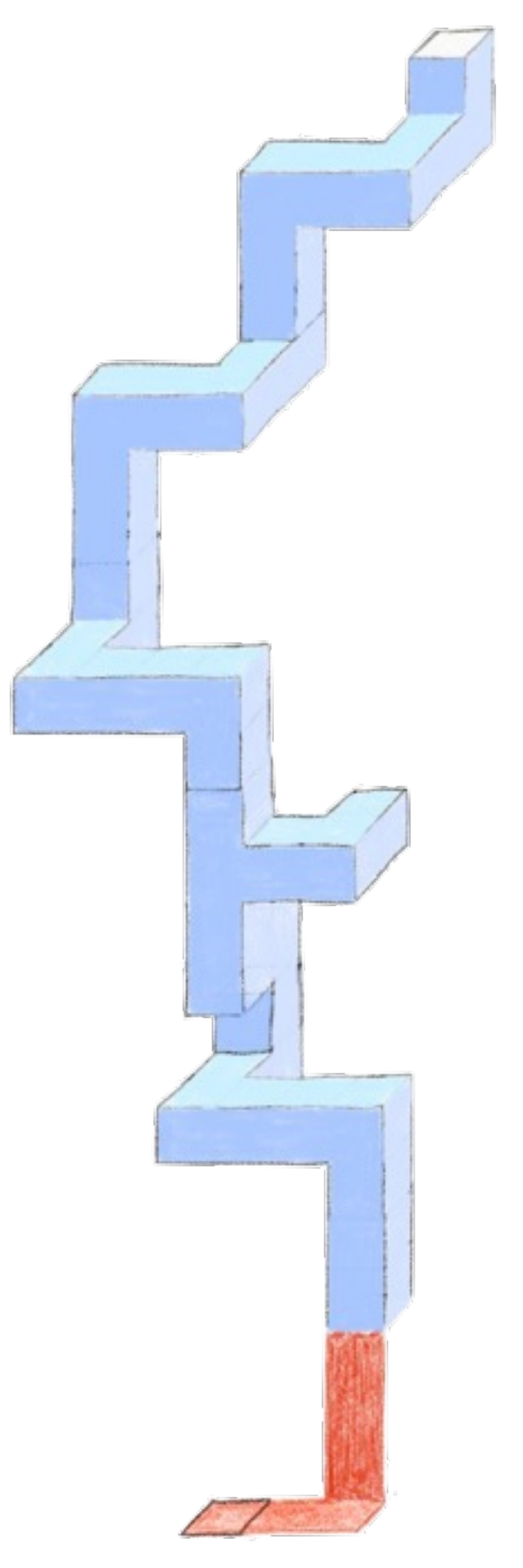}};
   
          \node (pillar) at (3.5,.35)
   {\includegraphics[width=.2\textwidth]{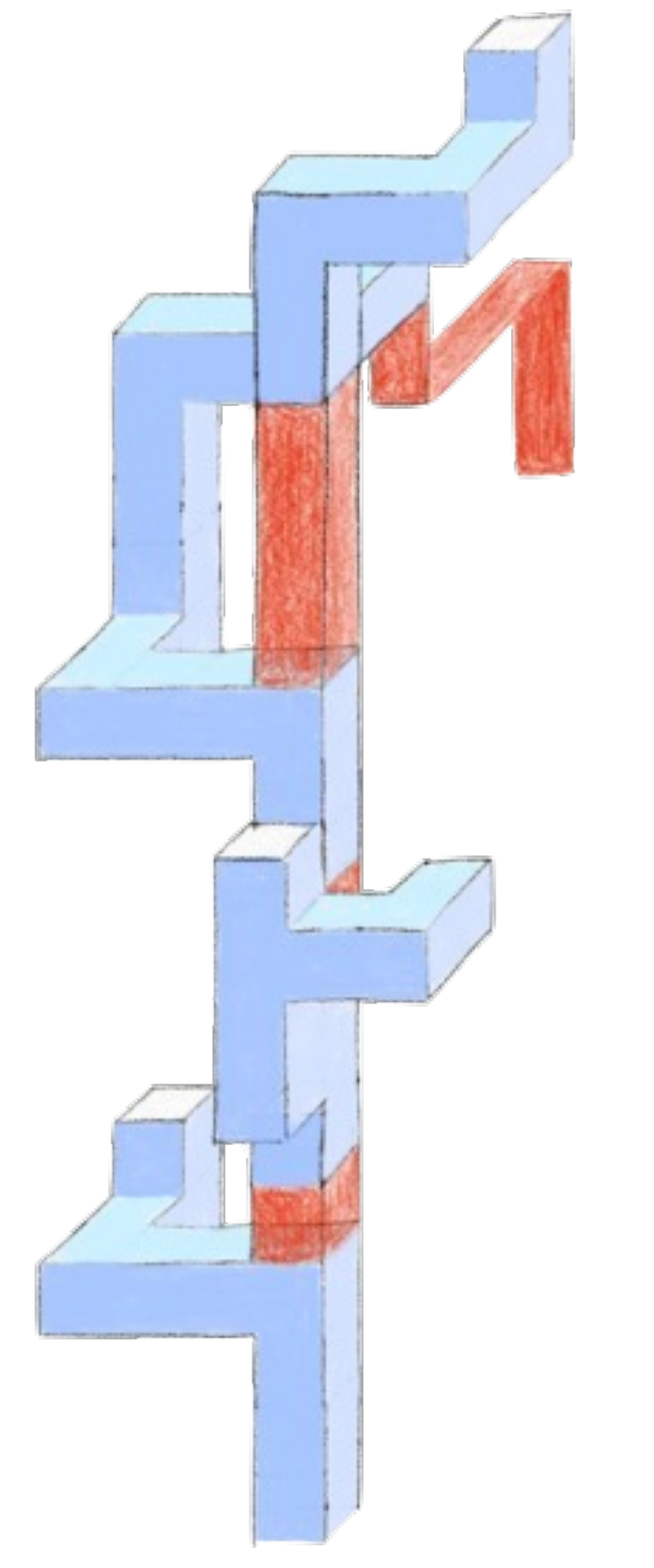}};

    \node[ellipse,draw,fill=gray!10,inner sep=0mm,minimum width=75pt, minimum height=75pt] (zoom) at (-6,-1) {
    };
    
        \node[ellipse,draw,fill=gray!10,inner sep=0mm,minimum width=75pt, minimum height=75pt] (zoom2) at (6,-1) {
    };

    \node[left=-33pt] at (zoom) {        \includegraphics[width=0.13\textwidth]{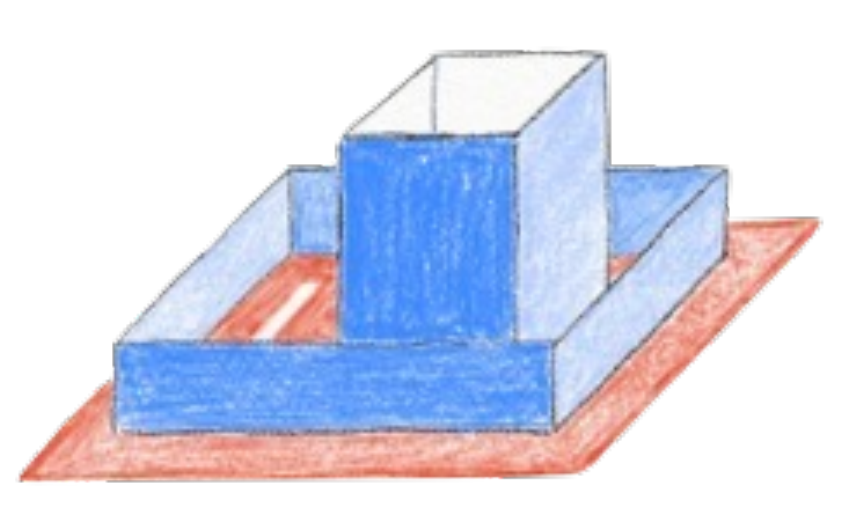}
    };

    \node[circle,draw,fill=black,inner sep=0mm,minimum width=3pt] (org) at (-3.07,-3.45) {
    };
    
        \node[circle,draw,fill=black,inner sep=0mm,minimum width=2.5pt] (org2) at (4.73,1.92) {
    };

       \node[left=-32pt] at (zoom2) {\includegraphics[width=0.13\textwidth]{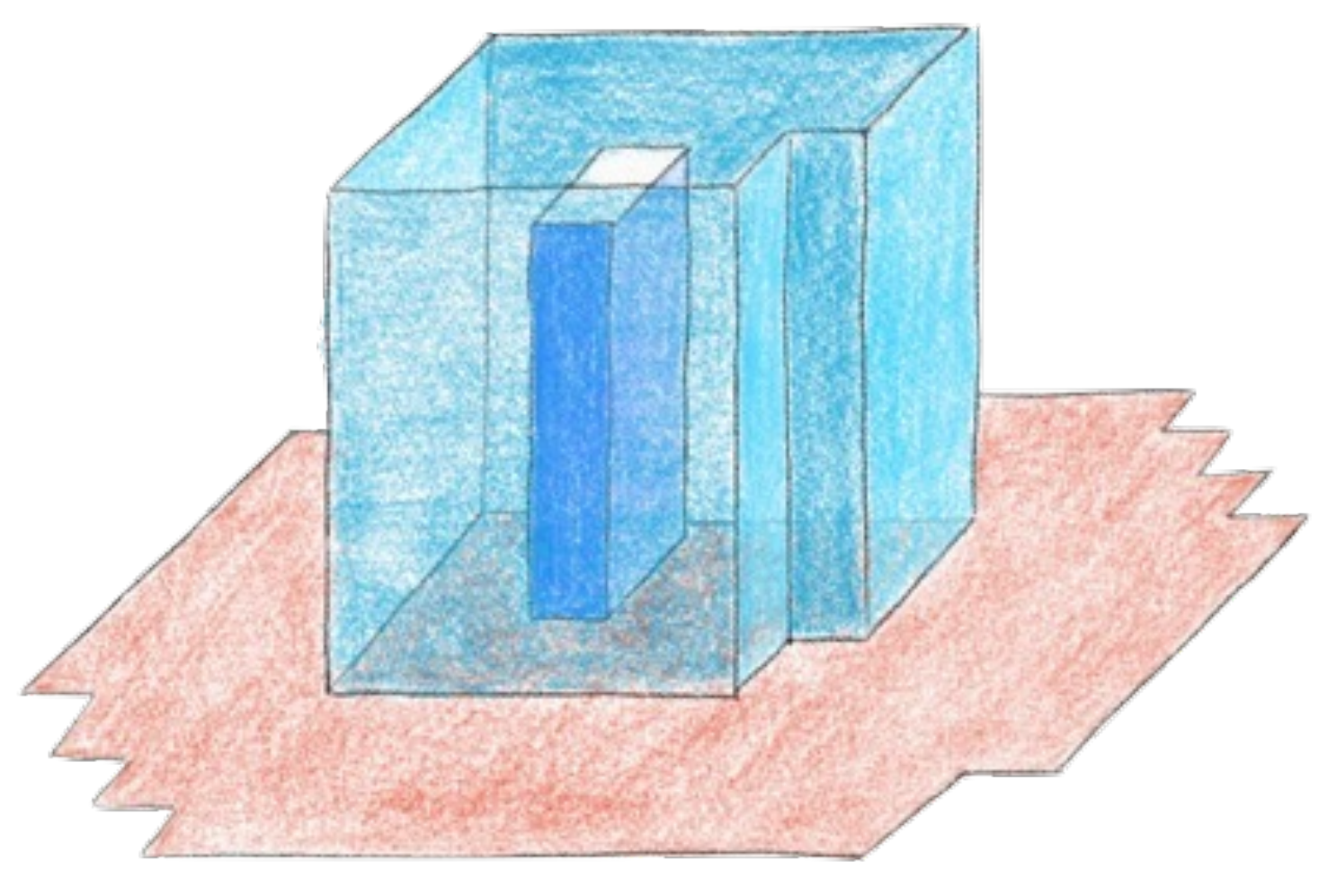}
    };

	\draw[gray!75] (zoom.10) -- (org.120);
	\draw[gray!75] (zoom.280) -- (org.240);
	
		\draw[gray!75] (zoom2.70) -- (org2.350);
	\draw[gray!75] (zoom2.160) -- (org2.240);
	
	  \node[font = \tiny] at (-3.72,-4.64) {$x$};

	  \node[font = \large] at (-3.5,4.5) {$\cF_A^\cI$};
	  \node[font = \large] at (3.5,4.5) {$\cF_A^\Psi$};
	  
	  \node at (-5.9,-.13) {$\Upsilon_{\rho(v_1)}$};
	  
	  \node at (5.9,-.13) {$\Upsilon_{w}$};

  \end{tikzpicture}
  \vspace{-0.27in}
  \caption{Two of the six constituent parts of the witness. Left: $\cF_{A}^\cI$ decorated by $\{\blue,\red\}$ and the vertex-decoration $\Upsilon_{\rho(v_1)}$. Right: $\cF_{A}^{\Psi}$ decorated by $\{\blue, \red\}$ and the vertex-decoration $\Upsilon_w$.}
  \vspace{-0.05in}
    \label{fig:witness-examples}
\end{figure}

When considering the criteria ({\tt A2}),({\tt A3}),({\tt B2}),({\tt B3}), the Euclidean distance between sets of faces in $\R^3$ is attained by vertices of $\Z^3$, which we will endow with an (arbitrary) lexicographic ordering, giving rise to unique minimizers of the distance.
Further, let
\begin{align*} \bX_A^\Psi &= \bigcup\{ \cF(\theta_{\rho(x-v_{\fs_j+1})} \sX_j) \,:\; 1 \leq j \leq j^* \} \,,\\
 \bX_B^\Psi &= \bigcup\{ \cF(\theta_{\rho(x+v_t-v_{j^*+1}-v_{\fs_k+1})} \sX_k) \,:\; t \leq k \leq k^* \} \,.\end{align*}
With these definitions in hand, the witness $\Xi(\cI)$ is constructed as follows: 
\begin{enumerate}[(1)]
\item The \blue\ faces of $\cF_\iota^\gamma$ for $\iota\in\{A,B\}$ and $\gamma\in\{\cI,\cJ,\Psi\}$ are precisely $\bX_\iota^\gamma$ (calculated via running $\Psi_{x,t}$).
\item For each $y\in\bY_\iota^\gamma$ (for $\iota\in\{A,B\}$ and $\gamma\in\{\cI,\cJ,\Psi\}$):
\begin{itemize}
    \item Let $v\in\bX_\iota^\gamma$ and $w\in\Theta_\udarrow^{\cI\setminus \cS_x} \tilde W_y$ be the minimizers of the distance that was violated in the respective deletion criterion (at the first time $y$ was marked for deletion).
    \item Add to $\cF_\iota^\gamma$ a shortest (nearest-neighbor) path of \red\ faces connecting $v$ to $w$.
\end{itemize}
\item Add to $\cF^\cI_A$ a shortest path of \red\ faces connecting $v_1$ and $y^\dagger$, and such a path connecting $v_1$ and $x$; include the face $x$ and color it \red.
\item Add to $\cF_\iota^\Psi$ for $\iota\in\{A,B\}$ every face of $\bX_\iota^\cJ$ that is not already present and color it \red.
\item Let $\Sigma$ be the set of all vertices $w$ for which we added a shortest \red\ path from $v$ to $w$ in step (2) above. Process the vertices in $w\in \Sigma \cup \cV(\bY_\sB)$ 
via some lexicographic order $(w^1 , w^2 , \ldots,w^{|\Sigma \cup \cV(\bY_\sB)|})$: for $i = 1,\ldots, |\Sigma \cup \cV(\bY_\sB)|$,
\begin{itemize}
    \item If $w^i$ is associated with the wall $\tilde W_y$ (i.e., $w^i\in\Theta_\udarrow^{\cI\setminus \cS_x} \tilde W_y$), let the \blue\ faces of $\Upsilon_{w^i}$ be the set  
\[ \fS_{w^i} = \tilde \fF_y \setminus \bigcup_{j<i} \fS_{w^j}\,.\]
\item For each edge or face $u\in\rho(\fS_{w^i})$, add to $\Upsilon_{w^i}$ the \red\ set of faces in the ball of radius $N_\rho(u)$ around $u$ in $\cL_{0,n}$. Complete $\Upsilon_{w^i}$ by connecting all faces added to it, together with the vertex $\rho(w^i)$, via a \red\ minimum size spanning tree of faces in $\cL_{0,n}$.
\end{itemize}
(For every other vertex $w\in \cV(\cF_\iota^\gamma)$, we let $\Upsilon_w=\emptyset$.)
\end{enumerate}

See Figure~\ref{fig:witness-examples} for examples of $\cF_A^\cI$ and $\cF_A^\Psi$ together with their decorations.

\medskip
\noindent \textbf{Reconstructing $\cI$ from the witness.} To see that this indeed yields a ``witness" of the pre-image interface $\cI$, we show that from a witness $\Xi(\cI)$ and the interface $\cJ$, one can reconstruct $\cI$. 

\begin{lemma}\label{lem:witness-injective}
For every $\cJ \in \Psi_{x,t}(\bar \bI_{x,T,H})$ (respectively, $\cJ \in \tilde \Psi_{x,\ell}(\bar \bI_{x,T,H})$), the map $\Xi_{\cJ,x,t}$ (resp., $\tilde \Xi_{\cJ, x, \ell}$) is injective on $\Psi_{x,t}^{-1}(\cJ)$ (resp., $\tilde \Psi_{x, \ell}^{-1}(\cJ)$). 
\end{lemma}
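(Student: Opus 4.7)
The plan is to construct an explicit inverse to $\Xi_{\cJ,x,t}$ on its image: given $\cJ$ and a witness $\Xi(\cI)$, I will describe a deterministic procedure that recovers $\cI$. The statement for $\tilde\Xi_{\cJ,x,\ell}$ then follows formally from the identity $\tilde\Xi_{\cJ,x,\ell}(\cI)=\Xi_{\cJ,x,\tau_\ell(\cP_x)}(\cI)$, since the reconstruction yields $\cI$ (and hence $\tau_\ell(\cP_x^\cI)$) without a priori knowledge of $t$; two $\tilde\Xi$-preimages sharing a witness therefore share the same value of $t$ and collapse to a single $\cI$.

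First I would extract the base data from $\cF_A^\cI$: step~(3) of the witness construction planted shortest \red{} paths joining $v_1$ to $x$ and to $y^\dagger$, so $v_1$ is identified as the unique vertex admitting \red{} paths to both $x$ and to a remaining endpoint, which is then $y^\dagger$. Similarly, for each of the six subsets $\cF_\iota^\gamma$ with $\iota\in\{A,B\}$ and $\gamma\in\{\cI,\cJ,\Psi\}$, the endpoints of the red paths added in step~(2) recover the index sets $\bY_\iota^\gamma$ of walls marked for deletion, with the superscript $\gamma$ recording which of the three candidate horizontal translates of $\sX_i$ attained the violated distance in $\fD_x$.

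Second, I would recover the standardizations $\tilde\fF_y$ of every deleted wall. Processing the vertices $w^1,w^2,\ldots$ of $\Sigma\cup\cV(\bY_\sB)$ in the lexicographic order used in step~(5), the \blue{} faces of $\Upsilon_{w^i}$ are by definition $\fS_{w^i}=\tilde\fF_{y(i)}\setminus\bigcup_{j<i}\fS_{w^j}$, where the associated wall-index $y(i)$ is already known from the previous paragraph. Taking the disjoint union of the $\fS_{w^i}$ over all $i$ with $y(i)=y$ reconstructs $\tilde\fF_y$, and Lemma~\ref{lem:standard-wall-representation} applied to the standard-wall representation of $\cJ\setminus\cP_x^\cJ$ augmented by these walls produces the unique interface $\cI\setminus\cS_x$.

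Finally, the \blue{} faces of $\cF_A^\cI$ and $\cF_B^\cI$ are the original increment sequences $\bX_A^\cI$ and $\bX_B^\cI$, determining both $j^*$ and $k^*$ and all the geometry of the increments that $\Psi_{x,t}$ trivialized; the \blue{} faces of $\cF_A^\Psi$ and $\cF_B^\Psi$ in turn expose the shift vectors $v_{\fs_j+1}$ and $v_{\fs_k+1}$ that produced the horizontally displaced segments $\bX_1^\cJ$ and $\bX_2^\cJ$ inside $\cJ$. Attaching to $\cI\setminus\cS_x$ at the appropriate position above $x$ the spine obtained by concatenating $\bX_A^\cI$, the un-shift of $\bX_1^\cJ$, $\bX_B^\cI$, and the un-shift of $\bX_2^\cJ$ then yields the unique $\cI$. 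The main technical obstacle I anticipate is the disjointness issue in the preceding paragraph: a single $\tilde\fF_y$ may be associated with several $w^i$ (coming from different walls all nested in the same standard wall and deleted via different criteria), and the point of the lexicographic ordering together with the set-difference in the definition of $\fS_{w^i}$ is precisely to ensure that each $\tilde\fF_y$ is recovered as a clean disjoint union across the relevant $w^i$ without overcounting.
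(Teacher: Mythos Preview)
Your overall strategy—reconstruct $\cI$ from $\cJ$ and the witness—is exactly the paper's approach, and you correctly identify the key ingredients: the \blue{} faces of $\cF_A^\cI,\cF_B^\cI$ are $\bX_A^\cI,\bX_B^\cI$, and the \blue{} faces of the $\Upsilon_w$ are the deleted standard walls. However, you overcomplicate the reconstruction and in doing so introduce steps that are both unnecessary and not well-defined. The witness stores only colored face-sets, not a decomposition of the \red{} faces into individual paths; once the \red{} faces are merged, there is no way to read off ``endpoints of the \red{} paths added in step~(2)'' or to single out $v_1$ and $y^\dagger$ from the \red{} topology. Consequently your recovery of $\bY_\iota^\gamma$ and of the individual indices $y(i)$ is not justified.

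Fortunately, none of that is needed. The paper's reconstruction uses only \blue{} data: from $\bX_A^\cI$ and $\bX_B^\cI$ one reads off the heights $\hgt(v_1),\hgt(v_{j^*+1}),\hgt(v_t),\hgt(v_{k^*+1})$; these heights pick out $\bX_1^\cJ$ and $\bX_2^\cJ$ inside $\cP_x^\cJ$, and the horizontal shifts needed to undo $\lrvec\theta$ are determined simply by matching the bottom cell of $\bX_1^\cJ$ (resp.\ $\bX_2^\cJ$) to the top cell of $\bX_A^\cI$ (resp.\ $\bX_B^\cI$)—no use of $\cF_\iota^\Psi$ is required. For the walls, one does not need to reassemble each $\tilde\fF_y$ separately: since the $\fS_w$ are disjoint by construction and their union is exactly the set of deleted standard walls, one simply takes the union of all \blue{} faces over all $\Upsilon_w$ and adjoins it to the standard wall representation of $\cJ\setminus\cP_x^\cJ$; Lemma~\ref{lem:standard-wall-representation} then gives $\cI\setminus\cS_x$. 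Finally, the spine is appended at $v_1$ (the bottom cell of $\bX_A^\cI$), not ``above $x$.'' The \red{} faces serve only to make each $\cF_\iota^\gamma$ $*$-connected and rooted, which is what the enumeration in Lemma~\ref{lem:multiplicity-witness} needs; they carry no reconstruction data.
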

\begin{proof}
It suffices to show that from a given $\cJ$ and any element of  $\Xi_{\cJ, x, t}(\Psi_{x,t}^{-1}(\cJ))$ we can recover, uniquely, $\cI\in \Psi_{x,t}^{-1}(\cJ)$.  
From a witness in $\Xi_{\cJ,x,t}(\Psi_{x,t}^{-1}(\cJ))$, we recover $\cI$ by reconstructing its spine $\cS_x$ together with the standard wall representation of $\cI \setminus \cS_x$. Given $\cS_x$ and $(\tilde W_z)_{z\in \cL_{0,n}}$, we would obtain $\cI$ by first recovering the interface $\cI \setminus \cS_x$ via Lemma~\ref{lem:standard-wall-representation}, then appending to that $\cS_x$. 
\begin{enumerate}
    \item In order to reconstruct the spine $\cS_x$:
    \begin{enumerate}
        \item Extract $\bX_A^\cI$ and $\bX_B^\cI$ as exactly the set of \blue\ faces of $\cF^\cI_A$ and $\cF^\cI_B$ respectively. 
        \item Extract $\bX_1^\cJ$ and $\bX_2^\cJ$ by taking (the bounding faces of) all cells in $\cP_x^\cJ$ between $\hgt(v_{j^\star+1})$ and $\hgt(v_{t})$,  and above $\hgt(v_{k^\star +1})$, respectively (these heights are read off from $\bX_A^\cI$ and $\bX_B^\cI$).
    \item Obtain $\cS_x$ by horizontally shifting $\bX_1^\cJ$ and $\bX_2^\cJ$ so that their bottom cell coincides with the top cell of $\bX_A^\cI$ and $\bX_B^\cI$ respectively.
    \end{enumerate}
    \item In order to reconstruct the standard wall representation of $\cI\setminus \cS_x$:
    \begin{enumerate}
    \item For every vertex $w\in\cV(\cF_\iota^\gamma)$ for $\iota\in\{A,B\}$ and $\gamma\in\{\cI,\cJ,\Psi\}$, add the faces of $\fS_w$ (exactly the set of \blue\ faces of $\Upsilon_w$).
    \item Add the standardizations of all walls of $\cJ\setminus \cP_x^\cJ$.
    \end{enumerate}
\end{enumerate}
For the corresponding reconstruction from a witness in $\tilde \Xi_{\cJ,x,\ell}(\tilde \Psi^{-1}_{x,\ell}(\cJ))$, we recover $\cI$ in exactly the same way, noticing that we can read off $\hgt(v_{\tau_\ell})$ from $\bX_B^{\cI}$.   
\end{proof}

\medskip
\noindent \textbf{Enumerating over possible witnesses.} It remains to enumerate over the set of all possible witnesses of interfaces in $\Psi_{x,t}^{-1}(\cJ)$ and $\tilde \Psi^{-1}_{x,\ell}(\cJ)$ with excess area $\fm(\cI;\cJ) = M$ and show it is at most exponential in $M$. 
\begin{lemma}\label{lem:multiplicity-witness}
There exists some universal $C_\Psi>0$ such that for every $M\geq 1$, $x,t$, and $\cJ \in \Psi_{x,t}(\bar{\bI}_{x,T,H})$,
\begin{align*}
|\{\Xi_{\cJ,x,t} (\cI): \cI\in \Psi^{-1}_{x,t}(\cJ)\,,\, \fm(\cI;\cJ) = M\}| \leq C_\Psi^M\,.
\end{align*}
Similarly, there exists $\tilde C_{\Psi}$ such that for every $M\geq 1 $, $x,\ell$ and $\cJ \in \tilde \Psi_{x,\ell}(\bar{\bI}_{x,T,H})$, 
\begin{align*}
    |\{\tilde \Xi_{\cJ,x,\ell} (\cI): \cI \in \tilde \Psi^{-1}_{x,\ell}(\cJ)\,,\, \fm(\cI;\cJ) = M\}| \leq \tilde C_\Psi^M\,.
\end{align*}
\end{lemma}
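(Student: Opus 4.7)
My plan is to bound $|\Xi(\cI)|$ linearly in $M$ and then apply Fact~\ref{fact:number-of-connected-face-sets} together with the trivial $2^{\cdot}$ cost of the $\{\textsc{red},\textsc{blue}\}$ colorings. More precisely, I want to show that for some universal constant $K$, every witness in the image has total face count at most $KM$; Fact~\ref{fact:number-of-connected-face-sets} then gives at most $s^{KM}$ choices for the underlying $*$-connected face subsets (each rooted at a location determined from $\cJ$ and $x$ — e.g., $\cF_A^\cI$ and $\cF_A^\cJ$ and $\cF_A^\Psi$ are all anchored at $x$, while the $\Upsilon_{w^i}$ are anchored at $\rho(w^i)$ which is itself a face of the already-constructed ambient component), and the colorings contribute a further $2^{KM}$, giving the desired $C_\Psi^M$ bound.

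The size bound for each piece proceeds by tracking where the excess area ``lives.'' For the \textsc{blue} faces of $\cF_\iota^\cI$ (i.e.\ $\bX_\iota^\cI$), one has $|\cF(\sX_j)|\leq 3\fm(\sX_j)+4$, so summing over $1\leq j\leq j^*$ and $t\leq k\leq k^*$ and using $j^*-1,\,k^*-t\leq 6 \fm(\cI;\cJ)=6M$ from Claim~\ref{clm:m(W)} together with $\sum_{j\leq j^*}\fm(\sX_j)+\sum_{t\leq k\leq k^*}\fm(\sX_k)\leq M$ yields $|\bX_A^\cI|+|\bX_B^\cI|=O(M)$; the same bound applies to the $\bX_\iota^\cJ$ and $\bX_\iota^\Psi$ pieces since they are the trivializations/horizontal shifts of the same increments. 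The \textsc{red} shortest paths added in step (2) have length at most $\fD_x(\tilde W_y,i,\omega_1,\omega_2)$ for the triggering $y$, and Fact~\ref{fact:useful} bounds this by $(i-1)+\fm(\tilde\fF_y)$; summing over the deleted indices $y\in\bY$ and the finitely many contributing increments (the multiplicity-controlled criteria~(\texttt{A2})--(\texttt{A3}),~(\texttt{B2})--(\texttt{B3})) gives $O(M)$ via $\fm(\bW)\leq 3M$ and $j^*,k^*-t=O(M)$. The \textsc{red} paths added in step (3), and the face at $x$ itself, cost $O(\fm(\tilde\fW_{v_1}\cup\tilde\fW_{y^\dagger}))=O(M)$. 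Finally, the \textsc{red} faces added in step (4) to promote $\bX_\iota^\cJ$ inside $\cF_\iota^\Psi$ are bounded by the previously controlled $|\bX_\iota^\cJ|$.

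For the vertex decorations $\Upsilon_{w^i}$, the \textsc{blue} part $\fS_{w^i}$ is a subset of $\tilde\fF_y$, and the sets $\{\fS_{w^i}\}$ are pairwise disjoint by construction, so $\sum_i|\fS_{w^i}|\leq |\fF(\bW)|\leq 2\fm(\bW)=O(M)$. The \textsc{red} portion consists of faces within the ball of radius $N_\rho(u)$ around each $u\in\rho(\fS_{w^i})$, together with a minimum spanning tree connecting them to $\rho(w^i)$; by closeness of walls the sum $\sum_u\pi N_\rho(u)^2 \leq \sum_u|\tilde W_y|\cdot\mathbf 1\{\ldots\}$ telescopes to $O(|\bW|)+O(|\cE(\rho(\bW))|+|\cF(\rho(\bW))|)=O(\fm(\bW))=O(M)$, and the spanning tree adds at most a constant multiple of the horizontal extent, which is likewise $O(\fm(\bW))$. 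Thus $\sum_v|\Upsilon_v|=O(M)$. Each $\Upsilon_{w^i}$ is $*$-connected and rooted at a vertex $\rho(w^i)$ determined by the order of processing (itself determined from the already-built $\cF_\iota^\gamma$), so enumerating all $\Upsilon$'s contributes another $s^{O(M)}\cdot 2^{O(M)}$.

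For $\tilde\Xi_{\cJ,x,\ell}$, the only new issue is that $t=\tau_\ell(\cP_x)$ is not externally given, but Lemma~\ref{lem:witness-injective} recovers $\cI$ (hence $\tau_\ell$) from the witness alone; so the same enumeration applies and yields $\tilde C_\Psi=C_\Psi$ (or a slightly larger absolute constant). The main subtlety — and the step I expect to be most delicate — is the bookkeeping for $\sum|\Upsilon_{w^i}|$: the disjointness of $\{\fS_{w^i}\}$ and the closeness-of-walls estimate must be combined so that every wall face and every nearby horizontal face is charged only once across the at-most-six ambient components $\cF_\iota^\gamma$, so that the \textsc{red} decorations do not inflate the count beyond $O(M)$.
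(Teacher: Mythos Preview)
Your overall plan---bound the witness size by $O(M)$, then invoke Fact~\ref{fact:number-of-connected-face-sets} and pay $2^{O(M)}$ for colorings---is exactly the paper's strategy, but two of your steps have genuine gaps.

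\textbf{The \textsc{red} paths in step~(2).} You invoke Fact~\ref{fact:useful} to bound the path length $\fD_x(\tilde W_y,i,\omega_1,\omega_2)$ by $(i-1)+\fm(\tilde\fF_y)$, but that fact gives the inequality in the opposite direction: it says $i-1 \le \fD_x + \fm(\tilde\fF_y)$, i.e.\ a \emph{lower} bound on $\fD_x$, not an upper bound. The correct upper bound comes directly from the violation criterion that caused $y$ to be marked: if ({\tt A2})/({\tt B2}), then $\fD_x \le \fm(\tilde W_y)$, contributing at most $2\fm(\bW)$ in total; if ({\tt A3})/({\tt B3}), then $\fD_x \le (j^*-1)/2$ or $(k^*-t)/2$. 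Crucially, at most \emph{one} index $y^*_A$ and one $y^*_B$ are ever marked via ({\tt A3})/({\tt B3})---this is built into the algorithm and is what keeps the sum from blowing up to $\Omega(M^2)$. Your sketch does not isolate this point, and without it the bound fails.

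\textbf{Rooting $\cF_B^\cI$ and the $\tilde\Xi$ case.} You assert that each $\cF_\iota^\gamma$ is ``rooted at a location determined from $\cJ$ and $x$,'' but the root of $\cF_B^\cI$ is $v_t$, which depends on $\cI$. The paper recovers it sequentially: first choose $\cF_A^\cI$ (rooted at $x$), read off $v_{j^*+1}$, then locate the corresponding cut-point in $\cJ$ and count $t-(j^*+1)$ further increments to find $v'$, from which $v_t$ is reconstructed by the appropriate shift. For $\tilde\Xi_{x,\ell}$ this is more delicate still: your appeal to Lemma~\ref{lem:witness-injective} is circular, since that lemma recovers $\cI$ \emph{given} the witness, whereas here you must enumerate witnesses without yet knowing $\tau_\ell$. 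The paper pays an honest extra factor of $M$ for the choice of the cut-point $v'$ in $\cJ$ near height $\ell$ (since $|\hgt(v_{\tau_\ell})-\ell|\le \fm(\sX_{\tau_\ell})\le M$), and this is why $\tilde C_\Psi$ may exceed $C_\Psi$.

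Two smaller points: the ball in step~(5) of the witness has radius $\sqrt{N_\rho(u)}$, so its face count is $O(N_\rho(u))$, not $O(N_\rho(u)^2)$; and when enumerating the decorations $(\Upsilon_w)$ you need to first allocate the total budget of $O(M)$ faces among the $O(M)$ vertices (a binomial coefficient $\le 2^{O(M)}$) before applying Fact~\ref{fact:number-of-connected-face-sets} per vertex---your sketch omits this allocation step.
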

Combining the above lemma with Lemma~\ref{lem:witness-injective} immediately implies Propositions~\ref{prop:multiplicity}--\ref{prop:multiplicity-at-height}.

\begin{proof}
Let us prove the bound on the number of possible witnesses corresponding to $\Psi_{x,t}$ and $\tilde \Psi_{x,\ell}$ simultaneously, describing in the proof the parts that are different between $\Psi_{x,t}$ and $\tilde \Psi_{x,\ell}$. 

Fix $M, x,t, \ell$ and $T< H$ and $\cJ \in \Psi_{x,t}(\bar{\bI}_{x,T,H})$ (respectively $\cJ \in \Psi_{x,\ell}(\bar{\bI}_{x,T,H})$), and consider the number of possible witnesses $\Xi(\cI)$ for $\cI$ satisfying $\fm(\cI;\cJ) = M$. We decompose this into the number of possible choices of colored face-sets $\cF_\iota^\gamma$, and subsequently, the number of choices of decorations to the vertices of $\cF_\iota^\gamma$ via the number of choices of colored face-sets $(\Upsilon_w)_w$. Clearly, their product bounds the number of possible choices of witnesses.

\medskip
\noindent \emph{Number of faces in $\cF^\gamma_\iota$.} 
We first bound the number of faces in each $\cF_\iota^\gamma$ for $i\in \{A,B\}$ and $\gamma \in \{\cI,\cJ, \Psi\}$. 
\begin{enumerate}[(1)]
\item The number of \blue\ faces in $\cF^\gamma_\iota$ is exactly  $|\bX_\iota^\gamma|$. For $\gamma\in\{\cI,\cJ\}$, this quantity is at most $\bar C\fm(\cI;\cJ)$ by Claim~\ref{clm:X1-and-X3-vs-everything}. For $\gamma=\Psi$, we have that 
\[ |\bX_\iota^\Psi| \leq |\bX_\iota^\cI| + |\bX_\iota^\cJ| \leq 2\bar C \fm(\cI;\cJ)\,.\]
(Note that $\bX_\iota^\Psi$ consists of shifts of the increments composing $\bX_\iota^\cI$; these shifts add an additional cell whenever $\fs_i \neq \fs_{i-1}$, due to the additional 4 faces of the shared cut-point between consecutive increments. We compensate for these via the term $|\bX_\iota^\cJ|$.)
\item For each $w\in\Sigma $ associated with some wall $\tilde W_y$ (for some $y\in\bY_\iota^\gamma$):
\begin{itemize}
    \item If $y\neq y_\iota^*$, then the number of \red\ faces that were added to connect $w\in \Theta_{\udarrow}^{\cI \setminus \cS_x} \tilde W_y$ to $|\bX_\iota^\gamma|$ is at most $2\fm(\tilde W_y)$ (since $\tilde W_y$ violated criteria ({\tt A2}) or ({\tt B2})), where the factor of $2$ accounts for the transition from Euclidean distance to the graph distance in $\Z^3$.
    \item If $y= y_\iota^*$, then the number of \red\ faces that were added to connect $w\in \Theta_{\udarrow}^{\cI \setminus \cS_x} \tilde W_y$ to $|\bX_\iota^\gamma|$ is at most $(j^*-1) \vee (k^*-t)$  (since $\tilde W_y$ either violated criteria ({\tt A3}) or ({\tt B3})).
\end{itemize}
Summing these over all $y\in \bY_\iota^\gamma$ gives at most  $\fm(\bW)+(j^*-1) + (k^*-t)$ additional \red\ faces, which is at most $15\fm(\cI;\cJ)$ by Claim~\ref{clm:m(W)}. 
\item The number of \red\ faces added to connect $v_1$ to $y^\dagger$ as well as to $x$ is 
at most $\fm(\tilde\fF_{v_1})$, since all of these are part of $\cP_x$, and hence share some nesting wall by Observation~\ref{obs:pillar-nested-sequence-of-walls}.  Thus, the number of such faces is at most $2\fm(\bW) \leq 6\fm(\cI;\cJ)$ by Claim~\ref{clm:m(W)}.
\item The number of \red\ faces added to $\cF_\iota^\Psi$ by $\bX_\iota^\cJ$ is at most $\bar C \fm(\cI;\cJ)$ by Claim~\ref{clm:X1-and-X3-vs-everything}.
\end{enumerate}
Altogether, we see that there exists a universal $C>0$ such that for every $M\geq 1$, 
\begin{align*}
    \max_{\cI \in \Psi_{x,t}^{-1}(\cJ):\fm(\cI;\cJ) = M}\,\max_{\iota \in \{A,B\}, \gamma \in \{\cI, \cJ, \Psi\}} \, |\cF_\iota^\gamma| \leq C M\,.
\end{align*}
Since the constant $C$ above was uniform over the choice of $t$, and the witness constructed by $\tilde \Xi_{x,\ell}$ agrees with the witness constructed by $\Xi_{x,t}$ for some $t=t(\cI)$, they apply equally to the witnesses coming from $\tilde \Xi_{x,\ell}$. 

\medskip
\noindent \emph{Number of possible choices of $\cF_{\iota}^\gamma$.} 
We begin by enumerating over the choices of $\cF_A^\cI$: by Fact~\ref{fact:number-of-connected-face-sets}, the number of $*$-connected face sets rooted at $x$ (predetermined) with at most $C M$ faces is at most $s^{C M}$; multiplying this by $2^{CM}$ for the choices of \blue\ and \red\ colorings of these faces, bounds the total number of possible choices for $\cF_A^\cI$. Notice that the choice of $\cF_A^\cI$ reveals $v_1$ as its lowest cell that is bounded by (four) \blue\ faces, and $v_{j^*+1}$ as its highest such cell. This also determines whether $t>j^*$ or not.   

Next, if $t>j^*$, we enumerate over the choices of $\cF_B^\cI$. In the case of $\Xi_{x,t}$, its root, $v_t$, is determined by our above choice of $\cF_A^\cI$ as follows: starting from $v$, the cut-point of $\cP_x^\cJ$ at height $\hgt(v_{j^*+1})$, count $t-(j^*+1)$ extra increments (with $t$ predetermined) to a cut-point $v'$ (marking the top of $\bX_1^\cJ$ and the bottom of $\bX_B^\cJ$). The root $v_t$ is $\theta_{v'-v}(v_{j^*+1})$. Enumerating over $\cF_B^\cI$ then amounts to another factor of $(2s)^{CM}$.

In the case of $\tilde \Xi_{x,\ell}$, we can enumerate over choices of root $v_{\tau_\ell}$ by enumerating over the cut-point $v'$ of $\cJ$ whose height coincides with $\hgt(v_{\tau_\ell})$. In $\cJ$, the cutpoint $v'$ must be within a height of at most $\fm(\sX_{\tau_\ell}) \leq M$ from $\ell$, so there are at most $M$ such choices of cut-point $v'$. From that we recover the root $v_{\tau_\ell}$ as $\theta_{v'-v}(v_{j^*+1})$; choosing $\cF_B^\cI$ thus amounts to a factor of $M (2s)^{CM}$.

We bound the number of possible choices for $\cF_A^\cJ$ from above by $s^{CM}$, the number of $*$-connected sets of $CM$ faces, rooted at $x+(0,0,\hgt(v_1))$ (recall that $v_1$ can be read off of $\cF_A^\cI$), multiplied by a $2^{CM}$ factor for coloring those faces by \blue\ and \red. If $t>j^*$, the number of choices of $\cF_B^\cJ$ is at most the number of $*$-connected sets of $CM$ faces, rooted at the cut-point $v'$ mentioned above (which we read off of $\cJ$ and $\cF_A^\cI$), along with its coloring by \blue\ and \red, which combine to at most $(2s)^{CM}$.

To enumerate over $\cF_A^\Psi$, we first argue that it is a $*$-connected set of faces. In order to see this, recall that the \blue\ faces of $\cF_A^\Psi$ are comprised of various subsets of shifted increment sequences, which can only become disconnected when there is a change in the shift applied: i.e., at increments where $\fs_{j+1} \neq \fs_{j}$. Suppose that $\fs_{j+1} \neq \fs_{j}$ for some $1\leq j< j^*$; by Observation~\ref{obs:fs_i}, this occurs if and only if $\fs_{j+1} = j$ and $\sX_{j+1}$ is being shifted in $\bX_A^\Psi$ by
\[\rho(x-v_{\fs_{j+1}+1}) = \rho(x-v_{j+1})\,. \]
It suffices to show that for every such $j$, the cell $\theta_{\rho(x-v_{j+1})}v_{j+1}$ is $*$-connected to the cell $x+(0,0,\hgt(v_1))$ by faces in $\cF_{A}^{\Psi}$. 
 This follows since the bounding faces of the cell $\theta_{\rho(x-v_{j+1})} v_{j+1}$ are also in $\bX_A^\cJ$ (as a subset of $\theta_{\rho(x)}\Theta_\trivincr\sX_{j+1}$, to which $\sX_{j+1}$ was mapped), $\bX_A^\cJ$ connects this cell to $x+(0,0,\hgt(v_1))$ (via trivial increments), and $\bX_A^\cJ \subset \cF_A^\Psi$. As a connected set of at most $CM$ faces, colored by \blue\ and \red\ and rooted at $x+(0,0,\hgt(v_1))$ (which is dictated by our choice of $\cF_A^\cI$), there are at most $(2s)^{CM}$ choices for $\cF_A^\Psi$.

Similarly, if $t>j^*$, to see that $\cF_B^\Psi$ is $*$-connected, note that for $t \leq k < k^*$, the shifted increments $k$ and $k+1$ can only be disconnected if $\fs_{k+1}\neq \fs_k$, which occurs if and only if $\fs_{k+1}=k$, in which case the increment $\sX_{k+1}$ will be shifted in $\bX_B^\Psi$ by
\[ \rho(x+v_t-v_{j^*+1}-v_{\fs_{k+1}+1}) = \rho(x+v_t-v_{j^*+1}-v_{k+1}) \,.\]
The fact that $\bX_B^\cJ$ (which, as before, connects $\theta_{\rho(x+v_t-v_{j^*+1}-v_{k+1})}v_{k+1}$ to $x+v_t-v_{j^*+1}$ via trivial increments) is a subset of the faces of $\cF_B^\Psi$, implies that $\cF_B^\Psi$ is $*$-connected, as claimed.
As a $*$-connected set of at most $CM$ faces, colored by \blue\ and \red\ and rooted at $x+v_t-v_{j^*+1}$ (read off of our choice of $\cF_A^\cI$ and $\cJ$), there are at most $(2s)^{CM}$ choices for $\cF_B^\Psi$. In the case of $\tilde \Xi_{x,\ell}$ where this is $x+v_{\tau_\ell} - v_{j^* +1}$, recall that our choice of $\cF_B^\cI$ picked out the cut-point $v'$ from which we read off $v_{\tau_\ell}$, so the same bound holds.

\medskip
\noindent \emph{Number of possible decorations $(\Upsilon_w)$ to $w\in \cV(\cF_\iota^\gamma)$: }Let us first count the combined number of faces 
\begin{align}\label{eq:total-faces-to-decorate-with}
    \sum_{\iota \in \{A,B\}, \gamma \in \{\cI, \cJ, \Psi\}} \sum_{w\in \cV(\cF_\iota^\gamma)} |\Upsilon_w|
\end{align}
By construction, the sets $\fS_w$ are all disjoint. The number of \blue\ faces in total over all $\Upsilon_w$ is therefore at most $\sum_{z\in \bD} |W_z|\leq \sum_{z\in \bD} 2\fm(W_z) \leq 6\fm(\cI; \cJ)$ (as no wall is double counted). 

For each $w$, the number of \red\ faces added to $\Upsilon_w$ in $\cL_0$ in the ball of radius $\sqrt{N_\rho(u)}$ centered at an edge-or-face $u\in \rho(\fS_w)$ is at most, using Claim~\ref{clm:m(W)},
\[\sum_{w} \sum_{u\in \rho(\fS_w)} N_\rho(u)\leq \sum_{w}|\fS_w|\leq 2\fm(\bW)\leq 6 \fm(\cI;\cJ)\,.\]

By induction, every $\fS_w$ consists of the groups of walls of a nested sequence of walls. Indeed, when we allocate $\fS_w$, if $\tilde W$ already decorates some previously processed vertex $z$, then necessarily the entire group of walls of $\tilde W$ must also have been allocated to $\fS_z$, so the remainder is still the group of walls of a nested sequence of walls. For each $w$, denote this nested sequence of walls by $W_1^w \Subset W_2^w \Subset \ldots$.

Within every $\fS_w$, all close walls are connected, via the additional  \red\ faces in $\cL_0$ in balls of radius $\sqrt{N_\rho (u)}$ for $u\in \rho(\fS_w)$. Each of the $*$-connected components obtained in this way (whose \blue\ faces are precisely a group of walls) corresponds to some $W_i^w$ (say the innermost one it contains). Finally, by definition, $\rho(w)$ is interior to $W_1^w$. 
Therefore, to obtain a spanning tree of the face-set, we can include shortest paths of faces from $\rho(w)$ to $W_1^w$, and then from $W_i^w$ to $W_{i+1}^{w}$ for every $i$. This adds at most $\sum_{i}|W_i^w| \leq 2 \fm(\fS_w)$ many faces,
and the minimum spanning tree adds at most that many \red\ faces. 
Summing over all $w$, this last contribution (again by~\eqref{eq:m(I;J)-WxJ}) is also at most $2\sum_w \fm(\fS_w)\leq 6\fm(\cI;\cJ)$.

Altogether, we deduce that the total number of faces~\eqref{eq:total-faces-to-decorate-with} is at most $18\fm(\cI;\cJ) = 18 M$.

In order to enumerate over all such possible decorating face-sets, let us first decide how many of the $18M$ faces are allocated to each $\Upsilon_w$. For every $\iota\in\{A,B\}$ and $\gamma\in\{\cI,\cJ,\Psi\}$ we have that $\cV(\cF_\iota^\gamma) \leq 4CM$ by our bound on the number of faces in that set; in particular, there are at most $24CM$ vertices, between which we wish to partition at most $18M$ decorating faces. The number of such partitions is at most $$\binom{(24C + 18)M-1}{24CM-1} \leq 2^{(24C + 18)M}\,.$$ For each such partition, if $k_w$ is the number of decorating faces assigned to $w$ (so that $\sum_w k_w \leq 18M$) then we have $s^{k_w}$ choices for a $*$-connected decorating face subset rooted at $\rho(w)$, and $2^{k_w}$ choices of \red\ and \blue\ colors for these faces. Thus, the total number of choices for the decorating colored face subsets corresponding to this partition is at most $(2s)^{18M}$.

\medskip
Multiplying all of the above enumerations yields the desired bounds for some $C_\Psi$ and $\tilde C_\Psi$. 
\end{proof}

\subsection{Proofs of Theorem~\ref{thm:exp-tail-base} and Proposition~\ref{prop:exp-tail-all-increments}}

\begin{proof}[\textbf{\emph{Proof of Theorem~\ref{thm:exp-tail-base}}}]
Recall from Claim~\ref{clm:m(W)} that for every $t$ and every $\cI$, we have that 
$$\fm(\cI; \Psi_{x,t}(\cI))\geq 2(\hgt(v_1)- \tfrac 12)+2d_\rho(v_1, x) \geq 2|v_1 - (x+(0,0, \tfrac 12))|$$ as well as $\fm(\cI;\Psi_{x,t}(\cI)) \geq  \fm(\sB_x)$ and $\fm(\cI;\Psi_{x,t}(\cI))\geq \fm(\sX_t)$ so that it suffices to prove that
  \begin{align*}
       \mu_{n}^{\mp} \big(\fm(\cI; \Psi_{x,t}(\cI)) \geq r\mid \bI_{x,T,H}\big) \leq C\exp\big[-  (\beta-C) (r\wedge d(x,\partial \Lambda_n))\big]\,.
  \end{align*} 
To see this, recall the definition of $\bar{\bI}_{x,T,H}$, and
express $\mu_{n}^{\mp} (\fm(\cI; \Psi_{x,t}(\cI)) \geq r,  \bI_{x,T,H})$ as at most 
\begin{align*}
    \mu_n^\mp (\bar{\bI}_{x,T,H}^c \mid \bI_{x,T,H}) \mu_n^\mp(\bI_{x,T,H}) + \mu_n^\mp(\fm(\cI; \Psi_{x,t}(\cI)) \geq r,  \bar \bI_{x,T,H})\,.
\end{align*}
The first term above is bounded by $e^{ - 2(\beta - C)d(x,\partial \Lambda_n)} \mu_n^\mp(\bI_{x,T,H})$ by Proposition~\ref{prop:tameness}, so let us turn to the second term: for every $r\geq 1$,
\begin{align*}
    \sum_{M\geq r}\,\, \sum_{\substack{\cI\in \bar \bI_{x,T,H}\\ \fm(\cI;\Psi_{x,t}(\cI)) = M}} \mu_n^\mp (\cI) & \leq \sum_{M\geq r} \sum_{\substack{\cI\in \bar \bI_{x,T,H}\\ \fm(\cI;\Psi_{x,t}(\cI)) = M}} e^{ - (\beta- C)M} \mu_n^\mp(\Psi_{x,t}(\cI))  \\
    & = \sum_{M\geq r}\,\, \sum_{\cJ\in \Psi_{x,t}(\bar \bI_{x,T,H})}  \mu_n^\mp(\cJ) \,\, \sum_{\substack{\cI\in \Psi_{x,t}^{-1}(\cJ) \\ m(\cI; \Psi_{x,t}(\cI))=M}} e^{-(\beta - C)M} \\
    & \leq \sum_{M\geq r} C_\Psi^M e^{- (\beta- C)M} \mu^\mp_n(\Psi_{x,t}(\bar \bI_{x,T,H}))\,.
\end{align*}
In the first inequality above, we used  Proposition~\ref{prop:partition-function-contribution} (and the fact that $\bar \bI_{x,T,H}$ is in the domain of $\Psi_{x,t}$ by Proposition~\ref{prop:well-defined}) and in the second inequality, we used Proposition~\ref{prop:multiplicity}. 

Now, noting by Proposition~\ref{prop:well-defined} that 
\[\Psi_{x,t}(\bar \bI_{x,T,H}) \subset \bar \bI_{x,T,H}
\]
we deduce that
\[
\mu_n^\mp(\fm(\cI; \Psi_{x,t}(\cI)) \geq r,  \bar \bI_{x,T,H})\leq C e^{-(\beta -C -\log C_\Psi) r}\mu_n^\mp(\bar \bI_{x,T,H}) \leq C e^{-(\beta -C -\log C_\Psi) r}\mu_n^\mp( \bI_{x,T,H})\,. 
\]
Combining these estimates and dividing through by $\mu_n^\mp(\bI_{x,T,H})$ then yeilds the desired conditional bound. 
\end{proof}

\begin{proof}[\textbf{\emph{Proof of Proposition~\ref{prop:exp-tail-all-increments}}}]
By definition of $\tilde \Psi_{x,\ell}$, for every half-integer $\ell$, we have 
\[\fm(\cI; \tilde \Psi_{x,\ell}(\cI))\geq |\cF(\cP_x \cap \cL_\ell)|-4\,.
\]
Indeed this follows from the fact that if $\cP_x\cap \cL_\ell$ is not a single cell, either it is part of an increment in the spine, in which case $\tau_\ell$ is the index of that increment and we use $\fm(\cI;\tilde \Psi_{x,\ell}(\cI))\geq \fm(\sX_{\tau_\ell})$, or it is part of the base, in which case this follows from $\fm(\cI;\tilde \Psi_{x,\ell}(\cI))= \fm(\cI; \Psi_{x,1}(\cI))\geq \fm(\sB_x)$.

As such, it suffices for us to show that for every $r\geq 1$,
  \begin{align*}
       \mu_{n}^{\mp} \big(\fm(\cI; \tilde \Psi_{x,\ell}(\cI)) \geq r\mid \bI_{x,T,H}\big) \leq C\exp\big[-  (\beta-C) (r\wedge d(x,\partial \Lambda_n))\big] 
  \end{align*} 
Arguing as in the proof of Theorem~\ref{thm:exp-tail-base}, we bound the left-hand side above by Proposition~\ref{prop:tameness} by
\begin{align*}
    e^{-2(\beta - C) d(x,\partial \Lambda_n)}\mu_n^\mp(\bI_{x,T,H})+ \mu_n^\mp(\fm(\cI; \tilde \Psi_{x,\ell}(\cI))\geq r, \bar{\bI}_{x,T,H})\,.
\end{align*}
The second term above is then bounded as 
\begin{align*}
    \sum_{M\geq r} \sum_{\substack {\cI \in \bar{\bI}_{x,T,H} \\ \fm(\cI; \tilde \Psi_{x,\ell}(\cI)) = M}} \mu_n^\mp(\cI) &  \leq \sum_{M\geq r} \sum_{\cJ\in \tilde \Psi_{x,\ell}(\bar {\bI}_{x,T,H})} \mu_n^\mp(\cJ) \sum_{\substack{\cI \in \tilde \Psi_{x,\ell}^{-1}(\cJ) \\ \fm(\cI; \cJ)= M}} e^{ - (\beta - C)M} \\ 
    & \leq \sum_{M\geq r} \tilde C_\Psi^M e^{ - (\beta- C) M} \mu_n^\mp (\tilde \Psi_{x,\ell}(\bar{\bI}_{x,T,H}))\,.
\end{align*}
Then, noting by Proposition~\ref{prop:well-defined}, that for every $\cI \in \bar{\bI}_{x,T,H}$, for every $\ell$, $\Psi_{x,\ell}(\cI)\in \bar{\bI}_{x,T,H}$, implies that  
\begin{align*}
    \tilde \Psi_{x,\ell}(\bar{\bI}_{x,T,H})\subset \bar{\bI}_{x,T,H}\,.
\end{align*}
We then deduce that 
\begin{align*}
    \mu_n^\mp(\fm(\cI; \tilde \Psi_{x,\ell}(\cI))\geq r, \bar{\bI}_{x,T,H}) \leq Ce^{ - (\beta - C - \log \tilde C_\Psi)r}\mu_n^\mp(\bar{\bI}_{x,T,H})  \leq Ce^{ - (\beta - C - \log \tilde C_\Psi)r}\mu_n^\mp({\bI}_{x,T,H})\,,
\end{align*}
at which point, dividing through by $\mu_n^\mp(\bI_{x,T,H})$ implies the desired. 
\end{proof}

\section{A refined sub-multiplicativity bound}\label{sec:submult}
In order to establish tightness for the centered maximum $M_n$ under $\mu_n^\mp$, we need to replace the approximate sub-multiplicativity bound on $\mu_n^\mp(\hgt(\cP_x)\geq h_1+h_2)$ obtained in~\cite{GL19a}---which had an $O(\exp(\log^2 (h_1 \vee h_2)))$ multiplicative error term---by one in which the multiplicative error is only $O(1)$. We will in fact show this with an error that is $1+\epsilon_\beta$ for some sequence $\epsilon_\beta>0$ that vanishes as $\beta\to\infty$.

Let us denote by $A_h^x$ the event, measurable with respect to the configuration on $\cC(\Z^2 \times \llb 0,h\rrb)$, given by 
\begin{align}\label{eq:Ax-def}
 A_h^x = \left\{\sigma: x+(0,0,\tfrac 12) \xleftrightarrow[\Z^2\times\llb 0,h\rrb]{+} \cL_{h-\frac 12} \right\}\qquad\mbox{for $x\in\cF(\cL_0)$}\,,
\end{align}
so that, recalling the definition~\eqref{eq:alpha-alpha-h-def} of $\alpha_h$, we have
\[ \alpha_h = -\log \mu_{\Z^3}^{\mp}(A_h^o)\qquad\mbox{where}\qquad o=(\tfrac12,\tfrac12,0)\,.\]
Further let
\begin{align}\label{eq:Ex-def}
 E_h^x = \left\{\sigma\,:\; \hgt(\cP_x) \geq h \right\} \qquad\mbox{for $x\in\cF(\cL_0)$}\,.
\end{align}
We showed in~\cite[Eq.~(6.3)]{GL19a} (this will also follow from Claim~\ref{clm:equality-of-events-cond} below) that for  $n$ large and every $x\in \cL_{0,n}$,
\begin{align}\label{eq:equality-of-events}
(1-\epsilon_\beta) \mu^\mp_n(A_h^x) \leq \mu^\mp_n(E_h^x) \leq (1+\epsilon_\beta) \mu^\mp_n (A_h^x)\,,
\end{align}
where $\epsilon_\beta\to 0$ as $\beta\to\infty$, and the same also applies under $\mu_{\Z^3}^\mp$. Thus, for another such sequence $\epsilon_\beta$,
\begin{align}\label{eq:tilde-alpha-def} \widetilde\alpha_h = -\log \mu_{\Z^3}^{\mp}(E_h^o)\qquad\mbox{satisfies}\qquad
 \left|\widetilde\alpha_h - \alpha_h \right| \leq \epsilon_\beta
\,,\end{align}
and we recall from Proposition~\ref{prop:Ex-bounds} that, for some $C>0$ and every $h\geq 1$,
\[ 4\beta-C \leq -\frac1h\log \mu_n^\mp(E_h^o) \leq 4\beta + e^{-4\beta}+\frac{C}h\,.\]
(The existence of $\alpha=\lim_{h\to\infty}\alpha_h/h$, as established in the prequel~\cite{GL19a}, implies that $4\beta-C\leq\alpha\leq 4\beta+e^{-4\beta}$. Our next results will rederive the limit and give it a more accurate description---see Corollary~\ref{cor:alpha-super-additive} below.)
The inequalities in~\eqref{eq:equality-of-events} tie the approximate sub-multiplicativity of $\{\mu_{n}^{\mp}(E^o_h)\}_{h\geq 1}$ to that of $\{\mu_{\Z^3}^{\mp}(A^o_h)\}_{h\geq 1}$ (equivalently, the super-additivity of $\widetilde\alpha_h$ to that of $\alpha_h$), which the following result establishes.
\begin{proposition}\label{prop:submult} There exists $\beta_0>0$ such that for every $\beta>\beta_0$, every $h=h_1+h_2$ for $h_1,h_2\geq 1$ which may depend on $n$, and every $x,x_1,x_2\in\cL_{0,n}$ such that $d(x,\partial \Lambda_n)\gg h$ and $d(x_i,\partial\Lambda_n)\gg h$ for $i=1,2$,
\begin{align}\label{eq:submult-A_h}
 \mu_n^\mp(A^{x}_h) \leq (1+\epsilon_\beta)\, \mu_n^\mp(A_{h_1}^{x_1}) \mu_n^\mp(A_{h_2}^{x_2}) \,,
\end{align}
where $\epsilon_\beta>0$ vanishes as $\beta\to\infty$. Consequently, for another such sequence $\epsilon_\beta>0$, 
\begin{align}\label{eq:submult-E_h}
\mu_n^{\mp}(E^x_h) \leq (1+\epsilon_\beta) \, \mu_n^{\mp}(E^{x_1}_{h_1}) \mu_n^{\mp}(E^{x_2}_{h_2}) \,.
\end{align}
\end{proposition}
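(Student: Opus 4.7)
The plan is to exploit the cut-point structure of tall pillars furnished by Theorem~\ref{thm:exp-tail-base} and Proposition~\ref{prop:exp-tail-all-increments}, decompose the event $E_h^x$ at a cut-cell near height $h_1$, and use the cluster expansion from Theorem~\ref{thm:cluster-expansion} to establish approximate independence of the pillar segments below and above this cut. By~\eqref{eq:equality-of-events}, the estimate~\eqref{eq:submult-A_h} on $A_h^x$-probabilities will follow from~\eqref{eq:submult-E_h} on the corresponding $E_h^x$-events, after absorbing an additional $(1+\epsilon_\beta)$ factor into the sequence. I would thus work with $E_h^x$ directly throughout.

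First, I would show that with high probability the pillar $\cP_x$ has a cut-cell at a prescribed height $\ell$ close to $h_1$. Applying Proposition~\ref{prop:exp-tail-all-increments} with $T=0$ and $H=h$ to the slice $\cP_x \cap \cL_\ell$, the number of vertical faces in this slice is $4$ if and only if $\ell$ is a cut-height (the slice is a single cell); any multi-cell slice forces at least $8$ vertical faces, so that $\ell$ is a cut-height except with probability at most $C e^{-(\beta-C)}=\epsilon_\beta$. On this event let $c_\star$ denote the cut-cell at height $\ell$ and $y = \rho(c_\star)$. The interface then splits as $\cI = \cI^- \cup F(c_\star) \cup \cI^+$, where $\cI^-$ comprises the faces at height $<\ell-\tfrac12$, $F(c_\star)$ are the four vertical faces of $c_\star$, and $\cI^+$ the faces at height $>\ell+\tfrac12$. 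The lower portion realizes $E_{h_1}^x$, while the upper portion, translated so that $c_\star$ becomes the base cell at $y$, realizes $E_{h_2}^y$.

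The main obstacle is the third step: showing that conditionally on $\cI^-$ and on $c_\star$, the law of $\cI^+$ is close in total variation to that of an independent (translated) pillar rooted at $y$ under $\mu_n^\mp$, up to a factor $(1+\epsilon_\beta)$. The sub-critical bubbles in the plus and minus phases couple $\cI^+$ to $\cI^-$ through the cluster-expansion weights. Using Theorem~\ref{thm:cluster-expansion}, the conditional weight of $\cI^+$ given $\cI^-$ is proportional to $\exp[-\beta |\cI^+| - \sum_{f\in \cI^+}\g(f,\cI^-\cup F(c_\star)\cup\cI^+)]$, while the corresponding weight under the fresh pillar measure at $y$ is $\exp[-\beta |\cI^+| - \sum_{f\in \cI^+}\g(f,\cL_0\cup F(c_\star)\cup\cI^+)]$. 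By~\eqref{eq:g-exponential-decay}, each term $|\g(f,\cI)-\g(f,\cI')|$ is at most $\bar K e^{-\bar c\br}$, with $\br$ the congruence radius between the two interfaces near $f$. Combined with Theorem~\ref{thm:exp-tail-base}(a), which bounds both the diameter of $\sB_x$ and, with exponential tails, the excess area of any walls of $\cI^-$ close to the column through $c_\star$, summing over $f\in \cI^+$ yields a total perturbation of $O(e^{-\bar c})$, controlling the Radon--Nikodym derivative by $(1+\epsilon_\beta)$.

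Finally, I would integrate the factorized bound over the horizontal position of $y$. By~\eqref{eq:geometric-observation-tameness}, $|y-x|$ is bounded by $\diam(\sB_x) + \tfrac14 \fm(\cS_x^-)$, where $\cS_x^-$ is the portion of the spine below $\ell$; by Theorem~\ref{thm:exp-tail-base}, this has exponential tails. The sum $\sum_y \mu_n^\mp(E_{h_2}^y)$ over the admissible region collapses to $(1+\epsilon_\beta)\mu_n^\mp(E_{h_2}^{x_2})$ via approximate translation invariance of $\mu_n^\mp(E_{h_2}^\cdot)$---yet another cluster-expansion estimate, valid because $d(x,\partial\Lambda_n)\gg h$ and $d(x_i,\partial\Lambda_n)\gg h$. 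Combining with the factor $\mu_n^\mp(E_{h_1}^x)$ for the lower portion and the $\epsilon_\beta$ error from the exceptional event where $\ell$ is not a cut-height, the proof of~\eqref{eq:submult-E_h} concludes, and thence~\eqref{eq:submult-A_h} via~\eqref{eq:equality-of-events}.
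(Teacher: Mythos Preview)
Your approach is genuinely different from the paper's, and the gap lies in your third step. The paper does \emph{not} attempt to factorize the interface measure via cluster expansion; instead it works at the spin level with the increasing events $A_h^x$, reveals the plus $*$-component $\cA$ of $x+(0,0,\tfrac12)$ inside the slab $\Z^2\times\llb 0,h_1\rrb$, and uses FKG together with monotonicity in boundary conditions. On the good event $\Gamma$ (single cut-cells at the top and bottom of the slab, furnished by Theorem~\ref{thm:exp-tail-base} and Proposition~\ref{prop:exp-tail-all-increments}), the revealed boundary of $\cA$ consists of exactly two plus sites and otherwise all minus; FKG then gives a clean one-sided inequality $\mu_n^{\mp}(\theta_{h_1}A_{h_2}^Y\mid\cA)\le(1+\epsilon_\beta)\mu_n^{\mp}(A_{h_2}^{\rho(Y)})$ with no interaction terms to control.

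Your route, by contrast, requires bounding $\sum_{f\in\cI^+}|\g(f,\cI^-\cup F(c_\star)\cup\cI^+)-\g(f,\cL_0\cup F(c_\star)\cup\cI^+)|$ by $O(1)$ uniformly over $\cI^+$. You invoke Theorem~\ref{thm:exp-tail-base}(a) for this, but that result only controls the base $\sB_x$ and the increments of the spine $\cS_x$ of \emph{this} pillar; it says nothing about other walls of $\cI$ near the column through $c_\star$. Such walls (e.g., a neighbouring pillar of moderate height) contribute to the congruence radius $\br$ and can make the sum large; nothing you cite rules them out conditionally on $E_h^x$. Moreover, the factorization also requires controlling how $\sum_{f\in\cI^-}\g(f,\cI)$ depends on $\cI^+$ (and the matching normalization constants), which you do not address. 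Getting all of this down to a multiplicative $(1+\epsilon_\beta)$, rather than $e^{C}$ or $e^{C\fm}$, would essentially force you to rebuild a map of the complexity of $\Psi_{x,t}$ in Section~\ref{sec:exp-tail-base}. The paper's FKG argument sidesteps this entirely: the structural results are used only to show $\mu_n^\mp(\Gamma\mid E_h^x)\ge 1-\epsilon_\beta$, a far weaker requirement.
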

In light of the preceding inequalities, the above proposition readily implies the following corollary.

\begin{corollary}\label{cor:alpha-super-additive} There exists $\beta_0$ such that for every $\beta>\beta_0$ and every $h_1,h_2\geq 1$,
\begin{equation}\label{eq:super-additivity}  \alpha_{h_1}+\alpha_{h_2} - \epsilon_\beta \leq \alpha_{h_1+h_2} \leq \alpha_{h_1} +(4\beta+e^{-4\beta}) h_2\end{equation}
for some  $\epsilon_\beta>0$ that vanishes as $\beta\to\infty$.
In particular, the limit $\alpha = \lim_{h\to\infty} \alpha_h /h$ ($=\lim_{h\to\infty}\widetilde\alpha_h/h$) exists and satisfies $\alpha= \sup_h(\alpha_h-\epsilon_\beta)/h$.
\end{corollary}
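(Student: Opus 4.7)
\medskip
\noindent\textbf{Proof plan.} The plan combines the sub-multiplicativity estimate~\eqref{eq:submult-A_h} of Proposition~\ref{prop:submult} with a conditional FKG-forcing lower bound in the spirit of~\cite[Prop.~2.29]{GL19a}, and concludes via Fekete's lemma for super-additive sequences. The equivalence~\eqref{eq:tilde-alpha-def} lets us work interchangeably with $\alpha_h$ and $\widetilde\alpha_h$.

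For the left inequality of~\eqref{eq:super-additivity}, take $x=x_1=x_2=o:=(\tfrac12,\tfrac12,0)$ in~\eqref{eq:submult-A_h}, valid for all $n\gg h_1+h_2$. Since $A_h^o$ is measurable with respect to the finite cell-set $\cC(\Z^2\times\llb 0,h\rrb)$, the finite-volume probabilities $\mu_n^\mp(A_h^o)$ converge to $\mu_{\Z^3}^\mp(A_h^o)$ as $n\to\infty$. Passing to this limit, taking $-\log$, and using $\log(1+\epsilon_\beta)\leq\epsilon_\beta$ yields the super-additivity bound (with a possibly relabeled vanishing sequence).

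For the right inequality, it suffices to establish
\[
\mu_{\Z^3}^\mp(A_{h_1+h_2}^o)\,\geq\,(1-\epsilon_\beta)\,\mu_{\Z^3}^\mp(A_{h_1}^o)\cdot\exp\!\bigl[-(4\beta+e^{-4\beta})h_2\bigr]\,,
\]
since taking $-\log$ gives the stated bound up to an additive $-\log(1-\epsilon_\beta)\leq\epsilon_\beta$, which is absorbed into the $e^{-4\beta}h_2$ term once $\beta$ is large enough that $\epsilon_\beta\leq h_2\, e^{-4\beta}$ (using $h_2\geq 1$). We obtain this inequality by conditioning on $\sigma\restriction_{\cC(\Z^2\times\llb 0,h_1\rrb)}$: the event $A_{h_1}^o$ is measurable with respect to this restriction, and on it, the witnessing plus $*$-path reaches some cell $v\in\cL_{h_1-\tfrac12}$. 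Under the DLR conditional measure on $\cC(\Z^2\times\llb h_1,\infty\rrb)$, the spin at $v$ is forced to $+1$ by the conditioning, and the same Peierls/FKG plus-column forcing as in~\cite[Prop.~2.29]{GL19a}, applied from $v$ upward to $\cL_{h_1+h_2-\tfrac12}$, gives the conditional extension probability a lower bound of $(1-\epsilon_\beta)e^{-(4\beta+e^{-4\beta})h_2}$, uniformly in the conditioning; integrating delivers the claim.

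Existence of $\alpha:=\lim_{h\to\infty}\alpha_h/h$ then follows by super-additive Fekete applied to $b_h:=\alpha_h-\epsilon_\beta$: the left inequality of~\eqref{eq:super-additivity} reads $b_{h_1+h_2}\geq b_{h_1}+b_{h_2}$, so $b_h/h\to\sup_h b_h/h=:\alpha$; the right inequality, divided by $h$ and combined with $\alpha_1=O(\beta)$, forces this supremum to be at most $4\beta+e^{-4\beta}$ and hence finite. Since $\epsilon_\beta/h\to 0$, the same limit is inherited by $\alpha_h/h$, and via~\eqref{eq:tilde-alpha-def} by $\widetilde\alpha_h/h$. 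The identity $\alpha=\sup_h(\alpha_h-\epsilon_\beta)/h$ is precisely $\alpha=\sup_h b_h/h$. The main technical obstacle throughout is the FKG-forcing step: the plus extension must emanate from a \emph{random} endpoint $v\in\cL_{h_1-\tfrac12}$, not a prescribed site; this is sidestepped by working with the DLR conditional measure, under which the forced plus column above $v$ is an increasing event whose Peierls cost is uniform in both $v$ and the bottom configuration.
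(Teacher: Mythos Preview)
Your approach matches the paper's: the left inequality via Proposition~\ref{prop:submult} at $x=x_1=x_2=o$ and $n\to\infty$; the right inequality by revealing the slab below height $h_1$ and forcing a plus column of height $h_2$ above the endpoint (the paper works in finite volume and then takes a limit, you invoke DLR directly---either is fine). One slip: your absorption of the $(1-\epsilon_\beta)$ prefactor into the $e^{-4\beta}h_2$ term fails as written, since that slack is already spent in the exponent of the forcing bound you quoted, so you only get $\alpha_{h_1+h_2}\leq\alpha_{h_1}+(4\beta+e^{-4\beta})h_2+\epsilon_\beta$. The fix is that the direct finite-energy calculation on the column (conditioning on everything outside it, with the cell immediately below already plus) yields the lower bound $e^{-(4\beta+e^{-4\beta})h_2}$ with no prefactor; the $(1-\epsilon_\beta)$ in Proposition~\ref{prop:Ex-bounds} comes from a separate step (controlling the base of the pillar at height~$0$) that is absent in this conditional setting.
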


\begin{proof}
For the left-hand side, fix any $h_1,h_2$, take $x,x_1,x_2=o$ and send $n\to\infty$ (whence $\mu_n^\mp(A_h^o)\to\mu_{\Z^3}^\mp(A_h^o)$ is by the weak convergence of $\mu_n^\mp$ to $\mu_{\Z^3}^\mp$ as established by Dobrushin, and the analogous fact for $E_h^o$ follows from Corollary~\ref{cor:pillar-dependence-on-volume}).

Similarly, for the right-hand side it suffices for us to prove, in the setting of Proposition~\ref{prop:submult}, that 
\begin{align*}
    \mu_n^\mp(A_h^o)\geq \mu_n^\mp(A_{h_1}^{o})e^{-(4\beta +e^{-4\beta})h_2}\,,
\end{align*}
and then send $n\to\infty$. Reveal the entire configuration $\sigma$ on $\Lambda_n \cap \cL_{<h_1}$ under $\mu_n^\mp (\cdot \mid A_{h_1}^{o})$. On the event~$A_{h_1}^o$, in the configuration we revealed, $o+(0,0,\frac 12)$ is connected by plus sites to $\cL_{h_1 - \frac 12}$: call $Y-(0,0,\frac 12)$ an arbitrary site in the plus cluster of $o+(0,0,\frac 12)$ at height $h_1 - \frac 12$. Then, by a simple calculation (see, e.g., the proof in~\cite{GL19a} of the similar left-hand of Proposition~\ref{prop:Ex-bounds}), independently of the configuration outside of the set of cells $\{Y+(0,0,\frac k2):k=1,\ldots,h_2\}$, the probability that those $h_2$ cells are all plus---and therefore~$A_{h}^o$ holds---is at least $e^{-(4\beta +4e^{4\beta})h_2}$.
\end{proof}

We will need the following comparison which will imply the inequality~\eqref{eq:equality-of-events} that was stated above.
\begin{claim}\label{clm:equality-of-events-cond}
There exist $\beta_0>0$ and a sequence $\epsilon_\beta>0$ vanishing as $\beta\to\infty$ such that for all $\beta>\beta_0$, every~$n$ and every $x\in\cL_{0,n}$,
\begin{align*} \mu_n^\mp(E_h^x \mid A_h^x) &\geq 1-\epsilon_\beta\,,\qquad\mbox{ and }\qquad
 \mu_n^\mp(A_h^x \mid E_h^x) \geq 1-\epsilon_\beta\,.
 \end{align*}
\end{claim}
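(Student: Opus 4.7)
The plan is to prove the two conditional bounds separately, both by leveraging the fact that at low temperature the raw configuration $\sigma$ and the interface-pure configuration $\sigma(\cI)$ differ only on rare small bubbles.

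For the first inequality $\mu_n^\mp(E_h^x\mid A_h^x)\geq 1-\epsilon_\beta$, I would argue as follows: on $A_h^x$ there is a $*$-plus path of $\sigma$ from $x+(0,0,\tfrac12)$ to $\cL_{h-\tfrac12}$ within $\Z^2\times\llb 0,h\rrb$, and if $x+(0,0,\tfrac12)$ lies in the infinite plus $*$-cluster of $\sigma$, then none of the cells on this path is flipped when $\sigma(\cI)$ is formed from $\sigma$; they remain in $\cC^+(\cI)$, persist as a $*$-plus path in $\cL_{>0}$, and force $\hgt(\cP_x)\geq h$. Therefore $A_h^x\setminus E_h^x$ is contained in the event that $x+(0,0,\tfrac12)$ is plus and lies in a \emph{finite} plus $*$-cluster of $\sigma$. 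Being decreasing in $\sigma$ while $A_h^x$ is increasing, the FKG inequality gives
\[
\mu_n^\mp(A_h^x\setminus E_h^x)\leq \mu_n^\mp(A_h^x)\cdot \mu_n^\mp\bigl(x+(0,0,\tfrac12)\text{ is in a finite plus cluster}\bigr),
\]
and a standard Peierls estimate --- summing over the possible size $k\geq 1$ of the minus sheath enclosing such a cluster, with the isoperimetric energy factor $e^{-c\beta k^{2/3}}$ and the entropic factor $C^k$ from Fact~\ref{fact:number-of-connected-face-sets} --- bounds the last probability by $e^{-c\beta}$ uniformly in $n,x,h$, giving the desired bound with $\epsilon_\beta = e^{-c\beta}$.

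For the second inequality $\mu_n^\mp(A_h^x\mid E_h^x)\geq 1-\epsilon_\beta$, the plan is to condition on $\cI\in \hat E_h^x:=\{\hgt(\cP_x^\cI)\geq h\}$, which is allowed since $E_h^x$ depends only on $\cI$. By Observation~\ref{obs:interface}, given $\cI$ every cell in $\partial\cI$ has spin deterministically equal to $\sigma(\cI)\restriction_{\partial\cI}$; in particular every pillar cell with a minus face-neighbor --- including all cut-points of $\cP_x$ and every planar boundary cell of each cross-section $\cP_x\cap\cL_\ell$ --- is fixed plus in $\sigma$. I would then show that for every $\cI\in\hat E_h^x$, the ``shell'' $\cP_x\cap\partial\cI$ contains a $*$-connected plus path from $x+(0,0,\tfrac12)$ to $\cL_{h-\tfrac12}$ within $\Z^2\times\llb 0,h\rrb$, establishing $A_h^x$ deterministically. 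The shell meets every relevant height because each finite cross-section $\cP_x\cap\cL_\ell\subset\Z^2$ has a nonempty planar boundary, and those planar boundary cells lie in $\partial\cI$. For $*$-connectivity, $\cP_x\cap\partial\cI$ is obtained from the $*$-connected $\cP_x$ by removing only ``deep interior'' cells (those with all twenty-six $*$-neighbors in $\cP_x$), and a direct topological inspection shows that removing fully-surrounded cells from a finite $*$-connected subset of $\Z^3$ preserves $*$-connectivity.

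The hardest part will be the topological assertion for the second inequality. Pathological pillar topologies --- handles, tunnels, or nontrivial genus --- could in principle force the shell to split into several $*$-components, breaking the deterministic argument on a subset of $\cI\in\hat E_h^x$. I would then collect such configurations into an exception set, whose conditional probability must be bounded by $\epsilon_\beta$ \emph{uniformly} in $h$. A naive union bound of Proposition~\ref{prop:exp-tail-all-increments} over the $O(h)$ candidate heights for the defect only produces $h\cdot e^{-c\beta}$, which is insufficient; the fix is a global surgery on $\cI$ in the spirit of Section~\ref{sec:exp-tail-base}, removing all handle-inducing walls in a single step and absorbing the height summation into the cluster-expansion weight supplied by Theorem~\ref{thm:cluster-expansion}.
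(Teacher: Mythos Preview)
Both halves of your plan have genuine gaps.

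For the first inequality, the FKG step is invalid: the event $D=\{\sigma_{x+(0,0,1/2)}=+1\text{ and its plus }*\text{-cluster is finite}\}$ is neither increasing nor decreasing (the plus-spin clause is increasing, the finiteness clause decreasing). In fact your claimed inequality is already false at $h=1$, where $A_1^x=\{\sigma_{x+(0,0,1/2)}=+1\}\supset D$ and it would read $\mu_n^\mp(D)\leq\mu_n^\mp(\sigma_{x+(0,0,1/2)}=+1)\,\mu_n^\mp(D)$. Replacing $D$ by the genuinely decreasing event $\{x+(0,0,\tfrac12)\notin\text{infinite plus }*\text{-cluster}\}$ makes FKG valid but useless: under $\mu_n^\mp$ the cell $x+(0,0,\tfrac12)$ sits in the minus phase, so this event has probability close to $1$, not close to $0$, and your Peierls estimate (which bounds $\mu_n^\mp(D)$, not this larger probability) does not help. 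The paper anchors instead in the \emph{lower} half-space, intersecting $A_h^x$ with the increasing, probability-$(1-\epsilon_\beta)$ event that the column of cells below $x$ lies in $\cC^+(\cI)$ (via Proposition~\ref{prop:dobrushin-exp-tail}); this forces the plus path witnessing $A_h^x$ to survive in $\sigma(\cI)$ and hence to lie in $\cP_x$.

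For the second inequality, the deterministic shell argument fails at its starting point rather than at the topological step you anticipate. When $\sB_x$ is wide---say a $5\times 5$ slab at height $\tfrac12$ with the spine rising from its center and $\cI$ flat at height $0$ beneath it---the cell $x+(0,0,\tfrac12)$ has all six face-neighbors in $\cC^+(\cI)$ and no bounding vertex on any face of $\cI$; thus $x+(0,0,\tfrac12)\notin\partial\cI$, its spin under $\mu_n^\mp(\cdot\mid\cI)$ is not fixed, and $A_h^x$ does not follow deterministically. Note too that the claim is for \emph{every} $n$ and $x\in\cL_{0,n}$ with no relation between $h$ and $d(x,\partial\Lambda_n)$, so you cannot simply invoke Theorem~\ref{thm:exp-tail-base} to dispose of nonempty bases. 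The paper avoids all of this---and the surgery you propose---by a short probabilistic argument: condition on $\cI\in E_h^x$, observe that the conditional law on $\cC^+(\cI)$ stochastically dominates $\mu_{\Z^3}^+$ restricted there, and note that under $\mu_{\Z^3}^+$ the cell $x+(0,0,\tfrac12)$ is $*$-plus-connected through $\cL_{>0}$ to infinity---hence to $\cS_x$, which reaches height $h$---with probability at least $1-\epsilon_\beta$ by Peierls.
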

\begin{proof}
Let us begin with the first inequality: by definition, we have for every $x\in \cL_{0,n}$ and $h\geq 1$,  
\[(A_h^x \cap \mbox{$\bigcap_{k\geq 1}$}\{x-(0,0, \tfrac k2)\in \sigma(\cI)\}) \subset (E_h^x\cap A_h^x)\,.
\]
By Proposition~\ref{prop:dobrushin-exp-tail}, $x$ is not interior to any wall of $\cI$, and therefore $\bigcap_{k\geq 1} \{x-(0,0,\frac k2)\in \sigma(\cI)\}$, except with probability $\epsilon_\beta$ going to zero as $\beta \to\infty$. Then by the FKG inequality, we deduce that 
$$\mu_n^\mp(E_h^x\cap A_h^x) \geq \mu_n^\mp(A_h^x \cap \mbox{$\bigcap_{k\geq 1}$}\{x-(0,0, \tfrac k2)\in \sigma(\cI)\}) \geq (1-\epsilon_\beta)\mu_n^\mp(A_h^x)$$
implying the desired after dividing through by $\mu_n^\mp(A_h^x)$.

Let us turn to the second inequality. We can expose the inner  boundary of the plus $*$-connected component of the cell-set $\sigma(\cI)$ under the measure $\mu_n^\mp(\cdot \mid E_h^x)$ as follows: reveal the entire minus $*$-connected component of the boundary $\partial \Lambda \cap \cL_{>0}$ by exposing the minus $*$-connected component and all $*$-adjacent (bounding) plus spins---exactly the plus (inner) boundary of the $*$-connected plus component of $\sigma(\cI)$ together with (inner) boundaries of finite plus bubbles in the minus phase. Since $x+(0,0,\frac 12)$ is plus in $\sigma(\cI)$ (on the event $E_h^x$), and for every $\cI$, the measure of $\mu_n^\mp(\cdot)$ on $\{v:\sigma(\cI)_v = +1\}$ stochastically dominates that induced by $\mu_{\Z^3}^+$ on $\{v:\sigma(\cI)_v = +1\}$, we have 
\begin{align*}
    \mu_n^\mp(A_h^x \mid E_h^x) &  \geq \inf_{\cI\in E_h^x} \mu_{\{v:\sigma(\cI)_v=+1\}}^+(x+(0,0,\tfrac 12) 
    \xleftrightarrow[\cL_{>0}]{+}
     \cS_x) \\
    & \geq \mu_{\Z^3}^+(x+(0,0,\tfrac 12)
    \xleftrightarrow[\cL_{>0}]{+}
    \infty)\,;
\end{align*}
this is seen to be at least $1-\epsilon_\beta$ by the classical Peierls argument. 
\end{proof}
The proof of Proposition~\ref{prop:submult} follows the same argument that was used to establish the weaker approximate sub-multiplicativity bound in~\cite{GL19a}, whereas here the error terms can be better tracked and controlled via the improved estimates on the shape of the pillar (in particular the exponential tail on the size of the base conditioned on the pillar reaching height $h$, vs.\ the bound in the prequel which had an extra $O(\log h)$~term).
For completeness, we include the full argument instead of only listing the needed modifications. We begin with recalling several decorrelation estimates for pillars which are needed for the proof.

\begin{proposition}[{\cite{Dobrushin72b}, \cite[Lemma 5]{Dobrushin73}, as well as \cite[Prop.~2.3]{BLP79b}}]\label{prop:dependence-on-volume}
There exists $C>0$ such that for every $\beta>\beta_0$, every $n\leq m$, for any subset $F \subset \cL_{0,n}$, 
\begin{align*}
\big\|\mu^\mp_n\big((\sF_y)_{y\in F} \in \cdot \big) - \mu^\mp_m\big((\sF_y)_{y\in F} \in \cdot \big)\big\|_{\tv} \leq C e^{- d(F,\partial \Lambda_n)/C}\,.
\end{align*}
In particular, sending $m$ to $\infty$, and via the tightness of $(\sF_y)_{y\in F}$, this holds if we replace $\mu_m$ by $\mu_{\Z^3}^{\mp}$. 
\end{proposition}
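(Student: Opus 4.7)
The plan is to leverage the cluster expansion of Theorem~\ref{thm:cluster-expansion}, under which both $\mu_n^\mp$ and $\mu_m^\mp$ can be written as Gibbs measures over interfaces with weights proportional to $\exp[-\beta|\cI| - \sum_{f\in\cI}\g(f,\cI)]$, normalized by the respective partition functions. Since the family $(\sF_y)_{y\in F}$ depends only on walls whose projections lie within a bounded horizontal neighborhood of $F$ (the groups of walls of nested sequences above each $y\in F$, whose excess areas have exponential tails via Proposition~\ref{prop:dobrushin-exp-tail}), the exponential decay~\eqref{eq:g-exponential-decay} of $\g$ should force the conditional law of $(\sF_y)_{y\in F}$ to be essentially independent of the ambient box size beyond distance $\sim d(F,\partial\Lambda_n)$.

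Set $r = d(F,\partial\Lambda_n)$. First I would restrict to the high-probability event $\mathbf{A}$ that no wall in $\fF_y$ for $y\in F$ has projection intersecting the annulus at distance $\geq r/3$ from $F$; by Proposition~\ref{prop:dobrushin-exp-tail} and a union bound, $\mathbf{A}^c$ has probability at most $Ce^{-(\beta-C)r/3}$ under either measure, which can be absorbed into the claimed bound. On $\mathbf{A}$, for each admissible realization $\fF$ of $(\sF_y)_{y\in F}$, write
\[
\mu_n^\mp((\sF_y)_{y\in F} = \fF) = \frac{1}{Z_n}\sum_{\cI \text{ extending } \fF} \exp\Big[-\beta|\cI| - \sum_{f\in\cI}\g(f,\cI)\Big]\,,
\]
and similarly for $\mu_m^\mp$. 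To compare these expressions, interpolate along a sequence of nested cylinders $\Lambda_n = \Lambda^{(0)}\subset\cdots\subset\Lambda^{(N)}=\Lambda_m$ differing by one column at a time. At each interpolation step, the only affected interactions are those between faces in the newly added column and faces of $\cI$ within distance $\sim r/3$ of $F$; by~\eqref{eq:g-exponential-decay} each such pair contributes at most $\bar K e^{-\bar c\rho}$, where $\rho \geq 2r/3$ is the separation. Summing these contributions over all relevant faces (whose number is controlled by~\eqref{eq:wall-excess-area-inequalities} and the exponential tails of Proposition~\ref{prop:dobrushin-exp-tail}) gives a total log-ratio perturbation bounded by $C'e^{-r/C'}$.

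This yields that uniformly in $\fF\in\mathbf{A}$,
\[
\Big|\log\frac{\mu_n^\mp((\sF_y)_{y\in F}=\fF)}{\mu_m^\mp((\sF_y)_{y\in F}=\fF)}\Big| \leq C' e^{-r/C'}\,,
\]
and the same bound holds for the ratio of partition function normalizations after the same interpolation argument. Applying the elementary inequality $\|\nu_1-\nu_2\|_{\tv}\leq 1-e^{-\sup|\log(d\nu_1/d\nu_2)|}$ to the restricted measures and adding the probability of $\mathbf{A}^c$ gives the claimed total variation bound with a possibly larger constant $C$.

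The main obstacle is controlling the cumulative $\g$-perturbation across the interpolation. Each individual bound $|\g(f,\cI)-\g(f',\cI')|\leq \bar K e^{-\bar c\br}$ is small, but one must sum over a potentially unbounded number of faces in $\cI$; the resolution is that (i) any face in $\cI$ with $\br(f,\cI;f',\cI')<r/3$ to a perturbation region at distance $\geq 2r/3$ must lie in a wall whose projection is close to $F$, and (ii) the expected excess area of such walls is summable via Proposition~\ref{prop:dobrushin-exp-tail}, so that both a typical-configuration argument and a worst-case bound using $|W|\leq 2\fm(W)$ combine to give the required $e^{-r/C}$ decay. A secondary technical point is that the cluster expansion representation is valid for interfaces in cylinders of either finite or infinite vertical extent, since the sum over vertical wall heights converges exponentially — this is exactly the setting of Theorem~\ref{thm:cluster-expansion} as stated.
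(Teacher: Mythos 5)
The paper does not actually prove this proposition: it is quoted directly from the literature (Dobrushin's 1972/1973 papers and Bricmont--Lebowitz--Pfister), so there is no internal proof to compare against; your sketch is in the spirit of those references, which do work with the wall/ceiling polymer representation and its cluster expansion. Nevertheless, as written your argument has a genuine gap. Theorem~\ref{thm:cluster-expansion} only gives the ratio of weights of two interfaces \emph{under the same measure} $\mu_n^\mp$, with a function $\g$ that is a priori tied to the fixed cylinder $\Lambda_n$; it says nothing about how $\g$, the set of admissible interfaces, or the normalization change when the domain grows from $\Lambda_n$ to $\Lambda_m$. That domain dependence is precisely the content of the proposition, so the key interpolation step --- ``at each step the only affected interactions are those between the new column and faces within distance $\sim r/3$ of $F$'' --- is asserted rather than proved, and is false at face value: adding a column enlarges the configuration space, multiplies the partition function by a factor exponentially large in the added volume, and perturbs the cluster weights of \emph{every} interface. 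What must be shown is that these effects cancel between the constrained sum and the normalization up to clusters connecting the modified column to the neighborhood of $F$, which requires the actual cluster-expansion representation of $\log Z$ and of $\g$ (as in the cited works), not the black-box statement~\eqref{eq:g-uniform-bound}--\eqref{eq:g-exponential-decay}.

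A second, quantitative issue is your claimed uniform bound $|\log(\mu_n^\mp(\fF)/\mu_m^\mp(\fF))|\leq C'e^{-r/C'}$ for all realizations $\fF$ and all $F\subset\cL_{0,n}$. Summing the per-column (or per-boundary-site) contributions naturally produces a factor counting the columns at distance comparable to $r=d(F,\partial\Lambda_n)$ from $F$, i.e.\ something like $\sum_{\rho\geq r}O(n+\rho)e^{-c\rho}=O(n e^{-cr})$ when $F$ has diameter of order $n$; your write-up silently drops this surface factor. In the regimes where the paper applies the proposition ($r\geq\log^2 n$, or $F$ of bounded diameter) such a factor is harmless and can be absorbed into $Ce^{-r/C}$, but your argument as stated does not justify the bound uniformly over arbitrary $F$ and does not flag where the absorption happens. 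To make the proof complete you would need to (i) import or reprove the domain-dependent cluster expansion so that $\g_{\Lambda_n}-\g_{\Lambda_m}$ and $\log(Z$-ratios$)$ are identified with sums over clusters reaching $\partial\Lambda_n$, and (ii) carry out the bookkeeping of these cluster sums against the conditioning on $(\sF_y)_{y\in F}$, tracking the boundary-size factor explicitly.
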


\begin{proposition}[{\cite{Dobrushin73}, see also~\cite[Proposition 2.1]{BLP79b}}]\label{prop:group-of-wall-correlations}
There exist $\beta_0>0$ and $C>0$ such that for every $\beta>\beta_0$, every $n$ and every two subsets $F_1,F_2 \subset \cL_{0,n}$, 
\begin{align*}
&\left\|\mu^\mp_n \left((\sF_y)_{y\in F_1}\in\cdot, (\sF_z)_{z\in F_2} \in\cdot\right )  - \mu^\mp_n \big((\sF_y)_{y\in F_1}\in \cdot\big)  \mu_n^\mp \big((\sF_z)_{z\in F_2} \in \cdot \big) \right\|_\tv  \leq C e^{-d(F_1,F_2)/C}\,.
\end{align*}
\end{proposition}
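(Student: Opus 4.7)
The plan is to leverage the cluster expansion from Theorem~\ref{thm:cluster-expansion} to view the induced distribution of the standard wall collection under $\mu_n^\mp$ as a weakly interacting polymer model on admissible wall configurations, in which groups of walls located in distant regions decouple up to exponentially small corrections. This type of decorrelation is the standard output of the polymer-expansion machinery of Dobrushin and of Bricmont--Lebowitz--Pfister at low temperatures.

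By the coupling characterization of total variation, it suffices to bound, uniformly over ``typical'' group-of-walls configurations $\omega_1,\omega_1'$ on $F_1$ and $\omega_2,\omega_2'$ on $F_2$, the cross-ratio
\[ \Delta := \log \frac{\mu_n^\mp(\omega_1\sqcup\omega_2)\,\mu_n^\mp(\omega_1'\sqcup\omega_2')}{\mu_n^\mp(\omega_1\sqcup\omega_2')\,\mu_n^\mp(\omega_1'\sqcup\omega_2)}\,, \]
where $\omega\sqcup\omega'$ denotes the event $\{(\sF_y)_{y\in F_1}=\omega,\,(\sF_z)_{z\in F_2}=\omega'\}$; atypically large configurations can be excluded a priori via the Peierls tail of Proposition~\ref{prop:dobrushin-exp-tail}. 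Each of the four marginals in $\Delta$ is a sum of interface weights $\exp[-\beta|\cI|-\sum_{f\in\cI}\g(f,\cI)]$ given by Theorem~\ref{thm:cluster-expansion}, taken over admissible wall collections extending the prescribed configuration on $F_1\cup F_2$. Pairing up interfaces that agree outside $F_1\cup F_2$, the energetic terms $\beta|\cI|$ cancel identically in the cross-ratio, since $|\cI|$ is additive over its standard walls. The surviving $\g(f,\cI)$ contributions also cancel whenever the congruence ball of~\eqref{eq:g-exponential-decay} around $f$ avoids either $F_1$ or $F_2$; the residual terms must therefore have congruence radius at least $d(F_1,F_2)$ and are each bounded by $\bar K e^{-\bar c d(F_1,F_2)}$. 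Summing over the source faces $f$ (enumerated via Fact~\ref{fact:number-of-connected-face-sets}) and over the faces in $F_1\cup F_2$ that drive the congruence failure, one obtains $|\Delta|\leq Ce^{-d(F_1,F_2)/C}$ for $\beta$ large, which is exactly what is needed.

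The main technical obstacle is tracking these cancellations uniformly over the infinite collection of unconstrained wall configurations that contribute to each marginal, and integrating out the unconstrained walls in a manner compatible with the absolute convergence of the cluster expansion. This is handled by the standard polymer-model decorrelation machinery at low temperatures as in~\cite{Dobrushin73, BLP79b}, combined with the tail estimate of Proposition~\ref{prop:dobrushin-exp-tail} to control the weight of atypically large walls both within $F_1\cup F_2$ and in its complement. In particular, we need that the ``background'' walls of large excess area contribute only an exponentially small fraction of the total weight, so that the pairwise bijection underlying the cancellation argument above loses nothing to unmatched terms.
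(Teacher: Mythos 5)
The paper does not prove this proposition at all: it is imported verbatim from the literature, cited to Dobrushin's 1973 work and to~\cite[Proposition 2.1]{BLP79b}, and is used as a black box (alongside Proposition~\ref{prop:dependence-on-volume}) to derive the pillar-decorrelation corollaries. Your proposal is in the same spirit as those references --- a polymer/cluster-expansion decorrelation for the wall representation --- but it is not a self-contained alternative: the step you label as ``the main technical obstacle'' (integrating out the unconstrained background walls compatibly with absolute convergence of the expansion, and controlling unmatched terms) is exactly the content of the cited proofs, and you explicitly delegate it back to \cite{Dobrushin73,BLP79b}. So as written, your argument reduces to the same citation the paper makes, plus a heuristic for why one should believe it.

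If the sketch were meant to stand on its own, two concrete gaps would need repair. First, bounding the log cross-ratio $\Delta$ uniformly over ``typical'' configurations does not by itself bound the total variation distance between the joint law of $\big((\sF_y)_{y\in F_1},(\sF_z)_{z\in F_2}\big)$ and the product of its marginals; one needs a uniform multiplicative comparison of the conditional law $\mu_n^\mp\big((\sF_z)_{z\in F_2}\in\cdot \mid (\sF_y)_{y\in F_1}\big)$ with the unconditional one (or an explicit coupling), and this requires summing the expansion over \emph{all} interfaces extending a given constrained configuration, not just a pairwise cross-ratio over representatives. Second, the pairing ``interfaces that agree outside $F_1\cup F_2$'' and the implicit swap of the $F_2$-data is not well defined as stated: admissibility of the standard wall representation requires disjoint projections, and a group of walls indexed in $F_1$ may physically extend arbitrarily close to (or over) the region occupied by the walls indexed in $F_2$, in which case the swapped configuration is not an interface. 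These overlapping/large configurations are precisely the source of the $e^{-d(F_1,F_2)/C}$ error, so they cannot simply be ``excluded a priori'' --- their total weight must be bounded quantitatively (via the Peierls-type estimate of Proposition~\ref{prop:dobrushin-exp-tail} and the exponential decay in~\eqref{eq:g-exponential-decay}) and fed into the TV bound, together with the bookkeeping of the vertical shifts induced on distant faces when the $F_2$-walls change (cf.\ Lemma~\ref{lem:standard-wall-representation}). That bookkeeping is the substance of the Dobrushin/BLP argument, and it is missing here.
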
  
Propositions~\ref{prop:dependence-on-volume}--\ref{prop:group-of-wall-correlations} readily translate to similar estimates on the collections of pillars (see the short proofs of Corollaries 6.4 and 6.6 in~\cite{GL19a}, addressing the special cases  where $F, F_1,F_2$ were balls of radius $r$ about some fixed faces $x_n,y_n \in \cL_{0,n}$, and following from the respective special cases of the above propositions).
\begin{corollary}[see {\cite[Corollary~6.4]{GL19a}}]\label{cor:pillar-dependence-on-volume}
There exist $\beta_0>0$ and $C>0$ such that for every $\beta>\beta_0$, every subset $F_n\subset \cL_{0,n}$ and every subset $F_m \subset \cL_{0,m}$ which is a horizontal translation of $F_n$,
\begin{align*}
\|\mu^\mp_n \big((\cP_{y})_{y\in F_n} \in \cdot\big) - \mu^\mp_m \big((\cP_y)_{y\in F_m} \in \cdot \big) \|_{\tv} \leq C e^{-(d(F_n, \partial \Lambda_n) \vee d(F_m,\partial \Lambda_m))/C}\,.
\end{align*}
\end{corollary}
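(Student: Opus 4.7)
The plan is to reduce the statement for pillars to the analogous statement for groups of walls given in Proposition~\ref{prop:dependence-on-volume}, by exploiting the fact that a pillar $\cP_y$ is determined \emph{locally} by the wall structure near $y$. By Observation~\ref{obs:pillar-nested-sequence-of-walls}, the pillar $\cP_y$ is a deterministic function of the nested sequence $\fW_y$ together with all walls nested in some wall of $\fW_y$. In particular, if we fix a scale $r := \tfrac12 (d(F_n,\partial\Lambda_n)\vee d(F_m,\partial\Lambda_m))$, then on the event $\cG_r$ that every wall intersecting $\rho(\cP_y)$ for some $y\in F_n$ has diameter less than $r$, the tuple $(\cP_y)_{y\in F_n}$ is a measurable function only of the collection of walls whose projections lie in the $r$-neighborhood $B_r(F_n)$ of $F_n$ in $\cL_0$, i.e., of $(\sF_y)_{y\in F_n'}$ for some $F_n'\subset \cL_{0,n}$ with $d(F_n',\partial \Lambda_n)\geq r$ (and symmetrically on the $F_m$ side, where $F_m'$ is the corresponding horizontal translate of $F_n'$).

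The first step is to bound $\mu_n^\mp(\cG_r^c)$. A wall $W$ that intersects $\rho(\cP_y)$ must contain $y$ in its closure (in the sense that a face of $\cP_y$ is interior to $W$, by Observation~\ref{obs:pillar-nested-sequence-of-walls}), so by Proposition~\ref{prop:dobrushin-exp-tail} a union bound over $y\in F_n$ yields
\[ \mu_n^\mp(\cG_r^c) \leq |F_n|\cdot 2 e^{-(\beta-C)r/2} \leq C' e^{-r/C'}\,,\]
since $|F_n|\leq (2n+1)^2$ is absorbed into the exponent for $r$ on the order of $d(F_n,\partial\Lambda_n)\gg \log n$ and for $\beta$ large enough; the same bound holds under $\mu_m^\mp$.

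Next, define $\Phi: (\sF_y)_{y\in F_n'}\mapsto (\cP_y)_{y\in F_n}$ as the map that, on $\cG_r$, reconstructs the pillars from the relevant groups of walls (setting $\Phi \equiv \star$, an arbitrary symbol, off $\cG_r$). For any event $A$ in pillar space,
\begin{align*}
|\mu_n^\mp((\cP_y)_{y\in F_n}\in A) - \mu_m^\mp((\cP_y)_{y\in F_m}\in A)|
&\leq 2\max\{\mu_n^\mp(\cG_r^c), \mu_m^\mp(\cG_r^c)\} \\
&\quad + |\mu_n^\mp(\Phi((\sF_y)_{y\in F_n'})\in A) - \mu_m^\mp(\Phi((\sF_y)_{y\in F_m'})\in A)|\,.
\end{align*}
Since $\Phi$ is the same deterministic map in both boxes (after translating $F_m'$ to coincide with $F_n'$) and depends only on $(\sF_y)_{y\in F_n'}$ (resp.\ $(\sF_y)_{y\in F_m'}$), the data-processing inequality bounds the second term by the TV distance $\|\mu_n^\mp((\sF_y)_{y\in F_n'}\in\cdot)-\mu_m^\mp((\sF_y)_{y\in F_m'}\in\cdot)\|_\tv$, which Proposition~\ref{prop:dependence-on-volume} bounds by $Ce^{-d(F_n',\partial\Lambda_n)/C}\leq Ce^{-r/C}$. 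Combining the three estimates, taking supremum over $A$, and adjusting constants yields the claim.

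The main subtle point in carrying this out is step one: one must check that the map $\Phi$ is genuinely measurable with respect to the restricted wall data on both sides, which uses that on $\cG_r$ no wall nested in an element of $\fW_y$ can stretch outside $B_r(F_n)$ (since any such wall is itself contained in some wall of $\fW_y$ of diameter $<r$); this, together with the translation-invariance of $\Phi$, is what allows a single map to be used against both $\mu_n^\mp$ and $\mu_m^\mp$ and lets the TV estimate of Proposition~\ref{prop:dependence-on-volume} transfer without loss.
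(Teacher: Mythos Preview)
Your overall strategy---reducing to Proposition~\ref{prop:dependence-on-volume} by using that pillars are determined by the local nested wall structure (Observation~\ref{obs:pillar-nested-sequence-of-walls})---is exactly the approach the paper refers to from~\cite{GL19a}, and the skeleton of the argument is correct. Two technical points, however, do not go through as written.

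First, the union bound $\mu_n^\mp(\cG_r^c)\le|F_n|\cdot 2e^{-(\beta-C)r/2}$ does not in general yield $C'e^{-r/C'}$: you justify absorbing $|F_n|\le(2n+1)^2$ by asserting ``$r$ on the order of $d(F_n,\partial\Lambda_n)\gg\log n$,'' but no such hypothesis is present. One can have $|F_n|\asymp n^2$ while $r$ stays bounded, in which case the union bound exceeds~$1$ whereas the target $C'e^{-r/C'}$ is a fixed constant below~$1$. In every use made of the corollary in this paper (singletons, or the boxes $\cB_i^-$ with $r\ge\log^2 n$) and in the~\cite{GL19a} formulation the paper cites ($F$ a ball of radius~$r$, so $|F|=O(r^2)$) the union bound does suffice, so this is a gap only in the stated generality.

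Second, with $r=\tfrac12(d(F_n,\partial\Lambda_n)\vee d(F_m,\partial\Lambda_m))$ as you take it, the enlarged set $B_r(F_m)$ need not lie in $\cL_{0,m}$, so Proposition~\ref{prop:dependence-on-volume} gives nothing on the $m$-side. This reflects that the~$\vee$ in the stated inequality should read~$\wedge$: applying the corollary as in~\eqref{eq:Ehx-tilde-alpha-h} with $F_m=\{o\}$ and $m\to\infty$ would, under~$\vee$, force the right-hand side to~$0$; more directly, with $m=n$ and $F_n,F_m$ singletons at distances $5$ and~$K$ from $\partial\Lambda_n$, letting $K\to\infty$ shows the $\vee$-bound cannot hold. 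With~$\wedge$ in place your argument is correct modulo the first point.
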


\begin{corollary}[see {\cite[Corollary~6.6]{GL19a}}]\label{cor:pillar-correlations}
There exist $\beta_0>0$ and some $C>0$ such that for every $\beta>\beta_0$, every $n$ and every two subsets $F_1, F_2 \subset \cL_{0,n}$, 
\begin{align*}
\|\mu^\mp_n ((\cP_y)_{y\in F_1}\in \cdot, (\cP_y)_{y\in F_2} \in \cdot) - \mu^\mp_n ((\cP_y)_{y\in F_1} \in \cdot)\mu^\mp_n ((\cP_y)_{y\in F_2}\in \cdot) \|_\tv \leq C e^{- d(F_1,F_2)/C}\,.
\end{align*}
\end{corollary}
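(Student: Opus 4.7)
The strategy is to deduce the pillar decorrelation from Proposition~\ref{prop:group-of-wall-correlations} via the fact that each pillar $\cP_y$ is a local functional of the interface, depending only on groups of walls in a small neighborhood of $y$. Setting $r=d(F_1,F_2)$, I would define the enlargements $\tilde F_i := \{y'\in \cL_{0,n}\,:\, d(y',F_i)\leq r/3\}$, so that $d(\tilde F_1,\tilde F_2)\geq r/3$. The plan is to produce deterministic maps $g_i$ such that, on a high-probability event, $(\cP_y)_{y\in F_i}=g_i\big((\sF_{y'})_{y'\in \tilde F_i}\big)$, and then invoke Proposition~\ref{prop:group-of-wall-correlations} to decorrelate the right-hand sides.

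To construct $g_i$, I would use Observation~\ref{obs:pillar-nested-sequence-of-walls}: the faces of $\cP_y$ lie in walls of $\fW_y$ or in walls nested therein, so $\cP_y$ is determined by the collection of walls whose projection is contained in $\rho(\fW_y)$. Let $G_i$ be the event that every wall of $\cI$ with projection meeting $F_i$ has projection entirely inside $\tilde F_i$ (equivalently, no such wall has diameter exceeding $r/3$). On $G_i$, for each $y\in F_i$ the walls of $\fW_y$ are recorded in $\sF_y$ with $y\in F_i\subset\tilde F_i$, and every wall nested inside them has projection contained in $\tilde F_i$ and is therefore captured by $\sF_{y'}$ for some interior face $y'\in \tilde F_i$. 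Hence $(\cP_y)_{y\in F_i}$ is a deterministic function $g_i$ of $(\sF_{y'})_{y'\in\tilde F_i}$ on the event $G_i$.

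To bound $\mu_n^\mp(G_i^c)$, I would appeal to Proposition~\ref{prop:dobrushin-exp-tail}: for each $y\in F_i$, $\mu_n^\mp(\fm(\fF_y)\geq r/3)\leq e^{-(\beta-C)r/3}$, and a union bound over $y\in F_1\cup F_2$ yields $\mu_n^\mp(G_1^c\cup G_2^c)\leq (2n+1)^2\,e^{-(\beta-C)r/3}$. Since the total variation distance is automatically at most one, the corollary is vacuous (up to enlarging $C$) whenever $r$ is below a small constant multiple of $\log n$; for $r$ above that threshold and $\beta$ large, the Peierls exponential absorbs the polynomial factor $(2n+1)^2$, giving $\mu_n^\mp(G_1^c\cup G_2^c)\leq Ce^{-r/C}$ for a possibly larger $C$.

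The conclusion follows from the triangle inequality for total variation. The TV distance in the corollary is bounded by (i) the TV distance between the joint law of $(\cP_y)_{y\in F_1\cup F_2}$ and that of $(g_1,g_2)$, which is at most $\mu_n^\mp(G_1^c\cup G_2^c)$ since these random variables agree on $G_1\cap G_2$; (ii) the analogous bound for the product of marginals; and (iii) the TV distance between the joint law of $(g_1,g_2)$ and the product of its marginals, which by Proposition~\ref{prop:group-of-wall-correlations} applied to $\tilde F_1,\tilde F_2$ is at most $Ce^{-d(\tilde F_1,\tilde F_2)/C}\leq Ce^{-r/(3C)}$. All three terms decay as $Ce^{-r/C}$ after adjusting the constant. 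The principal subtlety is that the union bound introduces a polynomial factor $(2n+1)^2$ while the corollary asks for decay uniform in $F_1,F_2$; this is resolved only because $\beta$ can be taken large enough that the Peierls-type exponential rate dominates the polynomial.
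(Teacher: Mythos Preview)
Your overall strategy is the right one and matches the paper's intended approach: localize each pillar collection to an enlarged neighborhood, appeal to Proposition~\ref{prop:group-of-wall-correlations} on the enlargements, and control the error by the probability of the bad event that some nesting wall is too large. The paper does not give a self-contained proof here; it points to the short argument in the prequel~\cite[Corollary~6.6]{GL19a}, which carries out exactly this reduction in the special case where $F_1,F_2$ are balls of a prescribed radius around two fixed faces.

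There is, however, a genuine gap in your treatment of the union bound. You correctly flag that the bound $\mu_n^\mp(G_1^c\cup G_2^c)\leq (2n+1)^2 e^{-(\beta-C)r/3}$ carries a polynomial prefactor in $n$ that must be absorbed, but your resolution is incorrect. You claim the corollary is ``vacuous (up to enlarging $C$)'' whenever $r$ is below a small constant multiple of $\log n$. This is false: the target bound $Ce^{-r/C}$ is only $\geq 1$ (hence vacuous) for $r\leq C\log C$, a constant independent of $n$. For $r$ in the intermediate range, say $r\asymp\sqrt{\log n}$, the target $Ce^{-r/C}$ is $o(1)$ while your union bound is $\gg 1$, and the argument gives nothing. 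The constant $C$ in the statement is fixed once and for all (it does not depend on $n$ or $\beta$), so you cannot enlarge it with $n$.

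This gap is specific to the generality claimed in the statement (arbitrary $F_1,F_2$). In the prequel's setting of balls, one has $|F_i|$ polynomial in $r=d(F_1,F_2)$, and then the union bound $\mathrm{poly}(r)\cdot e^{-(\beta-C)r/3}$ is indeed $\leq C'e^{-r/C'}$. Likewise, in every application of this corollary in the present paper (notably Proposition~\ref{prop:multiscale-linear}), the sets $F_i$ are at pairwise distance at least $\log^2 n$, which comfortably absorbs the $(2n+1)^2$ factor. So your argument is adequate for what the paper actually uses; it just does not establish the corollary in the full uniformity stated.
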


\begin{proof}[\textbf{\emph{Proof of Proposition~\ref{prop:submult}}}]
Recall that $h=h_1+h_2$, and suppose without loss of generality that $h_1\geq h_2$.
Define the vertical shift of the event $A^x_h$, for a given vertex $x$, as
\begin{align*}
\theta_{h_1} A_{h_2}^x = \left\{ \sigma: x+(0,0,h_1+\tfrac 12)
\xleftrightarrow[\Z^2 \times \llb h_1, h_1+h_2\rrb]{+} \cL_{h_1 + h_2- \frac 12} \right\}\,,
\end{align*}
noting that $\mu_{n}^{\mp} (\theta_{h_1} A^x_{h_2})= \mu_{n}^{\mp, -h_1}(A^x_{h_2})$, where we denote by $(\mp,-m)$ boundary conditions are those that are plus on $\partial \Lambda \cap \cL_{<-m}$ and minus on $\partial \Lambda\cap \cL_{>-m}$. Hence, for every $n$, by monotonicity in boundary conditions, 
\begin{align*}
\mu_{n}^{\mp, -h_1}(A^x_{h_2}) \leq \mu^\mp_{n} (A^x_{h_2})\,.
\end{align*}
A naive approach to establishing sub-multiplicativity would be to expose the plus $*$-component of $x+(0,0,\frac12)$ in the slab $\Z^2\times[0,h_1]$, wherein the measurable event $A_{h_1}^x$ guarantees that $x+(0,0,\frac12)$ is $*$-connected to $\cL_{h_1-\frac12}$, and as the tip of the $*$-component at $\cL_{h_1-\frac12}$ is now situated in the minus phase, the conditional probability of $A_{h}^x$ should be at most that of the unconditional $A_{h_2}^x$. However, revealing the plus $*$-component introduces some positive information (the connection event $A_{h_1}^x$ is increasing) along with negative information (e.g., minus spins along its boundary). We will control this using our new estimates on the shape of the pillar.

Denote by $\cA$ the $*$-connected plus component of $x+(0,0,\frac 12)$ in $\llb -n_{h},n_{h} \rrb^2 \times \llb 0,h_1\rrb$ (noting that the event~$A_{h_1}^x$ merely says that $\cA$ intersects the slab $\cL_{h_1-\frac12}$). An important fact which we will use later on is that, on the event that $\cP_x \neq \emptyset$, this plus $*$-component $\cA$ is a subset of the plus sites in $\cP_x$. 

\begin{definition}\label{def:sA}
Let $\sA$ denote the set of possible realizations of $\cA$ that satisfy the following properties:
\begin{enumerate}
\item The intersection of $\cA$ with $\cL_{\frac 12}$ is the single cell whose lower bounding face is $x$.
\item The intersection of $\cA$ with $\cL_{h_1-\frac 12}$ is a single cell; denote its upper bounding face by $Y\in\cL_{h_1}$;
\item We further have $d(x,\rho(Y)) \leq d(x,\partial\Lambda_n)/2$.  
\end{enumerate}
Let $\Gamma$ denote the (neither increasing nor decreasing) event $\{\cA\in\sA\}$, noting that $\Gamma\subset A_{h_1}^x$.
\end{definition}

\begin{claim}\label{claim:submult-1}
In the setting of Proposition~\ref{prop:submult}, there exists a sequence $\epsilon_\beta$ going to zero as $\beta \to\infty$ such that
\begin{align*}
\mu_n^\mp(A^x_h,\, \Gamma) \leq  (1+\epsilon_\beta) \mu^\mp_n(A_{h_1}^{x_1}) \mu^\mp_n (A_{h_2}^{x_2})\,.
\end{align*}
\end{claim}

\begin{claim}\label{claim:submult-2}
In the setting of Proposition~\ref{prop:submult}, there exists a sequence $\epsilon_\beta$ going to zero as $\beta \to\infty$ such that
\begin{align*}
\mu^\mp_n (A^x_h) \leq (1+\epsilon_\beta)\mu^\mp_n(A^x_h,\, \Gamma) \,.
\end{align*}
\end{claim}

\begin{proof}[\emph{\textbf{Proof of Claim~\ref{claim:submult-1}}}] 
Set 
\[r = \min\{d(x_1,\partial\Lambda_n)\,,\,d(x_2,\partial\Lambda_n)\,,\,d(x,\partial\Lambda_n)\}\,,\]
and recall that $r \gg h$ by assumption; thus, Corollary~\ref{cor:pillar-dependence-on-volume} (for $m=n$) implies that for each $i=1,2$,
\[ \|\mu^\mp_n(\cP_x\in\cdot) - \mu^\mp_n(\cP_{x_i}\in\cdot)\|_{\tv} \leq C \exp(-r/C)
\]
for some $C$ independent of $\beta$. 
By relating the events $A_{h_i}^x$ and $E_{h_i}^x$ via~\eqref{eq:equality-of-events}, we then obtain that
\begin{align} \mu^\mp_n(A^x_{h_i}) \leq (1+\epsilon_\beta)\mu^\mp_n(E_{h_i}^x) &\leq  (1+\epsilon_\beta)\mu^\mp_n(E^{x_i}_{h_i}) + C\exp(-r/C) \nonumber\\
&\leq (1+\epsilon_\beta)^2\mu^\mp_n(A_{h_i}^{x_i}) + C\exp(-r/C) \nonumber\\
&\leq (1+o(1))(1+\epsilon_\beta)^2 \mu^\mp_n\!(A_{h_i}^{x_i})\label{eq:mu-n_h-mu-n_h_i}
\end{align}
using $\mu^\mp_n(A_{h_i}^{x_i}) \geq \exp(-(4\beta+ e^{-4\beta}) h_i)$ and $r \gg h_i$ by our hypothesis.
Thus, the claim will follow once we show that, for some other sequence $\epsilon_\beta>0$ that vanishes as $\beta\to\infty$,
\begin{align}\label{eq:claim-submult-1-reduction1}
\mu^\mp_n (A^x_h,\,\Gamma)\leq (1+\epsilon_\beta) \mu^\mp_n(A_{h_1}^x)\mu^\mp_n(A_{h_2}^x)\,.
\end{align}
Recall that $\Gamma\subset A_{h_1}^{x}$, whence
\[ \mu^\mp_{n}(A^{x}_{h},\, \Gamma)  = \mu^\mp_n (\Gamma) \mu^\mp_{n}\big(A_{h}^{x}\mid \Gamma) 
\leq \mu^\mp_n (A^{x}_{h_1}) \mu^\mp_{n}\big(A_{h}^{x}\mid \Gamma)\,,
\]
so in order to establish~\eqref{eq:claim-submult-1-reduction1} it remains to show that
\begin{equation}\label{eq:claim-submult-1-reduction2} \mu_{n}^\mp\big(A_{h}^{x}\mid \Gamma) \leq (1+\epsilon_\beta) \mu_n^\mp(A_{h_2}^{x})\,.
\end{equation}
Denoting by $\E_\Gamma$ expectation w.r.t.\ $\mu_{n}^\mp(\cdot\mid \Gamma)$, whereby $\cA$ accepts values in $\sA$, we have that
\begin{align*}
    \mu_{n}^\mp(A^{x}_h\mid \Gamma) &= \E_\Gamma\left[ \mu_{n}^\mp (A_h^{x}\mid \cA) \right] = \E_\Gamma\left[ \mu_{n}^\mp(\theta_{h_1}A_{h_2}^{Y}\mid\cA)\right]\,,
\end{align*}
since $\Gamma$ stipulates that $Y-(0,0,\frac12)$ is the unique cell in the intersection of $\cA$ with $\cL_{h_1-\frac12}$, whence $Y+(0,0,\frac12)$ must  then be $*$-connected to $\cL_{h-\frac12}$ in $\cC(\Z^2\times \llb h_1,h\rrb)$ in order for $A_{h}^{x}$ to occur. 

Reveal $\cA\in\sA$, the $*$-connected plus component of $x+(0,0,\frac12)$, only in the slab $\cC(\llb -n,n\rrb^2 \times \llb 0,h_1\rrb)$. A key observation now is that the boundary spins of the $*$-component revealed in this manner are plus at $x+ (0,0,\frac12)$ and $Y-(0,0,\frac12)$, and minus on all sites in $\Lambda_n \setminus \cA$ that are $*$-adjacent to $\cA$. Indeed, the boundary conditions on $\partial \Lambda_n$ are all minus between heights $[0,h_1]$, and our definition of $\sA$ forces every $*$-adjacent spin in $\cL_{>0}\cap \cL_{<h_1}$ to be minus except at $x+ (0,0,\frac12)$ and $Y-(0,0,\frac12)$ (those are plus as per $\sA$). Denoting by $\partial\cA$ these boundary spins, and by $(\mp,\partial\cA)$ their addition to our Dobrushin boundary conditions,
the domain Markov property implies that
\[ \E_\Gamma\left[ \mu_{n}^\mp(\theta_{h_1}A_{h_2}^{Y}\mid\cA)\right] = \E_\Gamma\left[\mu_{n}^{\mp,\partial\cA}(\theta_{h_1}A_{h_2}^Y)\right] \leq \sup_{\cA\in\sA} \mu_{n}^{\mp,\partial\cA}(\theta_{h_1}A_{h_2}^Y) \,.\]
For a fixed $\cA\in\sA$ (hence fixed $Y$), in view of the above fact that $\partial\cA$ includes plus spins only at $x+(0,0,\frac12)$ and $Y-(0,0,\frac12)$, 
the FKG inequality w.r.t.\ the Ising measure conditioned on $\sigma_{x+(0,0,\frac12)}=\sigma_{Y-(0,0,\frac12)}=+1$ enables us to omit the conditioning on its minus spins and obtain that
\begin{align*} \sup_{\cA\in\sA} \mu_{n}^{\mp,\partial\cA}(\theta_{h_1}A_h^Y) &\leq  \sup_{\cA\in\sA} \mu_{n}^\mp\left(\theta_{h_1}A_h^Y \mid \sigma_{x+(0,0,\frac12)}=\sigma_{Y-(0,0,\frac12)}=+1\right) \\ 
&\leq 
 \sup_{\cA\in\sA}
\mu_{n}^{\mp, -h_1}\left(A^{\rho(Y)}_{h_2}\mid \sigma_{x+(0,0,-h_1+\frac12)}=\sigma_{\rho(Y)-(0,0,\frac12)}=+1\right)
\end{align*}
by translation. Another application of FKG---now for monotonicity in boundary conditions---allows us to move from $\mu_{n}^{\mp,-h_1}$ to $\mu_{n}^\mp$, and conclude that the last expression is at most
\begin{align*} \sup_{\cA\in\sA}
\mu_{n}^{\mp}\left(A^{\rho(Y)}_{h_2}\mid \sigma_{x+(0,0,-h_1+\frac12)}=\sigma_{\rho(Y)-(0,0,\frac12)}=+1\right) \leq 
(1+\epsilon_\beta)^2 \sup_{\cA\in\sA} 
\mu_{n}^{\mp}\left(A^{\rho(Y)}_{h_2}\right)
\,,
 \end{align*}
where the last inequality is justified as follows. For a fixed face $u\in\cL_{0,n}$ (here we would take $u=\rho(Y)$ for a worst-case realization of $\cA$), if we denote $B_x=\{\sigma_{x+(0,0,-h_1+\frac12)}=+1\}$ and $B_u=\{\sigma_{u-(0,0,\frac12)}=+1\}$, then $\mu_{n}^\mp(A_{h_2}^u \mid B_x\,,B_u) \leq \mu_{n}^\mp(A_{h_2}^u) / \mu_{n}^\mp(B_x\,, B_u)$; now, $\mu^\mp_{n}(B_x\,,B_u) \geq \mu_{n}^\mp(B_x)\mu_{n}^\mp(B_u)$ by FKG, thus it remains to show that each of the events $B_x$ and $B_u$ has probability at least $1-\epsilon_\beta$ under $\mu_{n}^\mp$. By the results of Dobrushin (see, e.g., Proposition~\ref{prop:dobrushin-exp-tail}), if $z=(z_1,z_2,z_3)$ is a fixed point $z_3<0$, then $(z_1,z_2,0)$ has no walls of the interface $\cI$ nesting it except with probability $\epsilon_\beta$. In particular, $z\in\sigma(\cI)$, whence a Peierls argument shows that with probability $1-\epsilon_\beta$ its spin is plus. 

Finally, for each $\cA\in\sA$,
deterministically $d(x,\rho(Y))\leq d(x,\partial\Lambda_n)/2$, so by the triangle inequality
\[ d(\rho(Y),\partial\Lambda_n) \geq d(x,\partial\Lambda_n)-d(x,\rho(Y)) \geq  d(x,\partial\Lambda_n)/2 \gg h
\]
by assumption. Thus, the same argument used to compare $x$ to $x_i$ in~\eqref{eq:mu-n_h-mu-n_h_i} shows that
\[ \mu_n^\mp(A_{h_2}^{\rho(Y)}) \leq (1+\epsilon_\beta)  \mu_n^\mp(A_{h_2}^x)\,,\]
establishing~\eqref{eq:claim-submult-1-reduction2} and thus concluding the proof.
\end{proof}

\begin{proof}[\emph{\textbf{Proof of Claim~\ref{claim:submult-2}}}] 
Writing 
\[  \mu_n^\mp(\Gamma\,,A_h^x\mid E_h^x) \leq \frac{\mu_n^\mp(\Gamma,\, A_h^x)}{\mu_n^\mp(E_h^x)} = \frac{\mu_n^\mp(A_h^x)}{\mu_n^\mp(E_h^x)} \mu_n^\mp(\Gamma \mid A_h^x) \leq (1+\epsilon_\beta) \mu_n^\mp(\Gamma \mid A_h^x)\,,\]
with the last inequality by~\eqref{eq:equality-of-events},
it remains to show  $\mu_n^\mp(\Gamma,\,A_h^x\mid E_h^x)\geq 1-\epsilon'_\beta$ for some other sequence~$\epsilon'_\beta>0$ vanishing as $\beta\to\infty$, which will altogether imply that $\mu_n^\mp(\Gamma,\, A_h^x) / \mu_n^\mp(A_h^x)\geq (1-\epsilon'_\beta)/(1+\epsilon_\beta)$, as required.
Using that $\mu_n^\mp((A_h^x)^c\mid E_h^x) \leq \epsilon_\beta$ by Claim~\ref{clm:equality-of-events-cond}, we have
\[\mu_n^\mp\left(\Gamma^c \cup (A_h^x)^c \mid E_h^x\right) \leq
\mu_n^\mp\left((A_h^x)^c \mid E_h^x\right) + \mu_n^\mp\left(\Gamma^c\,, A_h^x \mid E_h^x\right) \leq \epsilon_\beta + \mu_n^\mp\left(\Gamma^c\,, A_h^x \mid E_h^x\right)\,,
\]
and it remains to bound the last term in the right-hand by $\epsilon_\beta$. Examining the criteria for $\Gamma$ in Definition~\ref{def:sA}, observe that $A_h^x $ implies (through the inclusion $A_{h_1}^x \supset A_h^x$) that $\cA$ intersects both $\cL_{\frac12}$ and $\cL_{h_1-\frac12}$. Hence \[ \Gamma^c\cap A_h^x = (B_1 \cup B_2 \cup B_3)\cap A_h^x \qquad\mbox{where}\qquad 
\begin{cases}
B_1 = \left\{ |\cC(\cA \cap \cL_{\frac12})|\geq 2\right\}\,, \\
\noalign{\smallskip}
B_2 = \left\{ |\cC(\cA \cap \cL_{h_1-\frac12})|\geq 2\right\}\,, \\
\noalign{\smallskip}
B_3 = \Big\{ d(x,\rho(Y)) > \frac12 d(x,\partial\Lambda_n)\Big\}\,.
\end{cases}
\]
Since $E_h^x = \bI_{x,0,h}$, the aforementioned fact that $\cA$ is a subset of the plus spins in $\cP_x$ implies that
\[ \mu_n^\mp(B_1 \mid E_h^x) \leq \mu_n^\mp ( \sB_x \neq \emptyset \mid \bI_{x,0,h})\qquad\mbox{and}\qquad
\mu_n^\mp(B_2 \mid E_h^x) \leq \mu_n^\mp ( |\cC(\cP_x\cap \cL_{h_1-\frac12})|\geq 2 \mid \bI_{x,0,h})\,.\]
Theorem~\ref{thm:exp-tail-base} and Proposition~\ref{prop:exp-tail-all-increments} respectively show (using the hypothesis $h\ll d(x,\partial\Lambda_n)$) that these two probabilities are at most $\exp(-(\beta-C))$.

Finally, the event $ B_3$ implies that $d(x,\rho(y))>\frac12 d(x,\partial\Lambda_n)$ for some face $y\in\cP_x$. By~\eqref{eq:geometric-observation-tameness}, this implies $\diam(\sB_x)+ \frac14\fm(\cS_x) > \frac12 d(x,\partial\Lambda_n)$, whence by Proposition~\ref{prop:tameness},
\[ \mu_n^\mp( B_3 \mid \bI_{x,0,h}) \leq \mu_n^\mp(\diam(\sB_x)+ \tfrac14\fm(\cS_x) > \tfrac12 d(x,\partial\Lambda_n) \mid \bI_{x,0,h}) \leq C e^{-(\beta-C)d(x,\partial\Lambda_n)}=o(1)\,.\] 
Combined, we have that $\mu_n^\mp(\Gamma^c \cap A_h^x) \leq 2\exp(-(\beta-C))+o(1)$, as needed.
\end{proof}
Combining Claims~\ref{claim:submult-1}--\ref{claim:submult-2} concludes the proof. 
\end{proof}

\section{Tightness and exponential tails of the maximum}\label{sec:tightness}
In this section we prove left and right exponential tails for $M_n- m_n^\star$, as stated in the next proposition.  
\begin{proposition}\label{prop:exp-tails}
There exist $\beta_0>0$ and a sequence $\epsilon_\beta>0$ vanishing as $\beta\to\infty$ such that the following holds for all $\beta>\beta_0$.
Letting $\widetilde\alpha_h$ be as in~\eqref{eq:tilde-alpha-def} and $ m_n^\star$ be as in~\eqref{eq:mn-def}, for every $1\leq \ell \leq \sqrt{\log n}$,
\begin{align*}
\mu_n^\mp (M_n &\geq  m_n^\star +\ell) \leq  (1+\epsilon_\beta) \exp\left(2\beta- \widetilde\alpha_\ell\right)\,,\\
\mu_n^\mp (M_n &< m_n^\star - \ell) \leq (1+\epsilon_\beta) \exp\left(2\beta-\widetilde\alpha_\ell\right)\,. 
\end{align*}
\end{proposition}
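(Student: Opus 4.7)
The right tail follows from a union bound combined with super-additivity. By Corollary~\ref{cor:pillar-dependence-on-volume} together with~\eqref{eq:equality-of-events}, one has $\mu_n^\mp(E_h^x)\leq (1+\epsilon_\beta)e^{-\widetilde\alpha_h}$ uniformly over $x$ at distance much larger than $h$ from $\partial\Lambda_n$, while the boundary sites contribute an $o(1)$ fraction of the total. Applying Corollary~\ref{cor:alpha-super-additive} in the form $\widetilde\alpha_{m_n^\star+\ell}\geq \widetilde\alpha_{m_n^\star}+\widetilde\alpha_\ell-\epsilon_\beta$, along with the definition of $m_n^\star$ and~\eqref{eq:tilde-alpha-def} yielding $\widetilde\alpha_{m_n^\star}\geq 2\log(2n)-2\beta-\epsilon_\beta$, gives
\[
\mu_n^\mp(M_n \geq m_n^\star+\ell) \leq \sum_{x\in\cL_{0,n}}\mu_n^\mp(E_{m_n^\star+\ell}^x) \leq (1+\epsilon_\beta)\, e^{2\beta-\widetilde\alpha_\ell}\,.
\]

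For the left tail, the plan is to apply the Paley--Zygmund inequality to the counting variable $N:=\sum_{x\in\cL_{0,n}^\ominus}\one\{E_h^x\}$ at height $h=m_n^\star-\ell$, where $\cL_{0,n}^\ominus$ consists of sites at distance at least $\Delta_n\gg h$ from $\partial\Lambda_n$ (so $|\cL_{0,n}^\ominus|=(1-o(1))(2n)^2$). For the first moment, the bulk estimate $\mu_n^\mp(E_h^x)\geq (1-\epsilon_\beta)e^{-\widetilde\alpha_h}$ combined with the super-additive bound $\widetilde\alpha_h\leq \widetilde\alpha_{m_n^\star}-\widetilde\alpha_\ell+\epsilon_\beta$, the definition of $m_n^\star$ (so that $\alpha_{m_n^\star-1}\leq 2\log(2n)-2\beta$), and the one-step growth bound $\alpha_{m_n^\star}-\alpha_{m_n^\star-1}\leq 4\beta+e^{-4\beta}$ from Corollary~\ref{cor:alpha-super-additive}, yields
\[
\E[N]\geq (1-\epsilon_\beta)\,e^{\widetilde\alpha_\ell-2\beta}\,.
\]
Since $\mu_n^\mp(M_n<h)\leq\mu_n^\mp(N=0)\leq \var(N)/\E[N]^2$, it suffices to prove $\var(N)\leq (1+\epsilon_\beta)\E[N]$, equivalently
\[
\sum_{x\neq y\in\cL_{0,n}^\ominus}\bigl[\mu_n^\mp(E_h^x\cap E_h^y)-\mu_n^\mp(E_h^x)\mu_n^\mp(E_h^y)\bigr]\leq \epsilon_\beta\,\E[N]\,.
\]

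I would split the joint event according to whether $\cP_x=\cP_y$ (forcing $y\in\sB_x$) or the two pillars are distinct. In the first case, Theorem~\ref{thm:exp-tail-base} yields $\mu_n^\mp(E_h^x,\,y\in\sB_x)\leq Ce^{-(\beta-C)|x-y|}\mu_n^\mp(E_h^x)$; summing over $y$ for each $x$ gives $O((\beta-C)^{-2})$, so the total contribution is at most $\epsilon_\beta\E[N]$ for $\beta$ large. In the second case, using the sharp $O(1)$ control on the pillar's horizontal extent obtained by combining Theorem~\ref{thm:exp-tail-base} and Proposition~\ref{prop:exp-tail-all-increments}, one reveals a thin tube containing $\cP_x$, and applies the domain Markov property, FKG monotonicity, and exponential decay of correlations (Propositions~\ref{prop:dependence-on-volume}--\ref{prop:group-of-wall-correlations} together with a variant of the cluster-expansion argument from Proposition~\ref{prop:submult}) to derive a multiplicative decorrelation bound
\[
\mu_n^\mp(E_h^x\cap E_h^y,\,\cP_x\neq\cP_y)\leq (1+Ce^{-|x-y|/C})\mu_n^\mp(E_h^x)\mu_n^\mp(E_h^y)\,.
\]
Summing its excess over $\mu_n^\mp(E_h^x)\mu_n^\mp(E_h^y)$ yields $O(\E[N]^2/(2n)^2)=O(\E[N]\,e^{-\widetilde\alpha_h})$, which for $\beta$ large (recalling $\widetilde\alpha_h\geq 4\beta-C$) and for $n$ large in the stated range $\ell\leq \sqrt{\log n}$ is $o(\E[N])$, completing the Paley--Zygmund bound.

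The main obstacle is promoting the additive decorrelation of Corollary~\ref{cor:pillar-correlations} (which on its own would contribute an irreducible $O((2n)^2)$ error dwarfing $\E[N]$) to a genuinely multiplicative bound at moderate distances between the base scale $O(1)$ and the nominal decorrelation scale $\Theta(\log n)$. The new $O(1)$ control on the pillar's horizontal extent from Theorem~\ref{thm:exp-tail-base} and Proposition~\ref{prop:exp-tail-all-increments} is the essential input: outside an $O(1)$ tube around $x$, $\cP_x$ is absent with exponentially high probability, so conditioning on a thin realization of $\cP_x$ and using the decorrelation estimates on the almost-disjoint regions where $\cP_y$ must develop suffices to obtain the needed multiplicative bound.
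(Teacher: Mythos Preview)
Your right–tail argument and the overall Paley--Zygmund strategy for the left tail match the paper. The gap is in the second–moment bound, specifically your claimed multiplicative decorrelation
\[
\mu_n^\mp(E_h^x\cap E_h^y,\,\cP_x\neq\cP_y)\leq (1+Ce^{-|x-y|/C})\,\mu_n^\mp(E_h^x)\mu_n^\mp(E_h^y)\,.
\]
This is stronger than anything the available tools deliver, and your ``thin tube'' sketch does not establish it. Revealing the plus component $\cA_x$ of $x+(0,0,\tfrac12)$ in $\cL_{>0}$ exposes, as the only \emph{plus} boundary information for the complementary region, every cell of $\cA_x\cap\cL_{1/2}$; when the base $\sB_x$ is nonempty (which is not excluded by $\cP_x\neq\cP_y$) there may be many such cells, and the FKG step then yields a factor exponential in $|\cA_x\cap\cL_{1/2}|$, not $1+o(1)$. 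Since $E_h^x$ is increasing, FKG in fact gives $\mu_n^\mp(E_h^y\mid E_h^x)\geq \mu_n^\mp(E_h^y)$, so the needed \emph{upper} bound at short range genuinely requires an extra idea. Separately, your far–range input (Corollary~\ref{cor:pillar-correlations}) is additive, not multiplicative, and does not directly upgrade.

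The paper's fix is to replace $E_h^x$ by the refined event $G_h^x:=E_h^x\cap A_h^x\cap\{\sB_x=\emptyset\}$, which by Theorem~\ref{thm:exp-tail-base} has $\mu_n^\mp(G_h^x)\geq (1-\epsilon_\beta)\mu_n^\mp(E_h^x)$. This buys two things. First, on $G_h^x$ the only plus boundary cell in the revelation of $\cA_x$ is $x+(0,0,\tfrac12)$, so FKG gives the crude but uniform bound $\mu_n^\mp(G_h^y\mid G_h^x)\leq e^{6\beta}\mu_n^\mp(G_h^y)$; summed over the $O(n^2\log^4 n)$ pairs with $|x-y|\leq \log^2 n$ this contributes $n^{2+o(1)}e^{-2\widetilde\alpha_h}=n^{-2+o(1)}=o(1)$. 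Second, on $\{\sB_x=\emptyset\}$ the spine is (by Claim~\ref{clm:cut-point-property}) a single wall $W_x$, so $G_h^x$ is $W_x$–measurable and Proposition~\ref{prop:group-of-wall-correlations} gives an \emph{additive} covariance bound $Ce^{-|x-y|/C}$ for far pairs, whose sum over $|x-y|>\log^2 n$ is $o(1)$. Thus the paper never proves your multiplicative bound; it instead obtains $\E[Z_h^2]\leq \E[Z_h]^2+\E[Z_h]+o(1)$ directly.
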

Before proving this result, we will establish some preliminary estimates. 
Recalling that $m_n^\star$ is the first~$h$ such that $\alpha_h > 2\log(2n)-2\beta$, the relation between $\alpha_h,\widetilde\alpha_h$ in~\eqref{eq:tilde-alpha-def} and the bound $\alpha_{h+1}\leq\alpha_h+4\beta+\epsilon_\beta$ by~\eqref{eq:super-additivity} together imply that for the sequence $\widetilde\alpha_h$ we have
\begin{equation}
\label{eq:widetilde-m_n}
2\log(2n) - 2\beta - \epsilon_\beta \leq \widetilde\alpha_{m_n^\star} \leq 2\log(2n) + 2\beta + \epsilon_\beta\,.
\end{equation}
Next, recall that $E_h^x=\{\hgt(\cP_x)\geq h\}$ (see~\eqref{eq:Ex-def}), so that $\{M_n\geq h\}=\bigcup_{x\in\cL_{0,n}}E_h^x$; we will separate the analysis of $E_h^x$ for $x$ near and away from $\partial\Lambda_n$ as follows. Define the interior of $\cL_{0,n}$,
\begin{align} 
\cL_{0,n}^- = \cF\big(\llb -(n - \log^2 n) ,\,n - \log^2 n \rrb^{2}\times \{0\}\big)\,,\label{eq:L_0-minus-def} 
\end{align}
and observe that, by Corollary~\ref{cor:pillar-dependence-on-volume}, for $x\in\cL_{0,n}^-$ we can couple $\mu_n^\mp(\cP_x\in\cdot)$ to $\mu_{\Z^3}^\mp(\cP_o\in\cdot)$ and find that, for some fixed $c>0$,
\begin{align}\label{eq:Ehx-tilde-alpha-h} \mu_n^\mp(E_h^x) = \mu_{\Z^3}^\mp(E_h^o) + O(e^{-c\log^2 n}) = (1+o(1))e^{-\widetilde\alpha_h}
\quad\mbox{if $1\leq h \ll \log^2 n$}\,,
\end{align}
absorbing the $O(e^{-c\log^2 n})$ as $\widetilde\alpha_h \leq \alpha h + \epsilon_\beta $ by Corollary~\ref{cor:alpha-super-additive}, so 
$\mu_n^\mp(E_h^o) = e^{-\widetilde\alpha_h}=e^{-o(\log^2 n)}$ for $h\ll\log^2 n$.

Further recalling the definition of the event $A_h^x$ from~\eqref{eq:Ax-def}, for $h\in \Z_+$ let
\begin{align}\label{eq:G-def}
G^x_h =  A^x_h \cap
E^x_h \cap \{ \sB_x = \emptyset \}\,,
\end{align}
and define the counter 
\[Z_h = \sum_{x\in {\cL}_0^-} \one\{G_h^x\}\,.
\]

\begin{claim}\label{claim:expectation-lb}
There exist $\beta_0>0$ and a sequence $\epsilon_\beta>0$ vanishing as $\beta\to\infty$ such that that for every $\beta>\beta_0$ the following hold.
If $1\leq h \ll \log^2 n$ then for every $x\in\cL_{0,n}^-$ and large enough $n$,
\begin{align}\label{eq:G-vs-E} \mu_n^\mp(G^x_h) \geq (1-\epsilon_\beta)\mu_n^\mp(E_h^x)\,.\end{align}
Consequently, if $1 \leq h= m_n^\star-\ell$ for $\ell>0$ then 
\[ \E [Z_h] \geq (1-\epsilon_\beta)e^{\widetilde\alpha_{\ell}-2\beta}\,.
\]
\end{claim}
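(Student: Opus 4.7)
The plan is to establish~\eqref{eq:G-vs-E} first, then combine it with the super-additivity of $\widetilde\alpha$ to derive the lower bound on $\E[Z_h]$.

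First I would bound the difference $E_h^x \setminus G_h^x$ by
\[ E_h^x \setminus G_h^x \;\subseteq\; \big(E_h^x \cap (A_h^x)^c\big) \;\cup\; \big(E_h^x \cap \{\sB_x \neq \emptyset\}\big)\,. \]
The first piece contributes at most $\epsilon_\beta\,\mu_n^\mp(E_h^x)$ by the second inequality in Claim~\ref{clm:equality-of-events-cond}. For the second, the event $\{\sB_x \neq \emptyset\}$ is equivalent to $\hgt(v_1) > \tfrac12$, hence to $|v_1 - (x+(0,0,\tfrac12))| \geq 1$. Since $h \ll \log^2 n$ and $d(x,\partial\Lambda_n)\geq \log^2 n$ for $x \in \cL_{0,n}^-$, Theorem~\ref{thm:exp-tail-base}(a) applies with $T = 0$, $H = h$, $r = 1$, yielding
\[ \mu_n^\mp(\sB_x \neq \emptyset \mid E_h^x) \;\leq\; C e^{-2(\beta-C)}\,,\]
which is absorbable into $\epsilon_\beta$ for $\beta$ large. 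This proves~\eqref{eq:G-vs-E} after enlarging $\epsilon_\beta$ by a constant factor.

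For the consequence on $\E[Z_h]$, I would sum~\eqref{eq:G-vs-E} over $x \in \cL_{0,n}^-$. The hypothesis $h \ll \log^2 n$ together with $d(x,\partial\Lambda_n)\geq \log^2 n$ makes~\eqref{eq:Ehx-tilde-alpha-h} applicable uniformly in $x\in\cL_{0,n}^-$, giving $\mu_n^\mp(E_h^x) = (1+o(1)) e^{-\widetilde\alpha_h}$. Combined with $|\cL_{0,n}^-| = (1+o(1))(2n)^2$, this yields
\[ \E[Z_h] \;\geq\; (1-\epsilon_\beta)(1+o(1))\,(2n)^2 e^{-\widetilde\alpha_h}\,.\]
To convert $\widetilde\alpha_h$ into $\widetilde\alpha_\ell$, I would apply Corollary~\ref{cor:alpha-super-additive} with $(h_1,h_2)=(h,\ell)$ to get $\alpha_h + \alpha_\ell - \epsilon_\beta \leq \alpha_{m_n^\star}$; using~\eqref{eq:tilde-alpha-def} to replace each $\alpha_j$ by $\widetilde\alpha_j$ and invoking the upper bound $\widetilde\alpha_{m_n^\star}\leq 2\log(2n)+2\beta+\epsilon_\beta$ from~\eqref{eq:widetilde-m_n}, this rearranges to $\widetilde\alpha_h \leq 2\log(2n) + 2\beta - \widetilde\alpha_\ell + O(\epsilon_\beta)$. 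Substituting and absorbing the additive $O(\epsilon_\beta)$ into the prefactor then gives $\E[Z_h] \geq (1-\epsilon_\beta)e^{\widetilde\alpha_\ell - 2\beta}$, as desired.

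The argument is essentially bookkeeping given the strong inputs already in place. The two indispensable ingredients are Theorem~\ref{thm:exp-tail-base}(a), whose $O(1)$-precision control of the base ensures that $\{\sB_x\neq\emptyset\}$ carries vanishing conditional mass given $E_h^x$ as $\beta\to\infty$, and the sharp super-additivity of Corollary~\ref{cor:alpha-super-additive} with only $\epsilon_\beta$ additive slack (rather than $O(\log h)$ as in the prequel), which is exactly what yields the clean factor $e^{\widetilde\alpha_\ell - 2\beta}$ with no polynomial correction in $\ell$. The only ``hard'' part is the careful tracking of the several $\epsilon_\beta$ contributions so that they collapse into a single prefactor $1-\epsilon_\beta$ at the end.
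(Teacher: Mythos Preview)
Your proof is correct and follows essentially the same approach as the paper: the same decomposition of $E_h^x\setminus G_h^x$ handled via Claim~\ref{clm:equality-of-events-cond} and Theorem~\ref{thm:exp-tail-base}(a), followed by summing~\eqref{eq:Ehx-tilde-alpha-h} over $\cL_{0,n}^-$ and invoking the super-additivity of Corollary~\ref{cor:alpha-super-additive} together with~\eqref{eq:widetilde-m_n}. One small point worth making explicit in the second part is that $h=m_n^\star-\ell\leq m_n^\star=O(\log n)\ll\log^2 n$, so the hypothesis of the first part is indeed met; the paper notes this explicitly.
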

\begin{proof}
For every $x\in\cL_{0,n}^-$
we have
\[ \mu_n^\mp\left(G^x_h \mid E_{h}^x\right) \leq \mu_n^\mp((A_h^x)^c\mid E_h^x) + \mu_n^\mp (\sB_x\neq\emptyset\mid E_h^x)
\leq \epsilon_\beta + Ce^{-2\beta}\,,\]
using that $\mu_n^\mp((A_h^x)^c\mid E_h^x)\leq\epsilon_\beta$ by Claim~\ref{clm:equality-of-events-cond} and that $\mu_n^\mp (\sB_x\neq\emptyset\mid E_h^x)\leq \epsilon_\beta$ by part~(a) of Theorem~\ref{thm:exp-tail-base}, where we took $r=1$ and used that $E_h^x=\bI_{x,0,h}$ and $h \ll \log^2 n \leq d(x,\partial\Lambda_n$)
since $x\in\cL_{0,n}^-$; this yields~\eqref{eq:G-vs-E}.

For the second part of the claim, notice that $1 \leq h <  m_n^\star  = (\frac{2}\alpha+o(1))\log n$ by Corollary~\ref{cor:alpha-super-additive}. 
By~\eqref{eq:Ehx-tilde-alpha-h} (now $h=O(\log n)$) we have $\mu_n^\mp(E_h^x) =(1+o(1))e^{-\widetilde\alpha_h}$, and the super-additivity in Corollary~\ref{cor:alpha-super-additive} shows that
\begin{align*} \widetilde\alpha_h = \widetilde\alpha_{ m_n^\star-\ell} \leq \widetilde\alpha_{ m_n^\star} - \widetilde\alpha_{\ell} + \epsilon_\beta &\leq 2\log(2n) -\widetilde\alpha_\ell +2\beta +2\epsilon_\beta \,,\end{align*}
using~\eqref{eq:widetilde-m_n} for the last inequality.
Combining these, while noting that $|\cL_{0,n}^-|=(1-o(1))|\cL_{0,n}|= (4-o(1))n^2$, we obtain that the expectation of $Z_{-\ell}$ under $\mu_n^\mp$ satisfies
\[ \E[Z_h] \geq (1-o(1))(1-\epsilon_\beta) e^{\widetilde\alpha_{\ell}-2\beta-2\epsilon_\beta} \geq (1-\epsilon'_\beta) e^{\widetilde\alpha_{\ell}-2\beta}\,,
\]
for some other sequence $\epsilon'_\beta>0$ vanishing as $\beta\to\infty$.
\end{proof}

\begin{claim}\label{claim:effect-of-x-pillar}
There exist $\beta_0>0$ and a sequence $\epsilon_\beta>0$ vanishing as $\beta\to\infty$ such that for every $\beta>\beta_0$, every $1\leq h \ll \log^2 n$ and every $x \neq y\in \cL_{0,n}^-$ such that $d(x,y)\leq \log^2 n$, if $n$ is large enough then
\[\mu_n^\mp(G^y_h \mid G^x_h) \leq (1+\epsilon_\beta)e^{ 6\beta } \mu_n^\mp(G^y_h)\,.
\] 
\end{claim}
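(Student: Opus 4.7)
My approach mirrors the revelation-and-FKG argument used in the proof of Proposition~\ref{prop:submult}, but carried out with $G$-events in place of $A$-events, and with the conditioning analyzed locally so as to yield the sharp $e^{6\beta}$ constant. The idea is that although $d(x,y)\leq\log^2n$ prevents direct use of decorrelation, the extra structure provided by $G_h^x$ (a thin-based pillar) keeps the influence of the pillar at $x$ on the event $G_h^y$ localized.

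First, I would expose the plus $*$-connected component $\cA_x$ of $x+(0,0,\tfrac12)$ in the upper half-space, writing
\[\mu_n^\mp(G_h^y\mid G_h^x)\;=\;\sum_{\cA}\mu_n^\mp(\cA_x=\cA\mid G_h^x)\,\mu_n^\mp(G_h^y\mid \cA_x=\cA,\,G_h^x)\,,\]
where the sum runs over admissible realizations of $\cA_x$. The event $G_h^x$ forces $\cA_x$ to reach height $h-\tfrac12$ and, crucially via $\sB_x=\emptyset$, to meet $\cL_{1/2}$ only at the single cell $x+(0,0,\tfrac12)$; moreover, the $*$-neighbors of $\cA_x$ in $\cL_{>0}$ carry the spin $-1$.

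For each realization $\cA_x=\cA$, the domain Markov property identifies $\mu_n^\mp(\cdot\mid \cA_x=\cA)$ with an Ising measure $\mu^\cA$ on the complement of $\cA\cup\partial^*\cA$, having boundary conditions plus on $\cA$, minus on $\partial^*\cA$, and $\mp$ on $\partial\Lambda_n$. Decomposing $G_h^y=A_h^y\cap E_h^y\cap\{\sB_y=\emptyset\}$, I would bound $\mu^\cA(G_h^y)$ by comparison with $\mu_n^\mp(G_h^y)$ via FKG. Two features conspire in our favor: (i) the minus spins on $\partial^*\cA$ suppress the increasing parts $A_h^y,E_h^y$ (monotonicity in boundary conditions) while simultaneously enhancing $\{\sB_y=\emptyset\}$; (ii) the only positive boost comes from the plus spins on $\cA$. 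Because $\sB_x=\emptyset$ restricts $\cA\cap\cL_{1/2}$ to the single cell at $x$, the boost to $A_h^y$ is mediated by at most six face-interactions with $\cA$ near $\rho(y)$, whence a single-spin Markov estimate delivers
\[\mu^\cA(G_h^y)\,\leq\, (1+\epsilon_\beta)\,e^{6\beta}\,\mu_n^\mp(G_h^y)\]
uniformly in $\cA$ (combining this with the approximate equivalence $G_h^y\approx A_h^y$ from Claim~\ref{clm:equality-of-events-cond} and Theorem~\ref{thm:exp-tail-base}(a)). Summing over $\cA$ and using $\sum_\cA\mu_n^\mp(\cA_x=\cA\mid G_h^x)=1$ yields the claim.

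The main obstacle is establishing the uniform $e^{6\beta}$ bound in Step (ii) when $\cA$'s spine passes within $O(1)$ of $\rho(y)$. For most realizations, decorrelation estimates (Corollary~\ref{cor:pillar-correlations}) and the exponential tail on $\fm(\sX_t)$ from Theorem~\ref{thm:exp-tail-base}(b) show that the interaction between $\cA$ and the pillar at $y$ is negligible, contributing at most a $(1+\epsilon_\beta)$ factor. The worst-case realizations---in which $\cA$ sits directly against $\rho(y)$---are controlled by appealing to the thin-base structure of $\cA$ at $\cL_{1/2}$ (the only height at which $\cA$ can directly interact with the base cell $y+(0,0,\tfrac12)$ of $\cP_y$), which confines the energetic cost to the six faces of a single cell and accounts for the stated $e^{6\beta}$ prefactor.
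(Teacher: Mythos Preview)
Your framework---reveal $\cA_x$, apply domain Markov, then FKG---is the same as the paper's, but you misidentify where the $e^{6\beta}$ comes from, and this leads you to invent a spurious ``main obstacle.'' The paper's argument is much shorter.

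First, do not try to control $G_h^y$ directly under $\mu^\cA$; instead bound $G_h^y\subset E_h^y$, so that you only need $\sup_{\cA}\mu_n^\mp(E_h^y\mid\cA_x=\cA)$ for the \emph{increasing} event $E_h^y$. The key geometric observation---which you state but then fail to use---is that among all revealed cells, the \emph{only} plus cell adjacent to an unrevealed cell is $x+(0,0,\tfrac12)$ itself: every other cell of $\cA_x$ sits at height $\geq\tfrac32$ and is completely surrounded within $\cL_{>0}$ by the revealed minus boundary $\partial^*\cA_x$; only $x+(0,0,\tfrac12)$ borders the unrevealed lower half-space. Thus, by FKG (dropping all minus boundary spins for the increasing event), one gets uniformly in $\cA$
\[
\mu_n^\mp(E_h^y\mid\cA_x=\cA)\;\leq\;\mu_n^\mp\big(E_h^y\,\big|\,\sigma_{x+(0,0,\frac12)}=+1\big)\;\leq\;e^{6\beta}\,\mu_n^\mp(E_h^y)\,,
\]
the last step being a single-spin finite-energy bound (one cell has six neighbors). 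So the factor $e^{6\beta}$ comes from the single plus spin at $x$, not from ``six face-interactions with $\cA$ near $\rho(y)$.'' In particular, the bound is automatically uniform in $\cA$, and nothing about the spine of $\cA$ passing near $\rho(y)$ matters; your final paragraph, with its appeal to Corollary~\ref{cor:pillar-correlations} and Theorem~\ref{thm:exp-tail-base}(b), is unnecessary. One then returns from $E_h^y$ to $G_h^y$ via $\mu_n^\mp(E_h^y)\leq(1-\epsilon_\beta)^{-1}\mu_n^\mp(G_h^y)$ from~\eqref{eq:G-vs-E}.
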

\begin{proof}
We use a a similar revealing procedure to that used in the proof of sub-multiplicativity above to reveal $\cP_x$, without obtaining too much positive information about $\cP_y$. Let $\cA_x$ be the $*$-connected plus component of $x+(0,0,\frac 12)$ in $\cL_{>0}$ and let $\cA_y$ be the $*$-connected plus component of $y+(0,0,\frac 12)$ in $\cL_{>0}$. If we reveal $\cA_x$ on the event $G_h^x$, we expose the plus $*$-component $\cA_x$ along with all $*$-adjacent (bounding) minus spins in $\cL_{>0}$. On the event $G_h^x$, whereby the first cut-point of $\cP_x$ is $x+(0,0,\frac 12)$, the exterior boundaries of $\cP_x$ and $\cA_x$ coincide, and therefore, the event $G_h^x$ is measurable with respect to the set of sites revealed in this manner. As such, we can express 
\begin{align*}
\mu_n^\mp(G_h^y \mid G_h^x) \leq \sup_{\cA_x\in G_h^x}  \mu_n^\mp (E_h^y \mid \cA_x)\,.
\end{align*}
The boundary sites revealed by $\cA_x \in G_h^x$ are all minus except a single plus site at $x+(0,0,\frac 12)$, and so by the FKG inequality and the fact that $E_h^y$ is an increasing event, this is at most 
\begin{align*}
\mu_n^\mp(E_h^y \mid \sigma_{x+(0,0,\frac 12)} = +1) \leq e^{6\beta} \mu_n^\mp (E_h^y)\,.
\end{align*}
The fact that $\mu_n^\mp (G_h^y) \geq (1-\epsilon_\beta) \mu_n^\mp (E_h^y)$ by~\eqref{eq:G-vs-E} concludes the proof.
\end{proof}

\begin{claim}\label{claim:covariance-bound}
There exists $\beta_0>0$ such that for every $\beta>\beta_0$ there is some $C>0$ such that  for every $x, y\in\cL_{0,n}^-$, we have
\begin{align*}
\left|\mu_n^\mp(G_h^x)\mu_n^\mp(G_h^y) - \mu_n^\mp(G_h^x,\,G_h^y)\right| \leq Ce^{-d(x,y)/C}\,.
\end{align*}
\end{claim}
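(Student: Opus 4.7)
The plan is to reduce the covariance bound to the pillar-decorrelation estimate of Corollary~\ref{cor:pillar-correlations}, after showing that $G_h^x$ differs from a local event only by corrections that are exponentially small in $d(x,y)$. For $d(x,y)$ bounded by an absolute constant the bound is trivial (the left-hand side is at most $1$), so I may assume $d(x,y)\geq R_0$ for a large constant $R_0$ and set $r:=d(x,y)/3$, so that the horizontal balls $B_r(x),B_r(y)\subset\cL_{0,n}$ of radius $r$ around $x$ and $y$ are disjoint.

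I will define a local approximation $G_h^{x,r}$ of $G_h^x$ by imposing, in addition to $G_h^x$, that (i) every wall $W$ of $\cI$ with $\rho(W)\cap B_r(x)\neq\emptyset$ satisfies $\rho(W)\subset B_{r/2}(x)$, and (ii) every finite plus or minus $*$-cluster of $\sigma$ intersecting the cylinder $B_{r/2}(x)\times \llb 0,h\rrb$ has diameter at most $r/4$. Under (i)--(ii), the pillar $\cP_x$ is contained in $B_{r/2}(x)\times\llb 0,h\rrb$ and the connectivity event $A_h^x$ is determined by the restriction of $\sigma$ to the box $B_r(x)\times \llb -r,h+r\rrb$, making $G_h^{x,r}$ measurable with respect to $\sigma$ on that box. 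Using Proposition~\ref{prop:dobrushin-exp-tail} (exponential tails on walls and groups of walls) together with Theorem~\ref{thm:exp-tail-base} and Proposition~\ref{prop:exp-tail-all-increments} (exponential tails on the base and on increments at each height of $\cP_x$), plus a standard low-temperature Peierls bound on finite spin clusters, a union bound over faces of $B_r(x)$ yields $\mu_n^\mp(G_h^x\triangle G_h^{x,r})\leq C e^{-r/C}$; the same holds with $x$ replaced by $y$.

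Since $G_h^{x,r}$ and $G_h^{y,r}$ are supported on disjoint boxes, the low-temperature decorrelation of local events under $\mu_n^\mp$---the same mechanism that underlies Proposition~\ref{prop:group-of-wall-correlations} and Corollary~\ref{cor:pillar-correlations}, obtained from the cluster expansion of Theorem~\ref{thm:cluster-expansion}---gives
\[
\bigl|\mu_n^\mp(G_h^{x,r}\cap G_h^{y,r})-\mu_n^\mp(G_h^{x,r})\mu_n^\mp(G_h^{y,r})\bigr| \leq Ce^{-r/C}.
\]
The claim then follows by the triangle inequality. The main obstacle is the localization step: the event $A_h^x$ is about plus-connectivity in the raw configuration $\sigma$, whereas the pillar $\cP_x$ is read off the interface-modified $\sigma(\cI)$, so the localization must simultaneously control the interface's walls (via Dobrushin-type estimates) and the finite bubble clusters that can connect spins across the boundary of $B_r(x)$. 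This is precisely where the sharp $O(1)$ pillar-structure estimates established earlier (Theorem~\ref{thm:exp-tail-base} and Proposition~\ref{prop:exp-tail-all-increments}) combine with classical Peierls bounds to yield the exponential decay rate in $r$.
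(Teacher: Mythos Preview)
Your localization route is workable in spirit but far heavier than the paper's, and the measurability step has a real gap. The paper's argument is essentially one observation: the event $G_h^x$ includes $\{\sB_x=\emptyset\}$, which forces $\cP_x=\cS_x$, and by Claim~\ref{clm:cut-point-property} the face-set of the spine is a single wall $W_x$ (the wall containing the four side-faces of $x+(0,0,\tfrac12)$). The paper then asserts $G_h^x$ is $W_x$-measurable and applies Proposition~\ref{prop:group-of-wall-correlations} directly to the pair $(W_x,W_y)$, obtaining $Ce^{-d(x,y)/C}$ with no localization, no pillar-shape estimates, and no Peierls bounds. You never invoke Claim~\ref{clm:cut-point-property}; the condition $\sB_x=\emptyset$ was placed in $G_h^x$ precisely so that this one-wall reduction is available, making Theorem~\ref{thm:exp-tail-base} and Proposition~\ref{prop:exp-tail-all-increments} unnecessary here.

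The concrete gap in your argument is the claim that $G_h^{x,r}$ is determined by $\sigma$ on $B_r(x)\times\llb -r,h+r\rrb$. Condition~(i) constrains only walls whose projection meets $B_r(x)$; a wall $W'$ with $\rho(W')$ lying entirely outside $B_r(x)$ can still nest $x$, and since the interface $\cI$ is defined globally as the $*$-component of separating faces reaching $\partial\Lambda_n$, it cannot be reconstructed from a local window of $\sigma$. Two configurations agreeing on your box can have different interfaces (one with such a nesting wall, one without), hence different $\cP_x$, hence different values of $\one_{G_h^{x,r}}$. This is fixable---add a no-nesting hypothesis and pay another $e^{-cr}$ via Proposition~\ref{prop:dobrushin-exp-tail}---but it is exactly the work that the paper's wall-measurability shortcut avoids. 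Your separate treatment of $A_h^x$ via bubble bounds is, on the other hand, a point in your favor: $A_h^x$ is a raw-spin connectivity event not literally determined by $W_x$, so the paper's wall-measurability assertion is itself somewhat quick on that count.
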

\begin{proof}
Notice that the pair of events $G_h^x$ and $G_h^y$ are measurable with respect to the pair of walls $W_x,W_y$. This is because the bounding faces of the spine $\cS_x$ (respectively, $\cS_y$) are all part of the same wall as shown in Claim~\ref{clm:cut-point-property}, and the wall $W_x$ (resp., $W_y$) contains the four bounding faces of $x+(0,0, \frac 12)$ (resp., $y+(0,0, \frac 12)$). As such, we can bound the difference above as 
\[
|\mu_n^\mp(G_h^x) \mu_n(G_h^y) - \mu_n^\mp(G_h^x, G_h^y)| \leq \|\mu_n^\mp (W_x \in \cdot) \mu_n^\mp(W_y\in \cdot) - \mu_n^\mp (W_x\in \cdot, W_y\in \cdot)\|_\tv\,,
\]
which is at most $Ce^{- d(x,y)/C}$ by Proposition~\ref{prop:group-of-wall-correlations}.
\end{proof}

\subsection{Exponential tails for the maximum}
We are now ready to deduce that the centered maximum has left and right exponential tails (and is therefore tight).

\begin{proof}[\textbf{\emph{Proof of Proposition~\ref{prop:exp-tails}}}]
We begin with the right tail. Letting 
\[ h= m_n^\star+\ell\qquad\mbox{ for }\qquad 1\leq\ell \leq (1/\beta)\log n\]
(n.b.\ we could have taken here $\ell\leq (C/\beta)\log n$ for any absolute constant $C$), we have
\begin{align*}
\mu_n^\mp(M_n \geq  m_n^\star + \ell) & \leq \sum_{x\in \cL_{0,n} \setminus \cL_{0,n}^-} \mu_n^\mp(E_h^x) + \sum_{x\in \cL_{0,n}^-} \mu_n^\mp(E_h^x)
 \leq |\cL_{0,n}\setminus \cL_{0,n}^-|  e^{ - (4\beta-C)h} + |\cL_{0,n}^-| (1-o(1)) e^{-\widetilde\alpha_h}\,,
\end{align*}
using Proposition~\ref{prop:Ex-bounds} for the first sum and~\eqref{eq:Ehx-tilde-alpha-h} for the second one. 
By Corollary~\ref{cor:alpha-super-additive}, we have that \[ \widetilde\alpha_h - (4\beta-C) h \leq 
(\alpha + \epsilon_\beta - 4\beta + C) h 
\leq (C+2\epsilon_\beta) h \leq \epsilon'_\beta \log n
\,,\]
where the last inequality used the assumption on $\ell$ and the facts that $ m_n^\star = (2/\alpha+o(1))\log n$ and $\alpha > 4\beta-C$.
When combined with the fact that $|\cL_{0,n}\setminus \cL_{0,n}^-|/|\cL_{0,n}| = O(\frac{\log^2 n}n)$, this implies that
\[\frac{|\cL_{0,n}\setminus\cL_{0,n}^-|e^{-(4\beta-C)h}}
{|\cL_{0,n}^-| e^{-\widetilde\alpha_h}} \leq n^{-1+\epsilon'_\beta + o(1)}\,,
\]
which, in light of the first inequality in the proof, shows that for $\beta$ large enough (so as to have $\epsilon'_\beta<1$),
\begin{align*} \mu_n^\mp(M_n\geq m_n^\star+\ell) &\leq (1+o(1))|\cL_{0,n}^-|e^{-\widetilde\alpha_h} \leq (1+\epsilon_\beta) |\cL_{0,n}^-| \exp(-\widetilde\alpha_{ m^\star_n} -\widetilde\alpha_\ell) \\ 
&\leq (1+\epsilon_\beta) e^{2\beta+\epsilon_\beta-\widetilde\alpha_\ell} \leq (1+\epsilon'_\beta)e^{2\beta-\widetilde\alpha_\ell}\,,
\end{align*}
using Proposition~\ref{prop:submult} in the first line and~\eqref{eq:widetilde-m_n} in the second line. This establishes the right tail.

\begin{remark}
One can extend the right tail bound to hold for all $\ell$ (as opposed to $\ell\leq(1/\beta)\log n$)---albeit with a sub-optimal rate: there exists some $C>0$ such that
\begin{align}
    \mu_n^\mp(M_n \geq  m_n^\star +\ell) \leq C e^{-(2\beta-C)\ell} \qquad \mbox{for all $\ell>0$.}\label{eq:exp-tail-all-ell}
\end{align}
Indeed, consider $\ell>(1/\beta)\log n$ (having already established the desired right tail for smaller values of $\ell$). The bound $\mu_n^\mp(E_h^x) \leq C \exp(-(4\beta-C)h)$ in Proposition~\ref{prop:dobrushin-exp-tail} holds uniformly over all $x\in\cL_{0,n}$, and so
\[ \mu_n^\mp(E_{ m_n^\star+\ell}^x) \leq \mu_n^\mp(E_\ell^x) \leq C e^{-(4\beta-C)\ell} \leq C n^{-2} e^{-(2\beta-C)\ell}\,,
\]
whence
\[ \mu_n^\mp(M_n\geq m_n^\star + \ell) \leq  |\cL_{0,n}| C n^{-2} e^{-(2\beta-C)\ell} \leq 4 C e^{-(2\beta-C)\ell}\,,
\]
as claimed.
\end{remark}

Let us now turn to the lower tail for $M_n$. Let 
\[ h= m_n^\star-\ell\qquad\mbox{ for }\qquad 1\leq\ell \leq \sqrt{\log n}\,.\]
Since $Z_h > 0$ implies $E_h^x$ for some $x\in\cL_{0,n}^-$, and in particular that $M_n \geq h$, it will suffice to establish an appropriate upper bound on $\mu_n^\mp(Z_h =0)$, which we will
infer from a second moment calculation. Write 
\begin{align*}
\E[Z_h^2]  = \sum_{x,y\in \cL_{0,n}^-} \mu^\mp_n(G_h^x,\, G_h^y) 
 \leq  &\sum_{x\in \cL_{0,n}^-} \mu^\mp_n(G_h^x) \\ 
 + &\sum_{x\in \cL_{0,n}^-} \sum_{y \in \cL_{0,n}^-\cap B(x,\log^{2} n)\,:\,y\neq x } \mu^\mp_n(G_h^x) \mu^\mp_n(G_h^y\mid G_h^x) \\ 
 + &\sum_{x\in \cL_{0,n}^-} \sum_{y\in \cL_{0,n}^-\setminus  B(x, \log^{2} n)} \big(\mu^\mp_n(G_h^x)\mu^\mp_n(G_h^y) + |\mu^\mp_n(G_h^x)\mu^\mp_n(G_h^y)- \mu^\mp_n(G_h^x,\,G_h^y)|\big)\,.
\end{align*} 
Denoting these three summations by $\Xi_1,\Xi_2,\Xi_3$ (in order), we first observe that $\Xi_1$ is exactly $\E[Z_h]$. For the second summation, we apply Claim~\ref{claim:effect-of-x-pillar}, yielding 
\begin{align*}
\Xi_2 \leq 4 n^2 \log^{4}n (1+\epsilon_\beta) e^{ 6\beta} \sup_{x,y\in\cL_{0,n}^-}\mu^\mp_n(G_h^x)\mu^\mp_n(G_h^y) \leq
n^{2+o(1)}  \sup_{x\in\cL_{0,n}^-} \mu^\mp_n(E_h^x)^2\,.
\end{align*}
Using~\eqref{eq:Ehx-tilde-alpha-h} (here $h\leq  m_n^\star=O(\log n)$) we have $\mu_n^\mp(E_h^x) = (1+o(1))e^{-\widetilde\alpha_h}$, and $|\widetilde\alpha_h -  \widetilde\alpha_{m_n^\star}| = O(\ell) = O(\sqrt{\log n})$ by Corollary~\ref{cor:alpha-super-additive}, whence $e^{-\widetilde \alpha_h} = n^{-2+o(1)}$ by~\eqref{eq:widetilde-m_n}. Combined with the last equation, 
\[ \Xi_2 \leq n^{-2+o(1)} = o(1)\,.\]
Finally, for the last summation,
\begin{align*}
\sum_{x\in\cL_{0,n}^-}\sum_{y\in\cL_{0,n}^-\setminus B(x,\log^{2}n)} \mu^\mp_n(G_h^x)\mu^\mp_n(G_h^y) \leq \E[Z_h]^2 \,, 
\end{align*}
whereas, by Claim~\ref{claim:covariance-bound},
\begin{align*}
\sum_{x\in\cL_{0,n}^-}\sum_{y\in\cL_{0,n}^-\setminus B(x,\log^{2}n)}  |\mu^\mp_n(G_h^x)\mu^\mp_n(G_h^y)- \mu^\mp_n(G_h^x,\,G_h^y)| \leq C|\cL_{0,n}^-|^2 e^{-\log^2 n/C} = o(1)\,,
\end{align*}
and we deduce that $\Xi_3 \leq \E[Z_h]^2 + o(1)$. 
Putting all of these together, we obtain by the Paley--Zygmund inequality that
\begin{align*}
\mu^\mp_n(Z_h>0)  \geq \frac{\E[Z_h]^2}{\E [Z_h^2]} 
 \geq \frac{\E[Z_h]^2}{\E[Z_h]^2  + \E[Z_h] +o(1)}\,.
\end{align*}
As $\E[Z_{h}] \geq (1-\epsilon_\beta)e^{\widetilde\alpha_{\ell}-2\beta}$ by Claim~\ref{claim:expectation-lb}, we see that
\begin{align*}
    \mu_n^\mp(Z_h=0) \leq \frac{1+o(1)}{\E[Z_h]+1+o(1)}\leq (1+\epsilon_\beta') e^{2\beta-\widetilde\alpha_{\ell}}\,,
\end{align*}
and as $\{Z_h=0\}$ is implied by $\{M_n < h\}$, this concludes the proof.
\end{proof}

\subsection{The expectation and median of the maximum}
The following is a straightforward consequence of the results we have established in this section:
\begin{corollary}\label{cor:mean-median}
There exist $\beta_0>0$ and a sequence $\epsilon_\beta>0$ vanishing as $\beta\to\infty$ such that for all $\beta>\beta_0$, if $m_n^\star$ is defined as in~\eqref{eq:mn-def} and $m_n$ is a median of $M_n$ then $m_n \in [m_n^\star-1,m_n^\star]$ and $m_n^\star-1-\epsilon_\beta \leq \E[M_n]\leq m_n^\star+\epsilon_\beta$.
\end{corollary}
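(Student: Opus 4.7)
The plan is to derive both statements directly from the two-sided tail bound in Proposition~\ref{prop:exp-tails}, together with the fact (recall Proposition~\ref{prop:Ex-bounds} and Corollary~\ref{cor:alpha-super-additive}) that $\widetilde\alpha_\ell \geq 4(\beta-C)\ell$, which for $\beta$ large makes the tail decay in $\ell$ geometric with a rate growing with~$\beta$. No new combinatorial estimate is needed; the argument is essentially bookkeeping on tail sums.

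First, for the median bounds, recall that any median $m_n$ satisfies $\mu_n^\mp(M_n\geq m_n)\geq \tfrac12$ and $\mu_n^\mp(M_n\leq m_n)\geq \tfrac12$. Applying Proposition~\ref{prop:exp-tails} with $\ell=1$ gives
\[ \mu_n^\mp(M_n\geq m_n^\star+1) \,\vee\, \mu_n^\mp(M_n<m_n^\star-1) \;\leq\; (1+\epsilon_\beta)\exp(2\beta-\widetilde\alpha_1) \;\leq\; \epsilon_\beta', \]
for some other vanishing sequence $\epsilon_\beta'$, once $\beta$ is large enough that $4(\beta-C)-2\beta$ is large. This forces $m_n \leq m_n^\star$ (since otherwise $\mu_n^\mp(M_n\leq m_n^\star)\leq \epsilon_\beta'<\tfrac12$) and $m_n\geq m_n^\star-1$.

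For the expectation, use the tail sum identity $\E[M_n]=\sum_{k\geq 0}\mu_n^\mp(M_n>k)$ (valid since $M_n\geq 0$ is integer-valued) and split the sum at $k=m_n^\star-1$. The upper bound is
\[ \E[M_n] \;\leq\; m_n^\star + \sum_{\ell\geq 1}\mu_n^\mp(M_n\geq m_n^\star+\ell), \]
while the lower bound is
\[ \E[M_n] \;\geq\; (m_n^\star-1) - \sum_{\ell\geq 1}\mu_n^\mp(M_n< m_n^\star-\ell+1) \;=\; (m_n^\star-1) - \sum_{\ell\geq 0}\mu_n^\mp(M_n<m_n^\star-\ell). \]
(In the lower bound, the $\ell=0$ term is $\mu_n^\mp(M_n<m_n^\star)\leq 1$, which is unavoidable and accounts for the ``$-1$'' in the target bound; all remaining terms will be tiny.) In both tail sums, for $\ell\leq\sqrt{\log n}$ invoke Proposition~\ref{prop:exp-tails} to bound each term by $(1+\epsilon_\beta)\exp(2\beta-\widetilde\alpha_\ell)$, and then use $\widetilde\alpha_\ell\geq 4(\beta-C)\ell$ to get a geometric series with ratio $e^{-4(\beta-C)}$, which sums to at most $\epsilon_\beta$ for $\beta$ large.

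The only minor loose end is the contribution from $\ell>\sqrt{\log n}$ in either tail sum; here for the right tail we invoke~\eqref{eq:exp-tail-all-ell} and sum $Ce^{-(2\beta-C)\ell}$ over $\ell>\sqrt{\log n}$, which is $o(1)$ as $n\to\infty$, while for the left tail we use monotonicity: $\mu_n^\mp(M_n<m_n^\star-\ell)\leq \mu_n^\mp(M_n<m_n^\star-\sqrt{\log n})\leq(1+\epsilon_\beta)e^{2\beta-4(\beta-C)\sqrt{\log n}}$, which multiplied by the number of remaining terms (at most $m_n^\star=O(\log n)$) still vanishes in $n$. Collecting everything yields $\E[M_n]\in[m_n^\star-1-\epsilon_\beta,\,m_n^\star+\epsilon_\beta]$ for all sufficiently large $n$ and $\beta$, as required. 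There is no genuine obstacle in this argument; the tightness follows from the fact that Proposition~\ref{prop:exp-tails} is sharp enough that both tail sums converge to quantities that vanish with~$\beta$.
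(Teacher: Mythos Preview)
Your approach is essentially the same as the paper's: both derive the median and expectation bounds directly from the tail estimates of Proposition~\ref{prop:exp-tails}, summing the geometric tails over $1\leq\ell\leq\sqrt{\log n}$ and handling the far tails via~\eqref{eq:exp-tail-all-ell} on the right and a crude bound times $m_n^\star=O(\log n)$ on the left. The paper's write-up differs only cosmetically---it bounds $\E[(M_n-m_n^\star)_+]$ and $\E[(m_n^\star-1-M_n)_+]$ separately and then expresses $\E[M_n]$ in terms of $p_n=\mu_n^\mp(M_n<m_n^\star)$---but the content is identical.

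There is, however, an off-by-one slip in your lower-bound display. The tail-sum identity actually gives
\[
\E[M_n] \;\geq\; \sum_{k=0}^{m_n^\star-2}\mu_n^\mp(M_n>k) \;=\; (m_n^\star-1) - \sum_{\ell\geq 1}\mu_n^\mp(M_n<m_n^\star-\ell)\,,
\]
with the sum starting at $\ell=1$, not $\ell=0$; there is no $\mu_n^\mp(M_n<m_n^\star)$ term left over to ``account for the $-1$.'' As written, with both the leading $(m_n^\star-1)$ \emph{and} the extra $\ell=0$ term, you would only conclude $\E[M_n]\geq m_n^\star-2-\epsilon_\beta$. The fix is trivial: either drop the $\ell=0$ term, or (what your parenthetical remark suggests you intended) start the display from $m_n^\star$ rather than $m_n^\star-1$ and let the $\ell=0$ term supply the $-1$. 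With that correction the argument goes through.
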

\begin{proof}
For the bounds on the median $m_n$, by Proposition~\ref{prop:exp-tails} we have that \[\mu_n^\mp(M_n\geq m_n^\star + 1) \leq (1+\epsilon)e^{2\beta-\widetilde\alpha_1} \leq Ce^{-2\beta} \leq \epsilon'_\beta\,,\]
using that $\exp(-\widetilde\alpha_1) = \mu_{\Z^3}^\mp(E_1^o) \leq C e^{-4\beta}$. This implies that, once $\beta$ is large enough such that $\epsilon'_\beta < \frac12$, the median satisfies $m_n \leq m_n^\star$. Further, by that same proposition, $\mu_n^\mp(M_n\leq m_n^\star - 2) \leq (1+\epsilon)e^{2\beta-\widetilde\alpha_1} \leq  \epsilon'_\beta$, whence $m_n \geq m_n^\star - 1$.

For the bound on the expectation, note that by Proposition~\ref{prop:exp-tails}, as argued above for the median, we have
\begin{align*}
    \sum_{\ell=1}^{\sqrt{\log n}} \mu_n^\mp(M_n-m_n^\star \geq \ell) < \epsilon_\beta \qquad\mbox{and}\qquad
    \sum_{\ell=1}^{\sqrt{\log n}} \mu_n^\mp(m_n^\star-1 - M_n \geq \ell) < \epsilon_\beta\,.
\end{align*} 
Therefore, (denoting by $a_+$ the positive part of $a$)
\begin{align}
\E[ (M_n - m_n^\star)_+] \leq \sum_{\ell\geq 1} \mu_n^\mp(M_n-m_n^\star \geq \ell) &\leq 
\sum_{\ell=1}^{\sqrt{\log n}} \mu_n^\mp(M_n-m_n^\star \geq \ell)
+ \sum_{\ell\geq \sqrt{\log n}} \mu_n^\mp(M_n-m_n^\star \geq \ell) \nonumber\\
&\leq \epsilon_\beta + Ce^{-(2\beta-C)\sqrt{\log n}} \leq \epsilon'_\beta
 \label{eq:Mn-mn-plus}\,,
\end{align}
where we used the uniform bound~\eqref{eq:exp-tail-all-ell} on the right tail to obtain the second line. 
At the same time,
\begin{align} \E[ (m_n^\star - 1 - M_n)_+] &\leq  m_n^\star \mu_n^\mp(M_n \leq m_n^\star - \sqrt{\log n})  +  \sum_{\ell = 1}^{\sqrt{\log n}} \mu_n^\mp(m_n^\star-1-M_n \geq \ell) \nonumber\\
& \leq \epsilon_\beta + O(\log n) e^{-c\sqrt{\log n}} \leq \epsilon'_\beta\,.\label{eq:Mn-mn-minus}
\end{align}
Letting
\[ p_n = \mu_n^\mp(M_n < m_n^\star)\,,\]
we may express 
\begin{align*}
\E[ M_n \one\{M_n \geq m_n^\star\} ] &=  m_n^\star (1-p_n) + \E[ (M_n - m_n^\star)_+ ] \,,\\
\E[ M_n \one\{M_n \leq m_n^\star-1\} ] &= (m_n^\star-1) p_n -\E[ (m_n^\star-1-M_n)_+ ] \,,
\end{align*}
and deduce from~\eqref{eq:Mn-mn-plus} and~\eqref{eq:Mn-mn-minus} that
\begin{align*}  m_n^\star (1-p_n) &\leq \E[M_n\one\{M_n\geq m_n^\star\}] \leq   m_n^\star(1-p_n) + \epsilon_\beta\,,\end{align*}
whereas
\begin{align*}
 (m_n^\star-1)p_n - \epsilon_\beta &\leq \E[ M_n \one\{M_n \leq m_n^\star-1\} ] \leq  (m_n^\star-1)p_n\,.\end{align*}
Combining these, $|\E[M_n] - \xi_n |\leq \epsilon_\beta$, where $\xi_n = m_n^\star - p_n \in [m_n^\star-1,m_n^\star]$, as required.
\end{proof}

\section{Gumbel tail estimates for the maximum}\label{sec:gumbel}

\subsection{Coupling of different scales}
The following proposition compares $M_n$, the maximum height of the interface $\cI$ under $\mu_n^\mp$, to the maxima of i.i.d.\ copies on boxes of a smaller scale. This will later be used to deduce Gumbel tail bounds for the centered maximum. 
\begin{proposition}\label{prop:multiscale-linear}
There exists $\beta_0>0$ such that the following holds for all $\beta>\beta_0$. Fix $\gamma>0$, let $L_n$ be a sequence with
$n (\log n)^{-\gamma} <  L_n  < n$,
and set $\kappa_n = (\lfloor n/ L_n \rfloor)^2$ and $R_n = n \bmod L_n$. 
Then
\begin{align*}
    \left\|\mu^\mp_n\left(M_n\in\cdot\right) - \P\left(\max\{Y_1,\ldots,Y_{\kappa_n}\}\in\cdot\right)\right\|_\tv \leq  O\Big((R_n/n)^{1/3} + (\log n)^{-10}\Big)\,,
\end{align*}
where $Y_1,\ldots,Y_{\kappa_n}$ are i.i.d.\ with law $\P(Y_i\in\cdot) = \mu_{L_n}^\mp(M_{L_n}\in\cdot)$. In particular,
\begin{align*}
     \sup_x \left|\mu^{\mp}_n(M_n \leq x) - \mu^{\mp}_{L_n}(M_{L_n} \leq x)^{\kappa_n}\right| \leq O\Big((R_n/n)^{1/3} + (\log n)^{-10}\Big)\,.
\end{align*}
\end{proposition}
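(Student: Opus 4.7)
The plan is to tile $\Lambda_n$ with $\kappa_n$ disjoint sub-boxes $B_1,\ldots,B_{\kappa_n}$ of side $L_n-2b$ (with $b=(\log n)^2$), arranged in a $\lfloor n/L_n\rfloor\times\lfloor n/L_n\rfloor$ grid with buffer strips of width at least $b$ between adjacent tiles and between the grid and $\partial\Lambda_n$; let $\cR\subset\cL_{0,n}$ be the faces outside all $B_i$, so that $|\cR|\lesssim nR_n+n^2 b/L_n$. Set $M^{(i)}=\max_{x\in\cF(B_i)\cap\cL_0}\hgt(\cP_x)$ and $M^\cR$ analogously, so $M_n=\max(\max_i M^{(i)},M^\cR)$. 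Since $\|\mathrm{law}(M_n)-\mathrm{law}(\max_i M^{(i)})\|_\tv\leq\mu_n^\mp(M^\cR>\max_i M^{(i)})$, the proposition reduces to (i) showing $\mu_n^\mp(M^\cR>\max_i M^{(i)})\leq C((R_n/n)^{1/3}+(\log n)^{-10})$, and (ii) bounding $\bigl\|\mu_n^\mp((M^{(i)})_i\in\cdot)-\bigotimes_i\mu_{L_n}^\mp(M_{L_n}\in\cdot)\bigr\|_\tv\leq C(\log n)^{-10}$.

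For (i), the inclusion $\{M^\cR>\max_i M^{(i)}\}\subset\{M^\cR\geq h\}\cup\{M_n<h\}$ lets me balance, at a threshold $h=m_n^\star-\ell$, the two estimates $\mu_n^\mp(M^\cR\geq h)\leq|\cR|\,e^{-(4\beta-C)h}\lesssim(R_n/n+b/L_n)\,n^{O(1/\beta)}e^{\alpha\ell}$ (from the uniform Dobrushin tail of Proposition~\ref{prop:Ex-bounds} and~\eqref{eq:widetilde-m_n}, which gives $e^{-\alpha m_n^\star}\asymp e^{2\beta}n^{-2}$) and $\mu_n^\mp(M_n<h)\leq(1+\epsilon_\beta)e^{2\beta-\widetilde\alpha_\ell}\lesssim e^{-\alpha\ell+O(1)}$ (from Proposition~\ref{prop:exp-tails}, valid for $\ell\leq\sqrt{\log n}$). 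Choosing $e^{\alpha\ell}\asymp(R_n/n+b/L_n)^{-1/2}$ when permitted, and $\ell=\sqrt{\log n}$ otherwise (i.e., when $R_n/n$ is super-polynomially small and the first bound is already tiny), yields a combined bound $\lesssim\sqrt{R_n/n+b/L_n}\cdot n^{O(1/\beta)}$, which for $\beta$ large enough is at most $(R_n/n)^{1/3}+O(\sqrt{b/L_n})\leq(R_n/n)^{1/3}+(\log n)^{-10}$, using $b/L_n\leq(\log n)^{2+\gamma}/n$.

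For (ii), the $b$-separation between tiles permits telescopic application of Corollary~\ref{cor:pillar-correlations} to compare $\mu_n^\mp((\cP_y)_{y\in\bigcup_i B_i}\in\cdot)$ with $\bigotimes_i\mu_n^\mp((\cP_y)_{y\in B_i}\in\cdot)$ at cost $\kappa_n\cdot Ce^{-b/C}\leq(\log n)^{2\gamma}e^{-(\log n)^2/C}$, which is super-polynomial in $n$. Each marginal is then matched to $\mu_{L_n}^\mp$ by a double application of Corollary~\ref{cor:pillar-dependence-on-volume}: since $d(B_i,\partial\Lambda_n)\geq b$, we couple $\mu_n^\mp((\cP_y)_{y\in B_i}\in\cdot)$ to $\mu_{\Z^3}^\mp((\cP_y)_{y\in B_i}\in\cdot)$ within TV $Ce^{-b/C}$, and similarly couple $\mu_{L_n}^\mp((\cP_y)_{y\in B_i'}\in\cdot)$ (on a centered translate $B_i'\subset\Lambda_{L_n}$, also at distance $\geq b$ from $\partial\Lambda_{L_n}$) to the same infinite-volume measure; this matches $M^{(i)}$ under $\mu_n^\mp$ to the maximum over $B_i'\subset\Lambda_{L_n}$ under $\mu_{L_n}^\mp$. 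The remaining gap to $M_{L_n}$ comes from a perimeter strip of area $\lesssim bL_n$ inside $\Lambda_{L_n}$; run through the same balance as in (i) but now inside $\mu_{L_n}^\mp$, it costs $O(\sqrt{b/L_n})$ per tile, or $\kappa_n\cdot(\log n)^{1+\gamma/2}/\sqrt n$ in total, which is polynomial in $n$.

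The main obstacle is (i), where the exponent $1/3$ (rather than the heuristic $1/2$) arises because the only $x$-uniform upper bound on $\mu_n^\mp(E_h^x)$ across all $x\in\cL_{0,n}$ is the Dobrushin bound $e^{-(4\beta-C)h}$, which at $h=m_n^\star$ exceeds the sharp $e^{-\widetilde\alpha_{m_n^\star}}\asymp n^{-2}e^{2\beta}$ by a factor $n^{O(1/\beta)}$; this $n^{O(1/\beta)}$ loss pushes the heuristic $(R_n/n)^{1/2}$ down to the weaker $(R_n/n)^{1/3}$, which is only meaningful for $\beta$ sufficiently large. A secondary technicality is arranging the tiling when $R_n$ is very small (including $R_n=0$), which requires shrinking each tile by the buffer width $2b$ and absorbing the resulting inner strips into $\cR$, without tipping the budget over $(\log n)^{-10}$; this is precisely the scenario where the $\sqrt{b/L_n}$ tail dominates and must be kept below $(\log n)^{-10}$ via the choice $b=(\log n)^2$.
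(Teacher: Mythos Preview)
Your overall strategy matches the paper's: tile, carve out thin buffers, decouple the tiles via Corollaries~\ref{cor:pillar-correlations} and~\ref{cor:pillar-dependence-on-volume}, and handle the leftover by balancing a union bound on $\{M^{\cR}\geq h\}$ against the left tail of $M_n$. Part~(ii) is essentially correct and matches the paper. The gap is in part~(i), specifically in bounding the maximum over $\cR$ using only the uniform Dobrushin tail $e^{-(4\beta-C)h}$ for every face in $\cR$.

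The issue is the polynomial loss $n^{O(1/\beta)}$ that this produces. Your balance yields at best $\sqrt{R_n/n}\cdot n^{c/\beta}$ for some fixed $c>0$, and you assert this is at most $(R_n/n)^{1/3}$ for $\beta$ large. But that inequality is equivalent to $R_n/n \leq n^{-6c/\beta}$, which the hypotheses do not guarantee. For a concrete failure, take $L_n = n - \lfloor n/\log n\rfloor$, so that $\kappa_n=1$ and $R_n/n\sim 1/\log n$; then your bound is $(\log n)^{-1/2}\,n^{c/\beta}\to\infty$ for every fixed $\beta$, whereas the target is $O((\log n)^{-1/3})$. No admissible choice of $\ell\leq\sqrt{\log n}$ helps, since already the first term $(R_n/n)\,n^{c/\beta}e^{\alpha\ell}\geq (R_n/n)\,n^{c/\beta}$ exceeds $1$.

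The fix, which is what the paper does, is to split $\cR$ in two. The genuine buffer strips (between tiles and along $\partial\Lambda_n$) have total area $O(n(\log n)^{2+\gamma})=n^{2-\delta}$, and for those the Dobrushin bound at $h=m_n^\star-\log\log n$ is enough; this is packaged as Claim~\ref{clm:corridors} and gives $(\log n)^{-3\beta}$. The bulk of the $R_n$-remainder, however, consists of faces at distance $\geq\log^2 n$ from every box boundary, hence lying in $\cL_{0,n}^-$; for those you have access to the \emph{sharp} rate $\mu_n^\mp(E_h^x)=(1+o(1))e^{-\alpha_h}$ from~\eqref{eq:Ehx-tilde-alpha-h}, with no $n^{O(1/\beta)}$ loss. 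Running your balance on that set $S_1$ (of size $O(nR_n)$) with the sharp rate gives
\[
O\Big(\tfrac{|S_1|}{n^2}\,e^{(4\beta+e^{-4\beta})\ell}\Big)+O\big(e^{-(4\beta-C)\ell}\big),
\]
which at $\ell\asymp\frac1{8\beta}\log(n/R_n)$ is $O\big((R_n/n)^{1/2-\epsilon_\beta}\big)\leq (R_n/n)^{1/3}$. Note that in part~(ii) your use of the Dobrushin bound happens to be harmless only because the perimeter fraction $b/L_n$ there decays polynomially in $n$, which absorbs the $n^{O(1/\beta)}$; this is exactly what fails for the $R_n$-remainder in part~(i).
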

We first need the following simple claim, ruling out the improbable scenario where the maximum is attained above a fixed microscopic subset of faces of $\cL_{0,n}$.
\begin{claim}\label{clm:corridors}
For every $\delta>0$ there exists $\beta_0>0$ such that the following holds for all $\beta>\beta_0$. 
Let $S_0\subset \cL_{0,n}$ be a deterministic set of faces of size $|S_0| \leq n^{2-\delta}$. Let $M_n$ be the maximum height of $\cI$ under $\mu_n^\mp$, and let $M_n^- = \max\{\hgt(\cP_x)\,:\; x\in\cL_{0,n}\setminus S_0\}$. Then for every large enough $n$, we have $\mu_n^\mp(M_n \neq M_n^-) \leq (\log n)^{-3\beta}$.
\end{claim}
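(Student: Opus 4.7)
The plan is to choose a threshold $h$ slightly below $m_n^\star$ and exploit the deterministic inclusion
\[ \{M_n \neq M_n^-\} \subset \bigl\{\textstyle\max_{x\in S_0}\hgt(\cP_x)\geq h\bigr\} \cup \{M_n^- < h\}\,,\]
which holds because on $\{M_n\neq M_n^-\}$ one has $\max_{x\in S_0}\hgt(\cP_x)>M_n^-$, which forces the first event if $M_n^-\geq h$ and the second otherwise. The first event is controlled by a union bound exploiting the sparseness of $S_0$, and the second by rerunning the second moment argument of Proposition~\ref{prop:exp-tails} with $S_0$ excised.

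Set $\ell=\lceil\log\log n\rceil$ and $h=m_n^\star-\ell$. For the first event, Proposition~\ref{prop:Ex-bounds} gives $\mu_n^\mp(E_h^x)\leq Ce^{-(4\beta-C)h}$ uniformly over $x$. Combining this with $m_n^\star\geq 2\log(2n)/(4\beta+e^{-4\beta})-O(1)$ (Corollary~\ref{cor:alpha-super-additive}) shows $(4\beta-C)h\geq (2-O(\beta^{-1}))\log n - O(\beta\log\log n)$, and therefore
\[ \mu_n^\mp\bigl(\textstyle\max_{x\in S_0}\hgt(\cP_x)\geq h\bigr) \leq C|S_0|\,e^{-(4\beta-C)h} \leq Cn^{-\delta+O(\beta^{-1})}(\log n)^{O(\beta)} \leq \tfrac12(\log n)^{-3\beta}\]
for $n$ sufficiently large, provided $\beta_0$ is taken large enough that $C/\beta_0<\delta/2$.

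For the second event, consider the counter $Z_h^-:=\sum_{x\in\cL_{0,n}^-\setminus S_0}\one_{G_h^x}$ (cf.~\eqref{eq:G-def}). Since $|S_0|/|\cL_{0,n}^-|=O(n^{-\delta})=o(1)$, Claim~\ref{claim:expectation-lb} still yields $\E[Z_h^-]\geq(1-\epsilon_\beta)e^{\widetilde\alpha_\ell-2\beta}$ (up to a $1+o(1)$ factor), while Claims~\ref{claim:effect-of-x-pillar} and~\ref{claim:covariance-bound} apply verbatim. The Paley--Zygmund bound from the proof of Proposition~\ref{prop:exp-tails} then delivers
\[ \mu_n^\mp(M_n^-<h)\leq \mu_n^\mp(Z_h^-=0)\leq (1+\epsilon_\beta)e^{2\beta-\widetilde\alpha_\ell}\,.\]
Using $\widetilde\alpha_\ell\geq(4\beta-C)\log\log n$ (a consequence of $\widetilde\alpha_h\geq (4\beta-C)h$ from Proposition~\ref{prop:Ex-bounds}), this is at most $Ce^{2\beta}(\log n)^{-(4\beta-C)}\leq\tfrac12(\log n)^{-3\beta}$ once $\beta_0$ is taken large enough that $(\beta_0-C)\log\log n\geq 2\beta_0$ for $n$ beyond a threshold depending on $\beta_0$.

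Combining the two bounds yields the claim. There is no essential obstacle beyond calibrating constants; the only subtlety is verifying that removing the sparse set $S_0$ from the second moment sum does not spoil the Paley--Zygmund step, which is immediate from $|S_0|=o(|\cL_{0,n}^-|)$ together with the fact that the pairwise correlation estimates of Claims~\ref{claim:effect-of-x-pillar}--\ref{claim:covariance-bound} are insensitive to which subset of $\cL_{0,n}^-$ the sum ranges over.
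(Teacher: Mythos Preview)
Your proof is correct and follows the same overall strategy as the paper (same threshold $h=m_n^\star-\lceil\log\log n\rceil$, same union bound over $S_0$ for the first event). The only difference is in the second event: you use $\{M_n^-<h\}$ and rerun the Paley--Zygmund argument with $S_0$ excised, whereas the paper simply uses $\{M_n<h\}$ and directly invokes Proposition~\ref{prop:exp-tails}. Both decompositions are valid, but the paper's is shorter: on $\{M_n\neq M_n^-\}$ the maximum is attained at some $x\in S_0$, so if $M_n\geq h$ the first event already holds, and otherwise $M_n<h$, which is a smaller event than your $\{M_n^-<h\}$ and is already controlled by the left-tail bound of Proposition~\ref{prop:exp-tails}. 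Your rerun of the second moment argument is correct (the loss of $|S_0|=o(|\cL_{0,n}^-|)$ sites is indeed negligible), just unnecessary.
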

\begin{proof}
Let $ h=m_n^\star-\log\log n$. We may bound the sought probability by
\begin{align*}\mu_{n}^{\mp}(M_n \neq M_{n}^-) &\leq \mu_n^\mp\Big(\max_{x\in S_0}\hgt(\cP_x) \geq h\Big) + \mu_n^{\mp}(M_n < h) \\
&\leq \left| S_0\right|
\exp(-(4\beta-C)h) + O(\exp(-\widetilde\alpha_{\log \log n}))\,,
\end{align*}
with the last inequality relying on Proposition~\ref{prop:Ex-bounds} to bound the first probability and 
Proposition~\ref{prop:exp-tails} to bound the second one. The last term is at most $ O((\log n)^{-4\beta-C})$ by the definition~\eqref{eq:tilde-alpha-def} of $\widetilde\alpha_h$ and the inequality succeeding it. That same inequality implies that \[ (4\beta-C)h \geq \widetilde\alpha_h - \epsilon_\beta h \geq \widetilde\alpha_{m_n^\star} - (\epsilon_\beta-o(1))h \geq (2-\epsilon'_\beta)\log n \,,\]
where the first inequality used that  $\widetilde\alpha_{m_n^\star}-\widetilde\alpha_h=O( \log\log n)=o(h)$ by Corollary~\ref{cor:alpha-super-additive}. 
In light of this,
\begin{align*}
    \left|S_0\right| e^{-(4\beta-C)h}\leq 
    n^{2-\delta}\,
     n^{-2+\epsilon'_\beta} \leq n^{-\delta+\epsilon'_\beta+o(1)} < n^{-\delta/2+o(1)}
\end{align*}
for $\beta$ large enough so that $\epsilon'_\beta<\delta/2$. Hence, combined with the above inequality on $\mu_n^\mp(M_n<h)$, we get that
$\mu_{n}^{\mp}(M_n \neq M_{n}^-)$ is $ O((\log n)^{-4\beta+C}) < (\log n)^{-3\beta}$ for large enough $n$ provided $\beta>C$.
\end{proof}
\begin{proof}[\textbf{\emph{Proof of Proposition~\ref{prop:multiscale-linear}}}]
First consider the case $R_n=0$.
Partition $\cL_{0,n}$ into disjoint boxes $\cB_1,\ldots,\cB_{\kappa_n}$, each of side length $2L_n$, and further let 
\begin{align*}
 \cB_i^- = \{ x \in \cB_i \,:\; d(x,\partial \cB_i) \geq \log^2 n\}\qquad(i=1,\ldots,\kappa_n)\,.
 \end{align*}
We will show that $M_n$ is equal to $\max_{i} \bar M_{L_n}^{(i)}$ with high probability, where
\begin{align*}
    \bar M_{L_n}^{(i)} = \max\{ \hgt(\cP_x) \,:\; x \in \cB_i^-\}\,,
\end{align*}
which in turn can be coupled (with negligible error) to $\kappa_n$ i.i.d.\ copies of $\bar M_{L_n}^{(1)}$ under $\mu_{\cB_1}^\mp$. Each of those i.i.d.\ copies will then be coupled to $M_{L_n}$.

First, since $\kappa_n=O((\log n)^{2\gamma})$, we have $|\cL_{0,n}\setminus (\bigcup_{i=1}^{\kappa_n} \cB_i^-)|\leq \kappa_n \cdot 4 L_n \log^2 n = O\big(n (\log n)^{2\gamma+2}\big)$, and thus infer from Claim~\ref{clm:corridors} that (with some room),
\begin{equation}\label{eq:couple-Mn-bar-Mn} \mu_n^\mp\Big(M_n \neq \max_{1\leq i\leq\kappa_n} \bar M_{L_n}^{(i)}\Big) < (\log n)^{-10}\,.\end{equation}
Second, as the boxes $\cB_i^-$ have pairwise distances at least $\log^2 n$, iterating Corollary~\ref{cor:pillar-correlations} per box shows that 
\[ \bigg\| \mu_n^\mp\left( (\cP_x)_{x\in \bigcup_i \cB_i^-}\in\cdot\right) - \prod_i \mu_n^\mp\left( (\cP_x)_{x\in\cB_i^-}\in\cdot\right) \bigg\|_\tv = O(\kappa_n\,  e^{-c\log^2 n}) < n^{-10}\,.\]
Moreover, 
Corollary~\ref{cor:pillar-dependence-on-volume} gives
\[ \sum_{i=1}^{\kappa_n}\left\| \mu_n^\mp\left( (\cP_x)_{x\in\cB_i^-} \in\cdot \right) - \mu_{\cB_1}^\mp\left( (\cP_x)_{x\in\cB_1^-}\in\cdot\right) \right\|_\tv = O(\kappa_n\, e^{-c\log^2 n}) < n^{-10}\,;
\]
thus, 
 with probability $1-O(n^{-10})$ we may couple $(\bar M_{L_n}^{(1)},\ldots,\bar M_{L_n}^{(\kappa_n)})$ under $\mu_n^\mp$ to $(Y^-_1,\ldots,Y^-_{\kappa_n})$ where $Y^-_i$ are i.i.d.\ distributed according to $\mu_{\cB_i}^\mp(\bar M_{L_n}^{(i)}\in\cdot)$. Letting $Y_i$ be i.i.d.\ copies of  $\mu_{\cB_i}^\mp(\max_{x\in \cB_i} \hgt(\cP_x)\in \cdot)$, we again apply Claim~\ref{clm:corridors}, this time to $\mu_{\cB_i}^\mp$ with $S_0 = \cB_i \setminus \cB_i^-$, so that $|S_0|= O(|L_n|\log^2 n) = L_n^{1+o(1)} $, and
 \[\sum_{i=1}^{\kappa_n}\mu_{\cB_i}^\mp(Y_i \neq Y_i^-) < \kappa_n (\log n)^{-3\beta} < (\log n)^{-10}\]
 for $\beta$ large. 
 Altogether, the total variation distance between the law of $M_n$ and $\max\{Y_1,\ldots,Y_{\kappa_n}\}$---equal in distribution to $\kappa_n$ i.i.d.\ copies of $M_{L_n}$ under $\mu_{L_n}^{\mp}$---is $O((\log n)^{-10})$, as required.

It remains to handle the case $R_n \neq 0$. Here, we will partition $\cL_{0,n}$ into boxes $\cB_1,\ldots,\cB_{\kappa_n},\hat\cB_{\kappa_n+1},\ldots,\hat\cB_{\hat\kappa_n}$, where the boxes $\cB_1,\ldots,\cB_{\kappa_n}$ have side-length $2L_n$, as before, and the remaining boxes $\hat\cB_j$ ($j>\kappa_n$) have the shorter side-length $2R_n$ (so that $\hat\kappa_n = (\lceil n/L_n\rceil)^2$). We use the same definition of $\cB_i^-$ also for $\hat\cB_j^-$, that is,
\[ \hat\cB_j^- = \{ x\in \hat\cB_j \,:\; d(x,\partial\hat\cB_j) \geq \log^2 n\}\qquad (j=\kappa_n+1,\ldots,\hat\kappa_n)\,,\]
noting that it may be the case that $\hat\cB_j^-=\emptyset$ (whenever $R_n\leq \log^2 n$). However, we would still want to couple $M_n$ to $\max_{i\leq\kappa_n}\bar M_{L_n}^{(i)}$, as we did in~\eqref{eq:couple-Mn-bar-Mn}, ignoring the exceptional boxes $\hat\cB_j$. To achieve this, apply Claim~\ref{clm:corridors} with $S_0 = \cL_{0,n}\setminus (\bigcup_{i=1}^{\kappa_n}\cB_i^- \cup \bigcup_{j=\kappa_n+1}^{\hat\kappa_n}\hat\cB_j^-)$, which, as before, has $|S_0| = O(\hat\kappa_n L_n \log^2 n) = O(n(\log n)^{2\gamma+2})$, and hence 
\begin{equation}\label{eq:coupling-Mn-S0-with-remainder} 
\mu_n^\mp\Big(M_n \neq \max_{x\in \cL_{0,n}\setminus S_0}\hgt(\cP_x) \Big) < (\log n)^{-10}\,.
\end{equation}
We may treat $S_1 = \bigcup_{j>\kappa_n}\hat\cB_j^-$ as follows:  recall from Corollary~\ref{cor:alpha-super-additive} that we have $\alpha_{m_n^\star-\ell} \geq \alpha_{m_n^\star}-(4\beta+e^{-4\beta})\ell$ and $\alpha_\ell\geq (4\beta-C)\ell$, and thus for $\ell>0$ to be specified below,
\begin{align*} \mu_n^\mp\Big(M_n = \max_{x\in S_1} \hgt(\cP_x)\Big) &\leq 
 |S_1|\max_{x\in S_1} \mu_n^\mp(\hgt(\cP_x) \geq m_n^\star-\ell) + \mu_n^\mp(M_n < m_n^\star-\ell) \\
&\leq O\bigg(|S_1| e^{-\alpha_{m_n^\star-\ell}} + e^{-\alpha_{\ell}}\bigg) \leq O\bigg(\frac{|S_1|}{n^2} e^{(4\beta+e^{-4\beta})\ell} + e^{-(4\beta-C) \ell}\bigg) \,,
\end{align*}
where we used that $\mu_n^\mp(\hgt(\cP_x)\geq h)=(1-o(1))e^{-\alpha_{h}}$ for $x\in S_1$ and $h=m_n^\star-\ell$ (as $h \ll \log^2 n \leq  d(x,\partial\hat\cB_j)$ for such~$x$), the definition of $\alpha_{m_n^\star}$, and Proposition~\ref{prop:exp-tails}. Choose 
\[ \ell = \ell_1 \;\wedge\;\ell_2\qquad\mbox{where}\qquad\ell_1 = \frac{3}{\beta}\log\log n \qquad\mbox{and}\qquad\ell_2 =\frac1{8\beta}\log\Big(\frac{n}{R_n}\Big) \,. \]
We see that $\ell = \ell_1$ implies $R_n/n \leq (\log n)^{-24}$, in which case, using $|S_1|=O(n R_n)$,
\begin{align*}
    e^{-(4\beta-C)\ell} + \frac{|S_1|}{n^2} e^{(4\beta+e^{-4\beta})\ell} \leq (\log n)^{-12 + \epsilon_\beta} + O(R_n/n) (\log n)^{12+\epsilon_\beta} = O\left( (\log n)^{-12+\epsilon_\beta}\right)\,.
\end{align*}
On the other hand, when $\ell=\ell_2$ we have
\begin{align*}
    e^{-(4\beta-C)\ell} + \frac{|S_1|}{n^2} e^{(4\beta+e^{-4\beta})\ell} \leq (R_n/n)^{\frac12 - \epsilon_\beta}
    + O\Big( (R_n/n)^{1-(\frac12+\epsilon_\beta)}\Big) = O\Big( (R_n/n)^{\frac12-\epsilon_\beta}\Big)\,.
\end{align*}
Altogether, for large enough $\beta$ we find that 
\begin{align*} \mu_n^\mp\Big(M_n = \max_{x\in S_1} \hgt(\cP_x)\Big) \leq O\Big((R_n/n)^{1/3} + (\log n)^{-10}\Big)\,.
\end{align*}
Combining this with~\eqref{eq:coupling-Mn-S0-with-remainder}, while noticing that $\cL_{0,n}\setminus (S_0\cup S_1)$ is nothing but $\bigcup_{i\leq\kappa_n}\cB_i^-$, we obtain that
\[ \mu_n^\mp\Big(M_n \neq \max_{i\leq\kappa_n} \bar M_{L_n}^{(i)}\Big) \leq O\Big((R_n/n)^{1/3} + (\log n)^{-10}\Big)\,,\]
at which point the original analysis of the law of $\max_{i\leq \kappa_n}\bar M_{L_n}^{(i)}$, showing that it is coupled to the maximum of $\kappa_n$ i.i.d.\ copies of $M_{L_n}$ under $\mu_{L_n}^\mp$, completes the proof. 
\end{proof}

\subsection{From multi-scale coupling to Gumbel tails}\label{sec:coupling-to-gumbel}
We will first prove the sought bounds in the special case when the side length $2n$ is a power of $2$. This will be extended to the general case at the end of~\S\ref{sec:coupling-to-gumbel}.

\subsubsection{Left tail} The following lemma establishes the doubly exponential left tail of the centered maximum. 
\begin{lemma}
\label{lem:left-tail-power-of-2}
There exists $\beta_0>0$ such that for every $\beta>\beta_0$ the following holds. For every fixed $\ell\geq 1$ and every large enough $n$ that is a power of 2,
\[ \exp\big(-e^{(4\beta + e^{-4\beta})\ell+2}\big) \leq \mu_n^\mp(M_n \leq m_n^\star - \ell) \leq \exp\big(-e^{\alpha_\ell - 12\beta}\big)\,. \]
\end{lemma}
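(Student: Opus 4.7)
The plan is to apply Proposition~\ref{prop:multiscale-linear} once, coupling $M_n$ to the maximum of $\kappa = 4^k$ i.i.d.\ copies of $M_L$ with $L = n/2^k$, where $k = k(\beta,\ell)$ is a fixed positive integer independent of $n$. Since $n$ is a power of $2$, $2^k \mid n$ gives $R_n = 0$, so the coupling error is $O((\log n)^{-10}) = o(1)$. From
\[
\mu_n^\mp(M_n \leq m_n^\star - \ell) \;=\; \bigl[\mu_L^\mp(M_L \leq m_n^\star - \ell)\bigr]^{4^k} \pm o(1)
\]
the goal is to control the base-scale probability via Proposition~\ref{prop:exp-tails}, which applies cleanly once $k$ is chosen so that $m_L^\star$ is a bounded (in $\beta,\ell$) integer distance from $m_n^\star - \ell$. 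By Corollary~\ref{cor:alpha-super-additive}, one unit of $k$ changes $m_L^\star$ by $\Theta(1/\beta)$ on average, and the appropriate choice satisfies $k \asymp \beta \ell /\log 2$.

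For the upper bound I would take $k$ maximal subject to $m_L^\star \geq m_n^\star - \ell + 2$. Then $\{M_L \leq m_n^\star - \ell\} \subseteq \{M_L < m_L^\star - 1\}$, and Proposition~\ref{prop:exp-tails} with $\ell' = 1$ yields
\[
p := \mu_L^\mp(M_L \leq m_n^\star - \ell) \;\leq\; (1+\epsilon_\beta) e^{2\beta - \widetilde\alpha_1} \;\leq\; e^{-(2\beta-C)},
\]
using $\widetilde\alpha_1 \geq 4\beta - C$. Thus $p^{4^k} \leq \exp(-(2\beta - C) 4^k)$, and combining the size of $k$ with the sub-additivity bound $\alpha_\ell \leq (4\beta + e^{-4\beta})\ell$ from Corollary~\ref{cor:alpha-super-additive} yields $(2\beta - C)\, 4^k \geq e^{\alpha_\ell - 12\beta}$ for $\beta$ large (the slack $-12\beta$ absorbing the $-4\beta$ gap between $\ell-1$ and $\ell$ together with a $\log(2\beta)$ correction). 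For the lower bound I would instead take $k$ minimal subject to $m_L^\star \leq m_n^\star - \ell - 1$; then $\bar F := \mu_L^\mp(M_L > m_n^\star - \ell) = \mu_L^\mp(M_L \geq m_L^\star + \ell')$ with $\ell' \geq 2$, and Proposition~\ref{prop:exp-tails} (right-tail) gives $\bar F \leq (1+\epsilon_\beta) e^{2\beta-\widetilde\alpha_2} \leq \tfrac12$. The elementary estimate $(1-\bar F)^\kappa \geq \exp(-2\kappa\bar F)$ for $\bar F \leq 1/2$ then yields
\[
\mu_n^\mp(M_n \leq m_n^\star - \ell) \;\geq\; \exp(-2 \cdot 4^k \bar F) - o(1),
\]
and the computed sizes of $k$ and $\bar F$ combine to give $2 \cdot 4^k \bar F \leq e^{(4\beta + e^{-4\beta})\ell + 2}$, as needed.

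The hardest point is the precise matching of constants. Since $m_L^\star$ and $k$ are integer-valued, exact equalities like $m_L^\star = m_n^\star - \ell + 2$ are never available, and the proof relies on the explicit slacks $-12\beta$ and $+2$ in the two target exponents to absorb this integer rounding together with the $O(\epsilon_\beta)$ terms from Corollary~\ref{cor:alpha-super-additive}. A small amount of bookkeeping is also needed for very small $\ell$ where the maximal $k$ in the upper-bound argument degenerates to $0$: for $\ell = 2$ this simply reduces to a direct application of Proposition~\ref{prop:exp-tails} at scale $n$ (which over-shoots the target by a wide margin since the target is $\exp(-e^{\alpha_2 - 12\beta})$, nearly $1$); for $\ell = 1$ the target reduces to $\mu_n^\mp(M_n \geq m_n^\star) \gtrsim e^{\alpha_1 - 12\beta}$, which follows from the second-moment/Paley--Zygmund argument already used in the proof of Proposition~\ref{prop:exp-tails} applied to $Z_{m_n^\star}$ (giving a lower bound of order $e^{-2\beta}$, vastly larger than $e^{-8\beta+1}$). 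Finally, the $o(1)$ coupling error is absorbed because for each fixed $\ell$ the target bounds are fixed positive constants bounded away from $0$ and $1$.
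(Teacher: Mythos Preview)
Your proposal is correct and follows essentially the same approach as the paper: apply Proposition~\ref{prop:multiscale-linear} with $L=n/2^k$ for a carefully chosen $k=O_\beta(\ell)$, bound the resulting base-scale probability via Proposition~\ref{prop:exp-tails}, and control $k$ through the super-additivity estimates of Corollary~\ref{cor:alpha-super-additive}. The only minor differences are that for the lower bound the paper targets $m_L^\star=m_n^\star-\ell$ exactly and bounds $(1-\epsilon_\beta)^{4^k}\geq e^{-4^k}$ directly (slightly cleaner than your route through $(1-\bar F)^\kappa\geq e^{-2\kappa\bar F}$), and your explicit treatment of $\ell=1,2$ is in fact more careful than the paper's one-line dismissal of those cases as ``trivial.''
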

\begin{proof}
The proof of both inequalities will follow from coupling $\mu_n^\mp(M_n\in\cdot)$ to the maxima of smaller scales.
We begin with the lower bound.
Consider $n_1 = n 2^{-j}$ and $n_2 = n 2^{-(j+1}) $. Since $m_{n_i}^\star$ is the minimal $h$ such that $\alpha_h$ exceeds the threshold $2\log(2n_i)-2\beta$, the difference of these thresholds between $m_{n_1}^\star$ and $m_{n_2}^\star$ is precisely $2\log2$, whereas $\alpha_{h+1}-\alpha_h \geq \alpha_1-\epsilon_\beta \geq 4\beta-C-\epsilon_\beta$ holds for every $h\geq 1$ by Corollary~\ref{cor:alpha-super-additive}. Hence, 
\[  m_{n_1}^\star -1 \leq  m_{n_2}^\star \leq  m_{n_1}^\star\,,\]
and we may consider $L_{n,k}=n 2^{-k}$ for the minimal $k\geq 0$ that would satisfy
\[ m_{L_{n,k}}^\star = m_n^\star - \ell\,.\]
(The fact that $\ell\geq 1$ implies that $k>0$.) 
We claim that this $k$ satisfies
 \[ k\leq k_1 := \left\lceil \frac{(4\beta+e^{-4\beta})\ell}{2\log 2}\right\rceil \,.\]
To see this, recall from Corollary~\ref{cor:alpha-super-additive} and the inequality below~\eqref{eq:tilde-alpha-def} that
\[ 2\log(2n)-2\beta <\alpha_{m_n^\star} \leq \alpha_{m_n^\star-\ell}+(4\beta+e^{-4\beta})\ell\,,\]
thus (using that $2\log(2n) = 2\log(2L_{n,k}) + k\log 4$ for every $k\geq 0$)
\[  \alpha_{m_n^\star-\ell} > 2\log(2L_{n,k_1})-2\beta + k_1\log 4 - (4\beta+e^{-4\beta})\ell \geq 2\log(2L_{n,k_1})-2\beta\,,\]
so $m_{L_{n,k_1}}^\star \leq m_n^\star-\ell$ and $k\leq k_1$ by definition.
Since $k_1=O(1)$, we have $n/L_{n,k}=O(1)$ and Proposition~\ref{prop:multiscale-linear} implies that
\begin{align*}
     \mu_n^\mp(M_n \leq m_n^\star -\ell) &= \mu_{L_{n,k}}^\mp(M_{L_{n,k}} \leq m_{L_{n,k}}^\star)^{4^{k}} + o(1) \geq (1-\epsilon_\beta)^{4^{k_1}}+o(1) \\
    &\geq \exp(-4^{k_1}) \geq \exp(-4 e^{(4\beta+e^{-4\beta})\ell})\,,
\end{align*}
where we used that $1-\epsilon_\beta> e^{-1}$ for every large enough $\beta$ in the transition between the lines, absorbing the $o(1)$-term for $n$ large enough in the process. This implies the desired lower bound.

For the upper bound, let $L_{n,k}=n2^{-k}$ for the minimal $k\geq 0$ that would satisfy
\[ m_{L_{n,k}}^\star = m_n^\star-\ell+2\,.\]
Further assume for now that $\ell\geq 3$ (hence $k> 0$); our resulting upper bound will hold trivially for $\ell=1,2$.
We immediately note that $k \leq k_1$, since we saw above that $m_{L_{n,k_1}}^\star\leq m_n^\star-\ell$. We will need a lower bound on $k$ to yield the required tail estimate.
Once again appealing to Corollary~\ref{cor:alpha-super-additive}, we have 
\[ \alpha_{m_n^\star-\ell+2} \leq \alpha_{m_n^\star}-\alpha_{\ell-2} + \epsilon_\beta \leq 2\log(2n) + 2\beta - \alpha_{\ell-2} + \epsilon'_\beta
\]
for some other sequence $\epsilon'_\beta>0$ vanishing as $\beta\to\infty$, 
where we used~\eqref{eq:widetilde-m_n} and the relation between $\alpha_h,\widetilde{\alpha}_h$ in~\eqref{eq:tilde-alpha-def}. 
It now follows that
\[ k \geq k_2 := \left\lfloor \frac{\alpha_{\ell-2} - 4\beta-1}{2\log 2}\right\rfloor\,,\]
since,  for $\beta$ large enough so that $\epsilon'_\beta<1$, we have
\begin{align*} \alpha_{m_n^\star-\ell+2} &\leq 2\log(2L_{n,k_2}) - 2\beta + (4\beta+\epsilon'_\beta - \alpha_{\ell-2} + k_2\log 4) 
< 2\log(2L_{n,k_2}) - 2\beta\,.
\end{align*}
Applying Proposition~\ref{prop:multiscale-linear} (recalling that $k \leq k_1$ and so $n/L_{n,k}=O(1)$ as before), we deduce that
\[ \mu_n^\mp(M_n \leq m_n^\star-\ell) = \mu_{L_{n,k}}^\mp(M_{L_{n,k}} \leq m_{L_{n,k}}^\star-2)^{4^{k}} +o(1) \leq \left((1+\epsilon_\beta)e^{2\beta-\widetilde\alpha_1}\right)^{4^{k_2}}\,,\]
using Proposition~\ref{prop:exp-tails} for the last inequality. Using that $\widetilde\alpha_1 \geq 4\beta - C$, and absorbing $\log(1+\epsilon_\beta)$ into this constant, we see that
\begin{equation}\label{eq:right-tail-upper-wrt-alpha} \mu_n^\mp(M_n\leq m_n^\star-\ell) \leq \exp(-(2\beta - C)4^{k_2}) \leq \exp\left(-(2\beta - C)e^{\alpha_{\ell-2}-4\beta-1}\right)
\leq \exp\left(-e^{\alpha_{\ell}-12\beta}\right)\end{equation}
 where in the last inequality we used that $\alpha_{\ell-2}\geq \alpha_\ell-8\beta-\epsilon_\beta$ (again by Corollary~\ref{cor:alpha-super-additive}) and thereafter added the term $1+\epsilon_\beta$ to the exponent in exchange for the factor $2\beta-C$, which is valid for large $\beta$. (Note that, as promised above, the resulting bound holds also for $\ell=1,2$, becoming trivial since $\alpha_1\leq \alpha_2 \leq 8\beta+\epsilon_\beta$.)
\end{proof}

\subsubsection{Right tail} The exponential upper bound on the right tail of the centered maximum was established in Proposition~\ref{prop:exp-tails}, implying via the relation~\eqref{eq:tilde-alpha-def} between $\alpha_h,\widetilde\alpha_h$ that (with room to spare), for every $\ell\geq 1$,
\[ \mu_n^\mp(M_n\leq m_n^\star+\ell) \leq \exp( -\alpha_\ell + 3\beta)\,.\]
It remains to provide a corresponding lower bound, as given by the following lemma.
\begin{lemma}\label{lem:right-tail-power-of-2}
There exists $\beta_0>0$ such that for every $\beta>\beta_0$ the following holds. For every fixed $\ell\geq 1$ and every large enough $n$ that is a power of 2,
\[  \mu_n^\mp(M_n \leq m_n^\star + \ell) \geq \exp\big(-(4\beta + e^{-4\beta})\ell-4(\beta+1)\big) \,. \]
\end{lemma}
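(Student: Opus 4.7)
The plan is to derive the stated lower bound directly from the complementary upper bound on the right tail of $M_n$ furnished by Proposition~\ref{prop:exp-tails}; a multi-scale coupling is not strictly needed here because the claimed bound decays in $\ell$ at a rate (roughly $4\beta$) that matches the lower bound on $\widetilde\alpha_\ell/\ell$, so only a constant lower bound on $\mu_n^\mp(M_n \leq m_n^\star + \ell)$ is required and no boosting via Proposition~\ref{prop:multiscale-linear} is necessary.

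The execution would proceed as follows. I would first apply Proposition~\ref{prop:exp-tails} with parameter $\ell+1$ to obtain
\[
\mu_n^\mp(M_n \geq m_n^\star + \ell + 1) \leq (1+\epsilon_\beta)\,e^{2\beta - \widetilde\alpha_{\ell+1}}\,.
\]
Then, using the bound $\widetilde\alpha_h \geq 4(\beta - C)h$ for all $h \geq 1$---which follows from the upper bound $\mu_{\Z^3}^\mp(E_h^o) \leq e^{-4(\beta-C)h}$ in Proposition~\ref{prop:Ex-bounds} together with definition~\eqref{eq:tilde-alpha-def} of $\widetilde\alpha_h$---the right-hand side above is at most $2\, e^{2\beta - 4(\beta-C)(\ell+1)} \leq 1/2$ for every $\ell \geq 1$ once $\beta$ is taken sufficiently large. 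Taking complements yields $\mu_n^\mp(M_n \leq m_n^\star + \ell) \geq 1/2$ for all such $\ell$ and $\beta$.

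To match the form of the claimed bound, it remains to check that $(4\beta + e^{-4\beta})\ell + 4(\beta+1) \geq 4\beta + 4 > \log 2$ for every $\beta > 0$ and $\ell \geq 1$, and hence $\exp(-(4\beta + e^{-4\beta})\ell - 4(\beta+1)) \leq 1/2$, so that the constant lower bound from the previous paragraph already suffices. The only real obstacle I foresee is a presentational one: for parallelism with the multi-scale argument of Lemma~\ref{lem:left-tail-power-of-2}, one could alternatively invoke Proposition~\ref{prop:multiscale-linear} with $L_n = n/2$ (so $\kappa_n = 4$) and write $\mu_n^\mp(M_n \leq m_n^\star + \ell) \geq \mu_{n/2}^\mp(M_{n/2} \leq m_n^\star + \ell)^4 - o(1) \geq 1/16 - o(1)$ for large $n$, which again dominates the claimed right-hand side; this version would highlight that the lemma serves as the right-tail analogue, at the level of doubly-exponential/exponential Gumbel tail comparisons, of the lower bound in Lemma~\ref{lem:left-tail-power-of-2}.
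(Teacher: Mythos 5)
Your argument only proves the statement as literally printed, and the printed inequality is a typo: the event inside the probability is meant to be $\{M_n \geq m_n^\star+\ell\}$, not $\{M_n \leq m_n^\star+\ell\}$. This is forced by the context: the subsection opens by noting that the exponential \emph{upper} bound on the right tail is already available from Proposition~\ref{prop:exp-tails}, and that ``it remains to provide a corresponding lower bound''; and in the proof of Theorem~\ref{mainthm:gumbel-tails} this lemma is invoked precisely to bound $\mu_N^\mp(M_N \geq m_N^\star+\ell+1)$ from below. Your own observation that the claimed bound is dominated by the constant $e^{-4\beta-4}$, so that no multi-scale input is needed, should have been the red flag: a lemma asserting that a probability tending to $1$ exceeds $e^{-c\ell}$ has no content and would be useless downstream. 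So while your derivation of $\mu_n^\mp(M_n\leq m_n^\star+\ell)\geq 1/2$ from Proposition~\ref{prop:exp-tails} is correct as far as it goes, it does not prove the result the paper needs, and your method cannot: complementing an upper bound on the right tail gives no lower bound on that same right tail.

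The intended statement is a genuine large-deviation \emph{lower} bound, and this is where the multi-scale coupling enters---in the opposite direction from your aside. One compares $n$ to a \emph{larger} scale $L_{n,k}=n2^k$, with $k$ minimal such that $m_{L_{n,k}}^\star=m_n^\star+\ell+1$; since each doubling of the side length raises the threshold $2\log(2n)-2\beta$ by $2\log 2$ while $\alpha_{h+1}-\alpha_h\geq 4\beta-C$, such a $k$ exists and satisfies $k\leq\lceil(4\beta+1+(4\beta+e^{-4\beta})\ell)/(2\log 2)\rceil$ (via Corollary~\ref{cor:alpha-super-additive} and~\eqref{eq:widetilde-m_n}). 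Proposition~\ref{prop:multiscale-linear} couples $M_{L_{n,k}}$ to the maximum of $4^{k}$ i.i.d.\ copies of $M_n$, so that $\mu_{L_{n,k}}^\mp\big(M_{L_{n,k}}\geq m_{L_{n,k}}^\star-1\big)\leq 4^{k}\,\mu_n^\mp\big(M_n\geq m_n^\star+\ell\big)+o(1)$; the left-hand side is at least $1-\epsilon_\beta$ by the \emph{left}-tail estimate of Proposition~\ref{prop:exp-tails}, whence $\mu_n^\mp(M_n\geq m_n^\star+\ell)\geq(1-\epsilon_\beta)4^{-k}\geq\exp(-(4\beta+e^{-4\beta})\ell-4\beta-3)$ for $\beta$ large, which gives the intended bound. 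Your alternative of applying Proposition~\ref{prop:multiscale-linear} with $L_n=n/2$ moves to a \emph{smaller} scale and again yields only a constant lower bound on the complementary event, so it does not recover this; the structural point, parallel to Lemma~\ref{lem:left-tail-power-of-2}, is that exceedance at level $m_n^\star+\ell$ must happen in at least one of the $4^{k}$ sub-boxes of the larger box whose typical maximum already sits at that height.
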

\begin{proof}
The proof will follow from coupling i.i.d.\ copies of $\mu_n^\mp(M_n\in\cdot)$ to the maximum of a larger scale.
As in the proof of Lemma~\ref{lem:left-tail-power-of-2}---now viewing increasing rather than decreasing side lengths---we have that if $n_1=n 2^j$ and $n_2=n 2^{j+1}$ then
$ m_{n_1}^\star \leq m_{n_2}^\star \leq m_{n_1}^\star + 1$, 
and therefore we may consider $L_{n,k} = n 2^k$ for the minimal $k\geq 0$ (in fact $k>0$ necessarily) that satisfies
\[ m_{L_{n,k}}^\star = m_n^\star + \ell + 1 \,.\]
We claim that
\[ k \leq k_1 := \left\lceil \frac{4\beta + 1 + (4\beta + e^{-4\beta})\ell}{2\log 2}\right\rceil\,.\]
Too see this, recall from Corollary~\ref{cor:alpha-super-additive} and~\eqref{eq:widetilde-m_n} (combined with the usual relation between $\widetilde\alpha_h,\alpha_h$) that
\begin{align*} \alpha_{m_n^\star+\ell} \leq \alpha_{m_n^\star} + (4\beta + e^{-4\beta})\ell \leq 2\log (2n) + 2\beta + \epsilon_\beta + (4\beta+e^{-4\beta})\ell\,.
\end{align*}
Writing $2\log(2n)=2\log(2L_{n,k})-k\log 4$, we get that for any $k\geq 0$,
\begin{align*} \alpha_{m_n^\star+\ell} \leq 2\log(2L_{n,k}) - 2\beta + (4\beta + \epsilon_\beta + (4\beta+e^{-4\beta})\ell - k\log 4) \,,
\end{align*}
and substituting $k=k_1$ as chosen above now yields (for $\beta$ large enough so that $\epsilon_\beta<1$)
\[ \alpha_{m_n^\star+\ell} < 2\log(2L_{n,k_1}) - 2\beta\,,\]
and therefore $m_{L_{n,k_1}}^\star > m_n^\star+\ell$; that is, $ m_{L_{n,k_1}}^\star \geq m_n^\star+\ell+1$, implying that $k\leq k_1$ as claimed.

Since $k_1=O(1)$, so $n/L_{n,k}=O(1)$, we may invoke Proposition~\ref{prop:multiscale-linear} and find that
\[ \mu_{L_{n,k}}^\mp(M_{L_{n,k}}< m_{L_{n,k}}^\star-1) = \mu_n^\mp(M_n<m_n^\star+\ell)^{4^{k}}\,,\]
and so
\[ \mu_{L_{n,k}}^\mp(M_{L_{n,k}}\geq m_{L_{n,k}}^\star - 1) \leq 4^{k} \mu_{n}^\mp(M_n\geq m_n^\star+\ell)\,.\]
Using Proposition~\ref{prop:exp-tails} to bound the left-hand side from below by $1-\epsilon_\beta$, we obtain that
\begin{align*} \mu_n^\mp(M_n\geq m_n^\star+\ell) &\geq (1-\epsilon_\beta) 4^{-k} \geq (1-\epsilon_\beta) 4^{-k_1} \geq (\tfrac1{4}-\epsilon'_\beta) \exp\left(-4\beta - 1 - (4\beta+e^{-4\beta})\ell\right) \\
&\geq \exp\left(-4\beta - 3 - (4\beta+e^{-4\beta})\ell\right)
\end{align*}
for large enough $\beta$, as required.
\end{proof}

\begin{proof}[\textbf{\emph{Proof of Theorem~\ref{mainthm:gumbel-tails}}}]
For $n$ that is a power of 2, the bounds in Theorem~\ref{mainthm:gumbel-tails} were all established: 
the lower bounds were
obtained in Lemmas~\ref{lem:left-tail-power-of-2} 
and~\ref{lem:right-tail-power-of-2} for a choice of $\bar\alpha = 4\beta+e^{-4\beta}$; the upper bounds were obtained by Lemma~\ref{lem:left-tail-power-of-2} and
by~\eqref{eq:right-tail-upper-wrt-alpha} (which followed from Proposition~\ref{prop:exp-tails}).
It remains to extend the estimates in Lemmas~\ref{lem:left-tail-power-of-2} and~\ref{lem:right-tail-power-of-2} to general $n$, which will follow from the decorrelation inequalities of~\S\ref{sec:submult}. 

Let $N$ be a power of $2$ such that $N/4 < n <N/2$. By Corollary~\ref{cor:pillar-dependence-on-volume} we have that
\[ \left\| \mu_n^\mp\left((\cP_x)_{x\in \cL_{0,n}^-}\in\cdot\right) - 
\mu_N^\mp\left((\cP_x)_{x\in \cL_{0,n}^-}\in\cdot\right) \right\|_\tv = O(e^{-c\log ^2 n}) = o(1)\,.
\]
Recall that $m_n^\star \leq m_N^\star \leq m_n^\star+1$ 
(since the scales changed by at most a factor of 2 whereas $\alpha_{h+1}-\alpha_h \geq 4\beta-C$, as explained in the proofs of Lemmas~\ref{lem:left-tail-power-of-2} and~\ref{lem:right-tail-power-of-2}). Furthermore, if $M_n^- =  \max\{\hgt(\cP_x) \,:\;x\in\cL_{0,n}^-\}$, then Claim~\ref{clm:corridors} shows that $\mu_n^\mp(M_n \neq M_n^-)=o(1) $, and so
\[ \mu_n^\mp(M_n\leq m_n^\star-\ell) = \mu_n^\mp(M_n^- \leq m_n^\star -\ell) +o(1) \geq \mu_N^\mp(M_N \leq m_N^\star - \ell-1)+o(1)\,.
\]
A lower bound on the probability in the right-hand is given by
Lemma~\ref{lem:left-tail-power-of-2}, whereby (recalling $\bar\alpha=4\beta+e^{-4\beta}$)
\[   \mu_N^\mp(M_N \leq m_N^\star - \ell-1) \geq \exp\left(-e^{\bar\alpha(\ell+1)+3}\right) \geq \exp\left(-e^{\bar\alpha\ell +4(\beta+1)}\right)\,,
\]
a lower bound that extends to $\mu_n^\mp(M_n\leq m_n^\star-\ell)+o(1)$ via the preceding inequality.

The remaining two inequalities (the upper bound on the right tail in~\eqref{eq:right-tail-upper-wrt-alpha} was already established for all~$n$) will follow from a comparison of $\mu_n^\mp$ to $\mu_N^\mp$ where $N$ is a power of $2$ such that $2N < n < 4N$. The same coupling mentioned above shows that 
\[ \left\| \mu_N^\mp\left((\cP_x)_{x\in \cL_{0,N}^-}\in\cdot\right) - 
\mu_n^\mp\left((\cP_x)_{x\in \cL_{0,N}^-}\in\cdot\right) \right\|_\tv = O(e^{-c\log ^2 n}) = o(1)\,.
\]
As before $\mu_N^\mp(M_N \neq M_N^-) = o(1)$, and now we have $m_N^\star \leq m_n^\star \leq m_N^\star +1$, whence
\[ \mu_n^\mp(M_n\leq m_n^\star-\ell) \leq \mu_N^\mp(M_N^- \leq m_N^\star - \ell+1) +o(1) = \mu_N^\mp(M_N \leq m_N^\star - \ell+1) + o(1)\,.
\]
By the upper bound in Lemma~\ref{lem:left-tail-power-of-2},
\[ \mu_N^\mp(M_N \leq m_N^\star - \ell+1) \leq \exp\left(-e^{\alpha_{\ell-1}-12\beta}\right) \leq \exp\left(-e^{\alpha_{\ell}-(16\beta+1)}\right)\,.
\]
Similarly, we have
\[ \mu_n^\mp(M_n \geq m_n^\star + \ell) 
\geq \mu_N^\mp(M_N^- \geq m_N^\star + \ell+1) -o(1) = 
\mu_N^\mp(M_N \geq m_N^\star + \ell+1) -o(1) \,,\]
whereas by Lemma~\ref{lem:right-tail-power-of-2},
\begin{align*}
    \mu_N^\mp(M_N \geq m_N^\star + \ell+1) \geq \exp\left(-\bar\alpha(\ell+1)-4(\beta+1)\right) \geq
\exp\left(-\bar\alpha\ell-(8\beta+5)\right)\,,
\end{align*} 
 thus concluding the proof.
\end{proof}

\subsection{Non-convergence of the centered maximum}
The following simple corollary of Proposition~\ref{prop:multiscale-linear} will be used to derive Proposition~\ref{mainprop:divergence}.
\begin{corollary}\label{cor:G-G^k}
Let $M_n$ be the maximum height of $\cI$ under $\mu_n^\mp$. Fix $\omega\in \R_+$. Then
\begin{align*}
    \limsup_{n\to\infty}\sup_x \left|\mu^{\mp}_n(M_n \leq x) - \mu^{\mp}_{\lfloor n/\omega \rfloor}(M_{\lfloor n/\omega \rfloor} \leq x)^{\omega^2}\right| = 0\,.
\end{align*}
\end{corollary}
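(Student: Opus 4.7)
The plan is to derive the corollary directly from Proposition~\ref{prop:multiscale-linear}, choosing $L_n = \lfloor n/\omega\rfloor$. Since $L_n = (n/\omega)(1+o(1))$ is of linear order in $n$, it satisfies the hypothesis $n(\log n)^{-\gamma} < L_n < n$ of that proposition for any fixed $\gamma>0$ once $n$ is large enough.

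For the case of interest for Proposition~\ref{mainprop:divergence}, namely a positive integer $\omega$, one then verifies by direct computation that the parameters $\kappa_n$ and $R_n$ appearing in Proposition~\ref{prop:multiscale-linear} behave in the desired way. Indeed, the definition $L_n = \lfloor n/\omega \rfloor$ gives $\omega L_n \leq n < \omega L_n + \omega$, so for every $n$ large enough that $L_n > \omega$, the ratio $n/L_n$ lies in $[\omega,\omega+1)$. Consequently $\lfloor n/L_n\rfloor = \omega$ and thus $\kappa_n = \lfloor n/L_n\rfloor^2 = \omega^2$ for all such $n$. Moreover, the remainder $R_n = n - \omega L_n$ lies in $[0,\omega)$, so in particular $(R_n/n)^{1/3} = O(n^{-1/3})$.

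Substituting these facts into the error bound of Proposition~\ref{prop:multiscale-linear} yields
\[ \sup_x \left|\mu_n^\mp(M_n\leq x) - \mu_{L_n}^\mp(M_{L_n}\leq x)^{\omega^2}\right| \;\leq\; O\!\left(n^{-1/3} + (\log n)^{-10}\right), \]
and sending $n\to\infty$ produces the desired conclusion. There is no real obstacle beyond this bookkeeping: the entire substance of the statement is already contained in the multi-scale coupling of Proposition~\ref{prop:multiscale-linear}, and this corollary merely records that $L_n = \lfloor n/\omega\rfloor$ is an admissible choice under which the number of boxes $\kappa_n$ is eventually exactly $\omega^2$ and the boundary remainder $R_n$ is bounded.
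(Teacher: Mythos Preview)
Your argument handles only the case where $\omega$ is a positive integer, but the corollary is stated for all $\omega\in\R_+$, and its sole application (the proof of Proposition~\ref{mainprop:divergence}) invokes it with $\omega=\sqrt{k}$ for integers $k\geq 2$, which is irrational whenever $k$ is not a perfect square. For such $\omega$, your computation breaks down: with $L_n=\lfloor n/\omega\rfloor$ you have $n/L_n \to \omega$, so $\lfloor n/L_n\rfloor$ oscillates among the integers nearest $\omega$, and $\kappa_n=\lfloor n/L_n\rfloor^2$ is an integer that cannot equal $\omega^2=k$ unless $k$ is a perfect square. Thus Proposition~\ref{prop:multiscale-linear} compares $\mu_n^\mp(M_n\leq x)$ to the wrong power of $\mu_{L_n}^\mp(M_{L_n}\leq x)$, and there is no immediate way to pass from $p^{\kappa_n}$ to $p^{\omega^2}$ uniformly in $x$, since $p=p_{n,x}$ may be arbitrarily close to $0$.

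The paper's proof addresses exactly this obstruction. It introduces a common auxiliary scale $L_n=\lfloor n/\log n\rfloor$ and applies Proposition~\ref{prop:multiscale-linear} twice, once to $n$ and once to $n_\omega=\lfloor n/\omega\rfloor$, obtaining $\mu_n^\mp(M_n\leq x)\approx p^{\kappa_n}$ and $\mu_{n_\omega}^\mp(M_{n_\omega}\leq x)\approx p^{\kappa'_n}$ with $\kappa'_n=(\omega^{-2}+o(1))\kappa_n$. The key additional step is to first restrict to $x$ for which one of the two CDFs is at least some fixed $\epsilon>0$ (paying $\epsilon$ in the sup); on this set $p^{\kappa_n}$ is bounded away from $0$, which lets one replace the exponent $(1+o(1))\kappa_n$ by $\kappa_n$ with an additive $o(1)$ error, and hence match $p^{\omega^2\kappa'_n}$ with $p^{\kappa_n}$. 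This extra layer (common sub-scale plus the $\epsilon$-truncation) is precisely what is missing from your argument.
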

\begin{proof}
Fix any $\epsilon>0$, set $n_\omega = \lfloor n/\omega\rfloor$ for brevity, and let 
\[ \Gamma = \Gamma_{\epsilon,n} \cup \Gamma_{\epsilon,n_\omega}\qquad\mbox{where}\qquad \Gamma_{\epsilon,n} = \left\{ x\in \R \,:\; \mu^\mp_n(M_n\leq x) \geq \epsilon\right\}\,. 
\]
By writing
\begin{align*} \sup_{x_n} \left|\mu^\mp_n(M_n\leq x_n) - \mu^\mp_{n_\omega}(M_{n_\omega}\leq x_n)^{\omega^2}\right| \leq \epsilon &+ \sup_{x_n\in\Gamma} \left|\mu^\mp_n(M_n\leq x_n) - \mu^\mp_{n_\omega}(M_{n_\omega}\leq x_n)^{\omega^2}\right|
\,,\end{align*}
it will suffice to prove that the second term in the right-hand vanishes as $n\to\infty$ for any such choice of $\epsilon$.

Applying Proposition~\ref{prop:multiscale-linear} to $\mu_n^\mp(M_n\in\cdot)$ with $L_n=\lfloor n/\log n\rfloor$, whereby $\kappa_n = (1+o(1))\log^2 n$, we obtain  
\[ \sup_{x_n\in\Gamma} \left| \mu_n^\mp(M_n\leq x_n) - p_{n,x_n}^{\kappa_n} \right| = o(1)\,,\qquad\mbox{where}\qquad p_{n,x_n}=\mu_{L_n}^\mp(M_{L_n}\leq x_n)\,.
\]
Another application of Proposition~\ref{prop:multiscale-linear}, this time to $\mu_{n_\omega}^\mp(M_{n_\omega}\in\cdot)$ yet with the same $L_n=\lfloor n/\log n\rfloor$, gives
\[ \sup_{x_n\in\Gamma}\left|\mu_{n_\omega}^\mp(M_{n_\omega}\leq x_n) - p_{n,x_n}^{\kappa'_n} \right|= o(1)\,,
\]
where $p_{n,x_n}$ is as before, and $\kappa'_n = (\omega^{-2}+o(1)) \kappa_n$ (recall that $\omega\in\R_+$ is fixed). 

For $x_n\in\Gamma_{\epsilon,n}$, we have that $p_{n,x_n}^{\kappa_n} \geq \epsilon - o(1)$, whence  $p_{n,x_n}^{(1+o(1))f_n}= p_{n,x_n}^{f_n} + o(1)$ if $f_n$ has order $\log^2 n$, and the same conclusion holds if $x_n\in\Gamma_{\epsilon,n_\omega}$ (as $\kappa'_n$ is also of order $\log^2 n$). In particular,
\[ \sup_{x_n\in\Gamma}\left|\mu_n^\mp(M_n\leq x_n) - p_{n,x_n}^{\log^2 n} \right| = o(1)\qquad\mbox{and}\qquad
 \sup_{x_n\in\Gamma}\left|\mu_{n_\omega}^\mp(M_{n_\omega}\leq x_n) - p_{n,x_n}^{\omega^{-2} \log^2 n} \right|= o(1)\,,
\]
and combining these inequalities yields that,
\[\sup_{x_n\in\Gamma}\left| \mu_n^\mp (M_n\leq x_n) - \mu_{n_\omega}^\mp(M_{n_\omega}\leq x_n)^{\omega^2} \right| = o(1)\,,\]
as required.
\end{proof}
\begin{proof}[\textbf{\emph{Proof of Proposition~\ref{mainprop:divergence}}}]
Suppose that $m_n$ is a sequence such that $\{M_n-m_n\}$ weakly converges to a nondegenerate random variable $M_\infty$ with a distribution function $G$. Since $m_n \bmod 1$ must converge (as $M_n$ is integer valued), we may assume w.l.o.g.\ that $m_n\in\Z$ (whence $G$ is also integer valued). Observe that the bounds in Theorem~\ref{mainthm:gumbel-tails} (in fact already those of Proposition~\ref{prop:exp-tails}) imply that we must have 
\[\limsup_{n\to\infty}|m_n-m_n^\star|<\infty\,.\]
Fix $k\geq 2$, and consider Corollary~\ref{cor:G-G^k} with $\omega=\sqrt{k}$. By assumption, $\lim_{n\to\infty}\mu_n^\mp(M_n\leq m_n+x) = G(x)$ for all $x$, 
and the above corollary then implies that
\[ \lim_{n\to\infty} \mu_{\lfloor n/\sqrt{k}\rfloor}^\mp(M_{\lfloor n/\sqrt{k}\rfloor}\leq m_n+x) = G^{1/k}(x)\,.\]
However, $|m_n^\star - m_{\lfloor n/\sqrt{k}\rfloor}^\star | \leq C(\beta) \log k $ (see, e.g., the estimate~\eqref{eq:widetilde-m_n}), and the above bound on $|m_n-m_n^\star|$ thus implies that
\[ a_{k,n} := m_n - m_{\lfloor n/\sqrt{k}\rfloor} \]
satisfies $\limsup_{n\to\infty}|a_{k,n}|<\infty$. Let $(a_{k,n_j})_{j\geq 1}$ be a converging subsequence of $(a_{k,n})_{n\geq 1}$, and denote its limit by $a_k$, whereby, recalling that $m_n\in\Z$, we must have $a_{k,n_j}=a_k$ for all sufficiently large $j$. Thus, for every large enough~$j$,
\[ \mu^\mp_{\lfloor \frac{n_j}{\sqrt k}\rfloor}\left(M_{\lfloor \frac{n_j}{\sqrt k}\rfloor} \leq m_{n_j} + x\right)
= \mu^\mp_{\lfloor \frac{n_j}{\sqrt k}\rfloor}\left(M_{\lfloor \frac{n_j}{\sqrt k}\rfloor} \leq m_{\lfloor \frac{n_j}{\sqrt k}\rfloor} + x+a_k\right) \xrightarrow[j\to\infty]{} G(x+a_k)\,,
\]
where the last equality is again by our weak convergence assumption. 
Together, this implies that for every $x\in\R$ we have $G^k(x+a_k)=G(x)$; having established this for every $k\geq 2$, we find that $G$ is max-stable, yet $G$ is discrete, contradicting the fact that the only (nondegenerate) max-stable distributions are continuous ones, belonging to one of the three classes of extreme value distributions (see, e.g.,~\cite[Thm.~1.3.1]{LLR83}).
\end{proof}

\subsection*{Acknowledgment} We are grateful to an anonymous referee for valuable comments. E.L.~was supported in part by NSF grant DMS-1812095.

\bibliographystyle{abbrv}

\bibliography{ising}

\end{document}